\NewDocumentCommand{\R}{}{\mathbb{R}}
\NewDocumentCommand{\C}{}{\mathbb{C}}
\NewDocumentCommand{\N}{}{\mathbb{N}}
\NewDocumentCommand{\M}{}{\mathbb{M}}
\NewDocumentCommand{\Real}{}{\mathrm{Re}\:}
\NewDocumentCommand{\Imag}{}{\mathrm{Im}\:}
\NewDocumentCommand{\Ct}{}{\widetilde{C}}
\NewDocumentCommand{\Nplus}{}{\N_{+}}
\NewDocumentCommand{\Error}{}{\mathrm{Error}}
\NewDocumentCommand{\MetricSpace}{}{\mathfrak{M}}
\NewDocumentCommand{\Manifold}{}{\MetricSpace}
\NewDocumentCommand{\metric}{o o}{\rho\IfNoValueF{#1}{(#1\IfNoValueF{#2}{,#2})}}
\NewDocumentCommand{\measure}{}{\mu}
\NewDocumentCommand{\MetricBall}{m m}{B_{\metric}(#1,#2)}
\NewDocumentCommand{\BMetricBall}{m m}{B_{\metric}\left(#1,#2\right)}
\NewDocumentCommand{\MetricBallClosure}{m m}{\overline{\MetricBall{#1}{#2}}}
\NewDocumentCommand{\Compact}{}{\mathcal{K}}
\NewDocumentCommand{\Distributions}{o}{\mathcal{D}'\IfNoValueF{#1}{(#1)}}
\NewDocumentCommand{\Bno}{}{B^n(1)}
\NewDocumentCommand{\Norm}{m o}{\|#1 \|\IfNoValueF{#2}{_{#2}}}
\NewDocumentCommand{\BNorm}{m o}{\left\|#1 \right\|\IfNoValueF{#2}{_{#2}}}
\NewDocumentCommand{\OpNorm}{m o}{\|#1 \|\IfNoValueF{#2}{_{#2\rightarrow #2}}}
\NewDocumentCommand{\LpNorm}{m m o o o}{\Norm{#1}[\LpSpace{#2}[#3][#4][#5]]}
\NewDocumentCommand{\BLpNorm}{m m o o o}{\BNorm{#1}[\LpSpace{#2}[#3][#4][#5]]}
\NewDocumentCommand{\HsSpace}{m o}{H^{#1}\IfValueT{#2}{(#2)}}
\NewDocumentCommand{\HsNorm}{m m o}{\Norm{#1}[\HsSpace{#2}[#3]]}
\NewDocumentCommand{\BHsNorm}{m m o}{\BNorm{#1}[\HsSpace{#2}[#3]]}
\NewDocumentCommand{\LpSpace}{m o o o}{L^{#1}\IfNoValueF{#2}{(#2\IfNoValueF{#3}{,#3 \IfNoValueF{#4}{;#4} })}}
\NewDocumentCommand{\DomainSymbol}{}{\mathscr{D}}
\NewDocumentCommand{\Domain}{o}{\DomainSymbol\IfNoValueF{#1}{(#1)}}
\NewDocumentCommand{\DAinfty}{}{\DomainSymbol(A^{\infty})}
\NewDocumentCommand{\DAsinfty}{}{\DomainSymbol((A^{*})^{\infty})}
\NewDocumentCommand{\Yb}{}{\overline{Y}}
\NewDocumentCommand{\NSubsetSymbol}{}{\mathfrak{N}}
\NewDocumentCommand{\NSubsetx}{}{\NSubsetSymbol_x}
\NewDocumentCommand{\NSubsety}{}{\NSubsetSymbol_y}
\NewDocumentCommand{\OpQSymbol}{}{\mathscr{Q}}
\NewDocumentCommand{\OpQx}{o o}{\OpQSymbol_1\IfNoValueF{#1}{^{#1\IfNoValueF{#2}{,#2}}}}
\NewDocumentCommand{\OpQy}{o o}{\OpQSymbol_2\IfNoValueF{#1}{^{#1\IfNoValueF{#2}{,#2}}}}
\NewDocumentCommand{\OpP}{}{\mathscr{P}}
\NewDocumentCommand{\opP}{}{\mathscr{P}}
\NewDocumentCommand{\opL}{}{\mathscr{L}}
\NewDocumentCommand{\opR}{}{\mathscr{R}}
\NewDocumentCommand{\opE}{}{\mathscr{E}}
\NewDocumentCommand{\ip}{m m o}{\langle #1, #2\rangle\IfNoValueF{#3}{_{#3}}}
\NewDocumentCommand{\Bip}{m m o}{\left\langle #1, #2\right\rangle\IfNoValueF{#3}{_{#3}}}
\NewDocumentCommand{\CinftySpace}{s o o}{C^\infty\IfNoValueF{#2}{\IfBooleanT{#1}{\left}(#2\IfNoValueF{#3}{;#3}  \IfBooleanT{#1}{\right})}}
\NewDocumentCommand{\CinftySpacet}{s o o}{C_{t}^\infty\IfNoValueF{#2}{\IfBooleanT{#1}{\left}(#2\IfNoValueF{#3}{;#3}  \IfBooleanT{#1}{\right})}}
\NewDocumentCommand{\CinftySpacetx}{s o o}{C_{t,x}^\infty\IfNoValueF{#2}{\IfBooleanT{#1}{\left}(#2\IfNoValueF{#3}{;#3}  \IfBooleanT{#1}{\right})}}
\NewDocumentCommand{\CSpacex}{s m o}{C_x\IfBooleanT{#1}{\left}(#2\IfNoValueF{#3}{;#3}\IfBooleanT{#1}{\right})}
\NewDocumentCommand{\CSpacey}{m o}{C_y(#1\IfNoValueF{#2}{;#2})}
\NewDocumentCommand{\LpSpacewx}{m o o}{L_{w,x}^{#1}\IfNoValueF{#2}(#2\IfNoValueF{#3}{,#3})}
\NewDocumentCommand{\LpSpacewy}{m o o}{L_{w,y}^{#1}\IfNoValueF{#2}(#2\IfNoValueF{#3}{,#3})}
\NewDocumentCommand{\CzinftySpace}{o}{C_0^\infty\IfNoValueF{#1}{(#1)}}
\NewDocumentCommand{\CSpace}{m}{C(#1)}
\NewDocumentCommand{\ClocSpace}{m}{C_{\mathrm{loc}}(#1)}
\NewDocumentCommand{\CbinftySpace}{o o}{C_b^\infty\IfNoValueF{#1}{(#1\IfNoValueF{#2}{;#2})}}
\NewDocumentCommand{\ceil}{m}{\lceil #1 \rceil}
\NewDocumentCommand{\uh}{}{\hat{u}}
\NewDocumentCommand{\deltah}{}{\hat{\delta}}
\NewDocumentCommand{\Wd}{}{d}
\NewDocumentCommand{\WWd}{}{(W,d)}
\NewDocumentCommand{\degWd}{}{\deg}
\NewDocumentCommand{\CjSpace}{m o}{C^{#1}\IfNoValueF{#2}{(#2)}}
\NewDocumentCommand{\CjNorm}{m m o}{\Norm{#1}[\CjSpace{#2}[#3]]}
\NewDocumentCommand{\BCjNorm}{m m o}{\BNorm{#1}[\CjSpace{#2}[#3]]}
\NewDocumentCommand{\Ltip}{s m m o o o}{\IfBooleanTF{#1}{\left \langle #2,#3 \right\rangle}{\langle#2,#3 \rangle}_{\LpSpace{2}[#4][#5][#6]}}
\NewDocumentCommand{\HilbertSpace}{}{\mathscr{H}}
\NewDocumentCommand{\Hip}{s m m}{\IfBooleanTF{#1}{\left \langle #2,#3 \right\rangle}{\langle#2,#3 \rangle}_{\HilbertSpace}}
\NewDocumentCommand{\HNorm}{s m}{\IfBooleanTF{#1}{\left \| #2 \right\|}{\|#2 \|}_{\HilbertSpace}}
\NewDocumentCommand{\FormQSymbol}{}{\mathcal{Q}}
\NewDocumentCommand{\FormQ}{s o o}{\FormQSymbol\IfValueT{#2}{ \IfBooleanTF{#1}{\left( #2 \IfValueT{#3}{,#3} \right)}{(#2\IfValueT{#3}{,#3})}     }}
\NewDocumentCommand{\FormQb}{s o o}{\overline{\FormQSymbol}\IfValueT{#2}{ \IfBooleanTF{#1}{\left( #2 \IfValueT{#3}{,#3} \right)}{(#2\IfValueT{#3}{,#3})}     }}
\NewDocumentCommand{\FormQClosure}{s o o}{\widetilde{\FormQSymbol}\IfValueT{#2}{ \IfBooleanTF{#1}{\left( #2 \IfValueT{#3}{,#3} \right)}{(#2\IfValueT{#3}{,#3})}     }}
\NewDocumentCommand{\FormQClosureb}{s o o}{\overline{\widetilde{\FormQSymbol}}\IfValueT{#2}{ \IfBooleanTF{#1}{\left( #2 \IfValueT{#3}{,#3} \right)}{(#2\IfValueT{#3}{,#3})}     }}
\NewDocumentCommand{\sconst}{}{(\mathrm{s.c.})}
\NewDocumentCommand{\lconst}{}{(\mathrm{l.c.})}
\NewDocumentCommand{\TLNorm}{s m m}{\IfBooleanT{#1}{\left}\| #2 \IfBooleanT{#1}{\right}\|_{\mathscr{F}^{#3}_{2,2}\WWd}}
\NewDocumentCommand{\LtltNorm}{m}{\left\|#1\right\|_{L^2(\Manifold,\mu;\ell^2(\N))}}
\NewDocumentCommand{\Ropn}{}{\R^{1+n}}
\NewDocumentCommand{\Rn}{}{\R^{n}}
\NewDocumentCommand{\supp}{}{\mathrm{supp}}
\NewDocumentCommand{\LtlocSpace}{o}{L^2_{\mathrm{loc}}\IfValueT{#1}{(#1)}}
\NewDocumentCommand{\Mh}{}{\widehat{M}}
\NewDocumentCommand{\BanachSpaceY}{}{\mathscr{Y}}
\NewDocumentCommand{\BanachSpace}{}{\mathscr{X}}
\NewDocumentCommand{\BanachSpaceDual}{}{\BanachSpace'}
\NewDocumentCommand{\BanachSpacew}{}{\BanachSpace_w}
\NewDocumentCommand{\BanachSpaceDualw}{}{\BanachSpaceDual_w}
\NewDocumentCommand{\BanachSpaceDualwstar}{}{\BanachSpaceDual_{*}}
\NewDocumentCommand{\BanachSpacePairing}{s m m}{\IfBooleanT{#1}{\left}\langle #2,#3 \IfBooleanT{#1}{\right}\rangle_{\BanachSpace,\BanachSpaceDual}}
\NewDocumentCommand{\BanachSpaceNorm}{s m}{\IfBooleanT{#1}{\left}\| #2\IfBooleanT{#1}{\right}\|_{\BanachSpace}}
\NewDocumentCommand{\BanachSpaceDualNorm}{s m}{\IfBooleanT{#1}{\left}\| #2\IfBooleanT{#1}{\right}\|_{\BanachSpaceDual}}
\NewDocumentCommand{\EmptySet}{}{\varnothing}
\NewDocumentCommand{\opLxdelta}{}{\opL_{x,\delta}}
\begin{document}

\newtheorem{theorem}{Theorem}[section]
\newtheorem{corollary}[theorem]{Corollary}
\newtheorem{proposition}[theorem]{Proposition}
\newtheorem{lemma}[theorem]{Lemma}
\newtheorem{conjecture}[theorem]{Conjecture}
\newtheorem{problem}[theorem]{Problem}

\theoremstyle{remark}
\newtheorem{remark}[theorem]{Remark}

\theoremstyle{definition}
\newtheorem{definition}[theorem]{Definition}

\theoremstyle{definition}
\newtheorem{notation}[theorem]{Notation}

\theoremstyle{remark}
\newtheorem{example}[theorem]{Example}

\theoremstyle{definition}
\newtheorem{construction}[theorem]{Construction}
\newtheorem{numberedassumption}[theorem]{Assumption}
\newtheorem{condition}[theorem]{Condition}

\theoremstyle{definition}
\newtheorem{assumption}{Assumption}

\numberwithin{equation}{section}

\title{Hypoellipticity and Higher Order Gaussian Bounds}

\author{Brian Street\footnote{The author was partially supported by National Science Foundation Grant 2153069.}}
\date{}

\maketitle

\begin{abstract}
Let \((\mathfrak{M},\rho,\mu)\) be a metric measure space satisfying a doubling condition, \(p_0\in (1,\infty)\), and
\(T(t):L^{p_0}(\mathfrak{M},\mu)\rightarrow L^{p_0}(\mathfrak{M},\mu)\), \(t\geq 0\), a strongly continuous semi-group.
We provide sufficient conditions under which \(T(t)\) is given by integration against an integral kernel satisfying higher-order
Gaussian bounds of the form
\[
    \left| K_t(x,y) \right| \leq C \exp\left( -c \left( \frac{\rho(x,y)^{2\kappa}}{t} \right)^{\frac{1}{2\kappa-1}} \right) \mu\left( B_\rho\left(x,\rho(x,y)+t^{1/2\kappa}\right) \right)^{-1},
\]
where \(B_\rho\) denotes the metric ball.
We also provide conditions for similar bounds on ``derivatives'' of \(K_t(x,y)\)
and our results are localizable. If \(A\) is the generator of \(T(t)\) the main hypothesis
is that \(\partial_t -A\) and \(\partial_t-A^{*}\) satisfy a hypoelliptic estimate at every scale, uniformly in the scale.
We present applications to subelliptic PDEs.
\end{abstract}

\section{Introduction}
Let \((\MetricSpace,\metric)\) be a metric space and let \(\measure\) be a \(\sigma\)-finite Borel measure
on \(\MetricSpace\) (we will assume \(\measure\) has a ``doubling'' property--see Assumption \ref{Assumption::LocalDoubling}).
Fix \(p_0\in (1,\infty)\) and let \(T(t):\LpSpace{p_0}[\MetricSpace][\measure]\rightarrow \LpSpace{p_0}[\MetricSpace][\measure]\), \(t\geq 0\),
be a strongly continuous semi-group and let \((A,\Domain[A])\) be its generator (see \cite[Chapter 2, Section 1]{EngelNagelAShortCourseOnOperatorSemigroups}).
Our main motivation comes from the case when \(A\) is a partial differential operator
with smooth coefficients, though we proceed in this abstract setting.\footnote{The abstract setting offers some conveniences. For example, it allows
us to present a general result which
applies even when \(\Domain[A]\) plays a central role in understanding the operator (e.g., when studying homogeneous boundary value problems). See Section \ref{Section::MaxSub::BoundaryValueProblems}.}

The main theorem of this paper gives sufficient conditions under which \(T(t)\) is given by integration against a kernel \(K_t(x,y)\) satisfying
bounds like
\begin{equation}\label{Eqn::Intro::GaussianBoundsNoDerivs}
    \left| K_t(x,y) \right| \leq C \exp\left( -c \left( \frac{\metric[x][y]^{2\kappa}}{t} \right)^{\frac{1}{2\kappa-1}} \right) 
    \measure\left( 
        \MetricBall{x}{\metric[x][y]+t^{1/2\kappa}}
        \right)^{-1},
\end{equation}
for some \(\kappa\geq 1\).
Moreover, we more generally provide estimates on the ``derivatives'' of \(K_t(x,y)\). Abstractly, we are given operators \(X\) and \(Y\) acting
in the \(x\) and \(y\) variables, functions \(S_X,S_Y:(0,\infty)\rightarrow (0,\infty)\), and under certain assumptions we prove estimates like
\begin{equation}\label{Eqn::Intro::GaussianBoundsDerivs}
    \begin{split}
    \left| \partial_t^l X \Yb K_t(x,y) \right| \leq &C_l S_X(\metric[x][y]+t^{1/2\kappa})^{-1} S_Y(\metric[x][y]+t^{1/2\kappa})^{-1} ( \metric[x][y]+t^{1/2\kappa} )^{-2\kappa l}
    \\&\exp\left( -c \left( \frac{\metric[x][y]^{2\kappa}}{t} \right)^{\frac{1}{2\kappa-1}} \right) \measure\left( 
        \MetricBall{x}{\metric[x][y]+t^{1/2\kappa}}
        \right)^{-1}.
\end{split}
\end{equation}
Here, \(\Yb u= \overline{Y\overline{u}}\). In our main application (see Section \ref{Section::ApplicationNew}), \(X\) and \(Y\) are (possibly high order) differential operators.

The central assumption (see Assumptions \ref{Assumption::HypoellipticityI} and \ref{Assumption::HypoellipticityII}) is that the heat operators \(\partial_t -A\) and \(\partial_t-A^{*}\) satisfy a ``hypoelliptic estimate''
at every scale, uniformly in the scale. This is particularly useful in the study of subelliptic PDEs. Indeed, as we discuss in Section \ref{Section::ApplicationNew}, the estimates
we require are often consequences of a priori estimates which follow from well-known methods. One consequence is that one can show that
the parametricies for general maximally subelliptic PDEs satisfy good estimates, without using Lie groups, and only relying on elementary a priori estimates
and scaling techniques (see Section \ref{Section::ApplicationNew} for details);
this provides an approach to studying subelliptic PDEs when methods like the Rothschild-Stein lifting procedure \cite{RothschildSteinHypoellipticDifferentialOperatorsAndNilpotentGroups}
do not apply.

Important for such applications is that the theory we present is localizable. Our main assumptions and theorem are stated with this in mind.
The approach we use
involves proving the heat kernel has a certain Gevrey regularity in the \(t\) variable.
The connection between Gevrey regularity and heat operators dates back to Gevrey's original paper on the subject \cite{GevreySurLaNatureAnalytiqueDesSolutionsDesEquationsAuxDeriveesPartiellesPremierMemoire}.
The method we use was first used by Jerison and S\'anchez-Calle \cite{JerisonSanchezCalleEstimatesForTheHeatKernelForASumOfSquaresOfVectorFields}
to study the heat kernel for H\"ormander's sub-Laplacian; and they attribute the basic idea to Stein. Higher order Gaussian bounds for a certain homogeneous operator on nilpotent Lie groups
were established with this approach by Hebisch \cite{HebischSharpPointwiseEstimateForTheKernelsOfTheSemigroupGeneratedBySumsOfEvenPowersOfVectorFieldsOnHomogeneousGroups}.
The author used these methods to study heat operators associated to some general maximally subelliptic PDEs in  \cite[Chapter 8]{StreetMaximalSubellipticity};
however, the proof there is closely intertwined with the operators in question and it is not made clear that this is a general principle that
applies in many similar situations. This paper aims to fix that deficiency
by presenting an abstract theorem which will be used in forthcoming work by the author to address other settings
(see Section \ref{Section::MaxSub::BoundaryValueProblems}).

See Remark \ref{Rmk::MaxSub::SubLapalcian::OtherProofs} for some citations to the literature for related
results for PDEs.

\begin{remark}\label{Rmk::Intro::NonSharp}
    There are many papers studying Gaussian bounds for various heat equations, using various methods.
    Most of these focus on the case \(\kappa=1\) in \eqref{Eqn::Intro::GaussianBoundsDerivs},
    though there are many studying higher \(\kappa\) as well; see Remark \ref{Rmk::MaxSub::SubLapalcian::OtherProofs}
    for some references to the literature which are closely related to the results of this paper.
    One point, which sets apart our analysis, is the following. Most previous results
    studied the case when the generator was elliptic and sharp regularity results for the generator were already known
    (or other situations where sharp results for the generator were known).
    As we describe in Section \ref{Section::Subellip::MaximalSub::Parametricies} and Remark \ref{Rmk::Subellip::Proof::epsilon0NotOptimal}, non-sharp
    sub-elliptic estimates for the heat operator can be used with the methods in this paper to establish higher-order
    Guassian bounds for the heat semi-group.  These Gaussian bounds can be then used to establish sharp subelliptic results for the generator.
    This is particularly important in the study of subelliptic PDEs (as described in Section \ref{Section::ApplicationNew}),
    where non-sharp subelliptic estimates are much easier to establish than sharp subelliptic estimates.
    This idea was already partially visible in the proof of Hebisch \cite{HebischSharpPointwiseEstimateForTheKernelsOfTheSemigroupGeneratedBySumsOfEvenPowersOfVectorFieldsOnHomogeneousGroups},
    though the regularity theory for the generator in that case was already well-understood.
    This method will be particularly useful in future study of boundary value problems, where sharp results
    are not known (see Section \ref{Section::MaxSub::BoundaryValueProblems}).
\end{remark}

\section{The Setting and Assumptions}\label{Section::Assumptions}
Throughout this paper, \(\left( \MetricSpace,\metric \right)\) is a metric space and \(\measure\) is a \(\sigma\)-finite Borel measure
on \(\MetricSpace\); let \(\MetricBall{x}{\delta}:=\left\{ y\in \MetricSpace :\metric[x][y]<\delta \right\}\)
and \(\MetricBallClosure{x}{\delta}\) its closure.
Fix \(p_0\in (1,\infty)\) and \(T(t):\LpSpace{p_0}[\MetricSpace][\measure]\rightarrow \LpSpace{p_0}[\MetricSpace][\measure]\), \(t\geq 0\),
a strongly continuous semi-group.

Fix \(\kappa\geq 1\), \(\NSubsetx,\NSubsety\subseteq \MetricSpace\), and \(\delta_0\in (0,\infty]\). We will prove results like \eqref{Eqn::Intro::GaussianBoundsDerivs}
for \(x\in \NSubsetx\), \(y\in \NSubsety\), \(\metric[x][y]^{2\kappa}+t\lesssim \delta_0^{2\kappa}\); in particular, if
\(\NSubsetx=\NSubsety=\MetricSpace\) and \(\delta_0=\infty\), then we will establish results like \eqref{Eqn::Intro::GaussianBoundsDerivs}
for all \(x,y\in \MetricSpace\) and \(t>0\).

We let \(p_0'\in (1,\infty)\) be the dual exponent to \(p_0\) (i.e., \(1/p_0+1/p_0'=1\)) and write for \(f\in \LpSpace{p_0}[\MetricSpace][\measure]\),
\(g\in \LpSpace{p_0'}[\MetricSpace][\measure]\),
\begin{equation*}
    \ip{g}{f}=\int g\overline{f}\: d\measure.
\end{equation*}
For an operator \(S\), we let \(S^{*}\) be the adjoint with respect to this sesqui-linear form.

Since \(\LpSpace{p_0}[\MetricSpace][\measure]\) is reflexive, the adoint semi-group \(T(t)^{*}:\LpSpace{p_0'}[\MetricSpace][\measure]\rightarrow \LpSpace{p_0'}[\MetricSpace][\measure]\)
is also strongly continuous (see  \cite[page 9]{EngelNagelAShortCourseOnOperatorSemigroups}).
Let \(A\) and \(A^{*}\), with dense domains \(\Domain[A]\) and \(\Domain[A^{*}]\), be the generators of \(T(t)\) and \(T(t)^{*}\), respectively.
\(A^{*}\) is the adjoint of \(A\); see
 \cite[Chapter 2, Section 2.5]{EngelNagelAShortCourseOnOperatorSemigroups}.
Set
\begin{equation*}
    \DAinfty:=\bigcap_{j=1}^{\infty}\Domain[A^j],\quad \DAsinfty:=\bigcap_{j=1}^{\infty}\Domain[(A^{*})^j],
\end{equation*}
with the usual Fr\'{e}chet topologies.\footnote{That \(\Domain[A^m]=\cap_{j=1}^m\Domain[A^j]\) with norm
\(\sum_{j=0}^m \|A^j v\|\)
is a Banach space
follows from \cite[Chapter 2, Proposition 2.15]{EngelNagelAShortCourseOnOperatorSemigroups}. It then follows
that \(\DAinfty\) is a Fr\'{e}chet space; similarly for \(\DAsinfty\).}

Fix two continuous operators \(X:\DAinfty\rightarrow \LpSpace{p_0}[\MetricSpace][\measure]\) and \(Y:\DAsinfty\rightarrow\LpSpace{p_0'}[\MetricSpace][\measure]\),
and non-decreasing functions \(S_X,S_Y:(0,\infty)\rightarrow (0,\infty)\).

\begin{remark}
    Of particular interest is the special case when \(X=I\), \(Y=I\) (the identities on their respective spaces), and \(S_X=S_Y=C^{-1}\)
    (where \(C\geq 1\) is a constant).
    In this case, we establish bounds like \eqref{Eqn::Intro::GaussianBoundsNoDerivs}.
\end{remark}

\begin{remark}\label{Rmk::Assumption::Symmetric}
    A key aspect of the assumptions which follow is that they are symmetric in \(T(t)\) and \(T(t)^{*}\). More precisely,
    the assumptions are the same for \(T(t),A,\NSubsetx,p_0,X,S_X\) and \(T(t)^{*},A^{*},\NSubsety,p_0',Y,S_Y\).
\end{remark}

\begin{assumption}[Locality]\label{Assumption::Locality}
    \(\forall x_0\in \NSubsetx\), \(\delta\in (0,\delta_0)\), the \(\LpSpace{p_0}[\MetricSpace][\measure]\)-closure of
    \begin{equation*}
        \left\{ \phi\in \DAinfty : A^j\phi\big|_{\MetricBall{x_0}{\delta}}=0, \forall j\geq 0 \right\}
    \end{equation*}
    contains \(\LpSpace{p_0}[\MetricSpace\setminus \MetricBallClosure{x_0}{\delta}][\measure]\). We assume the same
    for \(\NSubsety\), \(A^{*}\), \(p_0'\) in place of 
    \(\NSubsetx\), \(A\), \(p_0\).
\end{assumption}

\begin{remark}\label{Rmk::Locality::LocalityUsuallyObvious}
    In our main application in Section \ref{Section::ApplicationNew}, Assumption \ref{Assumption::Locality} is immediate: \(A\) is a partial differential operator with smooth coefficients (on a manifold \(\MetricSpace\)) and we
    have \(\forall x\in \MetricSpace\), \(\forall \delta>0\),
    \begin{equation*}
        \CzinftySpace[\MetricSpace\setminus \MetricBallClosure{x}{\delta}]\subseteq \CzinftySpace[\MetricSpace]\subseteq \DAinfty \cap \DAsinfty
    \end{equation*}
    and \(\CzinftySpace[\MetricSpace\setminus \MetricBallClosure{x}{\delta}]\)
    is dense in both \(\LpSpace{p_0}[\MetricSpace\setminus \MetricBallClosure{x}{\delta}][\measure]\)
    and \(\LpSpace{p_0'}[\MetricSpace\setminus \MetricBallClosure{x}{\delta}][\measure]\).
    When \(A\) is not a partial differential operator, Assumption \ref{Assumption::Locality} is restrictive, and assumes some
    kind of locality which is likely false for any given non-local operator.
\end{remark}

    \subsection{Assumptions on the measure}
    \begin{assumption}[Local Positivity]\label{Assumption::LocalPositivity}
    For all non-empty open sets 
    \begin{equation*}
        U\subseteq \left( \bigcup_{x_0\in \NSubsetx} \MetricBall{x_0}{\delta_0} \right)\bigcup\left( \bigcup_{y_0\in \NSubsety} \MetricBall{y_0}{\delta_0} \right),
    \end{equation*}
    \(\mu(U)>0\).
\end{assumption}

\begin{remark}\label{Rmk::Assump::Measure::ContinuousUnique}
    If \(f\in \LpSpace{p}[\MetricSpace][\measure]\) and \(U\) is an open set as in Assumption \ref{Assumption::LocalPositivity},
    then we can ask if \(f\big|_U\) agrees almost everywhere with a continuous function.
    When it does, Assumption \ref{Assumption::LocalPositivity} implies that this continuous version is unique.
    Henceforth, we say \(f\big|_U\) is continuous to mean that it agrees almost everywhere with such a continuous function,
    and we identify it with this unique function. 
\end{remark}

\begin{assumption}[Local Doubling]\label{Assumption::LocalDoubling}
    There exists \(D_1\geq 1\), \(\forall x_0\in \NSubsetx\), \(\forall x\in \MetricBall{x_0}{\delta_0}\),
    \(\forall \delta\in (0, (\delta_0-\metric[x_0][x])/2)\),
    \begin{equation}\label{Eqn::LocalDoubling::DoublingEquation}
        \measure(\MetricBall{x}{2\delta})\leq D_1 \measure(\MetricBall{x}{\delta})<\infty.
    \end{equation}
    We assume the same for \(\NSubsetx\) replaced with \(\NSubsety\) (with the same constant \(D_1\)).
    See Remark \ref{Rmk::Assumption::RemoveQuantifers} for an assumption which implies this, is easier
    to understand, and often appears in applications.
\end{assumption}

\begin{remark}
    In the application we present
    in Section \ref{Section::ApplicationNew}, Assumption \ref{Assumption::LocalDoubling} was established by Nagel, Stein, and Wainger \cite{NagelSteinWaingerBallsAndMetricsDefinedByVectorFieldsIBasicProperties}.
    See Theorem \ref{Thm::MaxSub::Scaling::NSWScaling} \ref{Item::MaxSub::Scaling::Doubling}.
\end{remark}

    \subsection{Hypoellipticity Assumptions}
    By  \cite[Chapter 1, Proposition 1.4]{EngelNagelAShortCourseOnOperatorSemigroups}, there exists \(M_0\geq 1\) and \(\omega_0\in \R\)
such that
\begin{equation}\label{Eqn::Assump::Hypo::IntroduceM0Andomega0}
    \OpNorm{T(t)}[\LpSpace{p_0}[\MetricSpace]]\leq M_0 e^{\omega_0 t},\quad \forall t\geq 0.
\end{equation}
Henceforth, we fix such an \(M_0\) and \(\omega_0\).

\begin{definition}\label{Defn::Assump::Hypo::HeatFunction}
    For \(x\in \MetricSpace\), \(\delta>0\), and \(I\subseteq \R\) an open interval, we say
    \(u(t):I\rightarrow \DAinfty\) is an \((A-\omega_0,x,\delta)\)-heat function if
    \begin{enumerate}[(i)]
        \item\label{Item::Assump::Hypo::HeatFunction::SmoothExceptOnePoint} \(\exists t_0\in I\) with \(u\big|_{[t_0,\infty)\cap I} \in \CinftySpace*[\lbrack t_0,\infty)\cap I][\DAinfty]\)
            and \(u\big|_{(-\infty,t_0)\cap I} \in  \CinftySpace*[(-\infty,t_0)\cap I][\DAinfty]\).
        \item\label{Item::Assump::Hypo::HeatFunction::LocalSmooth} \(u\big|_{I\times \MetricBall{x}{\delta}}\in \CinftySpace*[I][\LpSpace{p_0}[\MetricBall{x}{\delta}]]\).
        \item\label{Item::Assump::Hypo::HeatFunction::Derivatives} For all \(j\geq 1\), \(\partial_t^j \left(u(t)\big|_{I\times \MetricBall{x}{\delta}}  \right)=\left( \left( \delta^{2\kappa}\left( A-\omega_0 \right) \right)^j u(t) \right)\big|_{I\times \MetricBall{x}{\delta}}\).
    \end{enumerate}
    We similarly define an \(\left( A^{*}-\omega_0,y,\delta \right)\)-heat function by replacing \(A,p_0\) with \(A^{*},p_0'\).
\end{definition}

\begin{remark}
    The only way \(\kappa\) appears in our assumptions is through the use of heat functions.
\end{remark}

\begin{remark}
    In most cases we are interested in, the case \(j\geq 2\) of Definition \ref{Defn::Assump::Hypo::HeatFunction} \ref{Item::Assump::Hypo::HeatFunction::Derivatives} follows from the case \(j=1\);
    for example, when \(A\) is a partial differential operator. However, for general operators it does not, due to the localization involved.
\end{remark}

\begin{remark}
    Note that Definition \ref{Defn::Assump::Hypo::HeatFunction} \ref{Item::Assump::Hypo::HeatFunction::SmoothExceptOnePoint}
    allows \(u(t,y)\) to be (possibly) discontinuous at one point \(t_0\in I\); however,
    Definition \ref{Defn::Assump::Hypo::HeatFunction} \ref{Item::Assump::Hypo::HeatFunction::LocalSmooth}
    requires this discontinuity to avoid \(y\in \MetricBall{x}{\delta}\).  It is possible
    \(u(t)\in \CinftySpace[I][\DAinfty]\); in this case any \(t_0\in I\) will do.
\end{remark}

\begin{notation}
    For a metric space \(\MetricSpace\), we let \(\CSpace{\MetricSpace}\) denote the Banach space of bounded continuous functions
    and \(\ClocSpace{\MetricSpace}\) denote the 
    space of continuous functions (with the compact-open topology).
\end{notation}

\begin{assumption}[Hypoellipticity I]\label{Assumption::HypoellipticityI}
    \(\exists a_1\in (0,1/2]\), \(\forall x_0\in \NSubsetx\), \(\delta\in (0,\delta_0)\), \(\forall u:(-1/2,1/2)\rightarrow \DAinfty\)
    an \((A-\omega_0,x_0,\delta)\)-heat function,
    \begin{enumerate}[(i)]
        \item\label{Item::Assumption::HypoellipticityI::Qualitative} \(Xu(t,x)\big|_{(-a_1,a_1)\times \MetricBall{x_0}{a_1\delta}}\in \CinftySpace[(-a_1,a_1)][\CSpace{\MetricBall{x_0}{a_1\delta}}]\). See Remark \ref{Rmk::Assump::Measure::ContinuousUnique} for comments on the meaning of this.
        \item\label{Item::Assumption::HypoellipticityI::Quantitative} We have,
            \begin{equation}\label{Eqn::Assump::Hyp::HypoellipticityIBound}
                \sup_{\substack{t\in (-a_1,a_1)\\ x\in \MetricBall{x_0}{a_1\delta}}} \left| S_X(\delta) Xu(t,x) \right|
                \leq \measure(\MetricBall{x_0}{\delta})^{-1/p_0} \LpNorm{u}{p_0}[(-1/2,1/2)\times \MetricBall{x_0}{\delta/2}][dt\times d\measure].
            \end{equation}
        \item\label{Item::Assumption::HypoellipticityI::CommuteDerivs} 
        \(\partial_t X u(t,x)\big|_{(-a_1,a_1)\times \MetricBall{x_0}{a_1\delta}} =X \delta^{2\kappa} (A-\omega_0) u(t,x)\big|_{(-a_1,a_1)\times \MetricBall{x_0}{a_1\delta}} \).
    \end{enumerate}
    And we assume the same for \(A, X, S_X, p_0, \NSubsetx\) replaced with \(A^{*}, Y, S_Y, p_0', \NSubsety\).
\end{assumption}

\begin{assumption}\label{Assumption::DoublingOfSXandSY}
    \(\exists D_2\geq 1\), \(\forall \delta\in (0,\infty)\), \(S_X(2\delta)\leq D_2S_X(\delta)\) and \(S_Y(2\delta)\leq D_2 S_Y(\delta)\).
\end{assumption}

\begin{assumption}[Hypoellipticity II]\label{Assumption::HypoellipticityII}
    \(\exists C_1\geq 1\), \(a_2\in (0,1]\), \(\forall x_0\in \NSubsetx\), 
    \(\forall x\in \MetricBall{x_0}{\delta_0}\), \(\forall \delta\in (0,\delta_0-\metric[x_0][x])\),
    \(\forall u:(-1,1)\rightarrow \DAinfty\) an \((A-\omega_0,x,\delta)\)-heat function,
    \begin{equation}\label{Eqn::Assump::Hyp::HypoellipticityIIBound}
        \LpNorm{\partial_t u}{p_0}[(-a_2,a_2)\times \MetricBall{x}{a_2\delta}][dt\times d\measure]
        \leq C_1 \LpNorm{u}{p_0}[(-1,1)\times \MetricBall{x}{\delta}][dt\times d\measure],
    \end{equation}
    and we assume the same for \(A,p_0,\NSubsetx\) replaced with \(A^{*},p_0', \NSubsety\).
    See Remark \ref{Rmk::Assumption::RemoveQuantifers} for an assumption which implies this, is easier
    to understand, and often appears in applications.
\end{assumption}

\begin{remark}
    Assumptions \ref{Assumption::HypoellipticityI} and \ref{Assumption::HypoellipticityII} are our main ``hypoellipticity'' assumptions.
    Indeed, if \(\partial_t -A\) is a hypoelliptic partial differential operator\footnote{A partial differential operator, \(\OpP\), is said to
    be hypoelliptic if whenever \(\OpP u\) is smooth near a point for a distribution \(u\), then \(u\) must also be smooth near that point. See \cite[Chapter 52]{TrevesTopologicalVectorSpacesDistributionsAndKernels}
    for a discussion of hypoellipticity. In particular, equation (52.1) of that reference shows that hypoelliptic operators always satisfy estimates 
    like 
    \eqref{Eqn::Assump::Hyp::HypoellipticityIBound} and \eqref{Eqn::Assump::Hyp::HypoellipticityIIBound}
    at a single scale.} and \(u(t):I\rightarrow \DAinfty\) is an \((A-\omega_0, x, \delta)\) heat function, then using that
    \(e^{\omega_0 t}(\partial_t-A)e^{-\omega_0t}=\partial_t-(A-\omega_0)\), we see \(u(\delta^{-2\kappa}t,x)\) is smooth on \((\delta^{2\kappa}I)\times \MetricBall{x}{\delta}\);
    and therefore \(u(t,x)\) is smooth on \(I\times \MetricBall{x}{\delta}\).
    Then, if \(X\) is a partial differential operator, the left hand sides of \eqref{Eqn::Assump::Hyp::HypoellipticityIBound} and \eqref{Eqn::Assump::Hyp::HypoellipticityIIBound}
    are finite. Assumptions \ref{Assumption::HypoellipticityI} and \ref{Assumption::HypoellipticityII} take this and assume that it is true in a
    uniform and ``scale-invariant'' way.
\end{remark}

\begin{remark}\label{Rmk::Assumption::Hypo::XandpartialtCommute}
    Assumption \ref{Assumption::HypoellipticityI} \ref{Item::Assumption::HypoellipticityI::CommuteDerivs} informally says that \(\partial_t\) and \(X\)
    commute--this is immediate in our application in Section \ref{Section::ApplicationNew} where \(X\) is a partial differential operator in the 
    \(x\)-variable.
\end{remark}

\begin{remark}\label{Rmk::Assumption::RemoveQuantifers}
    Both Assumptions \ref{Assumption::LocalDoubling} and \ref{Assumption::HypoellipticityII} involve
    several quantifiers.  In the main application we describe (see Section \ref{Section::ApplicationNew}) a stronger property is true, which is
    perhaps easier to understand.  Namely,
    set 
    \begin{equation*}
        \Omega_x:=\bigcup_{x_0\in \NSubsetx} \MetricBall{x_0}{\delta_0}. 
    \end{equation*}
    Then, 
    one can assume instead that
    \eqref{Eqn::LocalDoubling::DoublingEquation} holds \(\forall x\in \Omega_x\), \(\forall \delta\in (0,\delta_0)\)
    (and the same with \(\NSubsetx\) replaced with \(\NSubsety\)).
    Similarly, one can assume instead that 
    \eqref{Eqn::Assump::Hyp::HypoellipticityIIBound} holds
    \(\forall x\in \Omega_x\), \(\forall \delta\in (0,\delta_0)\)
    (and the same for \(A, p_0, \NSubsetx\) replaced with \(A^{*}, p_0', \NSubsety\)).
\end{remark}

\section{The Main Result}\label{Section::Results}
We take the setting and assumptions as in Section \ref{Section::Assumptions}.

Our main theorem (Theorem \ref{Thm::Results::MainThm}) concerns the integral kernel \(X\Yb K_t(x,y)\), which we define as the integral kernel of the operator \(XT(t)Y^{*}=XT(t/2) (YT(t/2)^{*})^{*} \); however
this operator does not a priori make sense. Our first theorem shows that under our assumptions, we can define this operator.

\begin{theorem}\label{Thm::Results::DefineOperator}
    There are open neighborhoods \(U_x,U_y\subseteq (0,\infty)\times \MetricSpace\) of \((0,\infty)\times \NSubsetx\) and \((0,\infty)\times \NSubsety\), respectively,
    such that the following holds.
    For \(t>0\), let \(U_{x,t}:=\left\{ v\in \MetricSpace : (t,v)\in U_x \right\}\) and similarly for \(U_{y,t}\).
    Then,
    \begin{equation*}
        \OpQx(t):u\mapsto \left( XT(t)u \right)\big|_{U_{x,t}}, \quad \OpQy(t):u\mapsto \left( Y T(t)^{*} u \right)\big|_{U_{y,t}},
    \end{equation*}
    initially defined as operators \(\DAinfty\rightarrow \LpSpace{p_0}[U_{x,t}][\measure]\) and \(\DAsinfty\rightarrow \LpSpace{p_0'}[U_{y,t}][\measure]\), respectively,
    extend to continuous operators
    \begin{equation*}
        \OpQx(t):\LpSpace{p_0}[\MetricSpace][\measure]\rightarrow \ClocSpace{U_{x,t}},\quad \OpQy(t):\LpSpace{p_0'}[\MetricSpace][\measure]\rightarrow \ClocSpace{U_{y,t}}.
    \end{equation*}
\end{theorem}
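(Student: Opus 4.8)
The plan is to prove the estimates first for $u\in\DAinfty$, where the orbit $s\mapsto T(s)u$ is genuinely smooth, and then to extend $\OpQx(t),\OpQy(t)$ by density. The mechanism that turns the orbit into a heat function is the following: for $u\in\DAinfty$ the map $s\mapsto T(s)u$ is $C^\infty$ from $(0,\infty)$ into $\LpSpace{p_0}[\MetricSpace][\measure]$ with $\partial_s T(s)u=AT(s)u=T(s)Au$, and $e^{\omega_0 t}(\partial_t-A)e^{-\omega_0 t}=\partial_t-(A-\omega_0)$. So fix $t_0>0$ and $x_0\in\NSubsetx$, and set $\delta(t_0)^{2\kappa}:=\tfrac14\min(t_0,\delta_0^{2\kappa})$; since $\kappa\geq 1$ this gives $\delta(t_0)<\delta_0$, $\delta(t_0)<\delta_0/2$, and $\delta(t_0)^{2\kappa}/2<t_0$, so Local Doubling (Assumption \ref{Assumption::LocalDoubling}) makes $\measure(\MetricBall{x_0}{\delta(t_0)})$ finite, Local Positivity (Assumption \ref{Assumption::LocalPositivity}) makes it positive, and $t_0+\delta(t_0)^{2\kappa}s>0$ for $s\in(-1/2,1/2)$. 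For $u\in\DAinfty$ define $v(s):=e^{-\omega_0(t_0+\delta(t_0)^{2\kappa}s)}T(t_0+\delta(t_0)^{2\kappa}s)u$ on $s\in(-1/2,1/2)$. A direct computation (using $(A-\omega_0)T(\tau)u=T(\tau)(A-\omega_0)u$) gives $\partial_s^j v(s)=(\delta(t_0)^{2\kappa}(A-\omega_0))^j v(s)$ for all $j$, and $v$ is $C^\infty$ into $\LpSpace{p_0}[\MetricSpace][\measure]$, hence into every $L^{p_0}$ of a ball; thus $v$ is an $(A-\omega_0,x_0,\delta(t_0))$-heat function on $(-1/2,1/2)$.

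Now I apply Assumption \ref{Assumption::HypoellipticityI} to $v$. Part \ref{Item::Assumption::HypoellipticityI::Qualitative} shows $Xv(s)|_{(-a_1,a_1)\times\MetricBall{x_0}{a_1\delta(t_0)}}\in\CinftySpace[(-a_1,a_1)][\CSpace{\MetricBall{x_0}{a_1\delta(t_0)}}]$, so at $s=0$ the function $e^{-\omega_0 t_0}XT(t_0)u$ agrees a.e. on $\MetricBall{x_0}{a_1\delta(t_0)}$ with a continuous function. Part \ref{Item::Assumption::HypoellipticityI::Quantitative}, evaluated at $s=0$ and combined with
\[
\LpNorm{v}{p_0}[(-1/2,1/2)\times\MetricBall{x_0}{\delta(t_0)/2}][dt\times d\measure]\leq\LpNorm{v}{p_0}[(-1/2,1/2)\times\MetricSpace][dt\times d\measure]\leq M_0\LpNorm{u}{p_0}[\MetricSpace][\measure],
\]
which uses $\OpNorm{T(r)}[\LpSpace{p_0}[\MetricSpace]]\leq M_0 e^{\omega_0 r}$, yields
\[
\sup_{x\in\MetricBall{x_0}{a_1\delta(t_0)}}\left| XT(t_0)u(x)\right|\leq M_0 e^{\omega_0 t_0}S_X(\delta(t_0))^{-1}\measure(\MetricBall{x_0}{\delta(t_0)})^{-1/p_0}\LpNorm{u}{p_0}[\MetricSpace][\measure].
\]

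Next I define $U_x:=\{(t,v)\in(0,\infty)\times\MetricSpace:\metric[x_0][v]<a_1\delta(t)\text{ for some }x_0\in\NSubsetx\}$; since $t\mapsto\delta(t)$ is continuous, $U_x$ is open, and it contains $(0,\infty)\times\NSubsetx$, so each $U_{x,t}$ is a neighborhood as required. For $u\in\DAinfty$ the continuous representatives of $XT(t)u$ on the balls $\MetricBall{x_0}{a_1\delta(t)}$, $x_0\in\NSubsetx$, all equal $XT(t)u$ a.e., hence agree with one another on overlaps by Local Positivity (Remark \ref{Rmk::Assump::Measure::ContinuousUnique}), and so glue to a continuous function on $U_{x,t}$; this realizes $u\mapsto(XT(t)u)|_{U_{x,t}}$ as a map into $\ClocSpace{U_{x,t}}$, and the displayed bound, applied on the finitely many balls covering a given compact $K\subseteq U_{x,t}$, shows $\sup_K|XT(t)u|\leq C(t,K)\LpNorm{u}{p_0}[\MetricSpace][\measure]$. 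Since $\DAinfty$ is dense in $\LpSpace{p_0}[\MetricSpace][\measure]$, for general $u$ I write $u=\lim_n u_n$ with $u_n\in\DAinfty$; the bound makes $(XT(t)u_n)_n$ Cauchy in $\ClocSpace{U_{x,t}}$, and $\OpQx(t)u:=\lim_n XT(t)u_n$ is then well-defined, independent of the sequence, extends the original operator, and is continuous $\LpSpace{p_0}[\MetricSpace][\measure]\to\ClocSpace{U_{x,t}}$. The statements for $\OpQy(t)$ follow by running the identical argument with $A^{*},Y,S_Y,p_0',\NSubsety$ in place of $A,X,S_X,p_0,\NSubsetx$, which is legitimate by the symmetry of the hypotheses recorded in Remark \ref{Rmk::Assumption::Symmetric}.

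I do not expect a serious obstacle — this statement mainly sets up the framework for the main result — but the delicate points are all bookkeeping: choosing $\delta(t)$ so that simultaneously $\delta(t)<\delta_0$, the rescaled twisted orbit is a heat function on all of $(-1/2,1/2)$, and $\measure(\MetricBall{x_0}{\delta(t)})$ is finite and positive; verifying openness of $U_x$ and the gluing of the local continuous representatives; and checking that the local bounds are uniform enough over compacta to give continuity into the Fréchet space $\ClocSpace{U_{x,t}}$. The one conceptual point is that one must pass through $\DAinfty$ rather than argue directly, since $s\mapsto T(s)u$ need not be differentiable for general $u\in\LpSpace{p_0}[\MetricSpace][\measure]$; density of $\DAinfty$ makes this costless.
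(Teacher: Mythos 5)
Your proof is correct and follows the same conceptual path as the paper: for $\phi\in\DAinfty$ one shows that $s\mapsto T(t_0+\delta^{2\kappa}s)\phi$, rescaled and (in your case) twisted by $e^{-\omega_0(\cdot)}$, is an $(A-\omega_0,x_0,\delta)$-heat function, feeds it to the hypoellipticity hypothesis to get a local sup bound in terms of $\LpNorm{\phi}{p_0}[\MetricSpace][\measure]$, and then extends by density. The paper packages this as a consequence of Lemma~\ref{Lemma::Proof::OperatorBounds::OnDiagonalOperators}, whose proof uses $u(s)=T(t_0/2+s\delta_2^{2\kappa})\phi$ and routes through the stronger Gevrey-type estimate in Lemma~\ref{Lemma::Proof::HeatFuncs::SupBoundWithoutExponential} (which needs the heat function on all of $(-1,1)$ and is later reused for the main pointwise bounds), whereas you observe—correctly—that for Theorem~\ref{Thm::Results::DefineOperator} alone it suffices to invoke Assumption~\ref{Assumption::HypoellipticityI} directly at $l=0$ on $(-1/2,1/2)$. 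You also keep $\omega_0$ explicit and build $U_x$ by the explicit formula with $\delta(t)^{2\kappa}=\tfrac14\min(t,\delta_0^{2\kappa})$, while the paper first reduces to $\omega_0=0$ and does not specify $U_x$; these are organizational, not substantive, differences, and your handling of the gluing via Remark~\ref{Rmk::Assump::Measure::ContinuousUnique} and of continuity into the Fr\'echet space $\ClocSpace{U_{x,t}}$ is complete.
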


We define \(XT(t)Y^{*}:=\OpQx(t/2)\OpQy(t/2)^{*}:\ClocSpace{U_{y,t}}^{*}\rightarrow \ClocSpace{U_{x,t}}\).
Since \(\ClocSpace{U_{y,t}}\) denotes the space of  continuous functions on \(U_{y,t}\),
the Dirac delta function, \(\delta_{y_0}(y)\), is an element of \(\ClocSpace{U_{y,t}}^{*}\),
for each \(y_0 \in U_{y,t}\); we may therefore consider
\(XT(t)Y^{*}\delta_{y_0}\), which defines a continuous function on \(U_{x,t}\).


\begin{definition}\label{Defn::Results::AdmissibleConsts}
    We say \(C\in \R\) is an admissible constant if \(C\) can be chosen to depend only on 
    \(\kappa\), \(D_1\), \(C_1\), \(a_1\), \(a_2\), and \(M_0\).
    We say \(C\in \R\) is a \(j\)-admissible constant if \(C\) can be chosen to depend only on anything an admissible constant can depend on,
    \(j\), and \(D_2\).
    We write \(A\lesssim B\) (respectively, \(A\lesssim_j B\)) if \(A\leq C B\) where \(C\geq 0\) is an admissible (respectively, \(j\)-admissible) constant.
    We write
    \(A\approx B\) for \(A\lesssim B\) and \(B\lesssim A\).
\end{definition}

\begin{theorem}\label{Thm::Results::MainThm}
        There exists an open neighborhood \(U\subseteq (0,\infty)\times \MetricSpace\times \MetricSpace\) of \((0,\infty)\times\NSubsetx\times \NSubsety\)
        and a unique function \(X\Yb K_{t}(x,y):U\rightarrow \C\) such that 
        \begin{equation*}
            X T(t) Y^{*} \delta_{y}(x) = X\Yb K_t(x,y), \quad \forall (t,x,y)\in U.
        \end{equation*}
        This function satisfies:
        \begin{itemize}
            \item For each fixed \(x\) and \(y\), \(t\mapsto X \Yb K_t(x,y)\) is smooth on its domain.
            \item For each \(j\in \N\), and 
            each fixed \(y\), \((t,x)\mapsto \partial_t^{j}X \Yb K_t(x,y)\) is continuous on its domain and for each fixed \(x\),
            \((t,y)\mapsto \partial_t^{j}X\Yb K_t(x,y)\) is continuous on its domain.
        \end{itemize}
        There exists an admissible constant \(c>0\) such that \(\forall j\in \N\), \(t>0\), \(x\in \NSubsetx\), \(y\in \NSubsety\),
        \begin{equation}\label{Eqn::Results::MainGaussianBounds}
            \begin{split}
            \left| \partial_t^j X\Yb K_t(x,y) \right|
            \lesssim_j &
            e^{\omega_0 t} 
            \left( \left| \omega_0 \right| + \left( \left( \metric[x][y]+t^{1/2\kappa} \right)\wedge \delta_0 \right)^{-2\kappa} \right)^j
            \\& \times S_X\left(  \left( \metric[x][y]+t^{1/2\kappa} \right)\wedge \delta_0 \right)^{-1}
            S_Y\left(  \left( \metric[x][y]+t^{1/2\kappa} \right)\wedge \delta_0 \right)^{-1}
            \\&\times \exp\left( -c \left( \frac{\left( \metric[x][y]\wedge \delta_0 \right)^{2\kappa}}{t} \right)^{1/(2\kappa-1)} \right)
            \\&\times \measure\left( \BMetricBall{x}{ \left( \metric[x][y] + t^{1/2\kappa} \right)\wedge \delta_0 }  \right)^{-1/p_0}
            \measure\left( \BMetricBall{y}{ \left( \metric[x][y] + t^{1/2\kappa} \right)\wedge \delta_0 }  \right)^{-1/p_0'}.
            \end{split}
        \end{equation}
        Here, and in the rest of the paper, \(a\wedge b=\min\left\{ a,b \right\}\).
\end{theorem}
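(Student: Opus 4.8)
The plan is to establish the estimate \eqref{Eqn::Results::MainGaussianBounds} in several stages, following the Jerison--Sánchez-Calle philosophy of proving Gevrey-$2\kappa$ regularity in $t$ and then using this to extract Gaussian decay. \emph{First}, I would iterate Assumption \ref{Assumption::HypoellipticityII}: applied to an $(A-\omega_0,x,\delta)$-heat function $u$, repeated use of \eqref{Eqn::Assump::Hyp::HypoellipticityIIBound} on nested balls and intervals, combined with the fact that $\partial_t^j u = (\delta^{2\kappa}(A-\omega_0))^j u$ on the relevant ball, should yield a bound of the form $\|\partial_t^j u\|_{L^{p_0}((-b,b)\times \MetricBall{x}{b\delta})} \lesssim C_1^j \|u\|_{L^{p_0}((-1,1)\times \MetricBall{x}{\delta})}$, with $b$ an admissible constant and no loss of $j!$-type factors at this stage (or, more carefully, tracking that the shrinkage of the interval is what will produce the Gevrey factor later). \emph{Second}, I would combine this with Assumption \ref{Assumption::HypoellipticityI}: applying \eqref{Eqn::Assump::Hyp::HypoellipticityIBound} to the heat function $\partial_t^j u$ (using part \ref{Item::Assumption::HypoellipticityI::CommuteDerivs} to commute $X$ past $\partial_t$, i.e.\ $X\partial_t^j u = \partial_t^j X u$) gives pointwise control of $S_X(\delta)\partial_t^j X u$ on a ball by the $L^{p_0}$ norm of $\partial_t^j u$, hence by $C_1^j$ times the $L^{p_0}$ norm of $u$ itself. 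This is the key mechanism producing the derivative gain.

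\emph{Third}, I would set up the actual heat functions. Theorem \ref{Thm::Results::DefineOperator} lets us make sense of $\OpQx(t)$ and $\OpQy(t)$; for fixed $u\in \LpSpace{p_0}$ the function $t\mapsto T(t)u$ should, after the usual semigroup smoothing and the locality Assumption \ref{Assumption::Locality}, be approximable by genuine $(A-\omega_0,x,\delta)$-heat functions on small time-intervals around any $t_0>0$, rescaled appropriately so that the natural scale $\delta$ matches $t_0^{1/2\kappa}$ (or $\delta_0$, whichever is smaller). Running the estimates from the first two steps at scale $\delta \approx (t_0^{1/2\kappa})\wedge\delta_0$ then gives, for the kernel, pointwise bounds on $\partial_t^j X\Yb K_t(x,y)$ with a factor $\big(C_1 \delta^{-2\kappa}\big)^j$ (the $\delta^{-2\kappa}$ coming from the rescaling $\partial_t \leftrightarrow \delta^{2\kappa}(A-\omega_0)$), together with the $S_X(\delta)^{-1}S_Y(\delta)^{-1}$ and the volume factors $\measure(\MetricBall{x}{\delta})^{-1/p_0}\measure(\MetricBall{y}{\delta})^{-1/p_0'}$; the $e^{\omega_0 t}$ and $\omega_0$ terms enter through the semigroup growth bound $\OpNorm{T(t)}\le M_0 e^{\omega_0 t}$ and the shift by $\omega_0$. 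The doubling Assumptions \ref{Assumption::LocalDoubling} and \ref{Assumption::DoublingOfSXandSY} are what let me freely replace $\delta$ by comparable quantities and pass between $\measure(\MetricBall{x}{\cdot})$ at different (comparable) radii. This handles the ``on-diagonal'' / small-$\metric[x][y]$ regime and also supplies the smoothness and continuity statements in the bulleted list (smoothness in $t$ from the Gevrey bounds; joint continuity from Theorem \ref{Thm::Results::DefineOperator}).

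\emph{Fourth and hardest}, I would upgrade to the off-diagonal Gaussian decay. The idea is Stein's: having shown $|\partial_t^j X\Yb K_t(x,y)| \lesssim_j (C \delta^{-2\kappa})^j \cdot (\text{size factors})$ uniformly in $j$ — i.e.\ $t\mapsto X\Yb K_t(x,y)$ is Gevrey of order $2\kappa$ with a quantitative radius — I can write a Taylor expansion of $t\mapsto X\Yb K_t(x,y)$ around a suitable time and optimize in $j$, using that for $\metric[x][y]$ large relative to $t^{1/2\kappa}$ the kernel must ``start small'' (indeed, by locality and the heat-function structure, $X\Yb K_t(x,y)$ together with all its $t$-derivatives vanishes as $t\to 0^+$ when $x\ne y$, since $T(0)=I$ acts locally). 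Balancing the factorial growth $j!^{2\kappa}$ against $(\metric[x][y]^{2\kappa}/t)^{\text{something}}$ via the Stirling-type optimization $\min_j j!^{2\kappa-1}(t/\metric[x][y]^{2\kappa})^{j}$ produces exactly the exponent $\big(\metric[x][y]^{2\kappa}/t\big)^{1/(2\kappa-1)}$ with an admissible constant $c$; capping $\metric[x][y]$ at $\delta_0$ reflects that the hypoelliptic estimates are only available for $\delta<\delta_0$. The main obstacle is making this optimization rigorous in the abstract metric-measure setting: carefully justifying the vanishing-at-$t=0$ boundary behavior of all $t$-derivatives of the off-diagonal kernel (this is where Assumption \ref{Assumption::Locality} does real work, not just bookkeeping), controlling the interplay between the time-rescaling and the spatial localization radius so that the constants stay admissible, and gluing the local-in-time estimates across all of $(0,\infty)$ without accumulating $j$-dependence in the wrong place. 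I expect this off-diagonal step, together with the precise extraction of the Gevrey radius in step one, to be where essentially all the technical content lies; the volume and $S_X,S_Y$ factors are then just carried along via the doubling hypotheses.
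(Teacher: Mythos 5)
Your proposal is essentially the approach taken in the paper: iterate Hypoellipticity~II across shrinking nested balls and intervals to establish Gevrey-\(2\kappa\) regularity of heat functions in \(t\) (Lemmas~\ref{Lemma::Proof::HeatFuncs::Inducel::Tmp}--\ref{Lemma::Proof::HeatFuncs::Inducel::GetFactorial}), upgrade to pointwise \(\sup\)-bounds via Hypoellipticity~I (Lemma~\ref{Lemma::Proof::HeatFuncs::SupBoundWithoutExponential}), convert Gevrey regularity together with vanishing on \((-\infty,0]\) --- exactly where Assumption~\ref{Assumption::Locality} does its work through Lemma~\ref{Lemma::HeatFuncs::Characterize}~\ref{Item::HeatFuncs::Characterize::AllReals} --- into exponential decay in \(t\) via a Taylor/Stirling optimization (Proposition~\ref{Prop::Proof::Gevrey::MainProp}), and finally combine the off- and on-diagonal one-variable kernel estimates through the factorization \(XT(t)Y^{*}=\OpQx(t/2)\OpQy(t/2)^{*}\) and H\"older's inequality on the covering \(\MetricSpace=(\MetricSpace\setminus\MetricBall{x}{\delta_1})\cup(\MetricSpace\setminus\MetricBall{y}{\delta_1})\). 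One correction to your Step~1: the bound as you first write it, with \(b\) a \(j\)-independent admissible constant and no factorial loss, cannot hold; the iteration necessarily produces the \((j!)^{2\kappa}\) Gevrey factor (your parenthetical is the accurate version), and this factorial is indispensable rather than incidental, since it is precisely what Proposition~\ref{Prop::Proof::Gevrey::MainProp} trades for the exponential.
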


\begin{remark}\label{Rmk::Results::CanChangeToSingleMeasureInMainEstimate}
    When \(\metric[x][y] + t^{1/2\kappa}\leq \delta_0/2\), one can replace
    \begin{equation*}
        \measure\left( \BMetricBall{x}{ \left( \metric[x][y] + t^{1/2\kappa} \right)\wedge \delta_0 }  \right)^{-1/p_0}
            \measure\left( \BMetricBall{y}{ \left( \metric[x][y] + t^{1/2\kappa} \right)\wedge \delta_0 }  \right)^{-1/p_0'}
    \end{equation*}
    in \eqref{Eqn::Results::MainGaussianBounds}
    with the more familiar quantity
    \begin{equation*}
        \measure\left( \BMetricBall{x}{ \metric[x][y] + t^{1/2\kappa}  }  \right)^{-1},
    \end{equation*}
    because in this case,
    \begin{equation*}
        \measure\left( \BMetricBall{x}{ \metric[x][y] + t^{1/2\kappa}  }  \right) \approx \measure\left( \BMetricBall{y}{ \metric[x][y] + t^{1/2\kappa}  }  \right)
    \end{equation*}
    by Assumption \ref{Assumption::LocalDoubling}.
\end{remark}

\begin{remark}\label{Rmk::Results::WorksForMatrixKernels}
    Theorem \ref{Thm::Results::MainThm} holds more generally with \(\LpSpace{p_0}[\MetricSpace][\measure]\) replaced with
    \(\LpSpace{p_0}[\MetricSpace][\measure][\C^N]\) for some \(N\); with the same proof.
    In this case, \(K_t(x,y)\) takes values in \(N\times N\) complex matricies.
    There are many other possible such generalizations (for example, one could work with vector bundles over a manifold \(\Manifold\))
    that can also be studied with the same proof.
\end{remark}

\begin{remark}\label{Rmk::Results::cIsAdmissible}
    The constant \(c>0\) in Theorem \ref{Thm::Results::MainThm} is an admissible constant
    and therefore does not depend on \(j\) or on
    the operators \(X\) and \(Y\) except via the constant \(a_1\) (see Definition \ref{Defn::Results::AdmissibleConsts}).
    This is important in our main application (see 
    Remark \ref{Rmk::Subellip::MaxSub::ChoicesInApplication}) where we apply this estimate for an infinite number of \(X\) and \(Y\) and all \(j\),
    and use that \(c>0\) does not change (because we show that the same constant \(a_1\) can be used in each application of Theorem \ref{Thm::Results::MainThm}).
\end{remark}

\begin{remark}
    See Remark \ref{Rmk::Subellip::MaxSub::ChoicesInApplication} for choices of \(X\) and \(Y\)
    in a concrete application.
\end{remark}

\begin{remark}
    Admissible constants do not depend on \(p_0\) (see Definition \ref{Defn::Results::AdmissibleConsts}). If one keeps track of constants
    in the proofs which follow, there is often a factor of \(C^{1/p_0}\), where \(C\geq 1\) is an admissible constant.
    Since \(C^{1/p_0}\leq C\), this can be bounded by an admissible constant.
\end{remark}

\section{Application: Subelliptic PDEs}\label{Section::ApplicationNew}
The results in this paper apply to a wide range of \emph{maximally subelliptic}
partial differential operators; these are operators defined in terms of
H\"ormander vector fields.  We state our general result in Theorem \ref{Thm::SubMainThm::MainThm}, and give 
several examples in Proposition \ref{Prop::Subellip::Examples::Examples}. In this introductory section, we describe some
easy to understand operators where our general result applies.
Moreover, as described in Corollary \ref{Cor::Subellip::MaxSub::HeatImpliesMaxSub}, this method gives a new approach to proving the maximal subellipticity
of certain operators.

Let \(\Manifold\) be a connected, smooth manifold of dimension \(n\), and let \(\measure\) be a smooth,
strictly positive density on \(\Manifold\)--in particular,
\(\measure\) induces a \(\sigma\)-finite measure on \(\Manifold\)
which in any local coordinates is given by a smooth, positive function
times Lebesgue measure.
Let \(W_1,\ldots, W_r\) be smooth vector fields on \(\Manifold\) satisfying
\emph{H\"ormander's condition}: the Lie algebra generated by \(W_1,\ldots, W_r\)
spans the tangent space at every point. 

We use \(*\) to denote the formal \(\LpSpace{2}[\Manifold][\mu]\) adjoint
(when working with partial differential operators), or the true \(\LpSpace{2}[\Manifold][\mu]\) adjoint
(when working with abstract densely defined operators). For all the operators we consider,
whenever both interpretations make sense, the two interpretations agree; though this sometimes requires proof.
When there is ambiguity,
we are explicit about which we mean.

Several examples where the results in this section apply are described in Proposition \ref{Prop::Subellip::Examples::Examples}.
For now, the reader can keep in mind the following examples which are described in more detail in Proposition \ref{Prop::Subellip::Examples::Examples}:
\begin{enumerate}[(i)]
    \item\label{Item::SubNewIntro::HorSublap} Fix \(n_1,\ldots, n_r\in \Nplus\). Consider \(\sum_{j=1}^r \left( W_j^{n_j} \right)^{*}W_j^{n_j}\).
        When \(n_1=\cdots=n_r=1\), this is known as H\"ormander's sub-Laplacian and was the operator studied
        in \cite{JerisonSanchezCalleEstimatesForTheHeatKernelForASumOfSquaresOfVectorFields}.
        For general \(n_j\), though in the special case of translation invariant operators on some nilpotent Lie groups,
        this was the operator studied in \cite{HebischSharpPointwiseEstimateForTheKernelsOfTheSemigroupGeneratedBySumsOfEvenPowersOfVectorFieldsOnHomogeneousGroups}.
    \item\label{Item::SubNewIntro::MaxSubStarMaxSub} 
    More generally than \ref{Item::SubNewIntro::HorSublap}, we consider any operator of the form
        \(\opP^{*}\opP\), where \(\opP\) is a maximally subelliptic partial differential operator (see Definition \ref{Defn::SubEllip::MaxSub::MaxSub}).
    \item Suppose \(r=2\) and let \(n_1,n_2\in \Nplus\). We consider the operator
        \(\opL_{\alpha} = \left( W_1^{n_1} \right)^{*}W_1^{n_1} + \left( W_2^{n_2} \right)^{*}W_2^{n_2}+\alpha \left( W_1^{n_1} \right)^{*} W_2^{n_2}\),
        where \(\alpha\in \C\) with \(|\alpha|<2\). Note that, unlike \ref{Item::SubNewIntro::HorSublap}
        and \ref{Item::SubNewIntro::MaxSubStarMaxSub}, \(\opL_{\alpha}\) is not symmetric.
        We also consider more general non-symmetric operators in Proposition \ref{Prop::Subellip::Examples::Examples}.
    \item As we describe in Proposition \ref{Prop::Subellip::Examples::Examples} \ref{Item::Subellip::Examples::Examples::PerturbLowerOrder}, we can often add lower order terms to the above examples and obtain new examples.
\end{enumerate}

Let \(\opL\) be any of the above operators.
Let 
\((A,\Domain[A])\) 
be a generator of a strongly continuous semigroup \(T(t)\) on  \(\LpSpace{2}[\Manifold][\measure]\)
with \(\CzinftySpace[\Manifold]\subseteq \Domain[A]\) and  \(A=-\opL\big|_{\Domain[A]}\),
where \(\opL\) is taken in the sense of distributions
(as Propositions \ref{Prop::SubEllip::Examples::ConditionMConclusions} and \ref{Prop::Subellip::Examples::Examples}  show, 
such a choice of \(T(t)\) always exists in the above examples).
In Theorem \ref{Thm::SubMainThm::MainThm}, we show that
\begin{equation*}
    T(t) f(x) = \int K_t(x,y) f(y)\: d\measure(y),
\end{equation*}
where, on each compact subset of \(\Manifold\), \(K_t(x,y)\) satisfies higher order Gaussian bounds in terms of an adapted
Carnot--Carathe\'odory metric. Moreover, the same is true of appropriate derivatives of \(K_t(x,y)\).

As we describe in Corollary \ref{Cor::Subellip::MaxSub::HeatImpliesMaxSub}, this gives an approach to constructing a parametrix for \(\opL\) which has good estimates;
namely, \(\int_0^1 T(t)\: dt\) is such a parametrix.

    \subsection{The main theorem for subelliptic PDEs}\label{Section::Subellip::MainThm}
    As above, let \(\Manifold\) be a connected, smooth manifold
of dimension \(n\),
endowed with a smooth, strictly positive density \(\measure\);
with an abuse of notation, we denote the associated measure by \(\measure\).

\begin{definition}
    Let \(W=\left\{ W_1,\ldots,W_r \right\}\) be a finite collection smooth vector fields on \(\Manifold\).
    We say \(W\) satisfies H\"ormander's condition on \(\Manifold\) if the Lie algebra generated by \(W_1,\ldots, W_r\)
    spans the tangent space at every point.
\end{definition}

\begin{definition}
    Let \(W=\left\{ W_1,\ldots, W_r \right\}\) be H\"ormander vector fields on \(\Manifold\).
    To each \(W_j\) assign a formal degree \(\Wd_j\in \Nplus\).
    We write \(\WWd=\left\{ (W_1,\Wd_1),\ldots, (W_r,\Wd_r) \right\}\) and say \(\WWd\)
    is a set of H\"ormander vector fields with formal degrees.
\end{definition}

\begin{definition}\label{Defn::SubMainTheorem::OrderedMultiIndex}
    Let \(\alpha=(\alpha_1,\ldots, \alpha_L)\in \left\{ 1,\ldots, r \right\}^L\) be a list of elements of \(\left\{ 1,\ldots, r \right\}\).
    We set \(W^{\alpha}=W_{\alpha_1}W_{\alpha_2}\cdots W_{\alpha_L}\), \(|\alpha|=L\), and \(\degWd(\alpha)=\Wd_{\alpha_1}+\Wd_{\alpha_2}+\cdots+\Wd_{\alpha_L}\).
\end{definition}

Let \(\WWd=\left\{ (W_1,\Wd_1),\ldots, (W_r,\Wd_r) \right\}\) be a set of H\"ormander vector fields with formal degrees on \(\Manifold\).
\(\WWd\) induces a Carnot-Carath\'eodory metric, \(\metric\), on \(\Manifold\). We define this metric by
defining its metric balls. Namely, for \(x\in \Manifold\) and \(\delta>0\),
\begin{equation*}
\begin{split}
     \MetricBall{x}{\delta}:= \Bigg\{
        y\in \Manifold \: \Bigg|\:& \exists \gamma:[0,1]\rightarrow \Manifold, \gamma(0)=x,\gamma(1)=y,
        \\&\gamma \text{ is absolutely continuous},
        \\&\gamma'(t) =\sum_{j=1}^r e_j(t)\delta^{\Wd_j} W_j(\gamma(t))\text{ almost everywhere},
        \\& e_j \in \LpSpace{\infty}[{[0,1]}], \BLpNorm{\sum_{j=1}^r |e_j|^2}{\infty}[{[0,1]}]<1
     \Bigg\},
\end{split}
\end{equation*}
and we let \(\metric\) be the corresponding metric.
It is a classical theorem of Chow \cite{ChowUberSystemeVonLinearenPartillen}
that \(\metric\) is indeed a metric on \(\Manifold\)--see  \cite[Lemma 3.1.7]{StreetMaximalSubellipticity} for an exposition.

Fix \(\kappa\in \Nplus\) such that \(\Wd_j\) divides \(\kappa\), \(\forall j\).
For each \(\alpha\) and \(\beta\) with \(\degWd(\alpha),\degWd(\beta)\leq \kappa\),
let \(a_{\alpha,\beta}\in \CinftySpace[\Manifold]\). 
Define a partial differential operator, \(\opL\), acting on distributions by
\begin{equation}\label{Eqn::SubMainThem::FormulaForL}
    \opL:=\sum_{\degWd(\alpha),\degWd(\beta)\leq \kappa}\left( W^{\alpha} \right)^{*} a_{\alpha,\beta} W^{\beta},
\end{equation}
where \(*\) denotes the formal \(\LpSpace{2}[\Manifold][\measure]\) adjoint.


Our main assumption is the following maximal subellipticity-type assumption:
\begin{numberedassumption}\label{Assump::SubMainTheorem::MaxSubTypeIII}
    \(\forall \Omega\Subset \Manifold\) open and relatively compact,\footnote{We write \(A\Subset B\)
    to denote that \(A\) is a relatively compact subset of \(B\).}
    \(\exists C_1,C_2\geq 0\), \(\forall f\in \CzinftySpace[\Omega]\),
    \begin{equation*}
        \sum_{j=1}^r \BLpNorm{ W_j^{\kappa/\Wd_j} f}{2}[\Manifold][\measure]^2
        \leq C_1 \Real \Ltip*{f}{\opL f}[\Manifold][\measure]
        +C_2 \BLpNorm{f}{2}[\Manifold][\measure]^2.
    \end{equation*}
\end{numberedassumption}

\begin{theorem}\label{Thm::SubMainThm::MainThm}
    In the above setting, the following holds.
    Let 
\((A,\Domain[A])\) 
be a generator of a strongly continuous semigroup \(T(t)\) on  \(\LpSpace{2}[\Manifold][\measure]\)
with \(\CzinftySpace[\Manifold]\subseteq \Domain[A]\) and  \(A=-\opL\big|_{\Domain[A]}\), where \(\opL\)
is taken in the sense of distributions (such a semi-group
may not exist in general, and we assume it to exist\footnote{See Proposition \ref{Prop::Subellip::Examples::Examples} for examples where such a semi-group exists.}). Let \(\omega_0\in \R\) be as in \eqref{Eqn::Assump::Hypo::IntroduceM0Andomega0}.

    Then, there exists a unique smooth function
    \(K_t(x,y)\in \CinftySpace[(0,\infty)\times \Manifold\times \Manifold]\)
    such that
    \begin{equation}\label{Eqn::SubMainThm::DefineKt}
        T(t)g(x) = \int K_t(x,y) g(y)\: d\mu(y),\quad \forall g\in \LpSpace{2}[\Manifold][\mu].
    \end{equation}
    Moreover, for every \(\Compact\Subset \Manifold\) compact,
    \(\exists \delta_0=\delta_0(\Compact)\in (0,1]\), \(\exists c=c(\Compact)>0\),
    \(\forall \alpha,\beta\), \(\forall j\in \N\), \(\exists C=C(\Compact,\alpha,\beta,j)\),
    \(\forall t>0\), \(\forall x,y\in \Compact\),
    \begin{equation}\label{Eqn::SubMainThm::MainBound}
    \begin{split}
          \left| \partial_t^j W_x^{\alpha} W_y^{\beta} K_t(x,y) \right|
         \leq&
         C e^{\omega_0t}
         \left( \left( \metric[x][y]+t^{1/2\kappa} \right)\wedge \delta_0 \right)^{-2\kappa j -\degWd(\alpha)-\degWd(\beta)}
         \\&\times \exp\left( -c \left( \frac{\left( \metric[x][y]\wedge \delta_0 \right)^{2\kappa}}{t} \right)^{1/(2\kappa-1)} \right)
            \\&\times \measure\left( \BMetricBall{x}{ \left( \metric[x][y] + t^{1/2\kappa} \right)\wedge \delta_0 }  \right)^{-1/2}
            \measure\left( \BMetricBall{y}{ \left( \metric[x][y] + t^{1/2\kappa} \right)\wedge \delta_0 }  \right)^{-1/2}.
    \end{split}
    \end{equation}
\end{theorem}

See Section \ref{Section::Subellip::Proof} for the proof of Theorem \ref{Thm::SubMainThm::MainThm}.

\begin{remark}
    In Theorem \ref{Thm::SubMainThm::MainThm}, \(\delta_0>0\) is small and depends on \(\Compact\).
    If \(A\) is homogeneous under some dilation structure, then one may instead take \(\delta_0=\infty\) and prove the estimate
    for all \(x,y\in \Manifold\) (instead of on compact sets), using the same proof. For example, this is how the main result
    of Hebisch \cite{HebischSharpPointwiseEstimateForTheKernelsOfTheSemigroupGeneratedBySumsOfEvenPowersOfVectorFieldsOnHomogeneousGroups}
    is obtained (and our work is based on that paper).
\end{remark}

\begin{remark}
    In Theorem \ref{Thm::SubMainThm::MainThm}, many of the constants depend on the compact set \(\Compact\).
    This is because many qualitative properties automatically hold uniformly on compact sets. If one were to instead use appropriate quantitative assumptions
    which were uniform over \(\Manifold\), then one could replace \(\Compact\) with \(\Manifold\) throughout Theorem \ref{Thm::SubMainThm::MainThm}
    with the same proof.
\end{remark}

\begin{remark}
    One can replace 
    \begin{equation*}
        \measure\left( \MetricBall{x}{\left( \metric[x][y]+t^{1/2\kappa} \right)\wedge \delta_0} \right)^{-1/2}
        \measure\left( \MetricBall{y}{\left( \metric[x][y]+t^{1/2\kappa} \right)\wedge \delta_0} \right)^{-1/2}
    \end{equation*}
    in Theorem \ref{Thm::SubMainThm::MainThm} with the more familiar quantity
    \begin{equation*}
        \measure\left( \MetricBall{x}{\left( \metric[x][y]+t^{1/2\kappa} \right)\wedge \delta_0} \right)^{-1}.
    \end{equation*}
    Indeed, when \(\metric[x][y]+t^{1/2\kappa}\) is small, this follows from Remark \ref{Rmk::Results::CanChangeToSingleMeasureInMainEstimate},
    and when it is large then both quantities are \(\approx 1\)--see \cite[Corollary 3.3.9]{StreetMaximalSubellipticity}.
\end{remark}

\begin{remark}\label{Rmk::Subellip::MaxSub::ChoicesInApplication}
    We prove Theorem \ref{Thm::SubMainThm::MainThm} by applying 
    Theorem \ref{Thm::Results::MainThm} with the following choices:
    \begin{itemize}
        \item \(p_0=2\),
        \item \(\NSubsetx=\NSubsety=\Compact\),
        \item \(\delta_0=\delta_0(\Compact)>0\) will be a small number which is related to our scaling result (Theorem \ref{Thm::MaxSub::Scaling::NSWScaling}),
        \item Fix \(\psi\in \CzinftySpace[\Manifold]\) with \(\psi=1\) on a neighborhood of \(\Compact\). We use
            \(X=\psi W^{\alpha}\), \(Y=\psi W^{\beta}\), \(S_X(\delta)=\epsilon_{\alpha}\delta^{\degWd(\alpha)}\),
            \(S_X(\delta)=\epsilon_{\beta}\delta^{\degWd(\beta)}\), \emph{for each} \(\alpha\) and \(\beta\);
            here \(\epsilon_\alpha,\epsilon_\beta>0\) are small constants to be chosen later.
    \end{itemize}
    Thus, \eqref{Eqn::SubMainThm::MainBound} is proved by applying Theorem \ref{Thm::Results::MainThm}
    and infinite number of times (once for each \(\alpha\) and \(\beta\)). 
    We show the constant \(c=c(\Compact)>0\) is independent of \(\alpha\), \(\beta\), and \(j\)
    by using the idea described in Remark \ref{Rmk::Results::cIsAdmissible}.
\end{remark}

\begin{remark}
    The reader may notice a slight difference in the term 
    \(\left( \left( \metric[x][y]+t^{1/2\kappa} \right)\wedge \delta_0 \right)^{-2\kappa j}\)
    from
    \eqref{Eqn::SubMainThm::MainBound}
    compared to the similar term 
    \(\left( \left| \omega_0 \right| + \left( \left( \metric[x][y]+t^{1/2\kappa} \right)\wedge \delta_0 \right)^{-2\kappa} \right)^j\)
    from \eqref{Eqn::Results::MainGaussianBounds}.
    Roughly speaking, this is because we allow the implicit constant in \eqref{Eqn::SubMainThm::MainBound}
    to depend on \(\omega_0\), while we do not allow the same in \eqref{Eqn::Results::MainGaussianBounds}.
    This is described in more detail at the start of Section \ref{Section::MaxSub::CompleteProof}.
\end{remark}

Importantly, the assumptions of Theorem \ref{Thm::SubMainThm::MainThm}
are symmetric in \(\opL\) and \(\opL^{*}\), as the next proposition shows.

\begin{proposition}\label{Prop::Subellip::MaxSub::SymmetricAssumps}
    We have:
    \begin{enumerate}[(i)]
        \item\label{Item::Subellip::MaxSub::AdjointIsRightForm::opLStarForm}  If \(\opL\) is of the form \eqref{Eqn::SubMainThem::FormulaForL}, then \(\opL^{*}\) is of the form
            \eqref{Eqn::SubMainThem::FormulaForL}.
        \item\label{Item::Subellip::MaxSub::AdjointIsRightForm::opLStarMaxSub} If \(\opL\) satisfies Assumption \ref{Assump::SubMainTheorem::MaxSubTypeIII}, then \(\opL^{*}\)
    also satisfies Assumption \ref{Assump::SubMainTheorem::MaxSubTypeIII}.
        \item\label{Item::Subellip::MaxSub::AdjointIsRightForm::AdjointGenerator} If \((A,\Domain[A])\) is the generator for a strongly continuous semi-group \(T(t)\) on \(\LpSpace{2}[\Manifold][\mu]\), then
            \((A^{*}, \Domain[A^{*}])\) is the generator for the strongly continuous semi-group \(T(t)^{*}\).
    \end{enumerate}
    For the next two parts, 
    let \((A,\Domain[A])\) be a densely defined operator on \(\LpSpace{2}[\Manifold][\mu]\),
    and let \(\opL\) be a partial differential operator on \(\Manifold\) with smooth coefficients.
    Suppose \(A=-\opL\big|_{\Domain[A]}\), where \(\opL\) is taken in the sense of distributions. Then,
    \begin{enumerate}[resume*]
        \item\label{Item::Subellip::MaxSub::AdjointIsRightForm::ContainsCzinfty} \(\CzinftySpace[\Manifold]\subseteq \Domain[A^{*}]\) and \(A^{*}\big|_{\CzinftySpace[\Manifold]}=-\opL^{*}\big|_{\CzinftySpace[\Manifold]}\).
        \item\label{Item::Subellip::MaxSub::AdjointIsRightForm::EqualsDeriv} If \(\CzinftySpace[\Manifold]\subseteq \Domain[A]\), then \(A^{*}=-\opL^{*}\big|_{\Domain[A^{*}]}\).
    \end{enumerate}
\end{proposition}
\begin{proof}
    \ref{Item::Subellip::MaxSub::AdjointIsRightForm::opLStarForm} and \ref{Item::Subellip::MaxSub::AdjointIsRightForm::opLStarMaxSub} follow 
    immediately from the definitions.
    \ref{Item::Subellip::MaxSub::AdjointIsRightForm::AdjointGenerator} follows from 
    \cite[Chapter 2, Section 2.5]{EngelNagelAShortCourseOnOperatorSemigroups}.

    We recall \(\Domain[A^{*}]=\left\{ u\in \LpSpace{2} : \exists w\in \LpSpace{2}, \Ltip{w}{v}=\Ltip{u}{Av},\forall v\in \Domain[A]\right\}\),
    and \(A^{*}u\) is defined to be the unique \(w\in \LpSpace{2}\) with \(\Ltip{w}{v}=\Ltip{u}{Av}\), \(\forall v\in \Domain[A]\).

    \ref{Item::Subellip::MaxSub::AdjointIsRightForm::ContainsCzinfty}:
    Suppose \(g\in \CzinftySpace[\Manifold]\). Then,
    \begin{equation*}
        \Ltip{g}{Av}=\Ltip{g}{-\opL v} = \Ltip{-\opL^{*} g}{v},\quad \forall v\in \Domain[A].
    \end{equation*}
    \ref{Item::Subellip::MaxSub::AdjointIsRightForm::ContainsCzinfty} follows.

    \ref{Item::Subellip::MaxSub::AdjointIsRightForm::EqualsDeriv}: Suppose \(\CzinftySpace[\Manifold]\subseteq \Domain[A]\).
    Then, for \(u\in \Domain[A^{*}]\) and \(g\in \CzinftySpace[\Manifold]\), we have
    \begin{equation*}
        \Ltip{A^{*}u}{g}=\Ltip{u}{A g} = \Ltip{u}{-\opL g}=\Ltip{-\opL^{*} u}{g},
    \end{equation*}
    establishing \ref{Item::Subellip::MaxSub::AdjointIsRightForm::EqualsDeriv}.
\end{proof}

\begin{remark}\label{Rmk::MaxSub::SubLapalcian::OtherProofs}
    There are many methods in the literature to prove results similar Theorem \ref{Thm::SubMainThm::MainThm}--too many to list here.
    Especially when \(\kappa=1\) (in particular when \(\opL\) is H\"ormander's sub-Laplacian), there are a number of possible proofs. For example, Melrose \cite{MelrosePropagationForTheWaveGroup} presents
    a proof for H\"ormander's sub-Laplacian using the finite propagation speed of the wave equation (which only holds when \(\kappa=1\));
    see  \cite[Section 2.6]{StreetMultiParameterSingularIntegrals} for an exposition of this approach, which is an instance of a general
    phenomenon (see \cite{SikoraRieszTransformGaussianBoundsAndTheMethodOfWaveEquation}).
    See, also, 
    \cite{JerisonSanchezCalleEstimatesForTheHeatKernelForASumOfSquaresOfVectorFields} on which this work is based and
    \cite{KusuokaStroockApplicationsOfTheMalliavinCalculusIII,VaropoulosSaloffCosteCoulhonAnalysisAndGeometryOnGroups}.
    For higher order operators (\(\kappa>1\)), 
    less has been done, and many of the proofs from \(\kappa=1\) do not generalize to higher \(\kappa\). We have already mentioned the work of Hebisch \cite{HebischSharpPointwiseEstimateForTheKernelsOfTheSemigroupGeneratedBySumsOfEvenPowersOfVectorFieldsOnHomogeneousGroups},
    on which this work is based. See the work of Dungey 
    \cite{DungeyHigherOrderOperatorsAndGaussianBoundsOnLieGroupsOfPolynomialGrowth}
    for a different approach on Lie groups based on cut-off functions and an idea of Davies \cite{DaviesUniformlyEllipticOperatorsWithMeasurableCoefficients}.
    Dungey and Davies' approach also seems likely to be useful for general maximally subelliptic (or similar) operators, especially those with rough coefficients;
    though is more closely tied to the form of the operator being studied and seems harder to adapt to settings 
    like Section \ref{Section::MaxSub::BoundaryValueProblems}. 
    See \cite{BarbatisSharpHeatKernelBoundsAndFinslerTypeMetrics,BarbatisExplicitEstimatesOnTheFundamentalSolution,AuscherTchamitchianSquareRootProblemForDivergenceOperators}
    and the references in \cite[Section 5.2]{DaviesLpSpectralTheoryOfHigherOrderEllipticDifferentialOperators}
    for some other results in the elliptic setting whose proofs might shed light on subelliptic operators.
\end{remark}

    \subsection{Maximal Subellipticity}\label{Section::Subellip::MaximalSubellip}
    An important lens through which to view Theorem \ref{Thm::SubMainThm::MainThm}
is via the theory of maximal subellipticity.  As we explain,
Theorem \ref{Thm::SubMainThm::MainThm} can be used to deduce higher order Gaussian
bounds for some general maximally subelliptic heat equations.
This provides an approach to obtain parametrices and sharp results for maximally subelliptic
PDEs, without using  Lie groups.
In fact, this was the approach used by the author in \cite[Chapter 8]{StreetMaximalSubellipticity};
however, in that reference the proof closely used many properties of the partial differential operators
in question, and it was not made clear that this is a more general approach.
This general approach will be used by the author in a forthcoming work on boundary value problems
(see Section \ref{Section::MaxSub::BoundaryValueProblems}).

Let \(\WWd=\left\{ \left( W_1,\Wd_1 \right),\ldots, \left( W_r,\Wd_r \right) \right\}\)
be H\"ormander vector fields with formal degrees on a smooth manifold \(\Manifold\),
and let \(\measure\) be a smooth, strictly positive density on \(\Manifold\).
Fix \(\kappa,N\in \Nplus\) such that \(\Wd_j\) divides \(\kappa\) for every \(j\).
For each \(\alpha\) with \(\degWd(\alpha)\leq \kappa\) let \(a_\alpha\in \CinftySpace[\Manifold][\C^N]\).
Define
\begin{equation}\label{Eqn::SubEllip::MaxSub::FormulaForP}
    \opP:=\sum_{\degWd(\alpha)\leq \kappa} a_\alpha(x) W^{\alpha}.
\end{equation}

\begin{definition}\label{Defn::SubEllip::MaxSub::MaxSub}
    We say \(\opP\) given by \eqref{Eqn::SubEllip::MaxSub::FormulaForP} is maximally subelliptic
    of degree \(\kappa\) with respect to \(\WWd\) if for every relatively compact open set
    \(\Omega\Subset \Manifold\), there exists \(C_\Omega\geq 0\) such that
    \(\forall f\in \CzinftySpace[\Omega]\),
    \begin{equation*}
        \sum_{j=1}^r \BLpNorm{ W_j^{\kappa/\Wd_j} f }{2}[\Manifold][\mu]
        \leq C_\Omega
        \left( \BLpNorm{\opP f}{2}[\Manifold][\mu][\C^N] + \BLpNorm{f}{2}[\Manifold][\mu]   \right).
    \end{equation*}
\end{definition}

A maximally subelliptic operator of degree \(\kappa\) has a left parametrix which is
locally a ``singular integral operator of order \(-\kappa\)''.
This is due to many authors--initial results were due to Folland and Stein \cite{FollandSteinEstimatesForTheDbarComplexAndAnalysis}
and Rothschild and Stein \cite{RothschildSteinHypoellipticDifferentialOperatorsAndNilpotentGroups},
and some important examples were due to Nagel, Rosay, Stein, and Wainger \cite{NagelRosaySteinWaingerEstimatesForTheBergmanAndSzego}
and Koenigh \cite{KoenigOnMaximalSobolevAndHolderEstiamtes}; see \cite[Chapter 8]{StreetMaximalSubellipticity}
for a general theory and a history of these ideas.
In the notation of \cite[Definition 5.11.1]{StreetMaximalSubellipticity},
there is an operator \(S\in \mathscr{A}^{-\kappa}_{\mathrm{loc}}(\WWd)\)
such that \(S\opP\equiv I\) modulo smoothing operators.
In fact, the existence of such a parametrix is equivalent to \(\opP\)
being maximally subelliptic \cite[Theorem 8.1.1(i)\(\Leftrightarrow\)(vii)]{StreetMaximalSubellipticity}.


\begin{corollary}\label{Cor::Subellip::MaxSub::HeatImpliesMaxSub}
    Let \(\opL\) satisfy the assumptions of Theorem \ref{Thm::SubMainThm::MainThm};
    in particular we assume
    Assumption \ref{Assump::SubMainTheorem::MaxSubTypeIII} and
    the existence of a semi-group \(T(t)\) with generator \(\left( A,\Domain[A] \right)\)
    with \(\CzinftySpace[\Manifold]\subseteq \Domain[A]\) and
     \(A=-\opL\big|_{\Domain[A]}\).
    Then \(\opL\) is maximally subelliptic of degree \(2\kappa\) with respect to \(\WWd\).
    Moreover, if \(T(t)\) is the semi-group from Theorem  \ref{Thm::SubMainThm::MainThm},
    then \(S:=\int_0^1 T(t)\: dt\) is a two-sided parametrix for \(\opL\) and
     \(S\in \mathscr{A}^{-2\kappa}_{\mathrm{loc}}(\WWd)\).
\end{corollary}
\begin{proof}[Comments on the proof]
    Let \((A,\Domain[A])\) be the generator for \(T(t)\).
    \cite[Chapter 2, Lemma 1.3]{EngelNagelAShortCourseOnOperatorSemigroups} shows
    \begin{equation*}
        \opL S f= -AS f = f-T(1) f, \quad\forall f\in \LpSpace{2}[\Manifold][\measure],
    \end{equation*}
    \begin{equation*}
        S \opL f= -S Af = f-T(1) f, \quad\forall f\in \Domain[A]\supseteq \CzinftySpace[\Manifold].
    \end{equation*}
    Since \(T(1)\) is integration against a smooth function (by Theorem \ref{Thm::SubMainThm::MainThm}), 
    this shows that \(S\) is a two-sided parametrix for \(\opL\).

    Using the Gaussian bounds guaranteed by Theorem \ref{Thm::SubMainThm::MainThm},
    the proof of \cite[Corollary 5.11.16]{StreetMaximalSubellipticity} shows that
    \(S\in \mathscr{A}^{-2\kappa}_{\mathrm{loc}}(\WWd)\) (see that reference for details); in that reference it was assumed that \((-A,\Domain[A])\)
    was a non-negative self-adjoint operator, though the same proof works more generally for generators
    of strongly continuous semigroups. From here, that \(\opL\) is maximally subelliptic of degree \(2\kappa\) with respect to \(\WWd\)
    follows from \cite[Theorem 8.1.1(vii)\(\Rightarrow\)(i)]{StreetMaximalSubellipticity}.
\end{proof}

\begin{remark}
    Corollary \ref{Cor::Subellip::MaxSub::HeatImpliesMaxSub} 
    gives a new way to show some operators are maximally subelliptic.
    See Remark \ref{Rmk::Subellip::Examples::ExamplesAreMaxSub}.
\end{remark}

        \subsubsection{Parametricies}\label{Section::Subellip::MaximalSub::Parametricies}
        Corollary \ref{Cor::Subellip::MaxSub::HeatImpliesMaxSub} provides parametrices
for a wide range of subelliptic PDEs (see Proposition \ref{Prop::Subellip::Examples::Examples} and Remark \ref{Rmk::Subellip::Examples::LeftParametrixForP} for examples).
Dating back to the work of Folland \cite{FollandSubellipticEstimatesAndFunctionSpacesOnNilpotentLieGroups}, Rothschild and Stein \cite{RothschildSteinHypoellipticDifferentialOperatorsAndNilpotentGroups},
Fefferman and Phong \cite{FeffermanPhongSubellipticEigenvalueProblems}, Fefferman and S\'{a}nchez-Calle \cite{FeffermanSanchezCalleFundamentalSolutionsForSecondOrderOperators},
and Nagel, Stein, and Wainger \cite{NagelSteinWaingerBallsAndMetricsDefinedByVectorFieldsIBasicProperties} the following outline 
is often used to create parametricies for
maximally subelliptic PDEs (or other similar kinds of subelliptic PDEs):
\begin{enumerate}[(1)]
    \item\label{Item::MaxSub::Intro::ProveSubelliptic} Prove an a priori subelliptic estimate for \(\OpP\), using methods pioneered by H\"ormander \cite{HormanderHypoellipticSecondOrder}, Kohn \cite{KohnLecturesOnDegenerateEllipticProblems},
        and others. These estimates need not be sharp; sharp estimates are established in \ref{Item::MaxSub::Intro::SharpEstimates}.
    \item Show that a similar subelliptic estimate holds at every Carnot-Carath\'eodory scale, uniformly in the scale, by rescaling the proof in \ref{Item::MaxSub::Intro::ProveSubelliptic}.
        One general framework for this rescaling was introduced by Nagel, Stein, and Wainger \cite{NagelSteinWaingerBallsAndMetricsDefinedByVectorFieldsIBasicProperties}. See Section \ref{Section::MaxSub::Scaling}.
    \item\label{Item::MaxSub::Intro::Parametrix} Use the subelliptic estimate at each scale to provide good bounds on a parametrix, to show that this parametrix
        is a ``singular integral operator'' as referenced above.
    \item\label{Item::MaxSub::Intro::SharpEstimates} Use the parametrix from \ref{Item::MaxSub::Intro::Parametrix} to obtain sharp estimates in a variety of function spaces;
        see \cite{FollandSteinEstimatesForTheDbarComplexAndAnalysis}, \cite{FollandSubellipticEstimatesAndFunctionSpacesOnNilpotentLieGroups}, \cite{RothschildSteinHypoellipticDifferentialOperatorsAndNilpotentGroups}, \cite{NagelRosaySteinWaingerEstimatesForTheBergmanAndSzego}, \cite{KoenigOnMaximalSobolevAndHolderEstiamtes}, and  \cite[Chapter 8]{StreetMaximalSubellipticity}
        for examples.
    \item If relevant, use the parametrix from \ref{Item::MaxSub::Intro::Parametrix} or the sharp results from \ref{Item::MaxSub::Intro::SharpEstimates}
        to study related heat operators.  See, e.g., \cite{JerisonSanchezCalleEstimatesForTheHeatKernelForASumOfSquaresOfVectorFields}.
\end{enumerate}

A main difficulty when implementing this general outline is \ref{Item::MaxSub::Intro::Parametrix}.
For translation invariant, \textit{homogeneous}, maximally subelliptic operators on a graded nilpotent Lie group,
Folland \cite{FollandSubellipticEstimatesAndFunctionSpacesOnNilpotentLieGroups} used the homogenity to create
a homogeneous fundamental solution and achieve \ref{Item::MaxSub::Intro::Parametrix}. Outside a translation invariant, homogenenous setting,
step \ref{Item::MaxSub::Intro::Parametrix} can be more difficult.

Rothschild and Stein \cite{RothschildSteinHypoellipticDifferentialOperatorsAndNilpotentGroups} gave a general procedure
to lift a maximally subelliptic operator to a high dimensional graded nilpotent Lie group; see also \cite{GoodmanNilpotentLieGroupsStructureAndApplications}.
When the lifted operator is again maximally subelliptic, Folland's fundamental solution for the lifted operator can be used to create a parametrix.
Unfortunately, the lifted operator is not always maximally subelliptic; see  \cite[Section 4.5.7]{StreetMaximalSubellipticity}.
Recently 
Androulidakis, Moshen, and Yunken
\cite{AndroulidakisMoshenYunkenAPseudodifferentialCalculusForMaximallHypoelliptic}
have presented an approach using representation theory of nilpotent Lie groups to create parametrices for general maximally subelliptic operators.

When studying a subellipic PDE where approximating by a nilpotent Lie group is not available, one often needs to prove an additional
estimate to achieve step \ref{Item::MaxSub::Intro::Parametrix}. See  \cite[Proposition 3.2]{KoenigOnMaximalSobolevAndHolderEstiamtes}
for a typical example of this kind of estimate. In the examples we know of, this additional estimate is tailored to the case at hand; and when
working with new PDEs it can be 
very difficult (if not impossible) to establish the needed estimate with a priori methods.

Corollary \ref{Cor::Subellip::MaxSub::HeatImpliesMaxSub} uses a different method
to create a parametrix, which does not require such an additional estimate, and does not use Lie groups.
The proof of Corollary \ref{Cor::Subellip::MaxSub::HeatImpliesMaxSub} proceeds as follows:
\begin{enumerate}[(1')]
    \item\label{Item::MaxSub::Intro::NewMethod::ProveSubelliptic} Prove an a priori subelliptic estimate for \(\partial_t-A\) and \(\partial_t-A^{*}\); for example, using the methods of \cite{KohnLecturesOnDegenerateEllipticProblems}.
        As before, this estimate need not be sharp. See Section \ref{Section::Subellip::Proof::UnitScale} and in particular Remark \ref{Rmk::Subellip::Proof::epsilon0NotOptimal}.
    \item\label{Item::MaxSub::Intro::NewMethod::Scale} Show similar estimates hold at every scale, uniformly in the scale, by rescaling the proof in \ref{Item::MaxSub::Intro::NewMethod::ProveSubelliptic}; for example, by using
        the methods of Nagel, Stein, and Wainger \cite{NagelSteinWaingerBallsAndMetricsDefinedByVectorFieldsIBasicProperties}. See Section \ref{Section::MaxSub::Scaling} for the basic scaling result, and Proposition \ref{Prop::SubProof::CompleteProof::ScaledEstimates} for some scaled estimates.
    \item\label{Item::MaxSub::Intro::NewMethod::GaussianBounds} Use the estimates from \ref{Item::MaxSub::Intro::NewMethod::Scale} along with Theorem \ref{Thm::Results::MainThm} to deduce local higher order Gaussian bounds
        for the semigroup \(e^{tA}\). See Section \ref{Section::MaxSub::CompleteProof}.
    \item\label{Item::MaxSub::Intro::NewMethod::Parametrix} Create a parametrix for \(-A\) by integrating this semi-group: \(\int_0^1 e^{tA}\: dt\), and use the Gaussian bounds
        to show that this parametrix is a ``singular integral operator of order \(-2\kappa\).'' See Corollary \ref{Cor::Subellip::MaxSub::HeatImpliesMaxSub}.
    \item Use the parametrix from \ref{Item::MaxSub::Intro::NewMethod::Parametrix} to deduce sharp estimates, as in \ref{Item::MaxSub::Intro::SharpEstimates}. See Remark \ref{Rmk::Subellip::Proof::epsilon0NotOptimal}.
\end{enumerate}

Unlike the previous method, \ref{Item::MaxSub::Intro::NewMethod::GaussianBounds} requires essentially no new estimates beyond
what is proved in \ref{Item::MaxSub::Intro::NewMethod::Scale}. This means that one can proceed using only scaling techniques
and standard techniques for a priori estimates to apply Theorem \ref{Thm::Results::MainThm} and deduce sharp regularity results.
As a result, this simplifies previous proofs, and is more easily adapted to new situations.

    \subsection{Examples}
    In this section, we present several examples where Theorem \ref{Thm::SubMainThm::MainThm}
applies.
Before we do so, we address the following issue. In Theorem \ref{Thm::SubMainThm::MainThm}
we assume the existence of a strongly continuous semi-group 
satisfying certain properties
(e.g., the generator
agrees with \(-\opL\) on its domain).  In the examples we present, the existence
such a semi-group follows easily from the theory of sectorial operators and the Freidrichs Extension,
and we describe this first.

Let \((\opR, \Domain[\opR])\) be a densely defined operator on a Hilbert space \(\HilbertSpace\).
\begin{definition}[{See \cite[Chapter 5, Section 3.10]{KatoPerturbationTheory}}]\label{Defn::Subellip::Examples::SectorialOp}
    \((\opR, \Domain[\opR])\) is said to be sectorial with vertex \(\gamma\in \R\)
    if \(\exists \theta\in [0,\pi/2)\) such that
    \begin{equation*}
        \left\{ \Hip{f}{\opR f} : f\in \Domain[\opR], \HNorm{f}=1 \right\}
        \subseteq\left\{ \zeta\in \C : \left| \mathrm{arg}(\zeta-\gamma) \right|\leq \theta \right\}.
    \end{equation*}
\end{definition}

\begin{remark}\label{Rmk::Subellip::Examples::EquivFormForSectoral}
    Note that \((\opR, \Domain[\opR])\) is sectorial (for some vertex \(\gamma\in \R\)) if and only if \(\exists C_1,C_2\geq 0\),
    \begin{equation*}
        \left| \Imag \Hip{f}{\opR f} \right|
        \leq C_1 \Real \Hip{f}{\opR f}+ C_2 \HNorm{f}^2,\quad \forall f\in \Domain[\opR].
    \end{equation*}
\end{remark}

\begin{example}\label{Example::Subellip::Examples::Symmetric}
    Suppose \((\opR, \Domain[\opR])\) is a symmetric operator which is bounded from below (see \cite[Chapter 5, Section 3.10]{KatoPerturbationTheory}); i.e.,
    \(\opR\) is symmetric and \(\exists \gamma\in \R\) with
    \begin{equation*}
        \gamma \HNorm{f}^2\leq \Hip{f}{\opR f}, \quad \forall f\in \Domain[\opR].
    \end{equation*}
    Then, \((\opR, \Domain[\opR])\) is sectorial with vertex \(\gamma\).
    However, sectorial operators need not be symmetric.
\end{example}

\begin{proposition}[The Freidrichs Extension]\label{Prop::Subellip::Examples::FormClosure}
    Let \((\opR, \Domain[\opR])\) be a sectorial operator with vertex \(\gamma\in \R\).
    Then, there exists an unbounded operator \(\left( A,\Domain[A] \right)\) on \(\HilbertSpace\)
    such that:
    \begin{enumerate}[(i)]
        \item\label{Item::Subellip::Examples::FormClosure::DomainContainment} \(\Domain[\opR]\subseteq \Domain[A]\),
        \item\label{Item::Subellip::Examples::FormClosure::Extension} \(A\big|_{\Domain[\opR]}=-\opR\),
        \item\label{Item::Subellip::Examples::FormClosure::mSectorial} \(\left( -A,\Domain[A] \right)\) is m-sectorial with vertex \(\gamma\) (see \cite[Chapter 5, Section 3.10]{KatoPerturbationTheory}),
        \item\label{Item::Subellip::Examples::FormClosure::Bound} 
        \(\left( A,\Domain[A] \right)\) is the generator for a strongly continuous semi-group \(T(t)\)
        on \(\HilbertSpace\) satisfying
        \begin{equation}\label{Eqn::Subellip::Examples::FormClosure::Bound}
          \Norm{T(t)}[\HilbertSpace\rightarrow\HilbertSpace]\leq e^{-\gamma t},\quad \forall t\geq 0.  
        \end{equation}
        \item\label{Item::Subellip::Examples::FormClosure::LimitsInDist} \(\forall u\in \Domain[A]\) there exists 
        a sequence \(g_j\in \Domain[\opR]\) 
            such that \(g_j\xrightarrow{j\rightarrow \infty} u\) in \(\HilbertSpace\) and
            \begin{equation*}
                \Hip{f}{\opR g_j}\xrightarrow{j\rightarrow \infty} \Hip{f}{-A u},\quad \forall f\in \Domain[\opR].
            \end{equation*}
    \end{enumerate}
\end{proposition}
\begin{proof}[Comments on the proof]
    This result is standard and described in \cite[Chapter 6, Section 2.3]{KatoPerturbationTheory};
    however, to directly demonstrate all the above properties, we expand on the proof given there.
    Define the sesqui-linear form \(\FormQ[f][g]\) with domain \(\Domain[\opR]\) by
    \begin{equation*}
        \FormQ[f][g]=\Hip{f}{\opR g},\quad f,g\in \Domain[\opR].
    \end{equation*}
    By \cite[Chapter 6, Section 1.5, Theorem 1.27]{KatoPerturbationTheory},
    \((\FormQ,\Domain[\opR])\) is closeable; let \((\FormQClosure, \Domain[\FormQClosure])\) denote the closure;
    note that \(\Domain[\opR]\) is a core for \((\FormQClosure, \Domain[\FormQClosure])\).
    By \cite[Chapter 6, Section 1.4, Theorem 1.18]{KatoPerturbationTheory}, \((\FormQClosure, \Domain[\FormQClosure])\)
    is a closed sectorial form with vertex \(\gamma\).
    \cite[Chapter 6, Section 2.1, Theorem 2.1]{KatoPerturbationTheory} associates to \((\FormQClosure, \Domain[\FormQClosure])\)
    an m-sectorial operator \((-A,\Domain[A])\) satisfying \(\FormQClosure[f][g]=\Hip{f}{-A g}\), \(\forall f\in \Domain[\FormQClosure]\) and
    \(g\in \Domain[A]\) (and this operator is maximal with this property--see  \cite[Chapter 6, Section 2.1, Theorem 2.1]{KatoPerturbationTheory} for details).
    Let \(g\in \Domain[\opR]\). Then
    \begin{equation}\label{Eqn::Subellip::Examples::FormClosure::Tmp1}
        \FormQClosure[f][g]=\FormQ[f][g]=\Hip{f}{\opR g},\quad \forall f\in \Domain[\opR].
    \end{equation}
    Since \(\Domain[\opR]\) is a core for \((\FormQClosure, \Domain[\FormQClosure])\),
    \eqref{Eqn::Subellip::Examples::FormClosure::Tmp1} and \cite[Chapter 6, Section 2.1, Theorem 2.1(iii)]{KatoPerturbationTheory} 
    shows \(g\in \Domain[A]\)
    with \(\opR g=-A g\). This completes the proof of \ref{Item::Subellip::Examples::FormClosure::DomainContainment},
     \ref{Item::Subellip::Examples::FormClosure::Extension}, and \ref{Item::Subellip::Examples::FormClosure::mSectorial}.
    For \ref{Item::Subellip::Examples::FormClosure::LimitsInDist}, 
    let \(u\in \Domain[A]\).
    Since
    \(\Domain[A]\subseteq \Domain[\FormQClosure]\) and \((\FormQClosure, \Domain[\FormQClosure])\)
    is the closure of \((\FormQ,\Domain[\opR])\), there exist \(g_j\in \Domain[\opR]\)
    with \(g_j\rightarrow u\) in \(\HilbertSpace\)
    and 
    \begin{equation*}
        \Hip{f}{\opR g_j}=\FormQ[f][g_j]\rightarrow \FormQClosure[f][u]=\Hip{f}{-A u},\quad \forall f\in \Domain[\opR],
    \end{equation*}
    establishing \ref{Item::Subellip::Examples::FormClosure::LimitsInDist}.

    \ref{Item::Subellip::Examples::FormClosure::Bound} follows from 
    \ref{Item::Subellip::Examples::FormClosure::mSectorial} and
    the fact that if 
    \(\left( -A,\Domain[A] \right)\) is an m-sectorial operator with vertex \(\gamma\in \R\),
    then \(\left( A,\Domain[A] \right)\) is the generator of a strongly continuous semi-group
    satisfying \eqref{Eqn::Subellip::Examples::FormClosure::Bound}.
    In fact, by definition (see \cite[Chapter 5, Section 3.10]{KatoPerturbationTheory})
    we have \(\left( -A-\gamma,\Domain[A]  \right)\) is m-accretive.
    From here, \ref{Item::Subellip::Examples::FormClosure::Bound} follows from the Lumer--Phillips Theorem
    \cite[Chapter 2, Theorem 3.15]{EngelNagelAShortCourseOnOperatorSemigroups}.

    In fact, \cite[Chapter 9, Section 1, Theorem 1.24]{KatoPerturbationTheory} shows \(T(t)\)
    is a holomorphic semi-group, though we do not use this fact.

\end{proof}

\begin{remark}
    In the case when \((\opR,\Domain[\opR])\) is symmetric and bounded below (see Example \ref{Example::Subellip::Examples::Symmetric}),
    the proof in Proposition \ref{Prop::Subellip::Examples::FormClosure} yields a self-adjoint extension
    of \((\opR,\Domain[\opR])\) with the same lower bound. This may be the most familiar version of the Freidrichs Extension.
\end{remark}

Now suppose \(\WWd=\left\{ \left( W_1,\Wd_1 \right),\ldots, \left( W_r,\Wd_r \right) \right\}\)
are H\"ormander vector fields with formal degrees on the connected, smooth manifold \(\Manifold\) and \(\mu\)
is a smooth, strictly positive density on \(\Manifold\).
Fix \(\kappa\in \Nplus\) such that \(\Wd_j\) divides \(\kappa\), \(\forall j\).
We consider partial differential operators of the form
\begin{equation}\label{Eqn::SubEllip::Examples::opLFormula}
    \opL=\sum_{\degWd(\alpha),\degWd(\beta)\leq \kappa} \left( W^{\alpha} \right)^{*}a_{\alpha,\beta}(x)W^{\beta},
    \quad a_{\alpha,\beta}\in \CinftySpace[\Manifold],
\end{equation}
where \(*\)-denote the formal \(\LpSpace{2}[\Manifold][\mu]\) adjoint.

For notational convenience we introduce a condition on \(\opL\).
\begin{condition}\label{Cond::SubEllip::Examples::ConditionM}
    We say \(\opL\) given by \eqref{Eqn::SubEllip::Examples::opLFormula} satisfies
    Condition \ref{Cond::SubEllip::Examples::ConditionM} with vertex \(\gamma\in \R\) if:
    \begin{itemize}
        \item Assumption \ref{Assump::SubMainTheorem::MaxSubTypeIII} holds, and
        \item \((\opL, \CzinftySpace[\Manifold])\) is sectorial with vertex \(\gamma\).
    \end{itemize}
    We say \(\opL\) satisfies Condition \ref{Cond::SubEllip::Examples::ConditionM} if \(\exists \gamma\in \R\) such that
    \(\opL\) satisfies Condition \ref{Cond::SubEllip::Examples::ConditionM} with vertex \(\gamma\).
\end{condition}

\begin{proposition}\label{Prop::SubEllip::Examples::ConditionMConclusions}
    Suppose \(\opL\) given by \eqref{Eqn::SubEllip::Examples::opLFormula} satisfies Condition \ref{Cond::SubEllip::Examples::ConditionM} with vertex \(\gamma\in \R\).
    Then,
    \begin{enumerate}[(i)]
        \item\label{Item::SubEllip::Examples::ConditionMConclusions::MaxSub} \(\opL\) is maximally subelliptic of degree \(2\kappa\) with respect to \(\WWd\),
        \item\label{Item::SubEllip::Examples::ConditionMConclusions::ExistsSemiGroup} There exists a strongly continuous semi-group \(T(t)\) on \(\LpSpace{2}[\Manifold][\mu]\), satisfying \(\|T(t)\|\leq e^{-\gamma t}\),
    with generator \(\left( A,\Domain[A] \right)\) such that
    \(\CzinftySpace[\Manifold]\subseteq \Domain[A]\) and \(-A=\opL\big|_{\Domain[A]}\), 
    where \(\opL\)
    is taken in the sense of distributions.
        \item\label{Item::SubEllip::Examples::ConditionMConclusions::GaussianBounds} 
        For any strongly continuous semi-group \(T(t)\) satisfying the conclusions of \ref{Item::SubEllip::Examples::ConditionMConclusions::ExistsSemiGroup},
     \(T(t)\) satisfies the conclusions of Theorem \ref{Thm::SubMainThm::MainThm} with \(\omega_0=-\gamma\).
    \end{enumerate}

\end{proposition}
\begin{proof}
    \ref{Item::SubEllip::Examples::ConditionMConclusions::ExistsSemiGroup}: Since Condition \ref{Cond::SubEllip::Examples::ConditionM}
    assumes \((\opL, \CzinftySpace[\Manifold])\) is  sectorial with vertex \(\gamma\),
    Proposition \ref{Prop::Subellip::Examples::FormClosure}
    (applied with \((\opR,\Domain[\opR])=(\opL,\CzinftySpace[\Manifold])\))
    gives the existence of the semi-group
    \(T(t)\), satisfying \(\|T(t)\|\leq e^{-\gamma t}\), with generator \(\left( A,\Domain[A] \right)\), 
    \(\CzinftySpace[\Manifold]\subseteq \Domain[A]\) and
     \(A\big|_{\CzinftySpace[\Manifold]}=-\opL\).
    
     To see \(-A=\opL\big|_{\Domain[A]}\), where \(\opL\)
    is taken in the sense of distributions take \(u\in \Domain[A]\).
    Proposition \ref{Prop::Subellip::Examples::FormClosure}
    \ref{Item::Subellip::Examples::FormClosure::LimitsInDist}
    shows that there exist \(g_j\in \CzinftySpace[\Manifold]\) with \(g_j\rightarrow u\) in
    \(\LpSpace{2}[\Manifold][\mu]\) and
    \begin{equation*}
        \Ltip{f}{\opL g_j}\rightarrow \Ltip{f}{-A u},\quad \forall f\in \CzinftySpace[\Manifold].
    \end{equation*}
    However, we also have \(\opL g_j\rightarrow \opL u\) in distribution, and therefore
    \begin{equation*}
        \Ltip{f}{\opL g_j}\rightarrow \Ltip{f}{\opL u},\quad \forall f\in \CzinftySpace[\Manifold].
    \end{equation*}
    We conclude \(\opL u=-A u\).


    Since Condition \ref{Cond::SubEllip::Examples::ConditionM} includes that Assumption \ref{Assump::SubMainTheorem::MaxSubTypeIII} holds,
    \ref{Item::SubEllip::Examples::ConditionMConclusions::GaussianBounds} follows immediately from Theorem \ref{Thm::SubMainThm::MainThm}.

    \ref{Item::SubEllip::Examples::ConditionMConclusions::MaxSub} follows from
    Corollary \ref{Cor::Subellip::MaxSub::HeatImpliesMaxSub}, whose conditions
    are verified by \ref{Item::SubEllip::Examples::ConditionMConclusions::ExistsSemiGroup}
    and Condition \ref{Cond::SubEllip::Examples::ConditionM}.
\end{proof}

In light of Proposition \ref{Prop::SubEllip::Examples::ConditionMConclusions} to give examples
where Theorem \ref{Thm::SubMainThm::MainThm} applies, it suffices to give examples where
Condition \ref{Cond::SubEllip::Examples::ConditionM} holds.  The next proposition gives such examples.

\begin{proposition}\label{Prop::Subellip::Examples::Examples}
    The following examples satisfy Condition \ref{Cond::SubEllip::Examples::ConditionM},
    and therefore (by Proposition \ref{Prop::SubEllip::Examples::ConditionMConclusions})
    are maximally subelliptic and are associated to semi-groups satisfying higher order Gaussian bounds
    as in Theorem \ref{Thm::SubMainThm::MainThm}.
    Throughout, \(\Manifold\) is a connected, smooth manifold endowed with a smooth, strictly positive
    density \(\mu\). \(\WWd=\left\{\left( W_1,\Wd_1 \right),\ldots, \left( W_r, \Wd_r \right)  \right\}\)
    are H\"ormander vector fields with formal degrees on \(\Manifold\); either chosen in the example
    or assumed given. All adjoints are formal \(\LpSpace{2}[\Manifold][\mu]\) adjoints.
    \begin{enumerate}[(a)]
        \item\label{Item::Subellip::Examples::Examples::HormanderSubLap} Let \(W_1,\ldots, W_r\) be H\"ormander vector fields on \(\Manifold\), and let \(n_1,\ldots, n_r\in \Nplus\)
            be given. Set
            \begin{equation}\label{Eqn::Subellip::Examples::Examples::HormanderSubLap}
                \opL:=\sum_{j=1}^r \left( W_1^{n_1} \right)^{*} W_1^{n_1}+\cdots + \left( W_r^{n_r} \right)^{*} W_r^{n_r}.
            \end{equation}
            Let \(\kappa\in \Nplus\) be the least common multiple of \(n_1,\ldots,n_r\) and set
            \(\Wd_j=\kappa/n_j\in \Nplus\).  Then, \(\opL\) satisfies Condition \ref{Cond::SubEllip::Examples::ConditionM} with
            vertex \(0\) with
            this choice of \(\WWd\).
        \item\label{Item::Subellip::Examples::Examples::MaxSub} Let \(\opP\) be as in \eqref{Eqn::SubEllip::MaxSub::FormulaForP} and assume
            \(\opP\) is maximally subelliptic of degree \(\kappa\) with respect to \(\WWd\)
            as in Definition \ref{Defn::SubEllip::MaxSub::MaxSub}. Then, \(\opL=\opP^{*}\opP\)
            satisfies Condition \ref{Cond::SubEllip::Examples::ConditionM} with vertex \(0\).

        \item\label{Item::Subellip::Examples::Examples::GeneralSubLap}
            More generally than \ref{Item::Subellip::Examples::Examples::HormanderSubLap},
        let \(W_1,\ldots, W_r\) be H\"ormander vector fields on \(\Manifold\), and let \(n_1,\ldots, n_r\in \Nplus\)
            be given. For each \(1\leq i,j\leq r\), let \(a_i^j\in \CinftySpace[\Manifold]\).
            We suppose
            \begin{enumerate}[label=(\alph{enumi}.\roman*)]
                \item\label{Item::Subellip::Examples::Examples::GeneralSubLap::MaxSub} \(\forall \Compact\Subset \Manifold\) compact, \(\exists A=A(\Compact)\geq 0\),
                    \(\forall \xi=(\xi_1,\ldots, \xi_r)\in \C^r\), \(\forall x\in \Compact\),
                    \begin{equation*}
                        |\xi|^2 \leq A \Real \sum_{i,j} \xi_i \overline{a_i^j(x) \xi_j}.
                    \end{equation*}
                \item\label{Item::Subellip::Examples::Examples::GeneralSubLap::Sectorial} \(\exists B\geq 0\), \(\forall \xi=(\xi_1,\ldots, \xi_r)\in \C^r\), \(\forall x\in \Manifold\),
                    \begin{equation*}
                        \left| \Imag \xi_i \overline{a_i^j(x) \xi_j} \right|
                        \leq B \Real \xi_i \overline{a_i^j(x) \xi_j}.
                    \end{equation*}
            \end{enumerate}
            Let \(\kappa\in \Nplus\) be the least common multiple of \(n_1,\ldots,n_r\) and set
            \(\Wd_j=\kappa/n_j\in \Nplus\).
            Then, \(\opL=\sum_{i,j=1}^r \left( W_i^{n_i} \right)^{*} a_i^j W_j^{n_j}\)
            satisfies Condition \ref{Cond::SubEllip::Examples::ConditionM} with this choice of \(\WWd\).

        \item\label{Item::Subellip::Examples::Examples::Perturb} Suppose \(\opL\) is as in \eqref{Eqn::SubEllip::Examples::opLFormula} and satisfies Condition \ref{Cond::SubEllip::Examples::ConditionM}.
            Let \(\opE\) be a partial differential operator of the form
            \begin{equation*}
                \opE=\sum_{\degWd(\alpha),\degWd(\beta)\leq \kappa}  \left( W^{\alpha} \right)^{*} b_{\alpha,\beta} W^{\beta},\quad b_{\alpha,\beta}\in \CinftySpace[\Manifold].
            \end{equation*}
            Suppose \(\exists a\in [0,1)\), \(c\in [0,\infty)\) with
            \begin{equation}\label{Eqn::Subellip::Examples::Examples::Perturb::AssumedBound}
                \left| \Ltip{f}{\opE f}[\Manifold][\mu] \right|\leq a \Real \Ltip{f}{\opL f}[\Manifold][\mu] + c\LpNorm{f}{2}[\Manifold][\mu]^2,\quad \forall f\in \CzinftySpace[\Manifold].
            \end{equation}
            Then, \(\opL+\opE\) satisfies Condition \ref{Cond::SubEllip::Examples::ConditionM}.
        \item\label{Item::Subellip::Examples::Examples::PerturbMaxtrix} Let \(W_1,\ldots, W_r\) be H\"ormander vector fields on \(\Manifold\), and let \(n_1,\ldots, n_r\in \Nplus\)
            be given.  Let \(\kappa,\Wd_j\in \Nplus\) be as in \ref{Item::Subellip::Examples::Examples::HormanderSubLap}
            and define \(\opL\) by \eqref{Eqn::Subellip::Examples::Examples::HormanderSubLap}.
            Let \(B(x)=(b_{i,j}(x))\in \CinftySpace[\Manifold][\M^{r\times r}(\C)]\) and suppose \(\exists a\in [0,1)\)
            with
            \begin{equation}\label{Eqn::Subellip::Examples::Examples::PerturbMaxtrix::BBound}
                \left| \ip{\xi}{B(x)\xi} \right| \leq a |\xi|^2, \quad \forall x\in \Manifold, \xi\in \C^r.
            \end{equation}
            Then, \(\opL + \sum_{j=1}^r \left( W_i^{n_i} \right)^{*} b_{i,j} W_j^{n_j}\)
            satisfies Condition \ref{Cond::SubEllip::Examples::ConditionM} with \(\WWd\) as in \ref{Item::Subellip::Examples::Examples::HormanderSubLap}.
        \item\label{Item::Subellip::Examples::Examples::PerturbScalar} Let \(r=2\) and \(W_1,W_2\) H\"ormander vector fields on \(\Manifold\).
            Let \(n_1,n_2\in \Nplus\). For \(\alpha\in \C\), set
            \begin{equation*}
                \opL_{\alpha}:= \left( W_1^{n_1} \right)^{*} W_1^{n_1} + \left( W_2^{n_2} \right)^{*} W_2^{n_2} + \alpha \left( W_1^{n_1} \right)^{*} W_2^{n_2}.
            \end{equation*}
             Let \(\kappa\in \Nplus\) be the least common multiple of \(n_1,n_2\) and set
            \(\Wd_j=\kappa/n_j\in \Nplus\). Then, if \(|\alpha|<2\), \(\opL_\alpha\) satisfies Condition \ref{Cond::SubEllip::Examples::ConditionM}
            with \(\WWd=\left\{ \left( W_1,\Wd_1 \right),\left( W_2,\Wd_2 \right) \right\}\).

        \item\label{Item::Subellip::Examples::Examples::PerturbLowerOrder} Suppose \(\Manifold\) is compact and  \(\opL\) is as in \eqref{Eqn::SubEllip::Examples::opLFormula} and satisfies Condition \ref{Cond::SubEllip::Examples::ConditionM}.
             Let \(\opE\) be a partial differential operator of the form
            \begin{equation*}
                \opE=\sum_{\substack{\degWd(\alpha),\degWd(\beta)\leq \kappa \\ \degWd(\alpha)+\degWd(\beta)<2\kappa}}  \left( W^{\alpha} \right)^{*} b_{\alpha,\beta} W^{\beta},\quad b_{\alpha,\beta}\in \CinftySpace[\Manifold].
            \end{equation*}
            Then, \(\opL+\opE\) satisfies Condition \ref{Cond::SubEllip::Examples::ConditionM}.
    \end{enumerate}
\end{proposition}
\begin{proof}
    \ref{Item::Subellip::Examples::Examples::MaxSub}: With \(\opL=\opP^{*}\opP\), Assumption \ref{Assump::SubMainTheorem::MaxSubTypeIII}
    is just a restatement of Definition \ref{Defn::SubEllip::MaxSub::MaxSub}.
    \(\left( \opP^{*}\opP, \CzinftySpace[\Manifold] \right)\) is clearly a non-negative symmetric operator
    and therefore is sectorial with vertex \(0\) (see Example \ref{Example::Subellip::Examples::Symmetric}).

    \ref{Item::Subellip::Examples::Examples::HormanderSubLap}: 
    This is the special case of \ref{Item::Subellip::Examples::Examples::MaxSub} with 
    \(N=r\) in Definition \ref{Defn::SubEllip::MaxSub::MaxSub},
    and \(\opP f=\left(W_1^{n_1}f, W_2^{n_2}f,\ldots, W_r^{n_r} f  \right)\).  It is immediate from the definitions
    that \(\opP\) is maximally subelliptic of degree \(\kappa\) with respect to \(\WWd\).

    \ref{Item::Subellip::Examples::Examples::GeneralSubLap}:
    Using \ref{Item::Subellip::Examples::Examples::GeneralSubLap::Sectorial}, we have \(\forall u\in \CzinftySpace[\Manifold]\),
    \begin{equation*}
        \begin{split}
            &\left| \Imag \Ltip{u}{\opL u}[\Manifold][\mu] \right|
            =\left| \sum_{i,j} \Imag \Ltip{W_i^{n_i} u}{a_i^j W_j^{n_j} u}[\Manifold][\mu] \right|
            \\&\leq B  \sum_{i,j} \Real \Ltip{W_i^{n_i} u}{a_i^j W_j^{n_j} u}[\Manifold][\mu] 
            =B \Real \Ltip{u}{\opL u}[\Manifold][\mu].
        \end{split}
    \end{equation*}
    By Remark \ref{Rmk::Subellip::Examples::EquivFormForSectoral}, this shows \(\left( \opL, \CzinftySpace[\Manifold] \right)\)
    is sectoral.
    Fix \(\Omega\Subset \Manifold\) open and relatively compact.
    Let \(A\) be as in \ref{Item::Subellip::Examples::Examples::GeneralSubLap::MaxSub} with
    \(\Compact=\overline{\Omega}\).  We have, \(\forall u\in \CzinftySpace[\Omega]\),
    \begin{equation*}
    \begin{split}
         & \sum_{j=1}^r \BLpNorm{W_i^{n_i} u}{2}[\Manifold][\mu]^2
         \leq A \sum_{i,j} \Real \Ltip{W_i^{n_i} u}{a_i^j W_j^{n_j} u}[\Manifold][\mu] 
         =A \Real \Ltip{u}{\opL u}[\Manifold][\mu].
    \end{split}
    \end{equation*}
    This establishes Assumption \ref{Assump::SubMainTheorem::MaxSubTypeIII} and completes the proof.

    \ref{Item::Subellip::Examples::Examples::Perturb}: 
    That \((\opL+\opE, \CzinftySpace[\Manifold])\) is sectorial follows directly from \cite[Chapter 6, Section 1.6, Theorem 1.33]{KatoPerturbationTheory};
    however, we include the proof here as the same initial estimate \eqref{Eqn::Subellip::Examples::Examples::Perturb::Tmp1} is also used to establish Assumption \ref{Assump::SubMainTheorem::MaxSubTypeIII}
    for \(\opL+\opE\).

    \eqref{Eqn::Subellip::Examples::Examples::Perturb::AssumedBound}
    gives, for \(f\in \CzinftySpace[\Manifold]\),
    \begin{equation*}
    \begin{split}
         &\Real \Ltip*{f}{\left(\opL+\opE\right)f}\geq \Real \Ltip*{f}{\opL f} -\left| \Ltip*{f}{\opE g} \right|
         \geq (1-a) \Real \Ltip*{f}{\opL f} - c\LpNorm{f}{2}^2.
    \end{split}
    \end{equation*}
    Therefore,
    \begin{equation}\label{Eqn::Subellip::Examples::Examples::Perturb::Tmp1}
        \Real \Ltip*{f}{\opL f}\leq (1-a)^{-1}\Real \Ltip*{f}{\left(\opL+\opE\right)f} + c(1-a)^{-1}\LpNorm{f}{2}^2.
    \end{equation}
    The assumption \((\opL,\CzinftySpace[\Manifold])\) is sectorial is equivalent to 
    \begin{equation}\label{Eqn::Subellip::Examples::Examples::Perturb::Tmp2}
        \left| \Imag \Ltip{f}{\opL f} \right|
        \leq C_1 \Real \Ltip{f}{\opL f}+ C_2 \LpNorm{f}{2}^2,\quad \forall f\in \CzinftySpace[\Manifold],
    \end{equation}
    for some \(C_1,C_2\geq 0\);
    see Remark \ref{Rmk::Subellip::Examples::EquivFormForSectoral}.
    Combining \eqref{Eqn::Subellip::Examples::Examples::Perturb::Tmp1} and \eqref{Eqn::Subellip::Examples::Examples::Perturb::Tmp2},
    we see
    \begin{equation}\label{Eqn::Subellip::Examples::Examples::Perturb::Tmp3}
        \left| \Imag \Ltip{f}{\opL f} \right|
        \leq C_1' \Real \Ltip{f}{(\opL+\opE) f}+ C_2' \LpNorm{f}{2}^2,\quad \forall f\in \CzinftySpace[\Manifold].
    \end{equation}
    Using \eqref{Eqn::Subellip::Examples::Examples::Perturb::Tmp3} and \eqref{Eqn::Subellip::Examples::Examples::Perturb::AssumedBound},
    we see, \(\forall f\in \CzinftySpace[\Manifold]\),
    \begin{equation*}
        \left| \Imag \Ltip{f}{(\opL+\opE) f} \right|
        \leq \left| \Imag \Ltip{f}{\opL f} \right| + \left| \Ltip{f}{\opE f} \right|
        \leq C_1'' \Real \Ltip{f}{(\opL+\opE) f}+ C_2'' \LpNorm{f}{2}^2.
    \end{equation*}
    This shows that \((\opL+\opE,\CzinftySpace[\Manifold])\) is sectoral (see Remark \ref{Rmk::Subellip::Examples::EquivFormForSectoral}).

    Next, we verify that \(\opL+\opE\) satisfies Assumption \ref{Assump::SubMainTheorem::MaxSubTypeIII}.
    Fix \(\Omega\Subset \Manifold\) open and relatively compact. Since \(\opL\) satisfies Assumption \ref{Assump::SubMainTheorem::MaxSubTypeIII},
    there exist \(C_1, C_2\geq 0\), such that \(\forall f\in \CzinftySpace[\Omega]\),
    \begin{equation}\label{Eqn::Subellip::Examples::Examples::Perturb::Tmp4}
        \sum_{j=1}^r \BLpNorm{ W_j^{\kappa/\Wd_j} f}{2}[\Manifold][\measure]^2
        \leq C_1 \Real \Ltip*{f}{\opL f}[\Manifold][\measure]
        +C_2 \BLpNorm{f}{2}[\Manifold][\measure]^2.
    \end{equation}
    Combining \eqref{Eqn::Subellip::Examples::Examples::Perturb::Tmp1}
    and \eqref{Eqn::Subellip::Examples::Examples::Perturb::Tmp4} shows
    \begin{equation*}
        \sum_{j=1}^r \BLpNorm{ W_j^{\kappa/\Wd_j} f}{2}[\Manifold][\measure]^2
        \leq C_1' \Real \Ltip*{f}{(\opL+\opE) f}[\Manifold][\measure]
        +C_2' \BLpNorm{f}{2}[\Manifold][\measure]^2, \quad \forall f\in \CzinftySpace[\Omega],
    \end{equation*}
    establishing  Assumption \ref{Assump::SubMainTheorem::MaxSubTypeIII} for \(\opL+\opE\)
    and completing the proof of \ref{Item::Subellip::Examples::Examples::Perturb}.

    \ref{Item::Subellip::Examples::Examples::PerturbMaxtrix}: Let \(\opE:=\sum_{j=1}^r \left( W_i^{n_i} \right)^{*} b_{i,j} W_j^{n_j}\).
    Then, \eqref{Eqn::Subellip::Examples::Examples::PerturbMaxtrix::BBound} implies, for \(f\in \CzinftySpace[\Manifold]\),
    \begin{equation*}
        \left| \Ltip{f}{\opE f} \right|
        \leq a\sum_{j=1}^r \BLpNorm{W_j^{n_j}f}{2}^2
        =a \Real \Ltip{f}{\opL f}.
    \end{equation*}
    From here, \ref{Item::Subellip::Examples::Examples::PerturbMaxtrix} follows from 
    \ref{Item::Subellip::Examples::Examples::HormanderSubLap} and 
    \ref{Item::Subellip::Examples::Examples::Perturb}.

    \ref{Item::Subellip::Examples::Examples::PerturbScalar}:
    This is the special case of \ref{Item::Subellip::Examples::Examples::PerturbMaxtrix} with \(r=2\),
    \(b_{1,2}=\alpha\), \(b_{i,j}=0\) for \((i,j)\ne (1,2)\).

    \ref{Item::Subellip::Examples::Examples::PerturbLowerOrder}:
    To prove this result, we require some new notation and some results from \cite{StreetMaximalSubellipticity}. We write \(A\leq \sconst B_1+\lconst B_2\) to mean
    \(\forall \epsilon>0\), \(\exists C_\epsilon\geq 0\), \(A\leq \epsilon B_1+C_{\epsilon}B_2\);
    here \(\sconst\) and \(\lconst\) stand for ``small constant'' and ``large constant,'' respectively.
    In \cite[Section 6.2]{StreetMaximalSubellipticity}, Triebel--Lizorkin spaces with respect to \(\WWd\)
    were defined (here we take \(\mathcal{K}=\Manifold\) and \(\psi=1\) in that reference, using the fact
    that \(\Manifold\) is compact).  In particular, operators \(D_j\), \(j\in \N\), were given and the corresponding Triebel--Lizorkin norm
    was defined by
    \begin{equation}\label{Eqn::Subellip::Examples::Examples::PerturbLowerOrder::Tmp0}
        \TLNorm{f}{s}:=\LtltNorm{ \left\{ 2^{js}D_j f \right\}_{j\in \N} }.
    \end{equation}
    \cite[Corollary 6.2.14 (6.5)]{StreetMaximalSubellipticity} shows
    \begin{equation}\label{Eqn::Subellip::Examples::Examples::PerturbLowerOrder::EquivNorms}
        \TLNorm{f}{\kappa}^2\approx \sum_{\deg(\alpha)\leq \kappa} \BLpNorm{W^{\alpha}f}{2}^2\approx \sum_{j=1}^r \BLpNorm{ W_j^{\kappa/\Wd_j} f}{2}[\Manifold][\measure]^2 +\BLpNorm{f}{2}[\Manifold][\measure]^2.
    \end{equation}
    \cite[Propositions 6.2.11 and 6.2.13]{StreetMaximalSubellipticity} show
    \begin{equation}\label{Eqn::Subellip::Examples::Examples::PerturbLowerOrder::Tmp1}
        \sum_{\deg(\alpha)\leq \kappa-1} \BLpNorm{W^{\alpha}f}{2}
        \lesssim \TLNorm{f}{\kappa-1}.
    \end{equation}
    Using \eqref{Eqn::Subellip::Examples::Examples::PerturbLowerOrder::Tmp0} and \cite[Proposition 6.2.13]{StreetMaximalSubellipticity}, we have
    \begin{equation}\label{Eqn::Subellip::Examples::Examples::PerturbLowerOrder::Tmp2}
        \begin{split}
            &\TLNorm{f}{\kappa-1} =\LtltNorm{ \left\{ 2^{j(\kappa-1)}D_j f \right\}_{j\in \N} }
            \\&\leq \sconst \LtltNorm{ \left\{ 2^{j\kappa}D_j f \right\}_{j\in \N} }
            +\lconst \LtltNorm{ \left\{ D_j f \right\}_{j\in \N} }
            \\&=\sconst \TLNorm{f}{\kappa} + \lconst\TLNorm{f}{0}^2
            \\&=\sconst \TLNorm{f}{\kappa} + \lconst\LpNorm{f}{2}^2
        \end{split}
    \end{equation}
    Combining \eqref{Eqn::Subellip::Examples::Examples::PerturbLowerOrder::Tmp1}, \eqref{Eqn::Subellip::Examples::Examples::PerturbLowerOrder::Tmp2},
    and \eqref{Eqn::Subellip::Examples::Examples::PerturbLowerOrder::EquivNorms}, we see
    \begin{equation}\label{Eqn::Subellip::Examples::Examples::PerturbLowerOrder::Tmp3}
    \begin{split}
         &\sum_{\deg(\alpha)\leq \kappa-1} \BLpNorm{W^{\alpha}f}{2}
         \leq \sconst \sum_{j=1}^r \BLpNorm{ W_j^{\kappa/\Wd_j} f}{2}^2 +
         \lconst \BLpNorm{f}{2}^2.
    \end{split}
    \end{equation}

    Since \(\Manifold\) is compact, Assumption \ref{Assump::SubMainTheorem::MaxSubTypeIII}
    for \(\opL\) (with \(\Omega=\Manifold\)) implies
    \(\exists C_1,C_2\geq 0\),
    \begin{equation}\label{Eqn::Subellip::Examples::Examples::PerturbLowerOrder::Tmp6}
        \sum_{j=1}^r \BLpNorm{ W_j^{\kappa/\Wd_j} f}{2}^2
        \leq C_1 \Real \Ltip*{f}{\opL f}
        +C_2 \BLpNorm{f}{2}^2, \quad \forall f\in \CinftySpace[\Manifold].
    \end{equation}
    Now, consider, for \(f\in \CinftySpace[\Manifold]\),
    \begin{equation}\label{Eqn::Subellip::Examples::Examples::PerturbLowerOrder::Tmp4}
    \begin{split}
         &\left| \Ltip*{f}{\opE f}\right|
         \leq \sum_{\substack{\degWd(\alpha),\degWd(\beta)\leq \kappa \\ \degWd(\alpha)+\degWd(\beta)<2\kappa}}  \left| \Ltip*{W^{\alpha} f}{b_{\alpha,\beta} W^{\beta}f} \right|
         \lesssim \sum_{\substack{\degWd(\alpha)\leq \kappa-1\\\degWd(\beta)\leq\kappa}} \BLpNorm{W^{\alpha}f}{2}\BLpNorm{W^{\beta}f}{2}
         \\&\leq \sconst \sum_{\degWd(\alpha)\leq \kappa} \BLpNorm{W^{\alpha}f}{2}^2 + \lconst \sum_{\degWd(\alpha)\leq \kappa-1} \BLpNorm{W^{\alpha}f}{2}^2
         \\&\leq \sconst \sum_{\degWd(\alpha)\leq \kappa} \BLpNorm{W^{\alpha}f}{2}^2 + \sconst \sum_{j=1}^r \BLpNorm{ W_j^{\kappa/\Wd_j} f}{2}^2  + \lconst \BLpNorm{f}{2}^2,
    \end{split}
    \end{equation}
    where the final estimate used \eqref{Eqn::Subellip::Examples::Examples::PerturbLowerOrder::Tmp3}.
    Applying \eqref{Eqn::Subellip::Examples::Examples::PerturbLowerOrder::EquivNorms}
    to the first term on the right-hand side of \eqref{Eqn::Subellip::Examples::Examples::PerturbLowerOrder::Tmp4}
    shows
    \begin{equation}\label{Eqn::Subellip::Examples::Examples::PerturbLowerOrder::Tmp5}
    \begin{split}
         &\left| \Ltip*{f}{\opE f}\right|
         \leq \sconst \sum_{j=1}^r \BLpNorm{ W_j^{\kappa/\Wd_j} f}{2}^2  + \lconst \BLpNorm{f}{2}^2.
    \end{split}
    \end{equation}
    Combining \eqref{Eqn::Subellip::Examples::Examples::PerturbLowerOrder::Tmp5} and
    \eqref{Eqn::Subellip::Examples::Examples::PerturbLowerOrder::Tmp6} shows
    \begin{equation}\label{Eqn::Subellip::Examples::Examples::PerturbLowerOrder::Tmp7}
        \left| \Ltip*{f}{\opE f}\right|
        \leq \sconst \Real \Ltip*{f}{\opL f} + \lconst \BLpNorm{f}{2}^2, \quad \forall f\in \CinftySpace[\Manifold].
    \end{equation}
    Using \eqref{Eqn::Subellip::Examples::Examples::PerturbLowerOrder::Tmp7}, \ref{Item::Subellip::Examples::Examples::PerturbLowerOrder} follows from
    \ref{Item::Subellip::Examples::Examples::Perturb}.
\end{proof}

\begin{remark}\label{Rmk::Subellip::Examples::ExamplesAreMaxSub}
    Proposition \ref{Prop::Subellip::Examples::Examples} shows that the examples given there
    are maximally subelliptic. This is often not obvious directly from the definitions.
    For example, consider H\"ormander's sub-Laplacian \(\opL=W_1^{*}W_1+\cdots+W_r^{*}W_r\),
    where \(W_1,\ldots, W_r\) are vector fields satisfying H\"ormander's condition.
    It is a result of Rothschild and Stein \cite{RothschildSteinHypoellipticDifferentialOperatorsAndNilpotentGroups}
    that \(\opL\) is maximally subelliptic of degree \(2\) with respect to
    \(\left\{ \left( W_1,1 \right),\ldots, \left( W_r,1 \right) \right\}\);
    however, this does not follow directly from the definitions.
    Proposition \ref{Prop::Subellip::Examples::Examples} gives another proof of this fact
    which generalizes to many other situations (for example, all the operators described in that proposition).
\end{remark}

\begin{remark}\label{Rmk::Subellip::Examples::LeftParametrixForP}
    In light of Corollary \ref{Cor::Subellip::MaxSub::HeatImpliesMaxSub},
    the operators in Proposition \ref{Prop::Subellip::Examples::Examples} all have
    nice two-sided parametrices. This is particularly useful in \ref{Item::Subellip::Examples::Examples::MaxSub}
    where if \(S\) is the two-sided parametrix for \(\opP^{*}\opP\), then
    \(S\opP^{*}\) is a left parametrix for \(\opP\).
    This is how left parametrices for general maximally subelliptic operators were
    constructed in \cite[Theorem 8.1.1]{StreetMaximalSubellipticity}.
\end{remark}

    \subsection{Proof of Theorem \texorpdfstring{\ref{Thm::SubMainThm::MainThm}}{\ref*{Thm::SubMainThm::MainThm}}}\label{Section::Subellip::Proof}
    In this section, we present the proof of Theorem \ref{Thm::SubMainThm::MainThm}.
This is separated into three main parts:
\begin{enumerate}[(I)]
    \item\label{Item::SubProof::Intro::APriori} A priori subelliptic estimates at the unit scale, established using techniques of Kohn \cite{KohnLecturesOnDegenerateEllipticProblems}. 
    See Section \ref{Section::Subellip::Proof::UnitScale}.
    \item\label{Item::SubProof::Intro::Scaling} A scaling result based on the work of Nagel, Stein, and Wainger \cite{NagelSteinWaingerBallsAndMetricsDefinedByVectorFieldsIBasicProperties}. 
    See Section \ref{Section::MaxSub::Scaling}.
    \item\label{Item::SubProof::Intro::Conclusion} Putting \ref{Item::SubProof::Intro::APriori}
    and \ref{Item::SubProof::Intro::Scaling}
    together to conclude hypoelliptic estimates at every scale, uniformly
        in the scale. This shows that Theorem \ref{Thm::Results::MainThm} applies to complete the proof. See Section \ref{Section::MaxSub::CompleteProof}.
\end{enumerate}
\ref{Item::SubProof::Intro::APriori} and \ref{Item::SubProof::Intro::Scaling} are both well-known,
and we just describe the main results and refer the reader to other sources for the proofs.
\ref{Item::SubProof::Intro::Conclusion} is the part of the proof of Theorem \ref{Thm::SubMainThm::MainThm}
which is new.

        \subsubsection{A priori subelliptic estimates at the unit scale}\label{Section::Subellip::Proof::UnitScale}
        In this section, we present a priori subelliptic estimates at the unit scale.
In the proof of Proposition \ref{Prop::SubProof::CompleteProof::ScaledEstimates}, below, we apply the results of this section an infinite number of times, and it is important
that the estimates here are uniform over this infinite number of applications.
Because of this, we are explicit about which quantities the constant in the main estimate \eqref{Eqn::Subellip::Proof::Apriori::Est} depends on.

We work on the unit ball \(B^n(1)\subseteq \R^n\).  Let \(\CbinftySpace[B^n(1)]\)
denote the Fr\'echet space of those smooth functions \(f:B^n(1)\rightarrow \C\)
all of whose derivatives are bounded.
Let \(\sigma\in \CbinftySpace[B^n(1)][(0,\infty)]\) be bounded below.
The role of \(\sigma\) is to account for the fact that while we prove results using Lebesgue
measure in this section, when we apply the results, we need to work with another measure
(\(\sigma(x)\: dx\)); see Remark \ref{Rmk::SubProof::QualSubellip::PullBackOp}.

Let \(W_1,\ldots,W_r\) be smooth H\"ormander vector fields on \(B^n(1)\),
such that when written in standard coordinates \(W_j=\sum b_j^k \partial_{x_k}\)
with \(b_j^k\in \CbinftySpace[B^n(1)][\R]\).
We assume \(W_1,\ldots, W_r\) are H\"ormander vector fields of order \(m\) in the sense that if \(X_1,\ldots, X_q\)
are all the commutators of \(W_1,\ldots, W_r\) up to order \(m\), then
\begin{equation}\label{Eqn::Subellip::Proof::Apriori::LowerBoundOfDet}
    0<\tau:=\inf_{x\in B^n(1)}\max_{j_1,\ldots, j_n} \left| \det\left( X_{j_1}(x)|\cdots | X_{j_n}(x) \right) \right|.
\end{equation}
Assign to each \(W_j\) a formal degree \(\Wd_j\in \Nplus\), and define \(\degWd(\alpha)\)
as in Definition \ref{Defn::SubMainTheorem::OrderedMultiIndex}.

Fix \(\kappa\in \Nplus\) such that \(\Wd_j\) divides \(\kappa\) for each \(j\).
For each \(\degWd(\alpha),\degWd(\beta)\leq \kappa\), let \(a_{\alpha,\beta}\in \CbinftySpace[B^n(1)]\).
Define a partial differential operator
\begin{equation}\label{Eqn::Subellip::Proof::Apriori::opLFormula}
    \opL:=\sum_{\degWd(\alpha),\degWd(\beta)\leq \kappa} \sigma^{-1} \left( W^{\alpha} \right)^{*} \sigma a_{\alpha,\beta} W^{\beta},
\end{equation}
where \(*\) denotes the \(\LpSpace{2}[B^n(1)]\) adjoint.

\begin{numberedassumption}\label{Assumption::Subellip::Proof::Apriori}
    There exists \(C_1,C_2\geq 0\), \(\forall f\in \CzinftySpace[B^n(1)]\),
    \begin{equation*}
        \sum_{j=1}^r \BLpNorm{W_j^{\kappa/\Wd_j} f}{2}[B^n(1)]^2
        \leq C_1 \Real \Ltip*{f}{\sigma \opL f}[B^n(1)] + C_2 \BLpNorm{f}{2}[B^n(1)]^2.
    \end{equation*}
\end{numberedassumption}

Consider \(\Ropn\) with coordinates \((t,x)\in \R\times \R^n\cong \Ropn\). Let \(\tau\) be the dual variable
to \(t\) and \(\xi\) be the dual variable to \(x\). We work with the nonisotropic \(\LpSpace{2}\)-Sobolev spaces
\(\HsSpace{s}[\Ropn]\) with norm:
\begin{equation}\label{Eqn::Subellip::Proof::Apriori::HsRopnNorm}
    \HsNorm{u}{s}[\Ropn]:=\BLpNorm{ \left( 1+|\tau|^{4\kappa}+|\xi|^2 \right)^{s/2}\hat{u}(\tau,\xi)}{2}[\Ropn].
\end{equation}
We also use the isotropic \(\LpSpace{2}\)-Sobolev spaces \(\HsSpace{s}[\Rn]\) with norm
\begin{equation}\label{Eqn::Subellip::Proof::Apriori::HsRnNorm}
    \HsNorm{f}{s}[\Rn]:=\BLpNorm{ \left( 1+|\xi|^2 \right)^{s/2}\hat{f}(\xi)}{2}[\Rn].
\end{equation}
If we say \(\HsNorm{u}{s}[\Ropn]<\infty\) it means \(u\in \HsSpace{s}[\Ropn]\) (and similarly
for any other Sobolev space).

For \(\phi_1,\phi_2\in \CzinftySpace[\Ropn]\), we write \(\phi_1\prec \phi_2\) to mean
\(\phi_2=1\) on a neighborhood of \(\supp(\phi)\).

\begin{proposition}\label{Prop::Subellip::Proof::Apriori}
    Fix \(\epsilon_0:=\min\left\{ 1/m\max\{\Wd_j:1\leq j\leq r\}, 1/2 \right\}>0\) (see Remark \ref{Rmk::Subellip::Proof::epsilon0NotOptimal}).
    In the above setting (assuming Assumption \ref{Assumption::Subellip::Proof::Apriori})
    the following holds.
    \(\forall \phi_1,\phi_2\in \CzinftySpace[\R\times B^n(1)]\) with \(\phi_1\prec \phi_2\),
    \(\forall s\in \R\), \(\exists C_{s,\phi_1,\phi_2}\geq 0\), \(\forall u\in \Distributions[\R\times B^n(1)]\),
    \begin{equation}\label{Eqn::Subellip::Proof::Apriori::Est}
        \BHsNorm{\phi_1 u}{s+\epsilon_0}[\Ropn]
        \leq C_{s,\phi_1,\phi_2}
        \left( \BHsNorm{\phi_2 \left( \partial_t+\opL \right)u}{s}[\Ropn] + \BLpNorm{\phi_2 u}{2}[\Ropn] \right),
    \end{equation}
    where if the right-hand side is finite, so is the left-hand side.
    There is a constant \(L\in \N\), depending only on \(n\), \(r\), \(s\), and \(\max\{\Wd_j\}\) such that
    \(C_{s,\phi_1,\phi_2}\geq 0\) can be chosen to depend only on
    \(n\), \(r\), \(m\), \(s\), \(\phi_1\), \(\phi_2\), \(\max\{\Wd_j\}\), and upper bounds for
    \(\tau^{-1}\), \(\max_{\alpha,\beta} \CjNorm{a_{\alpha,\beta}}{L}\), \(\CjNorm{\sigma}{L}\),
    \(\max_{j,k}\CjNorm{b_j^k}{L}\), and \(\sup_{x\in B^n(1)}\sigma(x)^{-1}\).
\end{proposition}
\begin{proof}[Comments on the proof]
    This follows from standard methods based on ideas due to Kohn \cite{KohnLecturesOnDegenerateEllipticProblems}. 
    See \cite[Proposition 8.3.6]{StreetMaximalSubellipticity} for a priori estimates for \(\opL\) instead of \(\partial_t+\opL\).
    For the added twist to deal with \(\partial_t+\opL\) instead of \(\opL\) see \cite[Lecture 3]{KohnLecturesOnDegenerateEllipticProblems}.
    In the forthcoming paper \cite{StreetAPrioriEstimatesForMaximallySubellipticQuadraticForms}, the author proves
    a similar result in the more difficult setting of boundary value problems. The proof there  adapts to this easier setting.

    We make a few comments which will help the reader see this via standard techniques.
    Fix \(\gamma_0\in (0,1)\) with \(\phi_2\in \CzinftySpace[\R\times B^n(\gamma_0)]\).
    \cite[Lemma 8.3.3(i)]{StreetMaximalSubellipticity} shows
    \begin{equation*}
        \sum_{\degWd(\alpha)\leq \kappa}\BLpNorm{W^{\alpha} f}{2}
        \lesssim \sum_{j=1}^r \BLpNorm{W_j^{\kappa/\Wd_j}f}{2}+\BLpNorm{f}{2},\quad \forall f\in \CzinftySpace[B^n(\gamma_0)],
    \end{equation*}
    so that Assumption \ref{Assumption::Subellip::Proof::Apriori} implies
    \begin{equation*}
        \sum_{\degWd(\alpha)\leq \kappa}\BLpNorm{W^{\alpha} f}{2}
        \leq C_1' \Real \Ltip*{f}{\sigma \opL f} + C_2' \BLpNorm{f}{2},\quad \forall f\in \CzinftySpace[B^n(\gamma_0)].
    \end{equation*}
    \cite[Lemma 8.3.3(ii)]{StreetMaximalSubellipticity} shows
    \begin{equation*}
        \sum_{\degWd(\alpha)\leq \kappa-1} \BLpNorm{W^{\alpha}f}{2}\lesssim \sum_{\degWd(\alpha)\leq \kappa}\BHsNorm{W^{\alpha}f}{-\epsilon_0}[\Rn],\quad \forall f\in \CzinftySpace[B^n(\gamma_0)],
    \end{equation*}
        \begin{equation*}
        \sum_{\degWd(\alpha)\leq \kappa-1} \BHsNorm{W^{\alpha}f}{\epsilon_0}[\Rn]\lesssim \sum_{\degWd(\alpha)\leq \kappa}\BLpNorm{W^{\alpha}f}{2},\quad \forall f\in \CzinftySpace[B^n(\gamma_0)].
    \end{equation*}
    With the above estimates in hand, the result now follows from standard a priori techniques as described above.
\end{proof}

Proposition \ref{Prop::Subellip::Proof::Apriori} immediately gives the kinds of hypoelliptic estimates we require, at the unit scale,
as the next corollary shows.

\begin{corollary}\label{Cor::MaxSub::UnitScale::UnitScaleHypoEstimates}
    Suppose 
    \(u\in \Distributions[(-1/2,1/2)\times \Bno]\)
    satisfies \(\left( \partial_t+\opL \right)u=0\).
    Then there is a constant \(C_1\geq 1\) such that
    \begin{equation}\label{Eqn::MaxSub::UnitScale::SubellipticLtCor}
        \LpNorm{\partial_t u}{2}[(-1/4,1/4)\times B^n(1/2)][dt\times \sigma dx]
        \leq C_1 \LpNorm{ u}{2}[(-1/2,1/2)\times \Bno][dt\times \sigma dx],
    \end{equation}
    and for every \(\alpha\), \(\exists C_\alpha\geq 1\),
    \begin{equation}\label{Eqn::MaxSub::UnitScale::SubellipticLinftyCor}
        \sup_{\substack{t\in (-1/4,1/4) \\ x\in B^n(1/2)}} \left| W^{\alpha} u(t,x) \right|
        \leq C_\alpha \LpNorm{ u}{2}[(-1/2,1/2)\times \Bno][dt\times \sigma dx],
    \end{equation}
    where if the right-hand side of either \eqref{Eqn::MaxSub::UnitScale::SubellipticLtCor} or 
    \eqref{Eqn::MaxSub::UnitScale::SubellipticLinftyCor} is finite, so is the corresponding left-hand side.
    Here, \(C_1\) and \(C_\alpha\) can depend on all the same quantities as \(C_{s,\phi_1,\phi_2}\) does in 
    Proposition \ref{Prop::Subellip::Proof::Apriori}, except for \(\phi_1\) and \(\phi_2\),
    and where \(s\) is chosen to be \(2\kappa\) for \(C_1\) and chosen to depend on \(\alpha\) and \(n\) for \(C_\alpha\).
\end{corollary}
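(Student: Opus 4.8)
The plan is to feed the hypothesis $(\partial_t+\opL)u=0$ directly into the subelliptic estimate of Proposition~\ref{Prop::MaxSub::UnitScale::MainSubEllipticEst}, using that it kills the ``forcing'' term $\phi_2(\partial_t+\opL)u$ on the right-hand side; the two estimates of the corollary then drop out by choosing the cutoffs and the Sobolev order appropriately and invoking Sobolev embedding. Concretely, since $u\in\LpSpace{2}[(-1,1)\times\Bno]$ and $\phi_2(\partial_t+\opL)u=0$ in $\Distributions[(-1,1)\times\Bno]$ for every $\phi_2\in\CzinftySpace[(-1,1)\times\Bno]$, Proposition~\ref{Prop::MaxSub::UnitScale::MainSubEllipticEst} yields, for every $s\in\R$ and every pair $\phi_1,\phi_2\in\CzinftySpace[(-1,1)\times\Bno]$ with $\phi_2\equiv1$ near $\operatorname{supp}\phi_1$,
\begin{equation}\label{Eqn::Sketch::KeyEst}
    \HsNorm{\phi_1 u}{s+\epsilon}\leq C_{s,\phi_1,\phi_2}\LpNorm{\phi_2 u}{2}.
\end{equation}
Here no a~priori regularity of $u$ need be assumed: Proposition~\ref{Prop::MaxSub::UnitScale::MainSubEllipticEst} is stated for arbitrary distributions, so finiteness of the right-hand side of \eqref{Eqn::Sketch::KeyEst} already forces $\phi_1 u\in\HsSpace{s+\epsilon}$ (this in particular recovers the smoothness of $u$ on $(-1,1)\times\Bno$, which we do not otherwise need). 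Throughout we pass freely between $L^2(dt\times dx)$ and $L^2(dt\times\sigma\,dx)$ and between ordinary Sobolev norms and norms of $W^\alpha$-derivatives, incurring constants controlled by $\sup_x\sigma(x)^{-1}$, $\CjNorm{\sigma}{L}[\Bno]$, and $\max_{j,k}\CjNorm{a_j^k}{L}[\Bno]$ for $L$ large --- all quantities on which $C_1$ and $C_\alpha$ may depend.

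For \eqref{Eqn::MaxSub::UnitScale::SubellipticLtCor}, pick $\phi_1\in\CzinftySpace[(-1,1)\times\Bno]$ with $\phi_1\equiv1$ on $(-1/2,1/2)\times B^n(1/2)$, and $\phi_2\in\CzinftySpace[(-1,1)\times\Bno]$ with $\phi_2\equiv1$ near $\operatorname{supp}\phi_1$. Apply \eqref{Eqn::Sketch::KeyEst} with $s=1$; then, using $\phi_1\partial_t u=\partial_t(\phi_1 u)-(\partial_t\phi_1)u$ and that $\partial_t$ has order $1$,
\begin{equation*}
    \LpNorm{\partial_t u}{2}[(-1/2,1/2)\times B^n(1/2)][dt\times\sigma\,dx]\leq\LpNorm{\phi_1\partial_t u}{2}\leq\HsNorm{\phi_1 u}{1+\epsilon}+\LpNorm{(\partial_t\phi_1)u}{2}\lesssim\LpNorm{u}{2}[(-1,1)\times\Bno][dt\times\sigma\,dx],
\end{equation*}
which is \eqref{Eqn::MaxSub::UnitScale::SubellipticLtCor}, with $s$ taken to be $1$ as claimed.

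For \eqref{Eqn::MaxSub::UnitScale::SubellipticLinftyCor}, fix $\alpha$ and set $s_0:=\ceil{(n+1)/2}+1$, so that $\HsSpace{s_0}(\R^{n+1})\hookrightarrow\CSpace{\R^{n+1}}$. Choose $\phi_1,\phi_2\in\CzinftySpace[(-1/2,1/2)\times\Bno]$ with $\phi_1\equiv1$ on $(-1/4,1/4)\times B^n(1/2)$ and $\phi_2\equiv1$ near $\operatorname{supp}\phi_1$ (possible since $(-1/4,1/4)\times B^n(1/2)$ is relatively compact in $(-1/2,1/2)\times\Bno$, which in turn lies in $(-1,1)\times\Bno$). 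Apply \eqref{Eqn::Sketch::KeyEst} with $s:=s_0+|\alpha|$; since $W^\alpha$ has order $|\alpha|$ with coefficients bounded in $\CjSpace{L}[\Bno]$ and $W^\alpha u=W^\alpha(\phi_1 u)$ on $(-1/4,1/4)\times B^n(1/2)$, Sobolev embedding gives
\begin{equation*}
    \sup_{\substack{t\in(-1/4,1/4)\\ x\in B^n(1/2)}}\left|W^\alpha u(t,x)\right|\lesssim\HsNorm{W^\alpha(\phi_1 u)}{s_0}\lesssim\HsNorm{\phi_1 u}{s_0+|\alpha|}\leq\HsNorm{\phi_1 u}{s_0+|\alpha|+\epsilon}\lesssim\LpNorm{\phi_2 u}{2}\lesssim\LpNorm{u}{2}[(-1/2,1/2)\times\Bno][dt\times\sigma\,dx],
\end{equation*}
which is \eqref{Eqn::MaxSub::UnitScale::SubellipticLinftyCor}; here the choice of $s$ depends only on $\alpha$ and $n$, and the cutoffs are fixed once and for all by the geometry, so $C_\alpha$ depends only on the permitted quantities.

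None of this is hard. The only points that deserve care are: (i) that Proposition~\ref{Prop::MaxSub::UnitScale::MainSubEllipticEst} is used in its strong (distributional, not merely a~priori) form, so that \eqref{Eqn::Sketch::KeyEst} may be applied with the right-hand side known finite but the left-hand side not assumed finite in advance; (ii) that the cutoffs are nested so their supports lie in $(-1,1)\times\Bno$ for \eqref{Eqn::MaxSub::UnitScale::SubellipticLtCor} and in $(-1/2,1/2)\times\Bno$ for \eqref{Eqn::MaxSub::UnitScale::SubellipticLinftyCor}, matching the domains appearing on the right-hand sides of those two estimates; and (iii) the routine bookkeeping confirming that every implied constant depends only on $n$, $r$, $m$, $n_1,\dots,n_r$, the differentiation order, and upper bounds for $\tau^{-1}$, $\max_{j,k}\CjNorm{a_j^k}{L}[\Bno]$, $\CjNorm{\sigma}{L}[\Bno]$, and $\sup_x\sigma(x)^{-1}$.
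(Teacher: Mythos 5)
Your proposal is correct and follows essentially the same route as the paper: feed the equation $(\partial_t+\opL)u=0$ into Proposition \ref{Prop::MaxSub::UnitScale::MainSubEllipticEst} to kill the forcing term, take $s=1$ and $\phi_1\equiv 1$ on $(-1/2,1/2)\times B^n(1/2)$ for \eqref{Eqn::MaxSub::UnitScale::SubellipticLtCor}, and take $\phi_1\equiv1$ on $(-1/4,1/4)\times B^n(1/2)$, $\phi_2\in\CzinftySpace[(-1/2,1/2)\times\Bno]$, together with Sobolev embedding for \eqref{Eqn::MaxSub::UnitScale::SubellipticLinftyCor}. You have merely made explicit a few bookkeeping steps (the commutator $\phi_1\partial_t u=\partial_t(\phi_1u)-(\partial_t\phi_1)u$, the precise choice of $s_0$, and the remark that the proposition's distributional form grants regularity) that the paper leaves implicit.
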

\begin{proof}
    Fix \(\phi_1,\phi_2,\phi_3\in \CzinftySpace[(-1/2,1/2)\times \Bno]\) with \(\phi_1\prec \phi_2\prec \phi_3\),
    \(\phi_1=1\) on \((-3/8,3/8)\times B^n(3/4)\), and \(0\leq \phi_2\leq 1\).
    Using Proposition \ref{Prop::Subellip::Proof::Apriori} with \(u\) replaced by \(\phi_3 u\in \Distributions[\R\times \Bno]\)
    and this choice of \(\phi_1\), \(\phi_2\)
    we have, \(\forall s\in \R\),
    \begin{equation}\label{Eqn::MaxSub::UnitScale::UnitScaleHypoEstimates::Tmp1}
        \BHsNorm{\phi_1 u}{s+\epsilon_0}[\Ropn]
        \leq C_{s}
         \BLpNorm{\phi_2 u}{2}[\Ropn]
         \leq C_s\BLpNorm{u}{2}[(-1/2,1/2)\times \Bno].
    \end{equation}

    Applying \eqref{Eqn::MaxSub::UnitScale::UnitScaleHypoEstimates::Tmp1} with \(s=2\kappa\), we see
    \begin{equation*}
        \begin{split}
            &
            \LpNorm{\partial_t  u}{2}[(-1/4,1/4)\times B^n(1/2)][dt\times \sigma dx]
            \lesssim \LpNorm{\partial_t \phi_1 u}{2}[(-1/4,1/4)\times B^n(1/2)][dt\times dx]
            \\&\lesssim \BHsNorm{\phi_1 u}{2\kappa}[\Ropn]
            \lesssim \BLpNorm{u}{2}[(-1/2,1/2)\times \Bno][dt\times dx]
            \lesssim \BLpNorm{u}{2}[(-1/2,1/2)\times \Bno][dt\times \sigma dx],
        \end{split}    
    \end{equation*}
    where in the last first and last estimates we have used \(\sigma\approx 1\),
    establishing \eqref{Eqn::MaxSub::UnitScale::SubellipticLtCor}.

    By the Sobolev embedding theorem, we have for some \(s=s(\alpha,n)\),
    \begin{equation}\label{Eqn::MaxSub::UnitScale::UnitScaleHypoEstimates::Tmp2}
        \sup_{\substack{t\in (-1/4,1/4) \\ x\in B^n(1/2)}} \left| W^{\alpha} u(t,x) \right|
        \lesssim \HsNorm{\phi_1 u}{s}.
    \end{equation}
    Combining \eqref{Eqn::MaxSub::UnitScale::UnitScaleHypoEstimates::Tmp2} with 
    \eqref{Eqn::MaxSub::UnitScale::UnitScaleHypoEstimates::Tmp1}, we have
    \begin{equation*}
        \sup_{\substack{t\in (-1/4,1/4) \\ x\in B^n(1/2)}} \left| W^{\alpha} u(t,x) \right|
        \lesssim \BLpNorm{u}{2}[(-1/2,1/2)\times \Bno]
        \approx \BLpNorm{u}{2}[(-1/2,1/2)\times \Bno][dt\times \sigma dx],
    \end{equation*}
    where in the last estimate we have used \(\sigma\approx 1\),
    establishing \eqref{Eqn::MaxSub::UnitScale::SubellipticLinftyCor}.
\end{proof}

\begin{remark}\label{Rmk::Subellip::Proof::epsilon0NotOptimal}
    The choice of \(\epsilon_0\) in Proposition \ref{Prop::Subellip::Proof::Apriori}
    is not optimal; though finding the optimal \(\epsilon_0\) is irrelevant for the proof of Theorem \ref{Thm::SubMainThm::MainThm}.
    What may be surprising is that Proposition \ref{Prop::Subellip::Proof::Apriori}
    (with a non-sharp \(\epsilon_0>0\)) implies the sharp subelliptic estimates for the operator \(\opL\)
    from Theorem \ref{Thm::SubMainThm::MainThm}. Indeed,
    as Corollary \ref{Cor::Subellip::MaxSub::HeatImpliesMaxSub} shows, \(\opL\) is maximally subelliptic.
    Sharp subelliptic estimates for general maximally subelliptic operators are known;
    see \cite[Corollary 8.2.5]{StreetMaximalSubellipticity} for sharp estimates in \(\LpSpace{p}\)-Sobolev
    spaces, and \cite[Theorem 8.1.1(v) and Section 8.2]{StreetMaximalSubellipticity} for sharp estimates
    in a wide variety of spaces.
    Thus, non-sharp subelliptic estimates can be used as an important step in obtaining sharp subelliptic estimates
    (by way of Gaussian bounds for the corresponding heat operator).
\end{remark}

        \subsubsection{Qualitative consequences of subellipticity}
        We record some standard qualitative consequences of the subelliptic estimate (Proposition \ref{Prop::Subellip::Proof::Apriori})
for operators on manifolds.

Let \(\opL\) be as in Section \ref{Section::Subellip::MainThm};
i.e., \(\opL\) is given by \eqref{Eqn::SubMainThem::FormulaForL}
for some H\"ormander vector fields with formal degrees \(\WWd\).
We assume throughout this section that Assumption \ref{Assump::SubMainTheorem::MaxSubTypeIII}
holds. Recall, we are working on a connected, smooth manifold \(\Manifold\)
with smooth, strictly positive density \(\mu\).

\begin{remark}\label{Rmk::SubProof::QualSubellip::PullBackOp}
    Let \(\Phi:B^n(1)\xrightarrow{\sim} \Phi(B^n(1))\subseteq \Manifold\) be a smooth coordinate chart.
    Define \(\sigma\in \CinftySpace[B^n(1)][(0,\infty)]\) by \(\sigma dx = d\Phi^{*}\mu\).
    Define \(\opL_{\Phi}:=\Phi^{*}\opL \Phi_{*}\); i.e.,
    \begin{equation*}
        \opL_{\Phi} f = \left( \opL (f\circ \Phi^{-1}) \right)\circ \Phi,\quad f\in \CzinftySpace[B^n(1)].
    \end{equation*}
    Let \(W_j^{\Phi}:=\Phi^{*}W_j\). Then,
    \begin{equation}\label{Eqn::SubProof::QualSubellip::opLPhi}
        \opL_\Phi=\sum_{\degWd(\alpha),\degWd(\beta)\leq \kappa} \sigma^{-1}\left( \left( W^{\Phi} \right)^{\alpha} \right)^{*} \sigma\left(  a_{\alpha,\beta}\circ \Phi \right) \left( W^{\Phi} \right)^{\beta}.
    \end{equation}
    Indeed, for \(f,g\in \CzinftySpace[B^n(1)]\), with \(F:=f\circ \Phi^{-1}\) and \(G=g\circ \Phi^{-1}\), we have
    \begin{equation*}
        \begin{split}
            &\Ltip{f}{\sigma \opL_\Phi g}[B^n(1)]
            =\sum_{\degWd(\alpha),\degWd(\beta)\leq \kappa} \Ltip*{\left( W^{\Phi} \right)^{\alpha} f}{\sigma a_{\alpha,\beta}\circ \Phi \left( W^{\Phi} \right)^{\beta}g}[B^n(1)]
            \\&=\sum_{\degWd(\alpha),\degWd(\beta)\leq \kappa} \Ltip*{ \left( W^{\alpha} F \right)\circ \Phi}{a_{\alpha,\beta}\circ \Phi \left( W^{\beta}G\right)\circ \Phi}[B^n(1)][\Phi^{*}\mu]
            \\&=\sum_{\degWd(\alpha),\degWd(\beta)\leq \kappa} \Ltip*{ W^{\alpha} F }{a_{\alpha,\beta}W^{\beta}G}[\Manifold][\mu]
            =\Ltip*{F}{\opL G}[\Manifold][\mu]
            \\&=\Ltip*{f}{\sigma \left( \opL G \right)\circ \Phi}[B^n(1)]
            =\Ltip*{f}{\sigma \left( \opL \left( g\circ \Phi^{-1} \right)\right)\circ \Phi}[B^n(1)].
        \end{split}
    \end{equation*}
\end{remark}

In what follows, we use non-isotropic \(\LpSpace{2}\)-Sobolev spaces \(\HsSpace{s}[\R\times \Manifold]\);
in a local coordinate system the norm is given by \eqref{Eqn::Subellip::Proof::Apriori::HsRopnNorm}.
We only  consider \(\HsNorm{u}{s}[\R\times \Manifold]\) for \(u\) with compact support,
and the equivalence class of this norm is therefore well-defined. 

\begin{proposition}\label{Prop::SubProof::QualSubellip::QualSubellip}
    Fix \(\Omega\Subset \Manifold\) open and relatively compact.
    \(\exists \epsilon=\epsilon(\Omega)>0\), \(\forall s\in \R\),
    \(\forall \phi_1,\phi_2\in \CzinftySpace[\R\times \Omega]\) with \(\phi_1\prec \phi_2\),
    \(\exists C_{\Omega,s,\phi_1,\phi_2}\geq 0\), \(\forall u\in \Distributions[\R\times \Manifold]\),
    \begin{equation*}
        \HsNorm{\phi_1 u}{s+\epsilon}[\R\times \Manifold]
        \leq C_{\Omega,s,\phi_1,\phi_2}
        \left( \HsNorm{\phi_2 \left( \partial_t +\opL \right)u}{s}[\R\times \Manifold]+
        \LpNorm{\phi_2 u}{2}[\R\times \Manifold][dt\times d\mu] \right),
    \end{equation*}
    where if the right-hand side is finite, so is the left-hand side.
\end{proposition}
\begin{proof}
    We will show, for each \(\zeta\in \Manifold\), there exists a neighborhood \(\Omega_\zeta\)
    of \(\zeta\) such that the result holds \(\Omega=\Omega_\zeta\). The full result then follows
    by compactness of \(\overline{\Omega}\) and a simple partition of unity argument, which we leave to the reader.

    Fix \(\zeta\in \Manifold\).
    Let \(\Phi:B^n(2)\xrightarrow{\sim}\Phi(B^n(2))\subseteq \Manifold\) be a smooth coordinate chart
    with \(\Phi(0)=\zeta\). Set \(\Omega_\zeta:=\Phi(B^n(1))\).
    Define \(\sigma\in \CzinftySpace[B^n(2)]\) by \(d\Phi^{*}\mu=\sigma(x)\: dx\).
    Let \(W^\Phi\) and \(\opL_{\Phi}\) be as in Remark \ref{Rmk::SubProof::QualSubellip::PullBackOp}
    with this choice of \(\Phi\). 
    Since \(B^n(1)\Subset B^n(2)\),
    \(\sigma\in \CbinftySpace[B^n(1)][(0,\infty)]\) and is bounded below,
    \(a_{\alpha,\beta}\circ \Phi\in \CbinftySpace[B^n(1)]\), the coefficients
    of \(W_j^{\Phi}\) are in \(\CbinftySpace[B^n(1)][\R]\) for each \(j\),
    and \(W_1^\Phi,\ldots, W_r^\Phi\) satisfy H\"ormander's condition of order \(m\) for some \(m\)
    in the sense that \eqref{Eqn::Subellip::Proof::Apriori::LowerBoundOfDet} holds for some \(m\).

    We claim that Assumption \ref{Assumption::Subellip::Proof::Apriori} holds for \(\opL_{\Phi}\).
    Using that \(\sigma\approx 1\) and Assumption \ref{Assump::SubMainTheorem::MaxSubTypeIII},
    we have, \(\forall f\in \CzinftySpace[B^n(1)]\),
    \begin{equation*}
    \begin{split}
         &\sum_{j=1}^r \BLpNorm{\left( W_j^{\Phi} \right)^{\kappa/\Wd_j} f}{2}[B^n(1)]
         \approx \sum_{j=1}^r \BLpNorm{\left( W_j^{\Phi} \right)^{\kappa/\Wd_j} f}{2}[B^n(1)][\sigma\: dx]
         =\sum_{j=1}^r \BLpNorm{W_j^{\kappa/\Wd_j} f\circ \Phi^{-1}}{2}[\Manifold][\mu]
         \\&\leq C_1 \Real \Ltip*{f\circ \Phi^{-1}}{\opL f\circ \Phi^{-1}}[\Manifold][\measure]
        +C_2 \BLpNorm{f\circ \Phi^{-1}}{2}[\Manifold][\measure]^2
        \\&=C_1 \Ltip{f}{\sigma \opL_{\Phi} f}[B^n(1)] + C_2 \BLpNorm{f}{2}[B^n(1)][\sigma\: dx]^2
        \leq C_1 \Ltip{f}{\sigma \opL_{\Phi} f}[B^n(1)] + C_2' \BLpNorm{f}{2}[B^n(1)]^2.
    \end{split}
    \end{equation*}
    We conclude
    \begin{equation*}
    \begin{split}
         &\sum_{j=1}^r \BLpNorm{\left( W_j^{\Phi} \right)^{\kappa/\Wd_j} f}{2}[B^n(1)]
         \leq C_1'' \Ltip{f}{\sigma \opL_{\Phi} f}[B^n(1)] + C_2'' \BLpNorm{f}{2}[B^n(1)]^2,
    \end{split}
    \end{equation*}
    establishing Assumption \ref{Assumption::Subellip::Proof::Apriori}.

    Let \(\phi_1,\phi_2\in \CzinftySpace[\R\times \Omega_\zeta]\) with \(\phi_1\prec \phi_2\).
    Set \(\psi_j(t,x)=\phi_j(t,\Phi(x))\in \CzinftySpace[\R\times B^n(1)]\) so that  \(\psi_1\prec \psi_2\).
    With \(\epsilon_0>0\) as in Proposition \ref{Prop::Subellip::Proof::Apriori}
    that proposition shows for \(u\in \Distributions[\R\times\Manifold]\),
    \begin{equation*}
        \begin{split}
            &\BHsNorm{\psi_1 \Phi^{*} u}{s+\epsilon_0}[\Ropn]
            \lesssim
            \BHsNorm{\psi_2 \left( \partial_t+\opL_{\Phi} \right)\Phi^{*} u}{s}[\Ropn] + \BLpNorm{\psi_2 \Phi^{*} u}{2}[\Ropn]
            \\&\lesssim 
            \BHsNorm{\psi_2 \left( \partial_t+\opL_{\Phi} \right)\Phi^{*} u}{s}[\Ropn] + \BLpNorm{\psi_2 \Phi^{*} u}{2}[\Ropn][dt\times \sigma\: dx].
        \end{split}
    \end{equation*}
    Changing variables shows
        \begin{equation*}
        \HsNorm{\phi_1 u}{s+\epsilon}[\R\times \Manifold]
        \lesssim 
        \HsNorm{\phi_2 \left( \partial_t +\opL \right)u}{s}[\R\times \Manifold]+
        \LpNorm{\phi_2 u}{2}[\R\times \Manifold][dt\times d\mu],
    \end{equation*}
    completing the proof.
\end{proof}

\begin{corollary}\label{Cor::SubProof::QualSubellip::EstCjNormTwoVars}
    Fix \(\Omega\Subset \Manifold\) open and relatively compact. \(\forall L\in \N\),
    \(\exists N\in \N\), \(\forall \phi_1,\phi_2\in \CzinftySpace[\R\times\Omega]\) with \(\phi_1\prec\phi_2\),
    \(\exists C\geq 0\), \(\forall u\in \Distributions[\R\times \Manifold]\),
    \begin{equation*}
        \CjNorm{\phi_1 u}{L} \leq C\sum_{j=0}^N \BLpNorm{\phi_2 \left( \partial_t+\opL \right)^j u}{2}[\R\times \Manifold][dt\times d\mu],
    \end{equation*}
    where if the right-hand side is finite, then \(\phi_1 u\in \CjSpace{L}[\R\times \Manifold]\).
\end{corollary}
\begin{proof}
    Let \(\epsilon=\epsilon(\Omega)>0\) be as in Proposition \ref{Prop::SubProof::QualSubellip::QualSubellip}.
    By the Sobolev embedding theorem, for \(s=s(L,\Omega)>0\) sufficiently large,
    we have \(\CjNorm{\phi_1 u}{L}\lesssim \HsNorm{\phi_1 u}{s}[\R\times \Manifold]\).
    Pick \(N\in \N\) so that \(N\epsilon\geq s\). 
    Let \(\psi_1,\psi_2,\ldots,\psi_N\in \CzinftySpace[\R\times \Omega]\) be such that
    \(\phi_1\prec \psi_1\prec\psi_2\prec\cdots \prec\psi_N\prec  \phi_2\).
    We have, by \(N\)
    applications of Proposition \ref{Prop::SubProof::QualSubellip::QualSubellip},
    \begin{equation*}
    \begin{split}
         &\CjNorm{\phi_1 u}{L}\lesssim \BHsNorm{\phi_1 u}{s}[\R\times \Manifold]
         \lesssim \BHsNorm{\psi_1 \left( \partial_t+\opL \right)u}{s-\epsilon}[\R\times \Manifold] + \BLpNorm{\psi_1 u}{2}
         \\&\lesssim \BHsNorm{\psi_2 \left( \partial_t+\opL \right)^2 u}{s-\epsilon}[\R\times \Manifold]+ \BLpNorm{\psi_2 (\partial_t+\opL) u}{2} + \BLpNorm{\psi_1 u}{2}
         \\&\lesssim \cdots
         \\&\lesssim \BHsNorm{\psi_N \left( \partial_t+\opL \right)^N u}{s-N\epsilon}[\R\times \Manifold]+\sum_{j=0}^{N-1} \BLpNorm{\psi_{j+1} (\partial_t+\opL)^j u}{2} 
         \\&\lesssim \sum_{j=0}^N \BLpNorm{\phi_2 \left( \partial_t+\opL \right)^j u}{2},
    \end{split}
    \end{equation*}
    where the final estimate used \(N\epsilon\geq s\), completing the proof.
\end{proof}

\begin{corollary}\label{Cor::SubProof::QualSubellip::EstCjNormOneVar}
    Fix \(\Omega\Subset \Manifold\) open and relatively compact.
    \(\forall L\in \N\), \(\exists N\in \N\), \(\forall \psi_1,\psi_2\in \CzinftySpace[\Omega]\)
    with \(\psi_1\prec \psi_2\), \(\exists C\geq 0\), \(\forall f\in \Distributions[\Manifold]\),
    \begin{equation}\label{Eqn::SubProof::QualSubellip::EstCjNormOneVar::Estimate}
        \CjNorm{\psi_1 f}{L} \leq C \sum_{j=0}^N \BLpNorm{\psi_2 \opL^j f}{2}[\Manifold][\mu],
    \end{equation}
    where if the right-hand side is finite, then \(\psi_1 f\in \CjSpace{L}[\Manifold]\).
\end{corollary}
\begin{proof}
    Let \(u\in \Distributions[\R\times \Manifold]\) be defined by \(u(t,\zeta)=f(\zeta)\).
    Let \(\gamma_1,\gamma_2\in \CzinftySpace[\R]\) satisfy \(\gamma_1\prec \gamma_2\)
    and \(\gamma_1=1\) on a neighborhood of \(0\).
    Let \(\phi_j(t,\zeta)=\gamma_j(t)\psi_j(\zeta)\).
    From here, the result follows from Corollary \ref{Cor::SubProof::QualSubellip::EstCjNormTwoVars}
    applied with this choice of \(u\), \(\phi_1\), and \(\phi_2\).
\end{proof}

\begin{corollary}\label{Cor::SubProof::QualSubellip::DomainEmbedsInSmooth}
    Suppose \((A,\Domain[A])\) is an unbounded operator on \(\LpSpace{2}[\Manifold][\mu]\)
    with \(A=-\opL\big|_{\Domain[A]}\). Then, \(\Domain[A^\infty]\subseteq \CinftySpace[\Manifold]\)
    and the inclusion \(\Domain[A^\infty]\hookrightarrow \CinftySpace[\Manifold]\) is continuous.
\end{corollary}
\begin{proof}
    Suppose \(f\in \Domain[A^\infty]\). Then \(\opL^j f= \left( -A \right)^j f\in \LpSpace{2}\), \(\forall j\).
    Corollary \ref{Cor::SubProof::QualSubellip::EstCjNormOneVar} shows
    \(f\in \CinftySpace[\Manifold]\). Moreover,
    for \(\phi\in \CzinftySpace[\Manifold]\), \eqref{Eqn::SubProof::QualSubellip::EstCjNormOneVar::Estimate}
    shows \(\forall L\in \N\), \(\exists N\in \N\), 
    \begin{equation*}
        \CjNorm{\phi f}{L}\lesssim \sum_{j=0}^N \BLpNorm{A^j f}{2}.
    \end{equation*}
    It follows that the inclusion \(\Domain[A^\infty]\hookrightarrow \CinftySpace[\Manifold]\) is continuous.
\end{proof}

\begin{proposition}\label{Prop::SubProof::QualSubellip::ExistsHeatKernel}
    Let \(T(t)\) be a strongly continuous semi-group on \(\LpSpace{2}[\Manifold][\mu]\) with
    generator \((A,\Domain[A])\). Suppose \(\CzinftySpace[\Manifold]\subseteq \Domain[A]\)
    and \(A=-\opL\big|_{\Domain[A]}\). Then, there exists a unique \(K_t(x,y)\in \CinftySpace[(0,\infty)\times \Manifold\times \Manifold]\)
    such that \(K_t(x,\cdot)\in \LpSpace{2}[\Manifold][\mu]\), \(\forall t,x\) and
    \begin{equation*}
        T(t) f(x) = \int K_t(x,y)f(y)\: d\mu(y), \quad \forall f\in \LpSpace{2}[\Manifold][\mu].
    \end{equation*}
\end{proposition}

To prove Proposition \ref{Prop::SubProof::QualSubellip::ExistsHeatKernel} we use the next two lemmas.
For the first, we write 
\(\CinftySpacetx[(0,\infty)\times \Manifold][\LpSpacewy{2}[\Manifold][\mu]]\) for the space
of those functions \(K_t(x,y)\) which are smooth as functions \((t,x)\mapsto K_t(x,\cdot)\), taking
values in \(\LpSpace{2}[\Manifold][\mu]\) in the
\(y\)-variable, where \(\LpSpace{2}[\Manifold][\mu]\) is given the weak topology.\footnote{In fact,
    the weak topology on \(\LpSpace{2}[\Manifold][\mu]\) can be replaced with the strong topology,
    since a \(\CinftySpace\) function taking values in a Banach space given the weak topology is
    \(\CinftySpace\) with the Banach space given the strong topology. We do not use this fact.}

\begin{lemma}\label{Lemma::SubProof::QualSubellip::ExistsHeatKernelWeak}
    Let \(T(t)\) be a strongly continuous semi-group on \(\LpSpace{2}[\Manifold][\mu]\) with
    generator \((A,\Domain[A])\). Suppose  \(A=-\opL\big|_{\Domain[A]}\).
    Then, there exists a unique \(K_t(x,y)\in \CinftySpacetx[(0,\infty)\times \Manifold][\LpSpacewy{2}[\Manifold][\mu]]\),
    such that
    \begin{equation*}
        T(t) f(x) = \int K_t(x,y)f(y)\: d\mu(y), \quad \forall f\in \LpSpace{2}[\Manifold][\mu].
    \end{equation*}
    Moreover, if \(\phi\in \CzinftySpace[(0,\infty)\times \Manifold]\), and \(\OpP\) is any partial differential
    operator with smooth coefficients in the \((t,x)\) variables, then
    \begin{equation}\label{Eqn::SubProof::QualSubellip::ExistsHeatKernelWeak::BoundDerivs}
        \sup_{t,x} \BLpNorm{ \OpP \phi(t,x) K_t(x,\cdot)}{2}[\Manifold][\mu]<\infty.
    \end{equation}
\end{lemma}
\begin{proof}
    For \(v\in \Domain[A]\), \(\partial_t T(t) v=AT(t)v=-\opL T(t)v\) (see \cite[Chapter 2, Lemma 1.3(ii)]{EngelNagelAShortCourseOnOperatorSemigroups}),
    and therefore \((\partial_t+\opL)T(t)v=0\) in the sense of distributions.
    
    Fix \(\phi_1\in \CzinftySpace[(0,\infty)\times \Manifold]\) and let \(\phi_2\in \CzinftySpace[\R\times \Manifold]\) be such that
    \(\phi_1\prec \phi_2\).
    Take \(a>0\) such that \(\phi_2\in \CzinftySpace[(0,a)\times \Manifold]\).
    Using that \((\partial_t+\opL)^j T(t)v=0\) for \(j\geq 1\),
    Corollary \ref{Cor::SubProof::QualSubellip::EstCjNormTwoVars} implies, \(\forall L\in \N\),
    \begin{equation}\label{Eqn::SubProof::QualSubellip::ExistsHeatKernelWeak::FirstBound}
        \CjNorm{\phi_1 T(t)v}{L} \lesssim \LpNorm{\phi_2 T(t)v}{2}[\R\times \Manifold][dt\times d\mu]\lesssim \int_0^a \LpNorm{T(t)v}{2}[\Manifold][\mu]\: dt
        \lesssim \LpNorm{v}{2}[\Manifold][\mu],
    \end{equation}
    where the \(\CjSpace{L}\) norm is in both variables in \(\R\times \Manifold\).
    Since \(\Domain[A]\subseteq \LpSpace{2}[\Manifold][\mu]\)
    is dense \cite[Chapter 2, Theroem 1.4]{EngelNagelAShortCourseOnOperatorSemigroups},
    \eqref{Eqn::SubProof::QualSubellip::ExistsHeatKernelWeak::FirstBound} shows
    \(\phi_1 T(\cdot):\LpSpace{2}[\Manifold][\mu]\rightarrow \CjSpace{\infty}[\R\times \Manifold]\)
    is continuous.  Since \(\phi_1\in \CzinftySpace[(0,\infty)\times \Manifold]\) was arbitrary, this implies
    \(T(\cdot):\LpSpace{2}[\Manifold][\mu]\rightarrow \CjSpace{\infty}[(0,\infty)\times \Manifold]\)
    is continuous.

    By the Riesz representation theorem, for each fixed \((t,x)\in (0,\infty)\times \Manifold\),
    the map \(g\mapsto T(t)g(x)\) is given by integration against a unique
    \(K_t(x,\cdot)\in \LpSpace{2}[\Manifold][\mu]\).  For fixed \(g\),
    the map \((t,x)\mapsto T(t)g(x)\) is in \(\CinftySpace[(0,\infty)\times \Manifold]\),
    which shows \(K_t(x,y)\in \CinftySpacetx[(0,\infty)\times \Manifold][\LpSpacewy{2}[\Manifold][\mu]]\).
    Finally, using \eqref{Eqn::SubProof::QualSubellip::ExistsHeatKernelWeak::FirstBound}
    with \(\phi\prec \phi_1\),
    we have for \(L=L(\OpP)\in \N\) large,
    \begin{equation*}
    \begin{split}
         &\sup_{t,x} \BLpNorm{ \OpP \phi(t,x) K_t(x,\cdot)}{2}[\Manifold][\mu]
         \lesssim \sup_{\substack{v\in \Domain[A]\\ \LpNorm{v}{2}=1}} \CjNorm{\phi_1 T(t)v}{L}
         \lesssim 1,
    \end{split}
    \end{equation*}
    establishing \eqref{Eqn::SubProof::QualSubellip::ExistsHeatKernelWeak::BoundDerivs}.
\end{proof}

The next lemma is standard.

\begin{lemma}\label{Lemma::SubProof::QualSubellip::SobolevTrick}
    Let \(M(t,x,y)\in \LpSpace{2}[\R\times \R^n\times \R^n]\).
    Suppose \(\forall j,\alpha,\beta\),
    \begin{equation}\label{Eqn::SubProof::QualSubellip::SobolevTrick::Assump}
        \partial_t^j M(t,x,y), \partial_x^{\alpha}M_t(x,y), \partial_y^\beta M_t(x,y)\in \LpSpace{2}[\R\times \R^n\times \R^n].
    \end{equation}
    Then, \(M(t,x,y)\in \CinftySpace[\R\times \R^n\times \R^n]\).
\end{lemma}
\begin{proof}
    Let \((\tau,\xi,\eta)\) be dual to \((t,x,y)\).
    \eqref{Eqn::SubProof::QualSubellip::SobolevTrick::Assump} shows, \(\forall m\in \N\),
    \begin{equation*}
        \tau^{2m} \Mh(\tau,\xi,\eta), |\xi|^{2m} \Mh(\tau,\xi,\eta), |\eta|^{2m}\Mh(\tau,\xi,\eta)\in \LpSpace{2}[\R\times \R^n\times \R^n].
    \end{equation*}
    It follows that \(M\in \HsSpace{2m}[\R\times \R^n\times \R^n]\), \(\forall m\in \N\),
    where \(\HsSpace{s}\) denotes the standard \(\LpSpace{2}\)-Sobolev space of order \(s\).
    From here, the result follows from the Sobolev Embedding Theorem.
\end{proof}

\begin{proof}[Proof of Proposition \ref{Prop::SubProof::QualSubellip::ExistsHeatKernel}]
    Let \(K_t(x,y)\) be the function from Lemma \ref{Lemma::SubProof::QualSubellip::ExistsHeatKernelWeak}.
    Proposition \ref{Prop::Subellip::MaxSub::SymmetricAssumps} shows
    that Lemma \ref{Lemma::SubProof::QualSubellip::ExistsHeatKernelWeak}
    also applies to \(T(t)^{*}\); so  there is a unique
    \(L_t(x,y)\in \CinftySpacetx[(0,\infty)\times \Manifold][\LpSpacewy{2}[\Manifold][\mu]]\)
    such that
    \begin{equation*}
        T(t)^{*} f(x) = \int L_t(x,y)f(y)\: d\mu(y), \quad \forall f\in \LpSpace{2}[\Manifold][\mu],
    \end{equation*}
    satisfying \eqref{Eqn::SubProof::QualSubellip::ExistsHeatKernelWeak::BoundDerivs}.
    We have \(L_t(x,y)=\overline{K_t(y,x)}\), and 
    we conclude that if \(\OpP_{t,x}\) is any partial differential operator with smooth coefficients
    in the \(t,x\) variables
    and \(\psi\in \CzinftySpace[(0,\infty)\times \Manifold\times \Manifold]\),
    then
    \begin{equation}\label{Eqn::SubProof::QualSubellip::ExistsHeatKernel::SumNorms}
        \BLpNorm{ \OpP_{t,x} \psi(t,x,y)K_t(x,y)}{2}[(0,\infty)\times \Manifold\times \Manifold]
        +\BLpNorm{ \OpP_{t,y} \psi(t,x,y) K_t(x,y)}{2}[(0,\infty)\times \Manifold\times \Manifold]<\infty.
    \end{equation}
    
    Fix \((t_0,x_0,y_0)\in (0,\infty)\times \Manifold\times \Manifold\).
    Let \(U,V\subseteq \Manifold\) be small neighborhoods of \(x_0\) and \(y_0\), respectively,
    which are diffeomorphic to connected open subsets of \(\R^n\).
    Let \(\psi\in \CzinftySpace[(0,\infty)\times U\times V]\)
    equal \(1\) on a neighborhood of \((t_0,x_0,y_0)\). 
    \eqref{Eqn::SubProof::QualSubellip::ExistsHeatKernel::SumNorms}
    shows that Lemma \ref{Lemma::SubProof::QualSubellip::SobolevTrick} applies
    to give \(\psi(t,x,y) K_t(x,y)\in \CinftySpace[(0,\infty)\times \Manifold\times \Manifold]\),
    and therefore \(K_t(x,y)\) is smooth near \((t_0,x_0,y_0)\). As 
    \((t_0,x_0,y_0)\in (0,\infty)\times \Manifold\times \Manifold\)
    was arbitrary, it follows that \(K_t(x,y)\in \CinftySpace[(0,\infty)\times \Manifold\times \Manifold]\), completing the proof.


\end{proof}

        \subsubsection{Scaling}\label{Section::MaxSub::Scaling}
        The results at the unit scale in Corollary \ref{Cor::MaxSub::UnitScale::UnitScaleHypoEstimates}
imply results at every scale by a general scaling procedure introduced by
Nagel, Stein, and Wainger \cite{NagelSteinWaingerBallsAndMetricsDefinedByVectorFieldsIBasicProperties},
and was later worked on by several authors, see for example
\cite{TaoWrightLpImprovingBoundsForAveragesAlongCurves,StreetMultiParameterCarnotCaratheodoryBalls,MontanariMorbidelliNonsmoothHormanderVectorFieldsAndTheirControlBalls,StovallStreetI,StovallStreetII,StovallStreetIII,StreetYaoImprovingRegularity}.
We state a version of this scaling.
For many more details and a further discussion, we refer the reader to  \cite[Sections 3.3 and 3.5]{StreetMaximalSubellipticity}.

We take the same setting as in the start of Section \ref{Section::Subellip::MainThm}; so that we have H\"ormander vector fields with formal degrees
\(\WWd=\left\{ \left( W_1,\Wd_1 \right),\ldots, \left( W_r,\Wd_r \right) \right\}\)
on a connected, smooth manifold \(\Manifold\) of dimension \(n\) with the corresponding metric \(\metric\). \(\mu\)
is a smooth, strictly positive density on \(\Manifold\).

\begin{theorem}[{See  \cite[Sections 3.3 and 3.5]{StreetMaximalSubellipticity}}]\label{Thm::MaxSub::Scaling::NSWScaling}
    Let \(\Compact_1\Subset \Omega \Subset \Manifold\) where \(\Compact_1\)
    is compact and \(\Omega\) is open and relatively compact. There exists \(\delta_1\in (0,1]\), \(\xi_0\in (0,1)\)
    such that the following holds
    \begin{enumerate}[label=(\alph*),series=scalingenumeration]
        \item\label{Item::MaxSub::Scaling::Doubling}  \(\exists D_1\geq 1\), \(\forall x\in \Compact_1\), \(\forall \delta\in (0,\delta_1]\), \(\measure\left( \MetricBall{x}{2\delta} \right)\leq D_1 \measure\left( \MetricBall{x}{\delta} \right)<\infty\).
    \end{enumerate}
    For \(x\in \Compact_1\) and \(\delta\in (0,\delta_1]\), there exists a smooth map \(\Phi_{x,\delta}:\Bno\rightarrow \Omega\)
    such that:
    \begin{enumerate}[resume*=scalingenumeration]
        \item \(\Phi_{x,\delta}(0)=x\).
        \item \(\Phi_{x,\delta}(\Bno)\subseteq \Omega\) is open and \(\Phi_{x,\delta}:\Bno\rightarrow \Phi_{x,\delta}(\Bno)\) is a smooth diffeomorphism.
        \item\label{Item::MaxSub::Scaling::Containments} \(\MetricBall{x}{\xi_0\delta}\subseteq \Phi_{x,\delta}(B^n(1/2))\subseteq \Phi_{x,\delta}(B^n(1))\subseteq \MetricBall{x_0}{\delta/2}\).
    \end{enumerate}
    For \(x\in \Compact_1\) and \(\delta\in (0,\delta_1]\), let \(W_j^{x,\delta}:=\Phi_{x,\delta}^{*}\delta^{\Wd_j}W_j\). Then,
    \begin{enumerate}[resume*=scalingenumeration]
        \item\label{Item::MaxSub::Scaling::UniformHormander} \(W_1^{x,\delta},\ldots, W_r^{x,\delta}\) are smooth H\"ormander vector fields on \(\Bno\), uniformly for \(x\in \Compact_1\) and \(\delta\in (0,\delta_1]\).
            I.e., the conditions from Section \ref{Section::Subellip::Proof::UnitScale} hold uniformly for in \(x\in \Compact_1\) and \(\delta\in (0,\delta_1]\).
            In particular, 
            \begin{enumerate}[label=(\alph{enumi}.\arabic*)]
                \item If we treat \(W_j^{x,\delta}(u)\) as a vector in \(\Rn\) (by using the standard basis \(\partial_{u_1},\ldots, \partial_{u_n}\)),
                    then 
                    \begin{equation*}
                        \left\{ W_j^{x,\delta} : x\in \Compact_1, \delta\in (0,\delta_1] \right\}\subset \CbinftySpace[B^n(1)][\Rn]
                    \end{equation*}
                    is a bounded set.
                \item \(\exists m\in \Nplus\), independent of \(x\) and \(\delta\), such that \(W_1^{x,\delta},\ldots, W_r^{x,\delta}\)
                    satisfy H\"ormander's condition of order \(m\) on \(B^n(1)\), and if \(\tau_{x,\delta}\)
                    is defined as in \eqref{Eqn::Subellip::Proof::Apriori::LowerBoundOfDet},
                    then
                    \begin{equation*}
                        \inf_{\substack{x\in \Compact_1\\\delta\in (0,\delta_1]}} \tau_{x,\delta}>0.
                    \end{equation*}
            \end{enumerate}
        \item\label{Item::MaxSub::Scaling::PullBackMeasure} \(\Phi_{x,\delta}^{*}\: d\measure = \measure(\MetricBall{x}{\delta}) \sigma_{x,\delta}(v)\: dv\), where \(\sigma_{x,\delta}\in \CbinftySpace[\Bno]\)
            is 
            a positive function satisfying 
            \begin{equation*}
                \inf_{x\in \Compact_1}\inf_{ \delta\in (0,\delta_1]} \inf_{ v\in B^n(1)} \sigma_{x,\delta}(v)>0,
            \end{equation*}
            and \(\left\{ \sigma_{x,\delta} :  x\in \Compact_1, \delta\in (0,\delta_1]\right\}\subset \CbinftySpace[B^n(1)]\)
            is a bounded set.
    \end{enumerate}
\end{theorem}
\begin{proof}[Comments on the proof]
    The statement here is similar to the more general  \cite[Theorem 3.5.1]{StreetMaximalSubellipticity}; however we have used
    \(\Phi_{x,\xi_3\delta/2}\)  from that theorem (where \(\xi_3\) is in that theorem), in place of \(\Phi_{x,\delta}\). This establishes \ref{Item::MaxSub::Scaling::Containments}
    using  \cite[Theorem 3.5.1(b),(e)]{StreetMaximalSubellipticity}.
\end{proof}

        \subsubsection{Completion of the proof of Theorem \texorpdfstring{\ref{Thm::SubMainThm::MainThm}}{\ref*{Thm::SubMainThm::MainThm}}}
        \label{Section::MaxSub::CompleteProof}
        The proof of Theorem \ref{Thm::SubMainThm::MainThm}
is mostly a matter of using
Corollary \ref{Cor::MaxSub::UnitScale::UnitScaleHypoEstimates}
and Theorem \ref{Thm::MaxSub::Scaling::NSWScaling}
to verify the assumptions of Theorem \ref{Thm::Results::MainThm}.
Fix a semi-group \(T(t)\) with generator \((A,\Domain[A])\) as in Theorem \ref{Thm::SubMainThm::MainThm}.

By replacing \(\opL\) with \(\opL+\omega_0\) (and \(T(t)\) with \(e^{-\omega_0 t}T(t)\)), it suffices to prove the result
with \(\omega_0=0\). Here, we have used that the constant \(C\) in Theorem \ref{Thm::SubMainThm::MainThm}
can depend on \(\omega_0\), and we have also used that \(\left( \metric[x][y]+t^{1/2\kappa} \right)\wedge \delta_0\leq \delta_0\leq 1\)
(so that \(\left( \left( \metric[x][y]+t^{1/2\kappa} \right)\wedge \delta_0 \right)^{-2\kappa j}\) is increasing in \(j\)).
This is in contrast to Theorem \ref{Thm::Results::MainThm}, where the constants did not depend on \(\omega_0\),
and \(\omega_0\) was instead kept track of in the right-hand side of \eqref{Eqn::Results::MainGaussianBounds}.

Proposition \ref{Prop::SubProof::QualSubellip::ExistsHeatKernel} establishes the existence 
of a unique smooth function \(K_t(x,y)\in \CinftySpace[(0,\infty)\times \Manifold\times\Manifold]\)
satisfying \eqref{Eqn::SubMainThm::DefineKt}, and therefore, our goal is to establish \eqref{Eqn::SubMainThm::MainBound}.

Fix \(\Compact\Subset \Manifold\) compact, for which we want to prove Theorem \ref{Thm::SubMainThm::MainThm}
and fix \(\psi\in \CzinftySpace[\Manifold]\) with \(\psi=1\) on a neighborhood of \(\Compact\).
For each \(\alpha\) and 
\(\beta\),
we will apply Theorem \ref{Thm::Results::MainThm} with
\(p_0=2\),
\(\NSubsetx=\NSubsety=\Compact\), \(\omega_0=0\), \(X=\psi W^{\alpha}\), \(Y=\psi W^{\beta}\),
\(S_X(\delta)=\epsilon_\alpha \delta^{\degWd(\alpha)}\), and \(S_Y(\delta)=\epsilon_\beta \delta^{\degWd(\beta)}\),
where \(\epsilon_\alpha,\epsilon_\beta>0\) are small numbers to be chosen later
and \(\delta_0\in (0,1]\) is a small number to be chosen later.
In this application of Theorem \ref{Thm::Results::MainThm}, admissible constants will not depend on \(\alpha\) or \(\beta\),
which shows that \(c>0\) does not depend on \(\alpha\) or \(\beta\) (see 
 Remarks \ref{Rmk::Results::cIsAdmissible} and \ref{Rmk::Subellip::MaxSub::ChoicesInApplication}).

 \begin{remark}\label{Rmk::SubProof::Completion::SameTopology}
   Throughout, we use the fact that the topology on \(\Manifold\) induced by \(\metric\)
   is the same as the topology on \(\Manifold\) as a manifold. See
   \cite[Lemma 3.1.7]{StreetMaximalSubellipticity}.
 \end{remark}

 We turn to verifying the assumptions of Theorem \ref{Thm::Results::MainThm}
 with the above choices.

 \begin{lemma}\label{Lemma::SubProof::Completion::XContinuous}
    For every \(\alpha\), \(\psi W^{\alpha}:\Domain[A^\infty]\rightarrow \LpSpace{2}[\Manifold][\mu]\)
    is continuous.
 \end{lemma}
 \begin{proof}
    Corollary \ref{Cor::SubProof::QualSubellip::DomainEmbedsInSmooth} shows
    \(\Domain[A^\infty]\hookrightarrow \CinftySpace[\Manifold]\) is continuous, and clearly
    \(\psi W^{\alpha}:\CinftySpace[\Manifold]\rightarrow \LpSpace{2}[\Manifold][\mu]\) is continuous.
 \end{proof}

 Fix \(\delta_2>0\) so small
 \begin{equation}\label{Eqn::SubProof::CompleteProof::DefineCompact1}
    \Compact_1:=\overline{\bigcup_{x\in \Compact} \MetricBall{x}{\delta_2}}
 \end{equation}
 is compact. This is always possible, since \(\Compact\) is compact 
 and the metric topology induced by \(\rho\) equals the usual topology on \(\Manifold\)
 as a manifold--see
 Remark \ref{Rmk::SubProof::Completion::SameTopology}.
 
 Fix \(\Omega\Subset \Manifold\) open and relatively compact with \(\Compact_1\Subset \Omega\)
 and \(\psi\in \CzinftySpace[\Omega]\).
 Let 
 \(\delta_1\in (0,1]\) and \(\Phi_{x,\delta}\) be as in
 Theorem \ref{Thm::MaxSub::Scaling::NSWScaling} 
 with this choice of \(\Compact_1\) and \(\Omega\)
 and set \(\delta_0:=\min\{\delta_1,\delta_2\}\in (0,1]\).
 
 \begin{lemma}\label{Lemma::SubProof::CompleteProof::SimplierContainment}
    For \(x_0\in \Compact\) and \(\delta\in (0,\delta_0]\), we have \(\MetricBall{x_0}{\delta}\subseteq \Compact_1\).
 \end{lemma}
 \begin{proof}
    Since \(\delta_0\leq \delta_2\), this follows immediately from the definition of \(\Compact_1\);
    see \eqref{Eqn::SubProof::CompleteProof::DefineCompact1}.
 \end{proof}

 \begin{remark}\label{Rmk::SubProof::CompleteProof::SimplierAssumptions}
    We use Lemma \ref{Lemma::SubProof::CompleteProof::SimplierContainment}
    to simplify Assumptions \ref{Assumption::LocalDoubling} and \ref{Assumption::HypoellipticityII};
    see Remark \ref{Rmk::Assumption::RemoveQuantifers}.
    In fact, to establish Assumption \ref{Assumption::LocalDoubling},
    Lemma \ref{Lemma::SubProof::CompleteProof::SimplierContainment}
    shows that it suffices to prove
    \(\mu(\MetricBall{x}{2\delta})\leq D_1\measure(\MetricBall{x}{\delta})<\infty\),
    \(\forall x\in \Compact_1\), \(\delta\in (0,\delta_0]\).
    Similarly, to establish Assumption \ref{Assumption::HypoellipticityII},
    it suffices to establish \eqref{Eqn::Assump::Hyp::HypoellipticityIIBound}
    \(\forall x\in \Compact_1\), \(\delta\in (0,\delta_0)\)
    (and the same for \(A\) replaced with \(A^{*}\)).
 \end{remark}

 \begin{lemma}\label{Lemma::SubProof::CompleteProof::CzinfyInDomainAInfty}
    \(\CzinftySpace[\Manifold]\subseteq \Domain[A^\infty]\).
 \end{lemma}
 \begin{proof}
    We prove by induction \(\CzinftySpace[\Manifold]\subseteq \Domain[A^j]\) for all \(j\geq 1\).
    The base case, \(\CzinftySpace[\Manifold]\subseteq \Domain[A]\), is assumed in Theorem \ref{Thm::SubMainThm::MainThm}.
    Suppose, for induction, \(\CzinftySpace[\Manifold]\subseteq \Domain[A^j]\) for some \(j\geq 1\).
    Then, since \(A\CzinftySpace[\Manifold]=-\opL\CzinftySpace[\Manifold]\subseteq \CzinftySpace[\Manifold]\subseteq \Domain[A^j]\),
    we see \(\CzinftySpace[\Manifold]\subseteq \Domain[A^{j+1}]\), completing the inductive step and the proof.
 \end{proof}

 \begin{lemma}\label{Lemma::SubProof::CompleteProof::Assumption1Holds}
    Assumption \ref{Assumption::Locality} holds.
    In fact, \(\forall x_0\in \Manifold\), \(\forall \delta>0\), the \(\LpSpace{2}[\Manifold][\mu]\)
    closure of 
    \begin{equation}\label{Eqn::SubProof::CompleteProof::Assumption1Holds::Set}
        \left\{ \phi\in \DAinfty : A^j\phi\big|_{\MetricBall{x_0}{\delta}}=0, \forall j\geq 0 \right\}
    \end{equation}
    contains \(\LpSpace{2}[\Manifold\setminus \MetricBallClosure{x_0}{\delta}][\measure]\).
    Here, \(\MetricBallClosure{x_0}{\delta}\) is the closure of \(\MetricBall{x_0}{\delta}\)
    with respect to the metric \(\metric\).
    The same result holds for \((A^{*},\Domain[A^{*}])\) in place of \((A,\Domain[A])\).
 \end{lemma}
 \begin{proof}
    Since the metric topology induced by \(\metric\) equals the topology on \(\Manifold\)
    as a manifold (see
 Remark \ref{Rmk::SubProof::Completion::SameTopology}),
    \(\MetricBallClosure{x_0}{\delta}\) is closed as a subset of the manifold.

    We claim \(\CzinftySpace[\Manifold\setminus \MetricBallClosure{x_0}{\delta}]\)
    is a subset of \eqref{Eqn::SubProof::CompleteProof::Assumption1Holds::Set}.
    Indeed, if \(\phi\in \CzinftySpace[\Manifold\setminus \MetricBallClosure{x_0}{\delta}]\),
    then \(\phi\in \Domain[A^\infty]\) by Lemma \ref{Lemma::SubProof::CompleteProof::CzinfyInDomainAInfty},
    and \(A^j\phi\big|_{\MetricBall{x_0}{\delta}} = (-\opL)^j\phi\big|_{\MetricBall{x_0}{\delta}}=0\).
    We conclude \(\phi\) is in \eqref{Eqn::SubProof::CompleteProof::Assumption1Holds::Set} as desired.

    Since the \(\LpSpace{2}[\Manifold][\mu]\) closure of \(\CzinftySpace[\Manifold\setminus \MetricBallClosure{x_0}{\delta}]\)
    contains \(\LpSpace{2}[\Manifold\setminus \MetricBallClosure{x_0}{\delta}][\measure]\),
    the result for \((A,\Domain[A])\) follows.

    Since our assumptions are symmetric in \(A\) and \(A^{*}\) (see Proposition \ref{Prop::Subellip::MaxSub::SymmetricAssumps}),
    the same result holds for \(A^{*}\) in place of \(A\).
 \end{proof}

 \begin{lemma}\label{Lemma::SubProof::Completion::PositiveMeasure}
    Assumption \ref{Assumption::LocalPositivity} holds.
    In fact, \(\forall \EmptySet\ne U\subseteq \Manifold\) open, \(\mu(U)>0\).
 \end{lemma}
 \begin{proof}
    This follows from the fact that \(\mu\) is a strictly positive density. See, also, Remark \ref{Rmk::SubProof::Completion::SameTopology}.
 \end{proof}

 \begin{lemma}\label{Lemma::SubProof::CompleteProof::Assumption3Holds}
    Assumption \ref{Assumption::LocalDoubling} holds.
    In fact, the following stronger statement is true.
    \(\exists D_1\geq 1\), \(\forall x\in \Compact_1\), \(\forall \delta\in (0,\delta_0]\),
    \begin{equation}\label{Eqn:::SubProof::CompleteProof::Doubling::MainDoubling}
        \mu\left( \MetricBall{x}{2\delta} \right)\leq D_1 \mu\left( \MetricBall{x}{\delta} \right).
    \end{equation}
 \end{lemma}
 \begin{proof}
    \eqref{Eqn:::SubProof::CompleteProof::Doubling::MainDoubling}
    is Theorem \ref{Thm::MaxSub::Scaling::NSWScaling} \ref{Item::MaxSub::Scaling::Doubling}.
    That \eqref{Eqn:::SubProof::CompleteProof::Doubling::MainDoubling} implies Assumption \ref{Assumption::LocalDoubling} follows from
   Remark \ref{Rmk::SubProof::CompleteProof::SimplierAssumptions}.
 \end{proof}

 \begin{lemma}\label{Lemma::SubProof::CompleteProof::HeatFuncsAreSmooth}
   \(\forall (x,\delta)\in \Manifold\times (0,\infty)\) and \(I\subseteq \R\) an open interval,
   if \(u:I\rightarrow \Domain[A^\infty]\) is an \((A,x,\delta)\)-heat function, then
   \(u\big|_{I\times \MetricBall{x}{\delta}}\in \CinftySpace[I\times \MetricBall{x}{\delta}]\).
 \end{lemma}
 \begin{proof}
   Fix \((t_0,y_0)\in I\times \MetricBall{x}{\delta}\) and let \(\phi_1,\phi_2\in \CzinftySpace[I\times \MetricBall{x}{\delta}]\)
   be such that \(\phi_1\prec \phi_2\) and \(\phi_1=1\) on a neighborhood of \((t_0,y_0)\).
   Consider, \(\phi_2(\delta^{-2\kappa}t,y)(\partial_t+\opL)u(\delta^{-2\kappa}t,y)=\phi_2(\delta^{-2\kappa}t,y)(\partial_t-A)u(\delta^{-2\kappa}t,y)=0\),
   and \(\phi_2(\delta^{-2\kappa}t,y)u(\delta^{-2\kappa}t,y)\in \LpSpace{2}[I\times \Manifold]\) by the definition of heat functions
   (see Definition \ref{Defn::Assump::Hypo::HeatFunction} \ref{Item::Assump::Hypo::HeatFunction::LocalSmooth}).
   Proposition \ref{Prop::SubProof::QualSubellip::QualSubellip} implies
   \(\phi_1(\delta^{-2\kappa}t,y)u(\delta^{-2\kappa}t,y)\in \HsSpace{s}[\R\times \Manifold]\), \(\forall s\in \R\).
   The Sobolev imbedding theorem then implies \(\phi_1(t,y)u(t,y)\in \CinftySpace[\R\times \Manifold]\).
   This shows \(u(t,y)\) is smooth on a neighborhood of \((t_0,y_0)\).
   As \((t_0,y_0)\in I\times \MetricBall{x}{\delta}\) was arbitrary, this completes the proof.
 \end{proof}

 \begin{proposition}\label{Prop::SubProof::CompleteProof::ScaledEstimates}
   Let \(a_1:=\min\{\xi_0,1/4\}\in (0,1/4]\), where \(\xi_0\in (0,1)\) is as in Theorem \ref{Thm::MaxSub::Scaling::NSWScaling}.
   \(\exists C_1\geq 0\), \(\forall \alpha\), \(\exists \epsilon_\alpha'> 0\),
   \(\forall (x,\delta)\in \Compact_1\times (0,\delta_0]\), \(\forall u:(-1/2,1/2)\rightarrow \Domain[A^\infty]\)
   an \((A,x,\delta)\)-heat function, we have
   \begin{enumerate}[(i)]
      \item\label{Item::SubProof::CompleteProof::ScaledEstimates::Smooth} \(\psi W^{\alpha}u(t,x)\big|_{(-1/2,1/2)\times \MetricBall{x}{\delta}}
      \in \CinftySpace[(-1/2,1/2)\times\MetricBall{x}{\delta}]\).
      \item\label{Item::SubProof::CompleteProof::ScaledEstimates::Walpha} \begin{equation*}
         \sup_{\substack{t\in (-a_1,a_1)\\ y\in \MetricBall{x}{a_1\delta}}} \left| \epsilon_\alpha' \psi \delta^{\degWd(\alpha)} W^{\alpha} u(t,y) \right|
         \leq \measure\left( \MetricBall{x}{\delta} \right)^{-1/2} \LpNorm{u}{2}[(-1/2,1/2)\times \MetricBall{x}{\delta/2}].
      \end{equation*}
      \item\label{Item::SubProof::CompleteProof::ScaledEstimates::partialt} \(
         \LpNorm{\partial_t u}{2}[(-a_1,a_1)\times \MetricBall{x}{a_1\delta}]
         \leq C_1 \LpNorm{u}{2}[(-1/2,1/2)\times \MetricBall{x}{\delta/2}].
      \)
   \end{enumerate}
 \end{proposition}

 We separate the proof of Proposition \ref{Prop::SubProof::CompleteProof::ScaledEstimates}
 into several steps.

 \begin{proof}[Proof of Proposition \ref{Prop::SubProof::CompleteProof::ScaledEstimates} \ref{Item::SubProof::CompleteProof::ScaledEstimates::Smooth}]
   This follows immediately from Lemma \ref{Lemma::SubProof::CompleteProof::HeatFuncsAreSmooth}.
 \end{proof}

 The proofs of Proposition \ref{Prop::SubProof::CompleteProof::ScaledEstimates}
\ref{Item::SubProof::CompleteProof::ScaledEstimates::Walpha} and \ref{Item::SubProof::CompleteProof::ScaledEstimates::partialt}
require some initial setup.
For \(x\in \Compact\), \(\delta\in (0,\delta_0]\), let
\(W_j^{x,\delta}:=\Phi_{x,\delta}^{*} \delta^{\Wd_j}W_j\) be as in Theorem \ref{Thm::MaxSub::Scaling::NSWScaling},
and let \(\sigma_{x,\delta}\) be as in Theorem \ref{Thm::MaxSub::Scaling::NSWScaling} \ref{Item::MaxSub::Scaling::PullBackMeasure}
so that \(\sigma_{x,\delta}\) is uniformly bounded below and
\(\left\{ \sigma_{x,\delta} : x\in \Compact, \delta\in (0,\delta_0] \right\}\subset \CbinftySpace[B^n(1)]\) is a bounded set.
Set \(a_{\alpha,\beta}^{x,\delta}:=\delta^{2\kappa-\degWd(\alpha)-\degWd(\beta)}a_{\alpha,\beta}\circ \Phi_{x,\delta}\in \CinftySpace[B^n(1)]\).

Let \(\delta^{\Wd}W=\left( \delta^{\Wd_1}W_1,\ldots, \delta^{\Wd_r}W_r\right)\).
Using \eqref{Eqn::SubMainThem::FormulaForL}, we have
\begin{equation}\label{Eqn::SubProof::CompleteProof::deltaopL}
   \delta^{2\kappa}\opL = \sum_{\degWd(\alpha),\degWd(\beta)\leq \kappa} \left( \left( \delta^{\Wd}W \right)^{\alpha} \right)^{*} \delta^{2\kappa-\degWd(\alpha)-\degWd(\beta)} a_{\alpha,\beta} \left( \delta^{\Wd}W \right)^{\beta}.
\end{equation}
Let \(\opLxdelta:=\Phi_{x,\delta}^{*} \delta^{2\kappa}\opL \left( \Phi_{x,\delta} \right)_{*}\) (see Remark \ref{Rmk::SubProof::QualSubellip::PullBackOp}).
In light of \eqref{Eqn::SubProof::QualSubellip::opLPhi}, using \eqref{Eqn::SubProof::CompleteProof::deltaopL}, 
the above definitions, and Remark \ref{Rmk::SubProof::QualSubellip::PullBackOp}, we have
\begin{equation}\label{Eqn::SubProof::CompleteProof::opLxdelta}
\begin{split}
    \opLxdelta
    &=\sum_{\degWd(\alpha),\degWd(\beta)\leq \kappa} \measure(\MetricBall{x}{\delta})^{-1} \sigma_{x,\delta}^{-1} \left( \left( W^{x,\delta} \right)^{\alpha} \right)^{*} \measure(\MetricBall{x}{\delta}) \sigma_{x,\delta} a_{\alpha,\beta}^{x,\delta} \left( W^{x,\delta} \right)^{\beta}
    \\&=\sum_{\degWd(\alpha),\degWd(\beta)\leq \kappa}  \sigma_{x,\delta}^{-1} \left( \left( W^{x,\delta} \right)^{\alpha} \right)^{*}  \sigma_{x,\delta} a_{\alpha,\beta}^{x,\delta} \left( W^{x,\delta} \right)^{\beta},
\end{split}
\end{equation}
where we have used \(\measure(\MetricBall{x}{\delta})\) is a constant since \(x\) is fixed.

To prove  Proposition \ref{Prop::SubProof::CompleteProof::ScaledEstimates}
\ref{Item::SubProof::CompleteProof::ScaledEstimates::Walpha} and \ref{Item::SubProof::CompleteProof::ScaledEstimates::partialt}
we will apply 
Corollary \ref{Cor::MaxSub::UnitScale::UnitScaleHypoEstimates}
with \(\opL\) replaced by \(\opLxdelta\); note that
\eqref{Eqn::SubProof::CompleteProof::opLxdelta} shows
\(\opLxdelta\) is of the form \eqref{Eqn::Subellip::Proof::Apriori::opLFormula}.
Importantly, we need that the estimates obtained from 
Corollary \ref{Cor::MaxSub::UnitScale::UnitScaleHypoEstimates}
are uniform in \((x,\delta)\).
Thus, we need to establish that the assumptions of 
Proposition \ref{Prop::Subellip::Proof::Apriori}
hold uniformly in \((x,\delta)\); see the statement of Proposition \ref{Prop::Subellip::Proof::Apriori} where the dependence
of the constant \(C_{s,\phi_1,\phi_2}\) is spelled out explicitly.

\begin{lemma}\label{Lemma::SubProof::CompleteProof::aAlphaBetaxdeltaBounded}
   \(\left\{ a_{\alpha,\beta}^{x,\delta} : (x,\delta)\in \Compact_1\times (0,\delta_0], \degWd(\alpha),\degWd(\beta)\leq \kappa \right\}\subset \CbinftySpace[B^n(1)]\)
   is a bounded set.
\end{lemma}
\begin{proof}
   Since \(\delta_0\in (0,1)\), it suffices to show
   \begin{equation}\label{Eqn::SubProof::CompleteProof::aAlphaBetaxdeltaBounded::BoundCjNorm}
      \sup_{\substack{x\in \Compact_1\\ \delta\in (0,\delta_0]
      }} \BCjNorm{a_{\alpha,\beta}\circ \Phi_{x,\delta}}{L}[B^n(1)]<\infty,\quad \forall L\in \N.
   \end{equation}
   In light of Theorem \ref{Thm::MaxSub::Scaling::NSWScaling} \ref{Item::MaxSub::Scaling::UniformHormander},
   \eqref{Eqn::SubProof::CompleteProof::aAlphaBetaxdeltaBounded::BoundCjNorm} is equivalent to
   \begin{equation}\label{Eqn::SubProof::CompleteProof::aAlphaBetaxdeltaBounded::BoundCjWNorm}
      \sup_{\substack{x\in \Compact_1\\ \delta\in (0,\delta_0]
      }}\sum_{|\gamma|\leq N} \sup_{u\in B^n(1)} \left| \left( W^{x,\delta} \right)^{\gamma} a_{\alpha,\beta}\circ \Phi_{x,\delta} (u) \right| <\infty,\quad \forall N\in \N.
   \end{equation}
   Since \(W_j^{x,\delta}:=\Phi_{x,\delta}^{*} \delta^{\Wd_j}W_j\), we have
   \begin{equation}\label{Eqn::SubProof::CompleteProof::aAlphaBetaxdeltaBounded::Tmp1}
      \left( W^{x,\delta} \right)^{\gamma} a_{\alpha,\beta}\circ \Phi_{x,\delta}
      = \left( \delta^{\degWd(\gamma)} W^{\gamma} a_{\alpha,\beta} \right)\circ \Phi_{x,\delta}.
   \end{equation}
   Using \eqref{Eqn::SubProof::CompleteProof::aAlphaBetaxdeltaBounded::Tmp1} and the fact that \(\delta_0\in (0,1)\),
   we have
   \eqref{Eqn::SubProof::CompleteProof::aAlphaBetaxdeltaBounded::BoundCjWNorm} 
   is implied by
   \begin{equation}\label{Eqn::SubProof::CompleteProof::aAlphaBetaxdeltaBounded::Tmp2}
      \sup_{\substack{x\in \Compact_1\\ \delta\in (0,\delta_0]
      }}\sum_{|\gamma|\leq N} \sup_{y\in \Phi_{x,\delta}(B^n(1))} \left| W^{\gamma} a_{\alpha,\beta}(y) \right| <\infty,\quad \forall N\in \N.
   \end{equation}
   But \(\Phi_{x,\delta}(B^n(1))\subseteq \Omega\)
   (see Theorem \ref{Thm::MaxSub::Scaling::NSWScaling})
   and \(\overline{\Omega}\Subset \Manifold\) is compact,
   so we have
   \begin{equation*}
   \begin{split}
       &\sup_{\substack{x\in \Compact_1\\ \delta\in (0,\delta_0]
      }}\sum_{|\gamma|\leq N} \sup_{y\in \Phi_{x,\delta}(B^n(1))} \left| W^{\gamma} a_{\alpha,\beta}(y) \right|
      \leq \sum_{|\gamma|\leq N} \sup_{y\in \overline{\Omega}} \left| W^{\gamma} a_{\alpha,\beta}(y) \right|
      <\infty,\quad \forall N\in \N,
   \end{split}
   \end{equation*}
   establishing \eqref{Eqn::SubProof::CompleteProof::aAlphaBetaxdeltaBounded::Tmp2} and 
   completing the proof.
\end{proof}

\begin{lemma}\label{Lemma::SubProof::CompleteProof::EqualL2Norms}
   \(\forall x\in \Compact_1\), \(\forall \delta\in (0,\delta_0]\), \(\forall f\in \LpSpace{2}[B^n(1)]\),
   \begin{equation}\label{Eqn::SubProof::CompleteProof::EqualL2Norms}
      \measure\left( \MetricBall{x}{\delta} \right)^{1/2} \BLpNorm{f}{2}[B^n(1)]
      \approx \measure\left( \MetricBall{x}{\delta} \right)^{1/2} \BLpNorm{f}{2}[B^n(1)][\sigma_{x,\delta}(v)\: dv]
      =\BLpNorm{ f\circ \Phi_{x,\delta}^{-1}}{2}[\Phi_{x,\delta}(B^n(1))][\measure],
   \end{equation}
   where the implicit constants do not depend on \(x\), \(\delta\), or \(f\).
\end{lemma}
\begin{proof}
   By Theorem \ref{Thm::MaxSub::Scaling::NSWScaling} \ref{Item::MaxSub::Scaling::PullBackMeasure},
   \(\sigma_{x,\delta}\approx 1\), and the \(\approx\) in \eqref{Eqn::SubProof::CompleteProof::EqualL2Norms} follows.
   The equality in \eqref{Eqn::SubProof::CompleteProof::EqualL2Norms}
   follows from \(\Phi_{x,\delta}^{*}\: d\measure = \measure(\MetricBall{x}{\delta}) \sigma_{x,\delta}(v)\: dv\)
   (see Theorem \ref{Thm::MaxSub::Scaling::NSWScaling} \ref{Item::MaxSub::Scaling::PullBackMeasure}).
\end{proof}

\begin{lemma}\label{Lemma::SubProof::CompleteProof::CheckScaledMaximalSub}
   Assumption \ref{Assumption::Subellip::Proof::Apriori} holds uniformly in \(x\) and \(\delta\).
   More precisely, \(\exists \Ct_1,\Ct_2\geq 0\), \(\forall f\in \CzinftySpace[B^n(1)]\),
   \(\forall x\in \Compact_1\), \(\forall \delta\in (0,\delta_0]\),
   \begin{equation*}
      \sum_{j=1}^r \BLpNorm{ \left( W_j^{x,\delta} \right)^{\kappa/\Wd_j} f}{2}[B^n(1)]^2
      \leq
      \Ct_1 \Real \Ltip*{f}{\sigma_{x,\delta}\opLxdelta f}[B^n(1)]
      +\Ct_2 \BLpNorm{f}{2}[B^n(1)]^2.
   \end{equation*}
\end{lemma}
\begin{proof}
   Let \(f\in \CzinftySpace[B^n(1)]\), \(x\in \Compact\), and \(\delta\in (0,\delta_0]\). 
   Note that \(f\circ \Phi_{x,\delta}\in \CzinftySpace[\Omega]\) (see Theorem \ref{Thm::MaxSub::Scaling::NSWScaling}).
   Applying Assumption \ref{Assump::SubMainTheorem::MaxSubTypeIII} with \(f\) replaced by
   \(f\circ \Phi_{x,\delta}\), we see
   \begin{equation}\label{Eqn::SubProof::CompleteProof::CheckScaledMaximalSub::Tmp1}
         \sum_{j=1}^r \BLpNorm{ W_j^{\kappa/\Wd_j} f\circ \Phi_{x,\delta}}{2}[\Manifold][\measure]^2
        \leq C_1 \Real \Ltip*{f\circ \Phi_{x,\delta}}{\opL f\circ \Phi_{x,\delta}}[\Manifold][\measure]
        +C_2 \BLpNorm{f\circ \Phi_{x,\delta}}{2}[\Manifold][\measure]^2.
   \end{equation}
   Multiplying both sides of \eqref{Eqn::SubProof::CompleteProof::CheckScaledMaximalSub::Tmp1}
   by \(\delta^{2\kappa}\) and using \(\delta\in (0,1]\), we see
   \begin{equation}\label{Eqn::SubProof::CompleteProof::CheckScaledMaximalSub::Tmp2}
         \sum_{j=1}^r \BLpNorm{ \left(\delta^{\Wd_j} W_j \right)^{\kappa/\Wd_j} f\circ \Phi_{x,\delta}}{2}[\Manifold][\measure]^2
        \leq C_1 \Real \Ltip*{f\circ \Phi_{x,\delta}}{\delta^{2\kappa}\opL f\circ \Phi_{x,\delta}}[\Manifold][\measure]
        +C_2 \BLpNorm{f\circ \Phi_{x,\delta}}{2}[\Manifold][\measure]^2.
   \end{equation}
   Using \(\delta^{\Wd_j} W_j f\circ \Phi_{x,\delta}= \left( W_j^{x,\delta} f\right)\circ \Phi_{x,\delta}\),
   changing variables \(u=\Phi_{x,\delta}(v)\), using \(\Phi_{x,\delta}^{*}\: d\measure = \measure(\MetricBall{x}{\delta}) \sigma_{x,\delta}(v)\: dv\)
   (see Theorem \ref{Thm::MaxSub::Scaling::NSWScaling} \ref{Item::MaxSub::Scaling::PullBackMeasure}), 
    Remark \ref{Rmk::SubProof::QualSubellip::PullBackOp} with \(\sigma\) replaced by
   \(\measure\left( \MetricBall{x}{\delta} \right)\sigma_{x,\delta}\), and \eqref{Eqn::SubProof::CompleteProof::deltaopL},
   \eqref{Eqn::SubProof::CompleteProof::CheckScaledMaximalSub::Tmp2} shows
   \begin{equation}\label{Eqn::SubProof::CompleteProof::CheckScaledMaximalSub::Tmp3}
        \begin{split}
         &\sum_{j=1}^r \measure\left( \MetricBall{x}{\delta} \right)\BLpNorm{ \left( W_j^{x,\delta} \right)^{\kappa/\Wd_j} f}{2}[B^n(1)][\sigma_{x,\delta}\: dv]^2
         \\&\leq C_1 \Real \measure\left( \MetricBall{x}{\delta} \right)\Ltip*{f}{\sigma_{x,\delta}\opLxdelta f}[B^n(1)]
         +C_2 \measure\left( \MetricBall{x}{\delta} \right)\BLpNorm{f}{2}[B^n(1)][\sigma_{x,\delta}\: dv]^2.
        \end{split}
   \end{equation}
   By Lemma \ref{Lemma::SubProof::CompleteProof::EqualL2Norms},
   \eqref{Eqn::SubProof::CompleteProof::CheckScaledMaximalSub::Tmp3} implies
   \begin{equation}\label{Eqn::SubProof::CompleteProof::CheckScaledMaximalSub::Tmp4}
        \begin{split}
         &\sum_{j=1}^r \measure\left( \MetricBall{x}{\delta} \right)\BLpNorm{ \left( W_j^{x,\delta} \right)^{\kappa/\Wd_j} f}{2}[B^n(1)]^2
         \\&\leq \Ct_1 \Real \measure\left( \MetricBall{x}{\delta} \right)\Ltip*{f}{\sigma_{x,\delta}\opLxdelta f}[B^n(1)]
         +\Ct_2 \measure\left( \MetricBall{x}{\delta} \right)\BLpNorm{f}{2}[B^n(1)]^2.
        \end{split}
   \end{equation}
   Dividing \(\measure\left( \MetricBall{x}{\delta} \right)\) from both sides of 
   \eqref{Eqn::SubProof::CompleteProof::CheckScaledMaximalSub::Tmp4} completes the proof;
   here we have used \(\measure\left( \MetricBall{x}{\delta} \right)> 0\)
   by Lemma \ref{Lemma::SubProof::Completion::PositiveMeasure}.
\end{proof}

\begin{lemma}\label{Lemma::SubProof::CompleteProof::EstiamtesAtScale}
   There exists constants \(C_1\geq 0\) and \(\forall \alpha\), \(C_\alpha\geq 0\) such that 
   \(\forall x\in \Compact_1\), \(\forall \delta\in (0,\delta_0]\)
   \(\forall \uh(t,v)\in \Distributions[(-1/2,1/2)\times \Bno]\) satisfying \(\left( \partial_t+\opLxdelta \right)\uh=0\),
   we have
   \begin{equation}\label{Eqn::SubProof::CompleteProof::EstiamtesAtScale::partialtEstimate}
      \BLpNorm{\partial_t \uh}{2}[(-1/4,1/4)\times \Phi_{x,\delta}^{-1}\left( \MetricBall{x}{\xi_0 \delta} \right)][dt\times \sigma_{x,\delta}\: dv]
      \leq C_1 \BLpNorm{\uh}{2}[(-1/2,1/2)\times B^n(1)][dt\times \sigma_{x,\delta}\: dv],
   \end{equation}
   and
   \begin{equation}\label{Eqn::SubProof::CompleteProof::EstiamtesAtScale::WalphaEstimate}
      \sup_{\substack{v\in \Phi^{-1}_{x,\delta}\left( \MetricBall{x}{\xi_0\delta} \right) \\ t\in (-1/4,1/4)}}
      \left| \left( W^{x,\delta} \right)^{\alpha}\uh(t,v) \right|
      \leq C_\alpha \BLpNorm{\uh}{2}[(-1/2,1/2)\times B^n(1)][dt\times \sigma_{x,\delta}\: dv],
   \end{equation}
   where if the right-hand side of one of the above equations is finite, so is the corresponding left-hand side.
\end{lemma}
\begin{proof}
   We have shown that the assumptions of Corollary \ref{Cor::MaxSub::UnitScale::UnitScaleHypoEstimates}
   hold uniformly for \(x\in \Compact_1\) and \(\delta\in (0,\delta_0]\)
   (see Lemmas \ref{Lemma::SubProof::CompleteProof::aAlphaBetaxdeltaBounded} and \ref{Lemma::SubProof::CompleteProof::CheckScaledMaximalSub}
   and Theorem \ref{Thm::MaxSub::Scaling::NSWScaling}).
   Thus the constants from Corollary \ref{Cor::MaxSub::UnitScale::UnitScaleHypoEstimates}
   can be chosen independent of \(x\in \Compact_1\) and \(\delta\in (0,\delta_0]\)
   (see the statement of Proposition \ref{Prop::Subellip::Proof::Apriori} for a description
   of what these constants depend on).
   Corollary \ref{Cor::MaxSub::UnitScale::UnitScaleHypoEstimates}
   then applies 
   to establish \eqref{Eqn::SubProof::CompleteProof::EstiamtesAtScale::partialtEstimate}
   and \eqref{Eqn::SubProof::CompleteProof::EstiamtesAtScale::WalphaEstimate}
   but with \(\Phi_{x,\delta}^{-1}\left( \MetricBall{x}{\xi_0 \delta} \right)\) on the left-hand side
   of both equations replaced by \(B^n(1/2)\).

   Theorem \ref{Thm::MaxSub::Scaling::NSWScaling} \ref{Item::MaxSub::Scaling::Containments}
   shows \(\Phi_{x,\delta}^{-1}\left( \MetricBall{x}{\xi_0 \delta} \right)\subseteq B^n(1/2)\),
   which completes the proof.
\end{proof}

\begin{proof}[Proof of Proposition \ref{Prop::SubProof::CompleteProof::ScaledEstimates}
\ref{Item::SubProof::CompleteProof::ScaledEstimates::Walpha} and \ref{Item::SubProof::CompleteProof::ScaledEstimates::partialt}]
   For \(x\in \Compact_1\), \(\delta\in (0,\delta_0]\),
   let \(u:(-1/2,1/2)\rightarrow \Domain[A^\infty]\) be an \((A,x,\delta)\)-heat function
   as in the statement of the proposition. Set \(\uh(t,v):=u(t,\Phi_{x,\delta}(v))\).

   By the definition of \((A,x,\delta)\)-heat functions (see Definition \ref{Defn::Assump::Hypo::HeatFunction} \ref{Item::Assump::Hypo::HeatFunction::Derivatives}),
   and using \(A=-\opL\big|_{\Domain[A]}\), we have
   \begin{equation*}
      \left( \left( \partial_t+\delta^{2\kappa}\opL \right) u \right)\big|_{(-1/2,1/2)\times \MetricBall{x}{\delta}}=0.
   \end{equation*}
   Since \(\opLxdelta:=\Phi_{x,\delta}^{*} \delta^{2\kappa}\opL \left( \Phi_{x,\delta} \right)_{*}\),
   and the range of \(\Phi_{x,\delta}\) is contained in \(\MetricBall{x}{\delta}\)
   (see Theorem \ref{Thm::MaxSub::Scaling::NSWScaling} \ref{Item::MaxSub::Scaling::Containments})
   we have \((\partial_t+\opLxdelta)\uh=0\).
   Lemma \ref{Lemma::SubProof::CompleteProof::EstiamtesAtScale} applies to show
   \eqref{Eqn::SubProof::CompleteProof::EstiamtesAtScale::partialtEstimate}
   and \eqref{Eqn::SubProof::CompleteProof::EstiamtesAtScale::WalphaEstimate}
   hold.

   Multiplying both sides of \eqref{Eqn::SubProof::CompleteProof::EstiamtesAtScale::partialtEstimate}
   by \(\measure(\MetricBall{x}{\delta})^{1/2}\)
   and applying Lemma \ref{Lemma::SubProof::CompleteProof::EqualL2Norms} with \(f(\cdot)=\uh(t,\cdot)=u(t,\Phi_{x,\delta}(\cdot))\),
   we obtain
   \begin{equation}\label{Eqn::SubProof::CompleteProof::FinishProofOfScaledEst::partialtEstimate}
      \BLpNorm{\partial_t u}{2}[(-1/4,1/4)\times  \MetricBall{x}{\xi_0 \delta} ][dt\times d\measure]
      \leq C_1 \BLpNorm{u}{2}[(-1/2,1/2)\times \Phi_{x,\delta}\left( B^n(1) \right)][dt\times d\measure].
   \end{equation}
   Similarly from \eqref{Eqn::SubProof::CompleteProof::EstiamtesAtScale::WalphaEstimate}
   and using \(\left( W^{x,\delta} \right)^{\alpha}\uh(t,v)= \left( \delta^{\degWd(\alpha)}W^{\alpha}u \right)(t,\Phi_{x,\delta}(v))\),
   we obtain
   \begin{equation}\label{Eqn::SubProof::CompleteProof::FinishProofOfScaledEst::WalphaEstimate}
      \sup_{\substack{y\in \MetricBall{x}{\xi_0\delta}\\ t\in (-1/4,1/4)}}
      \left| \delta^{\degWd(\alpha)}W^{\alpha}u(t,y) \right|
      \leq C_\alpha \BLpNorm{u}{2}[(-1/2,1/2)\times \Phi_{x,\delta}\left( B^n(1) \right)][dt\times d\measure].
   \end{equation}
   Finally, using \(a_1=\min\{\xi_0,1/4\}\) and \(\Phi_{x,\delta}(B^n(1))\subseteq \MetricBall{x}{\delta/2}\)
   (see Theorem \ref{Thm::MaxSub::Scaling::NSWScaling} \ref{Item::MaxSub::Scaling::Containments}),
   \eqref{Eqn::SubProof::CompleteProof::FinishProofOfScaledEst::partialtEstimate}
   and \eqref{Eqn::SubProof::CompleteProof::FinishProofOfScaledEst::WalphaEstimate} imply
      \begin{equation}\label{Eqn::SubProof::CompleteProof::FinishProofOfScaledEst::partialtEstimate::2}
      \BLpNorm{\partial_t u}{2}[(-a_1,a_1)\times  \MetricBall{x}{a_1 \delta} ][dt\times d\measure]
      \leq C_1 \BLpNorm{u}{2}[(-1/2,1/2)\times \MetricBall{x}{\delta/2}][dt\times d\measure],
   \end{equation}
   \begin{equation}\label{Eqn::SubProof::CompleteProof::FinishProofOfScaledEst::WalphaEstimate::2}
      \sup_{\substack{y\in \MetricBall{x}{a_1\delta}\\ t\in (-a_1,a_1)}}
      \left| \delta^{\degWd(\alpha)}W^{\alpha}u(t,y) \right|
      \leq C_\alpha \measure\left( \MetricBall{x}{\delta} \right)^{-1/2}  \BLpNorm{u}{2}[(-1/2,1/2)\times \MetricBall{x}{\delta/2}][dt\times d\measure].
   \end{equation}
   \ref{Item::SubProof::CompleteProof::ScaledEstimates::partialt}
   follows from \eqref{Eqn::SubProof::CompleteProof::FinishProofOfScaledEst::partialtEstimate::2},
   and with \(\epsilon_\alpha'=1/C_\alpha\), \ref{Item::SubProof::CompleteProof::ScaledEstimates::Walpha}
   follows from \eqref{Eqn::SubProof::CompleteProof::FinishProofOfScaledEst::WalphaEstimate::2}.
\end{proof}

\begin{lemma}\label{Lemma::SubProof::CompleteProof::Assumptions4And6Hold}
   Assumptions \ref{Assumption::HypoellipticityI} and \ref{Assumption::HypoellipticityII}
   hold with \(a_1=a_2\) where \(a_1\) is as in Proposition \ref{Prop::SubProof::CompleteProof::ScaledEstimates},
   where in Assumptions \ref{Assumption::HypoellipticityI} we take \(X=\psi W^{\alpha}\),  \(Y=\psi W^{\beta}\),
    \(S_X(\delta)=\epsilon_\alpha \psi \delta^{\degWd(\alpha) }W^{\alpha}\), and
   \(S_Y(\delta)=\epsilon_\beta \psi \delta^{\degWd(\beta) }W^{\beta}\),
   where \(\epsilon_\alpha>0\) is small described in the proof.
\end{lemma}
\begin{proof}
   Recall, \(\omega_0=0\).  We use 
   Remark \ref{Rmk::SubProof::CompleteProof::SimplierAssumptions}
   to see that to establish Assumption \ref{Assumption::HypoellipticityII},
   it suffices to prove 
   \eqref{Eqn::Assump::Hyp::HypoellipticityIIBound}
   for \(x\in \Compact_1\)
   and \(\delta\in (0,\delta_0]\).  Moreover, since \(\NSubsetx=\Compact\subseteq \Compact_1\),
   it also suffices to establish Assumption \ref{Assumption::HypoellipticityI}
   for \(x\in \Compact_1\)
   and \(\delta\in (0,\delta_0]\).

   Proposition \ref{Prop::Subellip::MaxSub::SymmetricAssumps} shows that our assumptions
   are symmetric \((A,\Domain[A])\) and \((A^{*},\Domain[A^*])\).
   In particular, Proposition \ref{Prop::SubProof::CompleteProof::ScaledEstimates}
   holds with \(A\) replaced by \(A^{*}\) throughout.
   Let \(\epsilon_\alpha''\) be \(\epsilon_\alpha\) from Proposition \ref{Prop::SubProof::CompleteProof::ScaledEstimates}
   when \(A\) is replaced by \(A^{*}\). We take \(\epsilon_\alpha:=\min\left\{ \epsilon_\alpha',\epsilon_\alpha'' \right\}\).

   Keeping this symmetry in \(A\) and \(A^{*}\) in mind,
   Assumption \ref{Assumption::HypoellipticityI} \ref{Item::Assumption::HypoellipticityI::Qualitative}
   follows from  Proposition  \ref{Prop::SubProof::CompleteProof::ScaledEstimates} \ref{Item::SubProof::CompleteProof::ScaledEstimates::Smooth}.
   Assumption \ref{Assumption::HypoellipticityI} \ref{Item::Assumption::HypoellipticityI::Quantitative}
   follows from Proposition  \ref{Prop::SubProof::CompleteProof::ScaledEstimates} \ref{Item::SubProof::CompleteProof::ScaledEstimates::Walpha}.
   For Assumption \ref{Assumption::HypoellipticityI} \ref{Item::Assumption::HypoellipticityI::CommuteDerivs}
   recall that
   \(A=-\opL\big|_{\Domain[A]}\), and therefore all operators in Assumption \ref{Assumption::HypoellipticityI} \ref{Item::Assumption::HypoellipticityI::CommuteDerivs}
   can be viewed as derivatives in the sense of distributions. Since \(\psi W^{\alpha}\) commutes with \(\partial_t\),
   Assumption \ref{Assumption::HypoellipticityI} \ref{Item::Assumption::HypoellipticityI::CommuteDerivs} follows
   (see, also, Remark \ref{Rmk::Assumption::Hypo::XandpartialtCommute}).

   Assumption \ref{Assumption::HypoellipticityII} follows from Proposition  \ref{Prop::SubProof::CompleteProof::ScaledEstimates} \ref{Item::SubProof::CompleteProof::ScaledEstimates::partialt}.
\end{proof}

\begin{lemma}\label{Lemma::SubProof::CompleteProof::Assumptions5Holds}
   Assumption \ref{Assumption::DoublingOfSXandSY} holds.
   Namely, let \(S_X(\delta):=\epsilon_{\alpha}\delta^{\degWd(\alpha)}\), 
   where \(\epsilon_{\alpha}>0\) is as in Lemma \ref{Lemma::SubProof::CompleteProof::Assumptions4And6Hold}.
   Then,
   \(S_X(2\delta)\leq 2^{\degWd(\alpha)}S_X(\delta)\).
\end{lemma}
\begin{proof}
   This is clear.
\end{proof}

\begin{proof}[Completion of the proof of Theorem \ref{Thm::SubMainThm::MainThm}]
   Recall, we have reduced to the case \(\omega_0=0\).

   Proposition \ref{Prop::SubProof::QualSubellip::ExistsHeatKernel} establishes the existence 
   of a unique smooth function \(K_t(x,y)\in \CinftySpace[(0,\infty)\times \Manifold\times\Manifold]\)
   satisfying \eqref{Eqn::SubMainThm::DefineKt}, and therefore, all that remains is to establish \eqref{Eqn::SubMainThm::MainBound}.
   We do this by applying Theorem \ref{Thm::Results::MainThm}
   with the choices described in Remark \ref{Rmk::Subellip::MaxSub::ChoicesInApplication}.

   In the above lemmas (especially Lemmas 
   \ref{Lemma::SubProof::Completion::XContinuous},
   \ref{Lemma::SubProof::CompleteProof::Assumption1Holds},
   \ref{Lemma::SubProof::Completion::PositiveMeasure},
   \ref{Lemma::SubProof::CompleteProof::Assumption3Holds},
   \ref{Lemma::SubProof::CompleteProof::Assumptions4And6Hold},
   and \ref{Lemma::SubProof::CompleteProof::Assumptions5Holds})
   show that the assumptions of 
   Theorem \ref{Thm::Results::MainThm} hold with these choices.
   Importantly, \(a_1\) as chosen in Proposition \ref{Prop::SubProof::CompleteProof::ScaledEstimates}
   does not depend on \(\alpha\) or \(\beta\).
   Thus,  as described in Remark \ref{Rmk::Results::cIsAdmissible},
   \(c>0\) from Theorem \ref{Thm::Results::MainThm} does not depend on \(\alpha\) or \(\beta\).

   From here, and using the fact that \(\psi=1\) on a neighborhood of \(\Compact\),
   \eqref{Eqn::SubMainThm::MainBound} follows from Theorem \ref{Thm::Results::MainThm}.
\end{proof}

    \subsection{Boundary value problems}\label{Section::MaxSub::BoundaryValueProblems}
    Theorem \ref{Thm::SubMainThm::MainThm} and Proposition \ref{Prop::SubEllip::Examples::ConditionMConclusions} 
are interior results which apply on manifolds without boundary.
One could hope instead for results concerning homogeneous boundary value problems and this was one of the main
original motivations for the results in this paper.

Roughly speaking, 
the assumptions of 
Proposition \ref{Prop::SubEllip::Examples::ConditionMConclusions}
can be stated in terms of the quadratic form
\begin{equation*}
    \FormQ[f][g]=\Ltip{f}{\opL g}[\Manifold][\mu].
\end{equation*}
where \(\FormQ\) is given the a priori domain \(\CzinftySpace[\Manifold]\).
Note that we have
\begin{equation}\label{Eqn::SubEllip::BoundaryValues:FormDefn}
    \FormQ[f][g]
    =\sum_{\degWd(\alpha),\degWd(\beta)\leq \kappa}\Ltip{W^{\alpha} f}{a_{\alpha,\beta}W^{\beta}g}[\Manifold][\mu].
\end{equation}
By changing the domain of \(\FormQ\),
using the formula \eqref{Eqn::SubEllip::BoundaryValues:FormDefn} (and by possibly changing the choice of \(a_{\alpha,\beta}\), keeping the same
underlying operator on the interior intact\footnote{
    The coefficients \(a_{\alpha,\beta}\) may not be uniquely determined by \(\opL\). For an extreme example,
    note that \(0\partial_x \partial_y + 0\partial_y \partial_x = 0 = \partial_x \partial_y - \partial_y\partial_x\). Different choices of \(a_{\alpha,\beta}\)
    in the definition of \(\FormQ\) in \eqref{Eqn::SubEllip::BoundaryValues:FormDefn} can lead to different boundary conditions.
    See \cite[Example 7.11]{FollandIntroductionToPartialDifferentialEquations} for this idea in action.
}), such forms give rise to operators \((A,\Domain[A])\)
which correspond to different homogeneous boundary value problems. See
\cite[Chapter 7, Section C]{FollandIntroductionToPartialDifferentialEquations}
for a friendly introduction.

Due to its abstract nature,
Theorem \ref{Thm::Results::MainThm} does not require any changes to apply such homogeneous boundary value problems.
However, to prove a result like Theorem \ref{Thm::SubMainThm::MainThm},
we require generalizations of Theorem \ref{Thm::MaxSub::Scaling::NSWScaling}
and Proposition \ref{Prop::Subellip::Proof::Apriori}
which work for boundary value problems.
These involve some added difficulties because one cannot as easily reduce the problems to standard
distribution theory: one needs to work carefully with the boundary and \(\Domain[A]\).

A generalization of Theorem \ref{Thm::MaxSub::Scaling::NSWScaling} for boundary value problems
can be found in the preprint \cite{StreetCarnotCaratheodoryBallsOnManifoldsWithBoundary},
while a version of Proposition \ref{Prop::Subellip::Proof::Apriori} for boundary value problems
is contained in the forthcoming paper \cite{StreetAPrioriEstimatesForMaximallySubellipticQuadraticForms}.
In a forthcoming paper, we will combine these with the main results of this paper to address
heat operators corresponding to maximally subelliptic boundary value problems.

\section{The Proof}
In this section, we prove Theorems \ref{Thm::Results::DefineOperator} and \ref{Thm::Results::MainThm}.
We being with three reductions.
\begin{itemize}
    \item By replacing \(T(t)\) with \(e^{-\omega_0 t}T(t)\), we may assume \(\omega_0=0\); we henceforth do this.
    \item Because the definition of admissible constants is uniform across \(x\in \NSubsetx\) and \(y\in \NSubsety\),
        it suffices to prove the results in the special case \(\NSubsetx=\{x_0\}\) and \(\NSubsety=\{y_0\}\); we henceforth make this replacement.
    \item Because our assumptions are symmetric in \(T(t)\) and \(T(t)^{*}\) (see Remark \ref{Rmk::Assumption::Symmetric} for details), we will often state and prove lemmas
        only for \(T(t)\), and later also use the conclusion for \(T(t)^{*}\).
\end{itemize}

    \subsection{Gevrey regularity and exponential bounds}
    The exponential decay in the Gaussian bounds \eqref{Eqn::Results::MainGaussianBounds} is introduced by showing that
certain Gevrey functions automatically have such an exponential decay; this section is devoted to showing
this fact. The main proof we present (Lemma \ref{Lemma::Proof::Gevrey::NoDerivs}) was done by Jerison and S\'anchez-Calle when \(\gamma=2\) (see  \cite[Lemma 2]{JerisonSanchezCalleEstimatesForTheHeatKernelForASumOfSquaresOfVectorFields})
and by Hebeisch for general \(\gamma>1\) (see  \cite[Lemma 6]{HebischSharpPointwiseEstimateForTheKernelsOfTheSemigroupGeneratedBySumsOfEvenPowersOfVectorFieldsOnHomogeneousGroups}).
We use the convention \(0^0=1\).

\begin{proposition}\label{Prop::Proof::Gevrey::MainProp}
    Let \(I\subseteq \R\) be an open interval containing \(0\) and suppose \(f\in \CinftySpace[I]\) satisfies
    \(f(t)=0\) for \(t<0\) and \(\exists \gamma>1\), \(C\geq 0\), \(R>0\), such that
    \begin{equation*}
        \left| \partial_t^l f(t) \right|\leq C R^l (l!)^{\gamma}, \quad \forall t\in I\cap (0,\infty),\: l\in \N.
    \end{equation*}
    Then, \(\forall t\in I\), \(l\in \N\),
    \begin{equation}\label{Eqn::Proof::Gevrey::MainBound}
        \left| \partial_t^l f(t) \right| \leq C R^l 2^{l\gamma} l^{l\gamma} e^{\gamma-1}\exp\left\{ -(\gamma-1) e^{-1}\left( (Re^{\gamma}) t  \right)^{-1/(\gamma-1)}  \right\}.
    \end{equation}
\end{proposition}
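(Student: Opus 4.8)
The proof follows the classical route used in the references cited above: since $f\in\CinftySpace[I]$ vanishes on $I\cap(-\infty,0)$, continuity of each derivative forces $\partial_t^k f(0)=0$ for all $k\in\N$, so the Taylor expansion of $\partial_t^l f$ based at $0$ has no polynomial part and equals its own integral remainder, which is governed by the Gevrey bound on a high derivative; optimizing the order of expansion then converts the factorial growth into exponential decay. First I would dispose of $t\le 0$ in $I$ (there $\partial_t^l f(t)=0$ and \eqref{Eqn::Proof::Gevrey::MainBound} is vacuous) and, for $t>0$ in $I$, write Taylor's formula with integral remainder: for every integer $N\ge 1$,
\[
    \partial_t^l f(t)=\frac{1}{(N-1)!}\int_0^t (t-s)^{N-1}\,\partial_t^{l+N}f(s)\,ds,
\]
so that, applying the hypothesis on $(0,t)\subseteq I\cap(0,\infty)$,
\[
    \bigl|\partial_t^l f(t)\bigr|\le \frac{t^N}{N!}\,C\,R^{l+N}\bigl((l+N)!\bigr)^{\gamma}
\]
for every integer $N\ge 0$ (the case $N=0$ is the hypothesis itself).

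Next I would simplify the combinatorial factor: from $\binom{l+N}{N}\le 2^{l+N}$ we get $(l+N)!\le 2^{l+N}\,l!\,N!$, and then $m!\le m^m$ (using $0^0=1$) applied with $m=l$ and $m=N$ turns the last display into
\[
    \bigl|\partial_t^l f(t)\bigr|\le C\,R^l\,2^{l\gamma}\,l^{l\gamma}\,\bigl(aN^{\gamma-1}\bigr)^{N},\qquad a:=tR2^{\gamma}.
\]
The first three factors are exactly the prefactor in \eqref{Eqn::Proof::Gevrey::MainBound}, so everything reduces to choosing, for each $t$, a nonnegative integer $N$ making $(aN^{\gamma-1})^N$ at most $e^{\gamma-1}\exp\{-(\gamma-1)e^{-1}((Re^{\gamma})t)^{-1/(\gamma-1)}\}$. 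The real function $N\mapsto\exp\{N\log a+(\gamma-1)N\log N\}$ is minimized at $N_\ast:=e^{-1}a^{-1/(\gamma-1)}$, with minimum $\exp\{-(\gamma-1)e^{-1}a^{-1/(\gamma-1)}\}$; and since $2<e$ one has $a^{-1/(\gamma-1)}=(tR2^{\gamma})^{-1/(\gamma-1)}\ge((Re^{\gamma})t)^{-1/(\gamma-1)}$, so this ideal value already lies below the target even without the factor $e^{\gamma-1}$.

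The only genuinely delicate point — and the place where the constant $e^{\gamma-1}$ is spent — is replacing the real number $N_\ast$ by an integer, which I would handle in three cases. If $N_\ast\ge 2$, take $N=\lfloor N_\ast\rfloor$; writing $\psi(N):=(aN^{\gamma-1})^N$, the mean value theorem applied to $\log\psi$ on $(N_\ast-1,N_\ast)$ together with $\log\tfrac{N_\ast}{N_\ast-1}\le\log 2<1$ gives $\psi(\lfloor N_\ast\rfloor)\le\psi(N_\ast-1)\le e^{\gamma-1}\psi(N_\ast)$. If $1\le N_\ast<2$, take $N=1$: then $\psi(1)=a\le e^{-(\gamma-1)}$ because $N_\ast\ge 1$, while $N_\ast<2$ forces $a^{-1/(\gamma-1)}<2e$, so the target exceeds $e^{\gamma-1}e^{-2(\gamma-1)}=e^{-(\gamma-1)}$. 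If $N_\ast<1$, take $N=0$: then $\psi(0)=1$, and $a>e^{-(\gamma-1)}$ together with $(e/2)^{\gamma}>1$ gives $(Re^{\gamma})t>e^{-(\gamma-1)}$, whence the target is at least $1$. Combining the three cases with $a^{-1/(\gamma-1)}\ge((Re^{\gamma})t)^{-1/(\gamma-1)}$ yields \eqref{Eqn::Proof::Gevrey::MainBound}. I expect the first two displays to be short and routine; the main obstacle is the bookkeeping in this last step — checking that every bit of slack from the integer rounding and from the bounded-$t$ regime is absorbed by the explicit constants $e^{\gamma-1}$ and $2^{\gamma}\!\to e^{\gamma}$ in the statement.
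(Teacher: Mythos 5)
Your proof is correct and follows essentially the same strategy as the paper: Taylor-expand at $0$ with integral remainder (possible because all derivatives vanish at $0^-$), use the Gevrey hypothesis to bound the remainder, split $(l+N)!$ into an $l$-part and an $N$-part via $\binom{l+N}{N}\le 2^{l+N}$ plus $m!\le m^m$, and then optimize the expansion order $N$ near $e^{-1}a^{-1/(\gamma-1)}$. The only organizational difference is that the paper factors the argument into two lemmas — first the $l=0$ case (Lemma \ref{Lemma::Proof::Gevrey::NoDerivs}), then a reduction lemma (Lemma \ref{Lemma::Proof::Gevrey::ReduceDerivs}) showing $f^{(l_0)}$ is again Gevrey with constants $CR^{l_0}2^{l_0\gamma}l_0^{l_0\gamma}$ and $Re^\gamma$, which when fed into the $l=0$ lemma gives the stated prefactor — whereas you merge both steps into a single Taylor expansion of $\partial_t^l f$. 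Your three-case integer-rounding analysis (using the continuous minimizer $N_\ast$, the mean value theorem for $N_\ast\ge 2$, and direct checks for $N_\ast<2$) is a bit more elaborate than the paper's, which simply brackets the order by $l+1\le e^{-1}(Rt)^{-1/(\gamma-1)}<l+2$ when that quantity is at least $1$ and falls back on the hypothesis otherwise; but the bookkeeping is sound and yields the identical constants.
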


We turn to the proof of Proposition \ref{Prop::Proof::Gevrey::MainProp}; for it we need two lemmas. In the first,
we establish a similar result for \(l=0\).

\begin{lemma}\label{Lemma::Proof::Gevrey::NoDerivs}
    Let \(I\subseteq \R\) be an open interval containing \(0\) and suppose \(f\in \CinftySpace[I]\) satisfies
    \(f(t)=0\) for \(t<0\) and \(\exists \gamma>1\), \(C\geq 0\), \(R>0\), such that
    \begin{equation}\label{Eqn::Proof::Gevrey::MainGevAssumpInLemma}
        \left| \partial_t^l f(t) \right|\leq C R^l (l!)^{\gamma}, \quad \forall t\in I\cap (0,\infty),\: l\in \N.
    \end{equation}
    Then, \(\forall t\in I\)
    \begin{equation}\label{Eqn::Proof::Gevrey::MainBound::Withl0}
        \left| f(t) \right| \leq C e^{\gamma-1} \exp\left\{ -(\gamma-1)e^{-1}(Rt)^{-1/(\gamma-1)} \right\}.
    \end{equation}
\end{lemma}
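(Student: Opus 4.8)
The plan is to combine Taylor's theorem at the origin with the Gevrey bound \eqref{Eqn::Proof::Gevrey::MainGevAssumpInLemma} and then optimize the order of the Taylor expansion, as in the proofs of Jerison--S\'anchez-Calle and Hebisch. Since $f$ is $C^\infty$ on $I$ and vanishes on $I\cap(-\infty,0)$, every derivative $\partial_t^k f$ vanishes at $0$, and $f(t)=0$ for $t\le 0$; so it suffices to treat $t>0$, in which case $[0,t]\subseteq I$. Taylor's theorem with Lagrange remainder of order $l$ at $0$, after discarding the vanishing lower-order coefficients, gives $|f(t)|=\frac{t^l}{l!}|\partial_t^l f(\xi)|$ for some $\xi\in(0,t)$, and \eqref{Eqn::Proof::Gevrey::MainGevAssumpInLemma} then yields
\[
    |f(t)|\le C\,(Rt)^l\,(l!)^{\gamma-1}\qquad\text{for all }l\ge 1,
\]
while the case $l=0$ of \eqref{Eqn::Proof::Gevrey::MainGevAssumpInLemma} gives $|f(t)|\le C$.

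Next I would estimate $(l!)^{\gamma-1}\le l^{l(\gamma-1)}$ (using $l!\le l^l$ and $\gamma>1$) and write $a:=(Rt)^{1/(\gamma-1)}$, turning the previous bound into
\[
    |f(t)|\le C\,(al)^{l(\gamma-1)}=C\exp\{(\gamma-1)\,l\log(al)\}\qquad\text{for all }l\ge 1.
\]
The real-variable function $l\mapsto l\log(al)$ on $(0,\infty)$ attains its minimum at $l^{*}:=e^{-1}(Rt)^{-1/(\gamma-1)}$, with minimal value $-e^{-1}(Rt)^{-1/(\gamma-1)}$; this is precisely (up to the factor $\gamma-1$) the exponent in \eqref{Eqn::Proof::Gevrey::MainBound::Withl0}. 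Were $l^{*}$ an integer this would already give \eqref{Eqn::Proof::Gevrey::MainBound::Withl0} with no prefactor, and the role of $e^{\gamma-1}$ is to absorb the error in rounding $l^{*}$ to an integer.

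To complete the argument I would split into two cases. If $l^{*}\ge 1$, take $l:=\lfloor l^{*}\rfloor\ge 1$ and write $l=l^{*}-\theta$ with $\theta\in[0,1)$; then $al=e^{-1}(1-\theta/l^{*})\le e^{-1}$, and since $\log(1-\theta/l^{*})\le 0$,
\[
    l\log(al)=(l^{*}-\theta)(\log(1-\theta/l^{*})-1)\le -(l^{*}-\theta)\le -l^{*}+1,
\]
so that $|f(t)|\le C\exp\{(\gamma-1)(1-l^{*})\}=Ce^{\gamma-1}\exp\{-(\gamma-1)e^{-1}(Rt)^{-1/(\gamma-1)}\}$, which is \eqref{Eqn::Proof::Gevrey::MainBound::Withl0}. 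If instead $l^{*}<1$, then $(Rt)^{-1/(\gamma-1)}<e$, which rearranges to $e^{\gamma-1}\exp\{-(\gamma-1)e^{-1}(Rt)^{-1/(\gamma-1)}\}\ge 1$; combined with $|f(t)|\le C$ this again gives \eqref{Eqn::Proof::Gevrey::MainBound::Withl0}. The only slightly delicate point is this case split together with the one-line estimate for $l\log(al)$ after rounding; everything else is routine bookkeeping.
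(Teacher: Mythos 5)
Your proposal is correct and follows essentially the same route as the paper: both expand $f$ at $0$ using Taylor's theorem (you with the Lagrange remainder, the paper with the integral form), both bound the remainder via the Gevrey estimate and $l!\le l^l$, and both choose the order $l$ to be (within one unit of) the real minimizer $l^{*}=e^{-1}(Rt)^{-1/(\gamma-1)}$, with a separate trivial check when $l^{*}<1$. The bookkeeping differs slightly—the paper chooses $l$ with $l+1\le l^{*}<l+2$ and estimates more crudely, while you track the rounding error $\theta=l^{*}-\lfloor l^{*}\rfloor$ explicitly—but the idea and the resulting constants are the same.
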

\begin{proof}
    \eqref{Eqn::Proof::Gevrey::MainBound::Withl0} is trivial for \(t\leq 0\) (since \(f(t)=0\)), so we assume \(t>0\).
    If \(0\leq e^{-1}\left( Rt \right)^{-1/(\gamma-1)}<1\), then the right-hand side of \eqref{Eqn::Proof::Gevrey::MainBound::Withl0}
    is \(\geq C\) and \eqref{Eqn::Proof::Gevrey::MainBound::Withl0} follows from the case \(l=0\) of \eqref{Eqn::Proof::Gevrey::MainGevAssumpInLemma}.
    Otherwise, pick \(l\in \N\) such that \(l+1\leq e^{-1}(Rt)^{-1/(\gamma-1)}<l+2\), so that \((Rt)^{1/(\gamma-1)}\leq 1/e(l+1)\).
    We have,
    \begin{equation*}
    \begin{split}
         |f(t)|\leq &\int_0^t \frac{(t-s)^l}{l!} \left| f^{(l+1)}(s) \right|\: ds \leq C R^{l+1} \left( \left( l+1 \right)! \right)^{\gamma}\frac{t^{l+1}}{\left( l+1 \right)!}
         \\&=CR^{l+1} \left( \left( l+1 \right)! \right)^{\gamma-1} t^{l+1}
         \leq C\left[ \left( Rt \right)^{1/(\gamma-1)} (l+1) \right]^{(\gamma-1)(l+1)}
         \\&\leq C e^{-(\gamma-1)((l+2)-1)}
         \leq C \exp\left\{  -(\gamma-1) \left( e^{-1} \left( Rt \right)^{-1/(\gamma-1)}-1 \right) \right\}.
    \end{split}
    \end{equation*}
\end{proof}

\begin{lemma}\label{Lemma::Proof::Gevrey::ReduceDerivs}
    Fix \(\gamma,R>0\) and \(I\subseteq \R\) an open interval. Suppose \(f\in \CinftySpace[I]\) satisfies
    \begin{equation}\label{Eqn::Proof::Gevrey::ReduceDerivs::Assump1}
        \left| \partial_t^l f(t) \right|\leq C R^l (l!)^{\gamma}.
    \end{equation}
    Then, \(\forall l_0\in \N\), \(f^{(l_0)}(t)=\partial_t^{l_0} f(t)\) satisfies
    \begin{equation*}
        \left| \partial_t^l f^{(l_0)}(t) \right|
        \leq C R^{l_0} 2^{l_0 \gamma} l_0^{l_0\gamma} \left( R e^{\gamma} \right)^l  \left( l! \right)^{\gamma}, \quad \forall l\in \N.
    \end{equation*}
\end{lemma}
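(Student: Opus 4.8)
The plan is to reduce the whole statement to an elementary combinatorial inequality for factorials, with essentially no analysis involved. First I would note that $\partial_t^l f^{(l_0)}(t) = \partial_t^{l+l_0} f(t)$, so applying the hypothesis \eqref{Eqn::Proof::Gevrey::ReduceDerivs::Assump1} with $l$ replaced by $l + l_0$ gives at once
\[
    \left| \partial_t^l f^{(l_0)}(t) \right| \le C R^{l+l_0}\left( (l+l_0)! \right)^{\gamma}.
\]
From here it suffices to bound $R^{l+l_0}\big((l+l_0)!\big)^{\gamma}$ by the right-hand side of the claimed estimate.

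The second step is the factorial bound. Writing $(l+l_0)! = \binom{l+l_0}{l_0}\, l!\, l_0!$ and using $\binom{l+l_0}{l_0} \le 2^{l+l_0}$ (it is one summand in the binomial expansion of $2^{l+l_0}$) together with $l_0! \le l_0^{l_0}$ — which also holds for $l_0 = 0$ under the convention $0^0 = 1$ — I obtain
\[
    (l+l_0)! \;\le\; 2^{l+l_0}\, l_0^{l_0}\, l! \;=\; 2^{l_0}\, l_0^{l_0}\, 2^{l}\, l! \;\le\; 2^{l_0}\, l_0^{l_0}\, e^{l}\, l!.
\]
Since all quantities are positive and $\gamma > 0$, and $x \mapsto x^{\gamma}$ is monotone on $(0,\infty)$, raising to the power $\gamma$ preserves this inequality, giving $\big((l+l_0)!\big)^{\gamma} \le 2^{l_0\gamma}\, l_0^{l_0\gamma}\, e^{l\gamma}\, (l!)^{\gamma}$.

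Finally I would combine the two displays, split $R^{l+l_0} = R^{l_0} R^{l}$, and absorb the $l$-dependent exponential into the base $Re^\gamma$ via $R^{l} e^{l\gamma} = (Re^{\gamma})^{l}$, which yields exactly
\[
    \left| \partial_t^l f^{(l_0)}(t) \right| \le C R^{l_0}\, 2^{l_0\gamma}\, l_0^{l_0\gamma}\, (Re^{\gamma})^{l}\, (l!)^{\gamma}.
\]
I do not expect any genuine obstacle; the only points requiring a moment of care are the role of the $0^0 = 1$ convention in the degenerate case $l_0 = 0$ (where the conclusion must collapse back to the hypothesis) and keeping the roles of $l$ and $l_0$ straight while splitting $R^{l+l_0}$ and $2^{l+l_0}$.
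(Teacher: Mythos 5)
Your argument is correct. After reducing to the inequality
\[
(l+l_0)! \le 2^{l_0}\, l_0^{l_0}\, e^{l}\, l!,
\]
which is exactly the intermediate bound the paper also establishes (the paper writes it as $(l+l_0)(l+l_0-1)\cdots(l+1)\le 2^{l_0}l_0^{l_0}e^{l}$), you derive it by a cleaner route: writing $(l+l_0)!=\binom{l+l_0}{l_0}\,l!\,l_0!$ and using the uniform estimates $\binom{l+l_0}{l_0}\le 2^{l+l_0}$ and $l_0!\le l_0^{l_0}$, then $2^{l}\le e^{l}$. The paper instead splits into the cases $l_0\ge l$ and $l\ge l_0$, bounding the product by $(2l_0)^{l_0}$ in the first and using the power-series estimate $l^{l_0}/l_0!\le e^{l}$ in the second. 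Your approach avoids the case split and is arguably more transparent, at the cost of throwing away the factor $e^{l}/2^{l}$ (which the paper genuinely uses in its second case); since both arrive at the same intermediate inequality and the rest of the argument is identical, nothing is lost. Your remark about $l_0=0$ and the $0^0=1$ convention is also handled correctly; the paper simply dismisses $l_0=0$ as trivial.
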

\begin{proof}
    The result is trivial when \(l_0=0\), so we assume \(l_0\geq 1\). We claim
    \begin{equation}\label{Eqn::Proof::Gevrey::ReduceDerivs::ToShow1}
        (l+l_0)(l+l_0-1)\cdots(l+1)\leq 2^{l_0} l_0^{l_0}e^{l}, \quad \forall l\geq 0.
    \end{equation}
    Indeed, for \(l_0\geq l\), the left hand side of \eqref{Eqn::Proof::Gevrey::ReduceDerivs::ToShow1} is \(\leq (2l_0)^{l_0}\) and \eqref{Eqn::Proof::Gevrey::ReduceDerivs::ToShow1} follows.
    If \(l\geq l_0\), the left hand side of \eqref{Eqn::Proof::Gevrey::ReduceDerivs::ToShow1} is \(\leq 2^{l_0}l^{l_0}\leq 2^{l_0}(l_0!) e^{l}\leq 2^{l_0} l_0^{l_0}e^{l}\),
    where we have used \(l^{l_0}/l_0!\leq e^{l}\) (since \(l^{l_0}/l_0!\) is a term in the power series of \(e^l\)).

    Using \eqref{Eqn::Proof::Gevrey::ReduceDerivs::Assump1} and \eqref{Eqn::Proof::Gevrey::ReduceDerivs::ToShow1}, we have
    \begin{equation*}
    \begin{split}
         &\left| \partial_t^l f^{(l_0)}(t) \right|
         \leq C R^{l} R^{l_0} \left( (l+l_0)! \right)^{\gamma}
         =C R^{l}R^{l_0} \left( (l+l_0)(l+l_0-1)\cdots(l+1) \right)^{\gamma} (l!)^{\gamma}
         \\&\leq C R^{l} R^{l_0} (2^{l_0}l_0^{l_0}e^{l})^{\gamma} \left(l! \right)^{\gamma}
         =C R^{l_0} 2^{l_0\gamma} l_0^{l_0 \gamma} (Re^{\gamma})^l (l!)^{\gamma}.
    \end{split}
    \end{equation*}
\end{proof}

\begin{proof}[Proof of Proposition \ref{Prop::Proof::Gevrey::MainProp}]
    Lemma \ref{Lemma::Proof::Gevrey::ReduceDerivs} shows that we may apply Lemma \ref{Lemma::Proof::Gevrey::NoDerivs}
    to \(f^{(l_0)}\), for every \(l_0\in \N\), with \(C\) replaced by \(CR^{l_0}2^{l_0\gamma}l_0^{l_0\gamma}\),
    and \(R\) replaced by \(Re^{\gamma}\). The result now follows from Lemma \ref{Lemma::Proof::Gevrey::NoDerivs}.
\end{proof}

    \subsection{Lemmas about doubling}
    \begin{lemma}\label{Lemma::ProofDoubling::AllDoubling}
    \(\forall \delta>0\), \(\measure(\MetricBall{x_0}{(2\delta)\wedge \delta_0})\leq D_1\measure(\MetricBall{x_0}{\delta\wedge \delta_0})\).
\end{lemma}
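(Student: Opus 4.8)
The plan is to deduce this directly from Assumption~\ref{Assumption::LocalDoubling}, using the reduction $\NSubsetx=\{x_0\}$ made at the start of the section. Applying Assumption~\ref{Assumption::LocalDoubling} with the choice $x=x_0$ (so that $\metric[x_0][x_0]=0$) gives
$\measure(\MetricBall{x_0}{2\delta})\leq D_1\measure(\MetricBall{x_0}{\delta})<\infty$ for every $\delta\in(0,\delta_0/2)$. If $\delta_0=\infty$ this is already the full claim, so we may assume $\delta_0<\infty$.

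For $\delta_0<\infty$, the first step is to record the endpoint estimate $\measure(\MetricBall{x_0}{\delta_0})\leq D_1\measure(\MetricBall{x_0}{\delta_0/2})$. This follows by letting $\delta\uparrow\delta_0/2$ in the inequality of the previous paragraph: the left-hand sides $\measure(\MetricBall{x_0}{2\delta})$ increase to $\measure(\MetricBall{x_0}{\delta_0})$ by continuity of $\measure$ from below, since $\MetricBall{x_0}{\delta_0}=\bigcup_{0<\delta<\delta_0/2}\MetricBall{x_0}{2\delta}$, while the right-hand sides are all bounded by $D_1\measure(\MetricBall{x_0}{\delta_0/2})$ by monotonicity of balls. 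In particular $\measure(\MetricBall{x_0}{\delta_0})<\infty$.

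With these two facts in hand, I would finish by a short case analysis on $\delta>0$. If $2\delta\leq\delta_0$, then $(2\delta)\wedge\delta_0=2\delta$ and $\delta\wedge\delta_0=\delta$, and the desired bound is either the inequality of the first paragraph (when $2\delta<\delta_0$) or the endpoint estimate (when $2\delta=\delta_0$). If $\delta\geq\delta_0$, then both sides equal $\measure(\MetricBall{x_0}{\delta_0})$ and the inequality holds since $D_1\geq 1$. Finally, if $\delta_0/2<\delta<\delta_0$, then $(2\delta)\wedge\delta_0=\delta_0$ and $\delta\wedge\delta_0=\delta$, and $\measure(\MetricBall{x_0}{\delta_0})\leq D_1\measure(\MetricBall{x_0}{\delta_0/2})\leq D_1\measure(\MetricBall{x_0}{\delta})$, the last step using $\MetricBall{x_0}{\delta_0/2}\subseteq\MetricBall{x_0}{\delta}$.

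The argument is entirely elementary; the only point requiring any care is the boundary case $2\delta=\delta_0$ (equivalently, the passage through $\delta=\delta_0/2$), where Assumption~\ref{Assumption::LocalDoubling} is stated only on the \emph{open} interval $(0,(\delta_0-\metric[x_0][x])/2)$ and so must be supplemented by the continuity-from-below argument above.
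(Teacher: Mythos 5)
Your proof is correct and follows essentially the same route as the paper: reduce to the endpoint estimate $\measure(\MetricBall{x_0}{\delta_0})\leq D_1\measure(\MetricBall{x_0}{\delta_0/2})$, and obtain it by applying Assumption~\ref{Assumption::LocalDoubling} at radii strictly below $\delta_0$ and passing to the limit via continuity of $\measure$ from below. The only cosmetic difference is that you spell out the case analysis on $\delta$ a bit more explicitly than the paper, which simply observes that the endpoint estimate suffices whenever $2\delta\geq\delta_0$.
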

\begin{proof}
    If \(2\delta<\delta_0\), this follows from Assumption \ref{Assumption::LocalDoubling}. 
    If \(\delta_0=\infty\), we are done.
    Otherwise, for \(2\delta\geq\delta_0\),
    it suffices to show 
    \begin{equation}\label{Eqn::ProofDoubling::AllDoubling::Tmp1}
        \measure(\MetricBall{x_0}{\delta_0})\leq D_1 \measure(\MetricBall{x_0}{\delta_0/2}).
    \end{equation}
    To prove \eqref{Eqn::ProofDoubling::AllDoubling::Tmp1}, we take \(\delta_1<\delta_0\). Assumption \ref{Assumption::LocalDoubling}
    shows \(\measure(\MetricBall{x_0}{\delta_1})\leq D_1 \measure(\MetricBall{x_0}{\delta_1/2})\). Taking the limit \(\delta_1\uparrow \delta_0\)
    completes the proof.
\end{proof}

The next lemma is standard (see, for example,  \cite[page 32]{SteinHarmonicAnalysis}), but we include the proof for completeness.

\begin{lemma}\label{Lemma::Doubling::Covering}
    There exists an admissible constant \(N\in \N\) such that the following holds.
    \(\forall \delta_1\in (0,\delta_0)\), \(\delta_2\in (0,(\delta_0-\delta_1)/2)\),
    \(\MetricBall{x_0}{\delta_1}\) can be covered by a collection \(\left\{ \MetricBall{x_j}{a_2\delta_2} \right\}_{j=1,2,\ldots}\),
    where \(x_j\in \MetricBall{x_0}{\delta_1}\), and no point lies in more than \(N\)
    of the balls \(\MetricBall{x_j}{\delta_2}\).
\end{lemma}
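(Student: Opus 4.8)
\medskip

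The plan is the classical greedy construction behind the basic covering lemma (as in the reference cited just before the statement, page~32 of \cite{SteinHarmonicAnalysis}), with the bounded-overlap constant extracted from Assumption~\ref{Assumption::LocalDoubling}. First I would use Zorn's lemma to choose a \emph{maximal} subset $\{x_j\}_{j=1,2,\dots}\subseteq\MetricBall{x_0}{\delta_1}$ that is $a_2\delta_2$-separated, i.e.\ $\metric[x_i][x_j]\geq a_2\delta_2$ for $i\neq j$. Maximality gives the covering immediately: if some $v\in\MetricBall{x_0}{\delta_1}$ lay in no $\MetricBall{x_j}{a_2\delta_2}$, then $\{x_j\}\cup\{v\}$ would still be $a_2\delta_2$-separated, contradicting maximality; hence $\MetricBall{x_0}{\delta_1}\subseteq\bigcup_j\MetricBall{x_j}{a_2\delta_2}$ with each $x_j\in\MetricBall{x_0}{\delta_1}$. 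That the index set is at most countable will follow a posteriori from the overlap bound, since the balls $\MetricBall{x_j}{a_2\delta_2/2}$ are then disjoint and of positive, finite measure (Assumptions~\ref{Assumption::LocalPositivity} and~\ref{Assumption::LocalDoubling}) inside the fixed ball $\MetricBall{x_0}{\delta_0}$.

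Next the bounded overlap. Suppose a point $v$ lies in $\MetricBall{x_{j_1}}{\delta_2},\dots,\MetricBall{x_{j_M}}{\delta_2}$ for distinct $j_1,\dots,j_M$; the goal is $M\lesssim 1$. Since the $x_{j_k}$ are $a_2\delta_2$-separated, the balls $\MetricBall{x_{j_k}}{a_2\delta_2/2}$ ($k=1,\dots,M$) are pairwise disjoint, and since $\metric[v][x_{j_k}]<\delta_2$ and $a_2\leq 1$ they all lie in $\MetricBall{v}{2\delta_2}$, so
\[
    \sum_{k=1}^{M}\measure\left(\MetricBall{x_{j_k}}{a_2\delta_2/2}\right)\leq\measure\left(\MetricBall{v}{2\delta_2}\right).
\]
For a matching lower bound on each summand, observe $\metric[x_0][x_{j_k}]<\delta_1$, so Assumption~\ref{Assumption::LocalDoubling} applies at $x_{j_k}$ at every radius below the admissible threshold $(\delta_0-\metric[x_0][x_{j_k}])/2$, which exceeds $(\delta_0-\delta_1)/2>\delta_2$; since $\MetricBall{v}{2\delta_2}\subseteq\MetricBall{x_{j_k}}{3\delta_2}$, iterating that doubling an admissible number of times (depending on $a_2$) gives $\measure(\MetricBall{v}{2\delta_2})\leq\measure(\MetricBall{x_{j_k}}{3\delta_2})\lesssim\measure(\MetricBall{x_{j_k}}{a_2\delta_2/2})$. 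Combining the two estimates yields $M\lesssim 1$, and one lets $N$ be the resulting admissible constant, which is legitimate because admissible constants may depend on $D_1$ and $a_2$ (Definition~\ref{Defn::Results::AdmissibleConsts}).

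The one step that requires genuine care is the scale bookkeeping just used: Assumption~\ref{Assumption::LocalDoubling} supplies the doubling inequality at $x_{j_k}$ only for radii strictly below $(\delta_0-\metric[x_0][x_{j_k}])/2$, whereas a naive telescoping from $a_2\delta_2/2$ up to $3\delta_2$ visits radii a bounded factor larger than $\delta_2$, and the hypothesis guarantees only $\delta_2<(\delta_0-\delta_1)/2$. To keep every invocation in the legal range I would, if necessary, replace the auxiliary disjoint balls by a smaller admissible dilate $\MetricBall{x_{j_k}}{\eta a_2\delta_2}$ (which only inflates the final $N$ by an admissible factor) and absorb the top handful of scales using Lemma~\ref{Lemma::ProofDoubling::AllDoubling}, which furnishes doubling at all radii capped at $\delta_0$ for balls centered at $x_0$. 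Since $N$ is required only to be admissible, none of these adjustments affects the conclusion; the essential point is simply that local doubling caps the packing number of an $a_2\delta_2$-separated set inside a ball of radius $\lesssim\delta_2$.
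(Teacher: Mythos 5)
Your construction is essentially the one in the paper: choose a maximal $a_2\delta_2$-separated set in $\MetricBall{x_0}{\delta_1}$ (the paper phrases this as a maximal disjoint subcollection of $a_2\delta_2/2$-balls, which is equivalent), get the covering from maximality, and get the overlap bound by packing the disjoint balls $\MetricBall{x_{j_k}}{a_2\delta_2/2}$ into a ball of comparable radius and invoking local doubling. The one cosmetic difference is in the final comparison: the paper packs into $\MetricBall{y}{2\delta_2}$ and then chains $\measure(\MetricBall{x_{i_l}}{a_2\delta_2/2})\approx\measure(\MetricBall{x_{i_l}}{2\delta_2})\geq\measure(\MetricBall{y}{\delta_2})\approx\measure(\MetricBall{y}{2\delta_2})$, doubling at both the centers $x_{i_l}$ and the test point $y$, whereas you compare directly through $\MetricBall{v}{2\delta_2}\subseteq\MetricBall{x_{j_k}}{3\delta_2}$ and double only at $x_{j_k}$. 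Both are standard and morally identical.

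You are right to flag the scale-bookkeeping subtlety, and it is not addressed in the paper either: when $x_{j_k}$ (or $y$) sits near the boundary of $\MetricBall{x_0}{\delta_1}$, Assumption~\ref{Assumption::LocalDoubling} guarantees doubling only up to a radius marginally larger than $\delta_2$, so telescoping from $a_2\delta_2/2$ up to $3\delta_2$ in your version, or doubling at $y$ from $\delta_2$ to $2\delta_2$ in the paper's, can in principle exit the legal range. However, the repair you sketch does not close this gap. Lemma~\ref{Lemma::ProofDoubling::AllDoubling} gives unconditional doubling only for balls \emph{centered at $x_0$}; the obstruction is at $x_{j_k}$ and $y$, precisely the points where the admissible radius is tightest, and the lemma says nothing about those centers. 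Shrinking the auxiliary disjoint balls to $\MetricBall{x_{j_k}}{\eta a_2\delta_2}$ also does not help, since the trouble is at the \emph{top} of the telescope (the radius one must reach), not the bottom. So the final paragraph of your proposal correctly diagnoses a real (if very minor) technicality but does not supply a working patch; a legitimate fix would be, e.g., to tighten the range of $\delta_2$ in the statement, or to compare only to $\measure(\MetricBall{x_{j_k}}{r})$ for $r$ strictly below $(\delta_0-\delta_1)/2$ and absorb the remaining factor by inclusion rather than doubling, checking that the downstream application (Lemma~\ref{Lemma::Proof::HeatFuncs::Inducel::Tmp2}) still goes through.
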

\begin{proof}
    Let \(\MetricBall{x_j}{a_2\delta_2/2}\), \(j=1,2,\ldots\) be a maximal disjoint subcollection of
    \begin{equation*}
        \left\{ \MetricBall{x}{a_2\delta_2/2} : x\in \MetricBall{x_0}{\delta_1} \right\}.
    \end{equation*}
    Note that
    \begin{equation*}
        \bigcup_{j\geq 1} \MetricBall{x_j}{a_2\delta_2}\supseteq \MetricBall{x_0}{\delta_1},
    \end{equation*}
    since if \(y\in \MetricBall{x_0}{\delta_1}\setminus \bigcup_{j\geq 1} \MetricBall{x_j}{a_2\delta_2}\),
    then \(\MetricBall{y}{a_2 \delta_2/2}\) is disjoint from the collection, contradicting maximality.

    Now suppose \(y\in \MetricBall{x_{i_1}}{\delta_2}\cap \cdots \cap \MetricBall{x_{i_N}}{\delta_2}\); we wish to show \(N\lesssim 1\).
    We have,
    \begin{equation*}
        \MetricBall{y}{2\delta_2}\supseteq \MetricBall{x_{i_l}}{\delta_2}\supseteq \MetricBall{x_{i_l}}{a_2 \delta_2/2}.
    \end{equation*}
    Because the balls \(\MetricBall{x_{i_l}}{a_2 \delta_2/2}\) are disjoint, we have
    \begin{equation}\label{Eqn::ProofDoubling::CoveringLemma::Tmp1}
        \measure (\MetricBall{y}{2\delta_2})\geq \sum_{l=1}^N \measure(\MetricBall{x_{i_l}}{a_2\delta_2/2}).
    \end{equation}
    Using repeated applications of Assumption \ref{Assumption::LocalDoubling}, we have
    \begin{equation}\label{Eqn::ProofDoubling::CoveringLemma::Tmp2}
        \measure(\MetricBall{x_{i_l}}{a_2\delta_2/2}) \approx \measure(\MetricBall{x_{i_l}}{2\delta_2})\geq \measure(\MetricBall{y}{\delta_2})\approx \measure(\MetricBall{y}{2\delta_2}).
    \end{equation}
    Combining \eqref{Eqn::ProofDoubling::CoveringLemma::Tmp1} and \eqref{Eqn::ProofDoubling::CoveringLemma::Tmp2}, we see
    \begin{equation*}
        \measure (\MetricBall{y}{2\delta_2}) \gtrsim N \measure (\MetricBall{y}{2\delta_2}).
    \end{equation*}
    Since \(0<\measure (\MetricBall{y}{2\delta_2}) <\infty\) (see Assumptions \ref{Assumption::LocalPositivity} and \ref{Assumption::LocalDoubling}),
    it follows \(N\lesssim 1\), completing the proof.
\end{proof}

\begin{lemma}\label{Lemma::ProofDoubling::ExpBeatsDoubling}
    Suppose \(c,\gamma>0\) and \(D\geq 1\) are given, and \(F(\delta):(0,\infty)\rightarrow (0,\infty)\)
    is a non-decreasing function satisfying \(F(2\delta)\leq DF(\delta)\).
    Then, \(\exists C=C(c,\gamma,D)\geq 1\), depending on nothing else, such that
    \begin{equation*}
        \sup_{\delta_1,\delta_2>0} \frac{F(\delta_1+\delta_2)}{F(\delta_1)} \exp\left( -c\left( \frac{\delta_2}{\delta_1} \right)^{\gamma} \right)\leq C.
    \end{equation*}
\end{lemma}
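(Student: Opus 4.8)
The plan is to turn the doubling hypothesis on $F$ into polynomial growth, and then use that any polynomial is dominated by the super-polynomial decay of $\exp\!\left(-c(\delta_2/\delta_1)^\gamma\right)$.

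First I would iterate the doubling inequality: induction on $k$ gives $F(2^k\delta)\le D^kF(\delta)$ for every $k\in\N$ and $\delta>0$. Given $\lambda\ge 1$, choose $k\in\N$ with $2^{k-1}\le\lambda<2^k$; then monotonicity of $F$ together with $D\ge 1$ yields
\[
    F(\lambda\delta)\le F(2^k\delta)\le D^kF(\delta)=D\cdot D^{k-1}F(\delta)\le D\,\lambda^{\log_2 D}F(\delta),
\]
where the last step uses $k-1\le\log_2\lambda$ and $D\ge 1$. Applying this with $\delta=\delta_1$ and $\lambda=(\delta_1+\delta_2)/\delta_1\ge 1$, and writing $s:=\delta_2/\delta_1>0$, we get
\[
    \frac{F(\delta_1+\delta_2)}{F(\delta_1)}\exp\!\left(-c\left(\frac{\delta_2}{\delta_1}\right)^{\gamma}\right)\le D\,(1+s)^{\log_2 D}e^{-cs^{\gamma}}.
\]
Thus the supremum in the statement is at most $D\sup_{s>0}(1+s)^{\log_2 D}e^{-cs^{\gamma}}$, and I would conclude by observing that this last supremum is a finite number depending only on $c$, $\gamma$, $D$: bounding $(1+s)^{\log_2 D}\le 2^{\log_2 D}\max\{1,s\}^{\log_2 D}$ and using that $e^{-cs^{\gamma}}$ decays faster than any fixed power of $s$ as $s\to\infty$, the function $(1+s)^{\log_2 D}e^{-cs^{\gamma}}$ is bounded on $(0,\infty)$. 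Taking $C$ to be this bound (enlarged so that $C\ge 1$) completes the proof.

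There is no genuine obstacle here; the only points needing care are the exponent bookkeeping in the iteration step — making sure the growth exponent is exactly $\log_2 D$ and that the multiplicative constant stays controlled — and verifying that the final elementary supremum really depends on nothing beyond $c,\gamma,D$, in particular not on $F$ itself nor on $\delta_1,\delta_2$ individually.
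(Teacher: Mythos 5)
Your proof is correct and follows essentially the same approach as the paper: iterate the doubling to obtain the polynomial bound $F(\lambda\delta)\le D\lambda^{\log_2 D}F(\delta)$, then observe that $e^{-cs^\gamma}$ dominates any fixed power of $s$. The only cosmetic difference is that you derive a single uniform estimate valid for all $\lambda\ge 1$, whereas the paper splits into the cases $\delta_2\le\delta_1$ (handled trivially) and $\delta_2>\delta_1$ (where the dyadic bookkeeping is done); both routes land in the same place.
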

\begin{proof}
    If \(\delta_2\leq \delta_1\), we have \(F(\delta_1+\delta_2)/F(\delta_1)\leq F(2\delta_1)/F(\delta_1)\leq D\) and the result follows.
    Now let \(\delta_2>\delta_1\). 
    Fix \(j\in \N\) with \(2^{j}\delta_1<\delta_2\leq 2^{j+1}\delta_1\). Then,
    \begin{equation*}
        \frac{F(\delta_1+\delta_2)}{F(\delta_1)} \leq \frac{F((2^{j+1}+1)\delta_1)}{F(\delta_1)}\leq \frac{F(2^{j+2}\delta_1)}{F(\delta_1)}
        \leq D^{j+2} = D^2 \left( 2^{j} \right)^{\log_2(D)}\leq D^2 \left( \frac{\delta_2}{\delta_1} \right)^{\log_2(D)}.
    \end{equation*}
    We conclude,
    \begin{equation*}
        \sup_{\delta_2>\delta_1>0} \frac{F(\delta_1+\delta_2)}{F(\delta_1)} \exp\left( -c\left( \frac{\delta_2}{\delta_1} \right)^{\gamma} \right)
        \leq \sup_{\delta_2>\delta_1>0} D^2 \left( \frac{\delta_2}{\delta_1} \right)^{\log_2(D)}\exp\left( -c\left( \frac{\delta_2}{\delta_1} \right)^{\gamma} \right).
    \end{equation*}
    The right-hand side of the above equation is finite and depends only on \(c\), \(D\), and \(\gamma\), completing the proof.
\end{proof}

    \subsection{Heat functions}
    In this section, we prove properties about \((A,x,\delta)\)-heat functions as described in Definition \ref{Defn::Assump::Hypo::HeatFunction}.
They arise through the following lemma.

\begin{lemma}\label{Lemma::HeatFuncs::Characterize}
    Fix \(\delta>0\).
    \begin{enumerate}[(i)]
        \item\label{Item::HeatFuncs::Characterize::NonNegativeReals} Let \(\phi \in \DAinfty\), and \(u(t):=T(\delta^{2\kappa}t)\phi\). Then, \(u\in \CinftySpace*[\lbrack 0,\infty)][\DAinfty]\) 
            and satisfies \(\partial_t^j u(t) = \left( \delta^{2\kappa} A \right)^j u(t)\), 
            \(\forall j\in \N\).
        \item\label{Item::HeatFuncs::Characterize::PositiveReals} Let \(\phi \in \DAinfty\), and \(u(t):=T(\delta^{2\kappa}t)\phi\). Then, \(u:(0,\infty)\rightarrow \DAinfty\) is an \((A,x,\delta)\)-heat function,
            for every \(x\in \MetricSpace\).
        \item\label{Item::HeatFuncs::Characterize::AllReals} Fix \(x\in \MetricSpace\) and suppose \(\phi\in \DAinfty\) is such that \(A^j\phi\big|_{\MetricBall{x}{\delta}}=0\), \(\forall j\geq 0\).
            Define \(u:\R\rightarrow \DAinfty\) by
            \begin{equation*}
                u(t):=\begin{cases}
                    T(\delta^{2\kappa}t) \phi, &t\geq 0,\\
                    0, &t<0.
                \end{cases}
            \end{equation*}
            Then, \(u\) is an \((A,x,\delta)\)-heat function.
    \end{enumerate}
\end{lemma}
\begin{proof}
    \ref{Item::HeatFuncs::Characterize::NonNegativeReals}: 
    By replacing \(t\) with \(\delta^{-2\kappa}t\), it suffices to consider the case \(\delta=1\).
    By  \cite[Chapter 2, Lemma 1.3]{EngelNagelAShortCourseOnOperatorSemigroups}
    we have \(u\in C^1\left( \lbrack 0,\infty); \LpSpace{p_0}[\MetricSpace][\measure] \right)\) and satisfies
    \begin{equation*}
        \partial_t u(t) = \partial_t T(t)\phi = AT(t)\phi = T(t)A\phi.
    \end{equation*}
    Since \(A\phi\) is of the same form as \(\phi\), iterating this 
    \(u(t)\in \CinftySpace*[\lbrack 0,\infty )][\LpSpace{p_0}[\MetricSpace][\measure]]\)
    and satisfies \(\partial_t^j u(t) = A^j u(t)\).
    Finally, since \(Au(t)=AT(t)\phi = T(t)A\phi\) is of the same form as \(u(t)\), we see
    \(u(t)\in \CinftySpace*[\lbrack 0,\infty )][\DAinfty]\).

    \ref{Item::HeatFuncs::Characterize::PositiveReals} follows from \ref{Item::HeatFuncs::Characterize::NonNegativeReals}, by restricting
    to \((0,\infty)\).

    For \ref{Item::HeatFuncs::Characterize::AllReals}, 
    we verify the conditions of Definition \ref{Defn::Assump::Hypo::HeatFunction} hold with \(t_0=0\).
    Define 
    \(v_1\in \CinftySpace*[(-\infty,0,\rbrack][\DAinfty]\)
    by \(v_1(t)=0\), \(\forall t\),
    and
     \(v_2\in \CinftySpace*[\lbrack 0,\infty)][\DAinfty]\) 
     by \(v_2(t)=T(\delta^{2\kappa}t)\phi\) (see \ref{Item::HeatFuncs::Characterize::NonNegativeReals}).
    By \ref{Item::HeatFuncs::Characterize::NonNegativeReals} 
    \(v_2\) satisfies \(\partial_t^j v_2(t) =(\delta^{2\kappa}A)^j v_2(t)\), \(\forall j\). Clearly, 
     \(\partial_t^j v_1(t)=0=(\delta^{2\kappa}A)^jv_1(t)\), \(\forall j\).
     Since 
     \begin{equation*}
        u(t):=\begin{cases}
           v_2(t), &t\geq 0,\\
           v_1(t), &t<0,
       \end{cases}
     \end{equation*}
    to complete the proof it suffices to show \(v_1^{(j)}(0)\big|_{\MetricBall{x}{\delta}}=v_2^{(j)}(0)\big|_{\MetricBall{x}{\delta}}\), \(\forall j\).
    I.e., we wish to show \(v_2^{(j)}(0)\big|_{\MetricBall{x}{\delta}}=0\), \(\forall j\in \N\). But, by  \cite[Chapter 2, Lemma 1.3]{EngelNagelAShortCourseOnOperatorSemigroups},
    \begin{equation*}
        v_2^{(j)}(0)\big|_{\MetricBall{x}{\delta}} = T(0)(\delta^{2\kappa}A)^j \phi\big|_{\MetricBall{x}{\delta}}=(\delta^{2\kappa}A)^j \phi \big|_{\MetricBall{x}{\delta}} =0,
    \end{equation*}
    completing the proof.
\end{proof}


A main result of this section is the following regularity for heat functions.

\begin{proposition}\label{Prop::Proof::HeatFuncs::MainExpBound}
    There exist admissible constants \(c_1>0\) and \(B_1\geq 1\) such that the following holds.
    Let \(\delta_1\in (0,\delta_0)\) and \(u:(-1,1)\rightarrow \DAinfty\) be an \((A,x_0,\delta_1)\)-heat function
    satisfying \(u\big|_{(-1,0)\times \MetricBall{x_0}{\delta_1}}=0\). Then,
    \(Xu\big|_{(-a_1,a_1)\times \MetricBall{x_0}{a_1\delta_1}} \in\CinftySpace[(-a_1,a_1)][\CSpace{\MetricBall{x_0}{a_1\delta_1}}]\)
    and, for every \(l\in \N\), \(\forall t\in (-a_1,a_1)\)
    \begin{equation}\label{Eqn::Proof::HeatFunc::MainExpBound}
\begin{split}
            &\sup_{x\in \MetricBall{x_0}{a_1\delta_1}} \left| \partial_t^l X u(t,x) \right|
            \\&\lesssim S_X(\delta_1)^{-1} \measure(\MetricBall{x_0}{\delta_1})^{-1/p_0} B_1^l l^{l\gamma} \exp\left( -c_1 t^{-1/(2\kappa-1)} \right)\LpNorm{u}{p_0}[(-1,1)\times \MetricBall{x_0}{\delta_1}][dt\times d\measure].
\end{split}    \end{equation}
\end{proposition}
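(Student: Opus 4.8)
The plan is to combine the scale-invariant hypoelliptic estimates (Assumptions \ref{Assumption::HypoellipticityI} and \ref{Assumption::HypoellipticityII}) with the Gevrey-bound machinery of Proposition \ref{Prop::Proof::Gevrey::MainProp}. The central observation is that for an \((A,x_0,\delta_1)\)-heat function \(u\), differentiating in \(t\) is, by Definition \ref{Defn::Assump::Hypo::HeatFunction} \ref{Item::Assump::Hypo::HeatFunction::Derivatives}, the same as applying \(\delta_1^{2\kappa}A\), and applying \(\delta_1^{2\kappa}A\) to a heat function produces (up to the localization issues) another heat function. So I would first show, by iterating Assumption \ref{Assumption::HypoellipticityII} on a nested sequence of time-space cylinders, that \(u\) obeys Gevrey-\(2\kappa\) bounds in \(t\): for a suitable admissible shrinking and an admissible \(B\geq 1\),
\[
    \LpNorm{\partial_t^l u}{p_0}[(-a,a)\times \MetricBall{x_0}{a\delta_1}][dt\times d\measure]
    \lesssim B^l (l!)^{2\kappa}\, \LpNorm{u}{p_0}[(-1,1)\times \MetricBall{x_0}{\delta_1}][dt\times d\measure].
\]
The exponent \(2\kappa=\gamma\) appears because to gain control of \(\partial_t^l u = (\delta_1^{2\kappa}A)^l u\) one must absorb \(l\) factors of \(\partial_t\), and each application of Assumption \ref{Assumption::HypoellipticityII} costs a fixed shrinkage of the cylinder; re-scaling a cylinder of size \(\sim 1/l\) back to unit size in the parabolic scaling \((t,x)\mapsto(l^{2\kappa}t, \text{(space)})\) is what converts the geometric loss into the factorial loss \((l!)^{2\kappa}\). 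This is the standard Stein–Jerison–Sánchez-Calle–Hebisch iteration, and it is the step I expect to require the most care: one has to choose the radii \(a_k\) of the cylinders summing to something bounded, verify at each stage that \((\delta_1^{2\kappa}A)^k u\) is still an admissible heat function on the smaller cylinder (here Definition \ref{Defn::Assump::Hypo::HeatFunction} \ref{Item::Assump::Hypo::HeatFunction::Derivatives} for all \(j\), not just \(j=1\), is exactly what is needed), and track that all constants are admissible and independent of \(\delta_1\).

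Next I would upgrade from the \(L^{p_0}\) bound to the pointwise \(\sup\) bound by invoking Assumption \ref{Assumption::HypoellipticityI}. Because \(\partial_t\) commutes with \(X\) on heat functions (Assumption \ref{Assumption::HypoellipticityI} \ref{Item::Assumption::HypoellipticityI::CommuteDerivs}), we have \(\partial_t^l X u = X \partial_t^l u = X (\delta_1^{2\kappa}A)^l u\); since \((\delta_1^{2\kappa}A)^l u\) is again a heat function on a slightly smaller cylinder, Assumption \ref{Assumption::HypoellipticityI} \ref{Item::Assumption::HypoellipticityI::Quantitative} applied to it gives
\[
    \sup \left| S_X(\delta_1)\, \partial_t^l X u(t,x) \right|
    \lesssim \measure(\MetricBall{x_0}{\delta_1})^{-1/p_0}\, \LpNorm{\partial_t^l u}{p_0}[(-1/2,1/2)\times \MetricBall{x_0}{\delta_1/2}][dt\times d\measure],
\]
and the right side is controlled by the Gevrey bound of the previous paragraph (after one more harmless rescaling of the cylinder by an admissible factor). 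Assumption \ref{Assumption::HypoellipticityI} \ref{Item::Assumption::HypoellipticityI::Qualitative} simultaneously gives the asserted regularity \(Xu\in\CinftySpace[(-a_1,a_1)][\CSpace{\MetricBall{x_0}{a_1\delta_1}}]\). At this point we have, for all \(l\) and all \(t\) in an admissible time interval, a bound of the form \(\left| \partial_t^l X u(t,x)\right| \le C' R^l (l!)^{2\kappa}\) where \(C' = C\, S_X(\delta_1)^{-1}\measure(\MetricBall{x_0}{\delta_1})^{-1/p_0}\,\LpNorm{u}{p_0}[(-1,1)\times \MetricBall{x_0}{\delta_1}][dt\times d\measure]\) and \(R\) is admissible.

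Finally, the hypothesis that \(u\) vanishes on the negative-time slab \((-1,0)\times\MetricBall{x_0}{\delta_1}\) means, after restriction to \(\MetricBall{x_0}{a_1\delta_1}\), that \(t\mapsto Xu(t,x)\) vanishes for \(t<0\); so for each fixed \(x\in\MetricBall{x_0}{a_1\delta_1}\) the function \(f(t):=Xu(t,x)\) satisfies the hypotheses of Proposition \ref{Prop::Proof::Gevrey::MainProp} with \(\gamma=2\kappa\). Plugging in gives exactly the exponential factor \(\exp(-(\gamma-1)e^{-1}((Re^{\gamma})t)^{-1/(\gamma-1)})\), i.e.\ \(\exp(-c_1 t^{-1/(2\kappa-1)})\) for an admissible \(c_1>0\), together with the \(l^{l\gamma}\) growth and an admissible constant \(B_1\) (absorbing the \(2^{l\gamma}\) and \((Re^\gamma)^l\) factors from Proposition \ref{Prop::Proof::Gevrey::MainProp} into \(B_1^l\)). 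Taking the supremum over \(x\in\MetricBall{x_0}{a_1\delta_1}\) then yields \eqref{Eqn::Proof::HeatFunc::MainExpBound}. The one bookkeeping point to be careful about is that Proposition \ref{Prop::Proof::Gevrey::MainProp}'s hypothesis requires the derivative bound on the whole interval \(I\cap(0,\infty)\) with a single pair \((C,R)\); since our Gevrey bound from step one already holds uniformly for \(t\) in a fixed admissible interval, this is automatic, and the asserted conclusion for \(t\in(-a_1,a_1)\) follows after possibly shrinking \(a_1\) to an admissible value.
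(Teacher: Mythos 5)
Your proposal follows essentially the same route as the paper: (1) establish an $L^{p_0}$ Gevrey-$2\kappa$ bound $\|\partial_t^l u\|_{p_0}\lesssim B^l(l!)^{2\kappa}\|u\|_{p_0}$ by iterating Assumption \ref{Assumption::HypoellipticityII} over $l$ nested cylinders shrinking by $\sim 1/(l+1)$ (the paper's Lemmas \ref{Lemma::Proof::HeatFuncs::EstimateUnscaledt}--\ref{Lemma::Proof::HeatFuncs::Inducel::GetFactorial}), using that $(\delta_1^{2\kappa}A)^k u$ is again a heat function (Lemma \ref{Lemma::Proof::HeatFuncs::DerivOfHeatIsHeat}); (2) pass to the sup bound via Assumption \ref{Assumption::HypoellipticityI} and the commutation $\partial_t^l Xu=X(\delta_1^{2\kappa}A)^l u$ (the paper's Lemma \ref{Lemma::Proof::HeatFuncs::SupBoundWithoutExponential}); and (3) apply Proposition \ref{Prop::Proof::Gevrey::MainProp} with $\gamma=2\kappa$, using the vanishing for $t<0$. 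This is the paper's proof, modulo minor bookkeeping (the final application of Assumption \ref{Assumption::HypoellipticityI} needs no extra rescaling since $(\delta_1^{2\kappa}A)^lu$ is already an $(A,x_0,\delta_1)$-heat function on $(-1/2,1/2)$).
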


The main thing Proposition \ref{Prop::Proof::HeatFuncs::MainExpBound} gives is that improves
\eqref{Eqn::Assump::Hyp::HypoellipticityIBound} to include the exponential \(\exp\left( -c_1 t^{-1/(2\kappa-1)}\right)\); this is how
we introduce the exponential in the Gaussian bounds. 
We turn to proving Proposition \ref{Prop::Proof::HeatFuncs::MainExpBound}; we require several preliminary results.
The goal is to apply Proposition \ref{Prop::Proof::Gevrey::MainProp}, and therefore we need to prove a certain Gevrey regularity for \(Xu(t,x)\) in the \(t\)-variable.

\begin{lemma}\label{Lemma::Proof::HeatFuncs::EstimateUnscaledt}
    \(\forall x\in \MetricBall{x_0}{\delta_0}\), \(\forall \delta\in (0,\delta_0-\metric[x_0][x])\), \(\forall \epsilon\in (0,1)\),
    \(\forall u:(-\epsilon^{-2\kappa}, \epsilon^{-2\kappa})\rightarrow \DAinfty\) an \((A,x,\delta)\)-heat function,
    \(\forall k\in [1, (2\epsilon)^{-1}]\),
    \begin{equation}\label{Eqn::Proof::HeatFunc::EsimateUnscaledt}
    \begin{split}
            &\LpNorm{\partial_t u}{p_0}[(-\epsilon^{-2\kappa}+(k\epsilon)\epsilon^{-2\kappa}, \epsilon^{-2\kappa}-(k\epsilon)\epsilon^{-2\kappa})\times \MetricBall{x}{a_2\delta}]
            \\&\lesssim 
            \LpNorm{u}{p_0}[(-\epsilon^{-2\kappa}+((k-1)\epsilon)\epsilon^{-2\kappa}, \epsilon^{-2\kappa}-((k-1)\epsilon)\epsilon^{-2\kappa})\times \MetricBall{x}{\delta}].
    \end{split}    
    \end{equation}
\end{lemma}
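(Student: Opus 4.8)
The plan is to deduce the estimate from Assumption~\ref{Assumption::HypoellipticityII}, which is an estimate of exactly this shape but only on the \emph{unit} time interval $(-1,1)$, by chopping the (long) target time interval into pieces of time-length comparable to $a_2$, translating each piece to the origin so that Assumption~\ref{Assumption::HypoellipticityII} applies to it, and then reassembling. We have already reduced to the case $\omega_0=0$, so ``heat function'' means $(A,\cdot,\cdot)$-heat function. Two harmless preliminary observations: the interval $[1,(2\epsilon)^{-1}]$ of admissible $k$ is nonempty only when $\epsilon\le 1/2$, so we may assume $\epsilon\in(0,1/2]$; and since $2\kappa-1\ge 1$ this forces the ``buffer'' $\epsilon^{1-2\kappa}=\epsilon^{-(2\kappa-1)}\ge\epsilon^{-1}\ge 2$. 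Write $J$ and $J'$ for the source and target time intervals appearing in \eqref{Eqn::Proof::HeatFunc::EsimateUnscaledt}; both are centered at $0$, and one checks directly that $J'\neq\emptyset$ (its radius is $\epsilon^{-2\kappa}(1-k\epsilon)\ge\epsilon^{-2\kappa}/2>0$ because $k\epsilon\le 1/2$), that $J'\subseteq J\subseteq(-\epsilon^{-2\kappa},\epsilon^{-2\kappa})$, and that $\mathrm{rad}(J)-\mathrm{rad}(J')=\epsilon^{1-2\kappa}$.

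Then I would partition $\R$ into the half-open intervals $I_m:=[a_2(m-1/2),a_2(m+1/2))$, $m\in\Z$, with midpoints $t_m:=a_2 m$, and work with those $m$ for which $I_m\cap J'\neq\emptyset$. For such $m$, $|t_m|\le\mathrm{rad}(J')+a_2$, hence $|t_m|+1\le\mathrm{rad}(J')+a_2+1\le\mathrm{rad}(J')+\epsilon^{1-2\kappa}=\mathrm{rad}(J)$ (using $a_2+1\le 2\le\epsilon^{1-2\kappa}$), so $(t_m-1,t_m+1)\subseteq J\subseteq(-\epsilon^{-2\kappa},\epsilon^{-2\kappa})$, the domain of $u$. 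For each such $m$ set $v_m(s):=u(t_m+s)$ for $s\in(-1,1)$; since the conditions in Definition~\ref{Defn::Assump::Hypo::HeatFunction} are invariant under translation in $t$, $v_m$ is an $(A,x,\delta)$-heat function on $(-1,1)$, and Assumption~\ref{Assumption::HypoellipticityII} (together with $x\in\MetricBall{x_0}{\delta_0}$, $\delta<\delta_0-\metric[x_0][x]$) applied to $v_m$ gives, after undoing the translation,
\[
\LpNorm{\partial_t u}{p_0}[(t_m-a_2,t_m+a_2)\times \MetricBall{x}{a_2\delta}]\ \le\ C_1\,\LpNorm{u}{p_0}[(t_m-1,t_m+1)\times \MetricBall{x}{\delta}].
\]
Since the $I_m$ are pairwise disjoint, cover $J'$, and satisfy $I_m\subseteq(t_m-a_2,t_m+a_2)$, raising to the $p_0$-th power and summing bounds $\LpNorm{\partial_t u}{p_0}[J'\times\MetricBall{x}{a_2\delta}]^{p_0}$ by $C_1^{p_0}\sum_m \LpNorm{u}{p_0}[(t_m-1,t_m+1)\times\MetricBall{x}{\delta}]^{p_0}$; and because the $t_m$ are $a_2$-separated while each $(t_m-1,t_m+1)$ has length $2$ and is contained in $J$, every point of $J$ lies in at most $2/a_2+1$ of these intervals, so the sum is $\le(2/a_2+1)\LpNorm{u}{p_0}[J\times\MetricBall{x}{\delta}]^{p_0}$. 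Taking $p_0$-th roots yields \eqref{Eqn::Proof::HeatFunc::EsimateUnscaledt} with the admissible constant $C_1(2/a_2+1)^{1/p_0}\le 3C_1/a_2$.

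The step I expect to be delicate is getting an \emph{admissible} constant even though the cover has of order $\epsilon^{-2\kappa}\to\infty$ pieces: a crude triangle inequality over the pieces would leave an $\epsilon$-dependent factor, so one must instead sum $p_0$-th powers of $L^{p_0}$ norms over the disjoint left-hand pieces $I_m$ and exploit that the slightly enlarged right-hand pieces $(t_m-1,t_m+1)$ all lie inside $J$ and overlap only $O(a_2^{-1})$ times. Everything else is elementary nested-interval geometry, which goes through precisely because the buffer $\epsilon^{1-2\kappa}$ between $J'$ and $J$ is at least $2\ge a_2+1$ — a consequence of $\kappa\ge1$ together with $\epsilon\le 1/2$, the latter being forced by the nonemptiness of the range of $k$.
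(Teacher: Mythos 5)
Your proof is correct and mirrors the paper's own argument: both cover the target time interval by subintervals of length $\sim a_2$, translate each piece so that Assumption~\ref{Assumption::HypoellipticityII} applies on $(-1,1)$, and then sum $p_0$-th powers of the piecewise estimates using the $O(a_2^{-1})$ bounded overlap of the enlarged intervals $(t_m-1,t_m+1)$ inside the source interval $J$, exactly as in the paper. Your explicit lattice partition of $\R$ (and the attendant observation that nonemptiness of the range of $k$ forces $\epsilon\le 1/2$, giving buffer $\ge 2\ge a_2+1$) is a small presentational variant of the paper's covering with centers $t_i$ chosen directly inside $J'$ (which needs only buffer $>1$); the underlying mechanism is identical.
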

\begin{proof}
    Note that
    \begin{equation*}
        \begin{split}
        &(-\epsilon^{-2\kappa}+((k-1)\epsilon)\epsilon^{-2\kappa}, \epsilon^{-2\kappa}-((k-1)\epsilon)\epsilon^{-2\kappa})
        \setminus (-\epsilon^{-2\kappa}+(k\epsilon)\epsilon^{-2\kappa}, \epsilon^{-2\kappa}-(k\epsilon)\epsilon^{-2\kappa})
        \\& = (-\epsilon^{-2\kappa}+((k-1)\epsilon)\epsilon^{-2\kappa},-\epsilon^{-2\kappa}+(k\epsilon)\epsilon^{-2\kappa}]
        \bigcup
        [\epsilon^{-2\kappa}-(k\epsilon)\epsilon^{-2\kappa}, \epsilon^{-2\kappa}-((k-1)\epsilon)\epsilon^{-2\kappa})
    \end{split}
    \end{equation*}
    which are two intervals of length \(\epsilon^{-2\kappa+1}\geq \epsilon^{-1}> 1\). 
    Thus, we may cover \((-\epsilon^{-2\kappa}+(k\epsilon)\epsilon^{-2\kappa}, \epsilon^{-2\kappa}-(k\epsilon)\epsilon^{-2\kappa})\)
    by a finite collection of intervals of the form \((t_1-a_2, t_i+a_2)\) with \(t_i\in (-\epsilon^{-2\kappa}+(k\epsilon)\epsilon^{-2\kappa}, \epsilon^{-2\kappa}-(k\epsilon)\epsilon^{-2\kappa})\)
    (and therefore \((t_1-1,t_i+1)\subset (-\epsilon^{-2\kappa}+((k-1)\epsilon)\epsilon^{-2\kappa}, \epsilon^{-2\kappa}-((k-1)\epsilon)\epsilon^{-2\kappa})\)),
    and such that no point lies in more than \(2\ceil{a_2^{-1}}\) of the intervals \((t_i-1,t_i+1)\).

    Assumption \ref{Assumption::HypoellipticityII} gives
    \begin{equation*}
        \LpNorm{\partial_t u}{p_0}[(t_i-a_2,t_i+a_2)\times \MetricBall{x}{a_2\delta}]
        \leq C_1 \LpNorm{u}{p_0}[(t_i-1,t_i+1)\times \MetricBall{x}{\delta}].
    \end{equation*}
    Combining this with the above, we have
    \begin{equation*}
    \begin{split}
         &\LpNorm{\partial_t u}{p_0}[(-\epsilon^{-2\kappa}+(k\epsilon)\epsilon^{-2\kappa}, \epsilon^{-2\kappa}-(k\epsilon)\epsilon^{-2\kappa})\times \MetricBall{x}{a_2\delta}]^{p_0}
         \\&\leq \sum_{i} \LpNorm{\partial_t u}{p_0}[(t_i-a_2,t_i+a_2)\times \MetricBall{x}{a_2\delta}]^{p_0}
         \\&\leq \sum_{i} C_1^{p_0}\LpNorm{u}{p_0}[(t_i-1,t_i+1)\times \MetricBall{x}{\delta}]^{p_0}
         \\&\leq 2C_1^{p_0}\ceil{a_2^{-1}} \LpNorm{u}{p_0}[(-\epsilon^{-2\kappa}+((k-1)\epsilon)\epsilon^{-2\kappa}, \epsilon^{-2\kappa}-((k-1)\epsilon)\epsilon^{-2\kappa})\times \MetricBall{x}{\delta}]^{p_0}.
    \end{split}
    \end{equation*}
    The result follows.
\end{proof}

\begin{lemma}\label{Lemma::Proof::HeatFuncs::Inducel::Tmp}
    \(\forall \delta_1\in (0,\delta_0)\),
    \(\forall x\in \MetricBall{x_0}{\delta_1}\), \(\forall \delta\in (0,\delta_1-\metric[x_0][x])\), \(\forall k\in [1, \delta_1/2\delta]\),
    \(\forall u:(-1,1)\rightarrow \DAinfty\) an \((A,x_0,\delta_1)\)-heat function,
    \begin{equation*}
        \LpNorm{\partial_t u}{p_0}[(-1+k\delta/\delta_1, 1-k\delta/\delta_1)\times \MetricBall{x}{a_2\delta}]
        \lesssim \left( \delta/\delta_1 \right)^{-2\kappa} \LpNorm{u}{p_0}[(-1+(k-1)\delta/\delta_1, 1-(k-1)\delta/\delta_1)\times \MetricBall{x}{\delta}].
    \end{equation*}
\end{lemma}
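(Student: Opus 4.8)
The plan is to deduce the inequality from Lemma~\ref{Lemma::Proof::HeatFuncs::EstimateUnscaledt} by the scaling substitution $t\mapsto\epsilon^{2\kappa}t$, where $\epsilon:=\delta/\delta_1$. First one checks that the constraints line up. Since $\delta<\delta_1-\metric[x_0][x]\le\delta_1$ we have $\epsilon\in(0,1)$; the hypothesis $k\in[1,\delta_1/2\delta]$ is exactly $k\in[1,(2\epsilon)^{-1}]$; and $\metric[x_0][x]+\delta<\delta_1$ gives $\MetricBall{x}{\delta}\subseteq\MetricBall{x_0}{\delta_1}$ by the triangle inequality. Also $x\in\MetricBall{x_0}{\delta_0}$ and $\delta<\delta_0-\metric[x_0][x]$, so Lemma~\ref{Lemma::Proof::HeatFuncs::EstimateUnscaledt} will apply to the rescaled function constructed below.

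Define $\tilde u:(-\epsilon^{-2\kappa},\epsilon^{-2\kappa})\to\DAinfty$ by $\tilde u(s):=u(\epsilon^{2\kappa}s)$, which is well defined since $u$ is defined on $(-1,1)$. The key preliminary step is to verify that $\tilde u$ is an $(A,x,\delta)$-heat function. Smoothness of $\tilde u$ in $s$ on $\MetricBall{x}{\delta}$ is inherited from that of $u$ on the larger ball $\MetricBall{x_0}{\delta_1}$, and restricting the identity of Definition~\ref{Defn::Assump::Hypo::HeatFunction}\ref{Item::Assump::Hypo::HeatFunction::Derivatives} for $u$ from $\MetricBall{x_0}{\delta_1}$ to $\MetricBall{x}{\delta}$ and applying the chain rule converts $\delta_1^{2\kappa}$ into $\delta^{2\kappa}$, giving $\partial_s^j(\tilde u(s)|_{\MetricBall{x}{\delta}})=((\delta^{2\kappa}A)^j\tilde u(s))|_{\MetricBall{x}{\delta}}$ for all $j\ge1$ (here we use $\omega_0=0$, by the reductions made at the start of this section).

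Next, apply Lemma~\ref{Lemma::Proof::HeatFuncs::EstimateUnscaledt} to $\tilde u$ with this $\epsilon$ and $k$, and then undo the substitution by setting $t=\epsilon^{2\kappa}s$. Under this substitution the $s$-intervals appearing in Lemma~\ref{Lemma::Proof::HeatFuncs::EstimateUnscaledt}, namely $(-\epsilon^{-2\kappa}+(k\epsilon)\epsilon^{-2\kappa},\epsilon^{-2\kappa}-(k\epsilon)\epsilon^{-2\kappa})$ and the analogous one with $k-1$ in place of $k$, map onto the $t$-intervals $(-1+k\delta/\delta_1,1-k\delta/\delta_1)$ and $(-1+(k-1)\delta/\delta_1,1-(k-1)\delta/\delta_1)$. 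The Jacobian $ds=\epsilon^{-2\kappa}\,dt$ contributes the same factor $\epsilon^{-2\kappa/p_0}$ to both sides, which cancels, whereas the identity $\partial_s\tilde u(s)=\epsilon^{2\kappa}(\partial_tu)(\epsilon^{2\kappa}s)$ contributes an extra factor $\epsilon^{2\kappa}$ on the left. Thus one obtains $\epsilon^{2\kappa}$ times the left-hand side of the asserted inequality bounded by its right-hand side, which upon dividing by $\epsilon^{2\kappa}=(\delta/\delta_1)^{2\kappa}$ is precisely the claim; the implied constant stays admissible because the one in Lemma~\ref{Lemma::Proof::HeatFuncs::EstimateUnscaledt} is.

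I do not expect a genuine obstacle here: the argument is essentially bookkeeping. The points demanding care are the inclusion $\MetricBall{x}{\delta}\subseteq\MetricBall{x_0}{\delta_1}$ (needed so that the heat-function identity for $u$ may be legitimately restricted to $\MetricBall{x}{\delta}$), the equivalence $k\le(2\epsilon)^{-1}\Longleftrightarrow k\le\delta_1/2\delta$ of the parameter ranges, and correctly tracking the powers of $\epsilon$ coming from $\partial_s=\epsilon^{2\kappa}\partial_t$ and from the Jacobian of the time rescaling.
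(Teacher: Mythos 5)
Your proof is correct and follows essentially the same route as the paper: substitute $\epsilon=\delta/\delta_1$, set $\tilde u(s)=u(\epsilon^{2\kappa}s)$, verify $\tilde u$ is an $(A,x,\delta)$-heat function (using $\MetricBall{x}{\delta}\subseteq\MetricBall{x_0}{\delta_1}$), invoke Lemma~\ref{Lemma::Proof::HeatFuncs::EstimateUnscaledt}, and undo the substitution; the $\epsilon^{2\kappa}$ from the chain rule produces the $(\delta/\delta_1)^{-2\kappa}$ factor. Your bookkeeping of the Jacobian and the powers of $\epsilon$ is accurate and simply spells out steps the paper compresses into ``the result follows by changing variables in $t$.''
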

\begin{proof}
    Set \(\epsilon:=\delta/\delta_1\in (0,1)\) and \(\uh(t):=u(\epsilon^{2\kappa}t)\).
    Then, \(\uh:(-\epsilon^{-2\kappa}, \epsilon^{-2\kappa})\rightarrow \DAinfty\),
    \(\partial_t^j \uh(t,x)\big|_{(-\epsilon^{-2\kappa},\epsilon^{2\kappa})\times \MetricBall{x_0}{\delta_1}} = \left( \delta^{2\kappa}A \right)^j \uh(t,x)\big|_{(-\epsilon^{-2\kappa},\epsilon^{2\kappa})\times \MetricBall{x_0}{\delta_1}}\), \(\forall j\),
    and \(\uh\big|_{(-\epsilon^{-2\kappa},\epsilon^{2\kappa})\times \MetricBall{x_0}{\delta_1}}\in \CinftySpace[(-\epsilon^{-2\kappa},\epsilon^{-2\kappa})][\LpSpace{p_0}[\MetricBall{x_0}{\delta_1}]]\).
    Since \(\MetricBall{x}{\delta}\subseteq \MetricBall{x_0}{\delta_1}\), it follows that \(\uh\) is an \((A,x,\delta)\)-heat function.
    Lemma \ref{Lemma::Proof::HeatFuncs::EstimateUnscaledt} shows
    \eqref{Eqn::Proof::HeatFunc::EsimateUnscaledt} holds with \(u\) replaced by \(\uh\).
    Using that \(\uh(t,x)=u(\epsilon^{2\kappa}t,x)\), the result follows by changing variables in \(t\) and using \(\epsilon=\delta/\delta_1\).
\end{proof}

\begin{lemma}\label{Lemma::Proof::HeatFuncs::Inducel::Tmp2}
    Let \(\delta_1\in (0,\delta_0)\) and let \(u:(-1,1)\rightarrow \DAinfty\) be an \((A,x_0,\delta_1)\)-heat function.
    Then, \(\forall \delta\in (0,\delta_1)\), \(\forall k\in [2,\delta_1/2\delta]\),
    \begin{equation*}
        \LpNorm{\partial_t u}{p_0}[(-1+k\delta/\delta_1, 1-k\delta/\delta_1)\times \MetricBall{x_0}{\delta_1-k\delta}]
        \lesssim \left( \delta/\delta_1 \right)^{-2\kappa} \LpNorm{u}{p_0}[(-1+(k-1)\delta/\delta_1, 1-(k-1)\delta/\delta_1)\times \MetricBall{x_0}{\delta_1-(k-1)\delta}].
    \end{equation*}
\end{lemma}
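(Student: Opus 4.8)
The plan is to deduce Lemma \ref{Lemma::Proof::HeatFuncs::Inducel::Tmp2} from Lemma \ref{Lemma::Proof::HeatFuncs::Inducel::Tmp} by a finite-overlap covering argument in the spatial variable, entirely analogous to the proof of Lemma \ref{Lemma::Proof::HeatFuncs::EstimateUnscaledt}. Note that the two statements have identical time intervals; only the spatial ball changes, from an arbitrary small ball \(\MetricBall{x}{a_2\delta}\) with \(x\) near \(x_0\) (Lemma \ref{Lemma::Proof::HeatFuncs::Inducel::Tmp}) to the concentric ball \(\MetricBall{x_0}{\delta_1-k\delta}\).

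First I would apply the covering Lemma \ref{Lemma::Doubling::Covering} with its ``\(\delta_1\)'' taken to be \(\delta_1-k\delta\) and its ``\(\delta_2\)'' taken to be \(\delta\). The hypotheses hold: \(\delta_1-k\delta\in (0,\delta_0)\) because \(k\le \delta_1/2\delta\) forces \(k\delta\le \delta_1/2<\delta_1<\delta_0\); and \(\delta\in (0,(\delta_0-(\delta_1-k\delta))/2)\) amounts to \((2-k)\delta<\delta_0-\delta_1\), which holds since \(k\ge 2\) makes the left side \(\le 0<\delta_0-\delta_1\). This yields a cover \(\{\MetricBall{x_j}{a_2\delta}\}_{j}\) of \(\MetricBall{x_0}{\delta_1-k\delta}\) with \(x_j\in \MetricBall{x_0}{\delta_1-k\delta}\) and no point lying in more than \(N\) of the balls \(\MetricBall{x_j}{\delta}\), where \(N\) is admissible.

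Next, for each \(j\) I would apply Lemma \ref{Lemma::Proof::HeatFuncs::Inducel::Tmp} with \(x=x_j\) and the same \(k\). Its hypotheses hold: \(x_j\in \MetricBall{x_0}{\delta_1}\); \(\delta<\delta_1-\metric[x_0][x_j]\) because \(\metric[x_0][x_j]<\delta_1-k\delta\le \delta_1-2\delta\); and \(k\in [1,\delta_1/2\delta]\). This gives
\[
    \LpNorm{\partial_t u}{p_0}[(-1+k\delta/\delta_1, 1-k\delta/\delta_1)\times \MetricBall{x_j}{a_2\delta}]
    \lesssim (\delta/\delta_1)^{-2\kappa}\LpNorm{u}{p_0}[(-1+(k-1)\delta/\delta_1, 1-(k-1)\delta/\delta_1)\times \MetricBall{x_j}{\delta}].
\]
Moreover, since \(\metric[x_0][x_j]+\delta<\delta_1-(k-1)\delta\), each ball \(\MetricBall{x_j}{\delta}\) is contained in \(\MetricBall{x_0}{\delta_1-(k-1)\delta}\), and the covering lemma gives \(\sum_j \mathbf 1_{\MetricBall{x_j}{\delta}}\le N\).

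Finally I would assemble these estimates: raise to the \(p_0\)-th power, sum over \(j\) using that the \(\MetricBall{x_j}{a_2\delta}\) cover \(\MetricBall{x_0}{\delta_1-k\delta}\), insert the per-ball estimate above, and then use the bounded overlap together with the inclusion \(\MetricBall{x_j}{\delta}\subseteq \MetricBall{x_0}{\delta_1-(k-1)\delta}\) to collapse \(\sum_j \LpNorm{u}{p_0}[\cdots\times \MetricBall{x_j}{\delta}]^{p_0}\le N\,\LpNorm{u}{p_0}[\cdots\times \MetricBall{x_0}{\delta_1-(k-1)\delta}]^{p_0}\); taking \(p_0\)-th roots yields the claim (the resulting factor \(N^{1/p_0}\) is admissible). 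I do not expect a genuine obstacle here — this is a routine covering argument; the only point requiring attention is the bookkeeping that the constraints \(k\in [2,\delta_1/2\delta]\) and \(\delta<\delta_1<\delta_0\) are precisely what is needed to validate the hypotheses of Lemmas \ref{Lemma::Doubling::Covering} and \ref{Lemma::Proof::HeatFuncs::Inducel::Tmp} and the two ball inclusions.
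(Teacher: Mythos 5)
Your proof is correct and is essentially the same as the paper's: both apply Lemma \ref{Lemma::Doubling::Covering} with parameters $\delta_1-k\delta$ and $\delta$ to produce a bounded-overlap cover of $\MetricBall{x_0}{\delta_1-k\delta}$ by balls $\MetricBall{x_j}{a_2\delta}$, apply Lemma \ref{Lemma::Proof::HeatFuncs::Inducel::Tmp} at each center $x_j$, note $\MetricBall{x_j}{\delta}\subseteq\MetricBall{x_0}{\delta_1-(k-1)\delta}$, and sum in the $L^{p_0}$ sense. Your hypothesis checks (in particular the computation $(2-k)\delta<\delta_0-\delta_1$) are also the same as, and if anything a touch cleaner than, what appears in the paper.
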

\begin{proof}
    Let \(\deltah_1=\delta_1-k\delta\), and \(\deltah_2=\delta\). Note that \(\deltah_2\leq (\delta_1-\deltah_1)/2<(\delta_0-\delta_1)/2\).
    By Lemma \ref{Lemma::Doubling::Covering} with \(\delta_1\) and \(\delta_2\) replaced with \(\deltah_1\) and \(\deltah_2\),
    there exists
    an admissible constant \(N\in \N\), and \(x_1,x_2,\ldots \in \MetricBall{x_0}{\delta_1-k\delta}\), such that
    \(\bigcup_j \MetricBall{x_j}{a_2\delta}\supseteq \MetricBall{x_0}{\delta_1-k\delta}\) and no point lies in more than \(N\)
    of the balls \(\MetricBall{x_j}{\delta}\).
    The triangle inequality implies \(\MetricBall{x_j}{\delta}\subseteq \MetricBall{x_0}{\delta_1-(k-1)\delta}\).
    
    Also, \(\delta\leq k\delta =\delta_1-(\delta_1-k\delta) <\delta_1-\metric[x_0][x_j]\).  Lemma \ref{Lemma::Proof::HeatFuncs::Inducel::Tmp}
    shows for some admissible constant \(C\geq 1\),
    \begin{equation*}
    \begin{split}
         &\LpNorm{\partial_t u}{p_0}[(-1+k\delta/\delta_1,1-k\delta/\delta_1)\times \MetricBall{x_0}{\delta_1-k\delta}]^{p_0}
         \leq \sum_{j\geq 1} \LpNorm{\partial_t u}{p_0}[(-1+k\delta/\delta_1,1-k\delta/\delta_1)\times \MetricBall{x_j}{a_2 \delta}]^{p_0}
         \\&\leq C^{p_0} \left( \delta/\delta_1 \right)^{-2\kappa p_0} \sum_{j\geq 1} \LpNorm{u}{p_0}[(-1+(k-1)\delta/\delta_1,1-(k-1)\delta/\delta_1)\times \MetricBall{x_j}{\delta}]^{p_0}
         \\&\leq C^{p_0} N \left( \delta/\delta_1 \right)^{-2\kappa p_0}\LpNorm{u}{p_0}[(-1+(k-1)\delta/\delta_1,1-(k-1)\delta/\delta_1)\times \MetricBall{x_0}{\delta_1-(k-1)\delta}]^{p_0}.
    \end{split}
    \end{equation*}
    The result follows.
\end{proof}

\begin{lemma}\label{Lemma::Proof::HeatFuncs::Inducel::Firstl}
    Let \(\delta_1\in (0,\delta_0)\) and \(u:(-1,1)\rightarrow \DAinfty\) be an \((A,x_0,\delta_1)\)-heat function. Then,
    for \(l\in \left\{ 1,2,3,\ldots \right\}\), \(k\in \left\{ 2,3,\ldots, l+1 \right\}\),
    \begin{equation*}
    \begin{split}
            &\LpNorm{\partial_t u}{p_0}[(-1+k/2(l+1), 1-k/2(l+1))\times \MetricBall{x_0}{\delta_1(1-k/2(l+1))}]
            \\&\lesssim l^{2\kappa}\LpNorm{u}{p_0}[(-1+(k-1)/2(l+1), 1-(k-1)/2(l+1))\times \MetricBall{x_0}{\delta_1(1-(k-1)/2(l+1))}] 
    \end{split}    
    \end{equation*}
\end{lemma}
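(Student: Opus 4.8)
The plan is to obtain this as an immediate specialization of Lemma \ref{Lemma::Proof::HeatFuncs::Inducel::Tmp2}, choosing the free scale parameter there so that its time-intervals and balls coincide with the ones in the present statement. Concretely, given \(l\geq 1\), I would apply Lemma \ref{Lemma::Proof::HeatFuncs::Inducel::Tmp2} with \(\delta := \delta_1/\bigl(2(l+1)\bigr)\). Since \(l\geq 1\), this \(\delta\) lies in \((0,\delta_1)\). With this choice \(\delta/\delta_1 = 1/(2(l+1))\), so \(k\delta/\delta_1 = k/(2(l+1))\) and \(\delta_1-k\delta = \delta_1\bigl(1-k/(2(l+1))\bigr)\), and likewise with \(k\) replaced by \(k-1\); hence the left- and right-hand norms in Lemma \ref{Lemma::Proof::HeatFuncs::Inducel::Tmp2} become exactly those appearing here.

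Next I would verify the index hypothesis \(k\in[2,\delta_1/2\delta]\) of Lemma \ref{Lemma::Proof::HeatFuncs::Inducel::Tmp2}. For our choice of \(\delta\) one has \(\delta_1/(2\delta) = l+1\), so this constraint reads \(k\in[2,l+1]\), which is precisely the range \(k\in\{2,3,\ldots,l+1\}\) assumed in the statement. Thus Lemma \ref{Lemma::Proof::HeatFuncs::Inducel::Tmp2} applies and yields the claimed estimate, but with the factor \((\delta/\delta_1)^{-2\kappa} = \bigl(2(l+1)\bigr)^{2\kappa}\) in place of \(l^{2\kappa}\). To finish, I would absorb the difference: since \(l\geq 1\) we have \(2(l+1)\leq 4l\), so \(\bigl(2(l+1)\bigr)^{2\kappa}\leq 4^{2\kappa}\, l^{2\kappa}\), and \(4^{2\kappa}\) depends only on \(\kappa\), hence is an admissible constant.

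There is no genuine obstacle here; the lemma is essentially a reparametrized restatement of Lemma \ref{Lemma::Proof::HeatFuncs::Inducel::Tmp2}, packaged so that iterating it over \(k=2,\ldots,l+1\) (using the base case \(k=1\) separately) will produce a factor \((l^{2\kappa})^{l}\) up to an admissible power \(C^{l}\), which is the Gevrey-type growth \(\sim(l!)^{2\kappa}\) needed to invoke Proposition \ref{Prop::Proof::Gevrey::MainProp} on \(t\mapsto Xu(t,\cdot)\). The only care required is the two routine checks above: that the admissible index bound \(k\leq l+1\) matches the hypothesis of Lemma \ref{Lemma::Proof::HeatFuncs::Inducel::Tmp2}, and that the polynomial prefactor \(\bigl(2(l+1)\bigr)^{2\kappa}\) is admissibly comparable to \(l^{2\kappa}\).
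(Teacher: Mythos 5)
Your proposal is correct and is precisely the paper's own proof: the paper dispatches this lemma in one line by applying Lemma \ref{Lemma::Proof::HeatFuncs::Inducel::Tmp2} with \(\delta=\delta_1/(2(l+1))\). You supply exactly the routine verifications (the index range \([2,\delta_1/2\delta]=[2,l+1]\), the reparametrized intervals and balls, and the admissible absorption \((2(l+1))^{2\kappa}\leq 4^{2\kappa}l^{2\kappa}\)) that the paper leaves implicit.
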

\begin{proof}
    This follows from Lemma \ref{Lemma::Proof::HeatFuncs::Inducel::Tmp2} with \(\delta=\delta_1/2(l+1)\).
\end{proof}

\begin{lemma}\label{Lemma::Proof::HeatFuncs::DerivOfHeatIsHeat}
    Let \(\delta>0\), \(x\in \MetricSpace\), \(I\subseteq \R\) a connected open interval, and \(u:I\rightarrow \DAinfty\) an \((A,x,\delta)\)-heat function.
    \begin{enumerate}[(i)]
        \item\label{Item::Proof::HeatFuncs::DerivOfHeatIsHeat::ApplyA} \(\delta^{2\kappa}Au:I\rightarrow \DAinfty\) is an \((A,x,\delta)\)-heat function.
        \item\label{Item::Proof::HeatFuncs::DerivOfHeatIsHeat::Deriv} \(\forall j\in \N\), there exists an \((A,x,\delta)\)-heat function \(v_j:I\rightarrow \DAinfty\)
            such that \(\partial_t^j\left( u(t)\big|_{\MetricBall{x}{\delta}} \right)=v_j(t)\big|_{\MetricBall{x}{\delta}}\).
    \end{enumerate}
\end{lemma}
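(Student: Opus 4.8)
The plan is to deduce both parts directly from Definition~\ref{Defn::Assump::Hypo::HeatFunction}. Since we have reduced to \(\omega_0=0\), an \((A,x,\delta)\)-heat function \(u\) is characterized by \(u\big|_{I\times\MetricBall{x}{\delta}}\in\CinftySpace[I][\LpSpace{p_0}[\MetricBall{x}{\delta}]]\) together with \(\partial_t^j\bigl(u\big|_{I\times\MetricBall{x}{\delta}}\bigr)=\bigl((\delta^{2\kappa}A)^j u\bigr)\big|_{I\times\MetricBall{x}{\delta}}\) for all \(j\geq 1\). I will use throughout that \(A\) maps \(\DAinfty\) into itself (if \(\phi\in\Domain[A^{j+1}]\) then \(A\phi\in\Domain[A^j]\) for every \(j\)), so that \((\delta^{2\kappa}A)^j u\) is a well-defined curve into \(\DAinfty\) for each \(j\).

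For part~\ref{Item::Proof::HeatFuncs::DerivOfHeatIsHeat::ApplyA} I would set \(v:=\delta^{2\kappa}Au:I\to\DAinfty\). The \(j=1\) case of Definition~\ref{Defn::Assump::Hypo::HeatFunction}\ref{Item::Assump::Hypo::HeatFunction::Derivatives} for \(u\) says precisely that \(v\big|_{I\times\MetricBall{x}{\delta}}=\partial_t\bigl(u\big|_{I\times\MetricBall{x}{\delta}}\bigr)\); since \(u\big|_{I\times\MetricBall{x}{\delta}}\) is smooth in \(t\), so is \(v\big|_{I\times\MetricBall{x}{\delta}}\), which is the smoothness clause of the definition for \(v\). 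For the derivative clause, differentiate this identity \(j\) further times and again invoke Definition~\ref{Defn::Assump::Hypo::HeatFunction}\ref{Item::Assump::Hypo::HeatFunction::Derivatives} for \(u\) (with \(j+1\) in place of \(j\)), together with \((\delta^{2\kappa}A)^{j+1}=(\delta^{2\kappa}A)^j\circ(\delta^{2\kappa}A)\); this yields \(\partial_t^j\bigl(v\big|_{I\times\MetricBall{x}{\delta}}\bigr)=\bigl((\delta^{2\kappa}A)^j v\bigr)\big|_{I\times\MetricBall{x}{\delta}}\), so \(v\) is an \((A,x,\delta)\)-heat function.

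For part~\ref{Item::Proof::HeatFuncs::DerivOfHeatIsHeat::Deriv} I would put \(v_j:=(\delta^{2\kappa}A)^j u\) and argue by induction on \(j\): the base case \(v_0=u\) is trivial, and the inductive step \(v_{j+1}=\delta^{2\kappa}A v_j\) is part~\ref{Item::Proof::HeatFuncs::DerivOfHeatIsHeat::ApplyA}, so each \(v_j\) is an \((A,x,\delta)\)-heat function; the required identity \(\partial_t^j\bigl(u(t)\big|_{\MetricBall{x}{\delta}}\bigr)=v_j(t)\big|_{\MetricBall{x}{\delta}}\) is then exactly Definition~\ref{Defn::Assump::Hypo::HeatFunction}\ref{Item::Assump::Hypo::HeatFunction::Derivatives} for \(u\). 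This is essentially bookkeeping and I do not expect a real obstacle; the one point to handle with care is that a heat function need only be smooth in \(t\) after restriction to the cylinder \(I\times\MetricBall{x}{\delta}\) (away from the ball it may be badly discontinuous in \(t\), cf.~Lemma~\ref{Lemma::HeatFuncs::Characterize}\ref{Item::HeatFuncs::Characterize::AllReals}), so one must manipulate only the restricted objects, using that \(\partial_t\), restriction to \(\MetricBall{x}{\delta}\), and the unbounded operators \((\delta^{2\kappa}A)^j\)---which act on the \(\DAinfty\)-valued curve \(u\) prior to any restriction---fit together consistently.
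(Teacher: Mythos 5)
Your proof is correct and takes essentially the same approach as the paper: part~\ref{Item::Proof::HeatFuncs::DerivOfHeatIsHeat::ApplyA} by shifting the index in Definition~\ref{Defn::Assump::Hypo::HeatFunction}~\ref{Item::Assump::Hypo::HeatFunction::Derivatives}, and part~\ref{Item::Proof::HeatFuncs::DerivOfHeatIsHeat::Deriv} by taking \(v_j=(\delta^{2\kappa}A)^j u\) and invoking part~\ref{Item::Proof::HeatFuncs::DerivOfHeatIsHeat::ApplyA}. You also correctly read part~\ref{Item::Proof::HeatFuncs::DerivOfHeatIsHeat::ApplyA} as asserting that \(\delta^{2\kappa}Au\) (not the scalar multiple \(\delta^{2\kappa}u\) as literally written) is the heat function in question, which matches the paper's own proof and its subsequent use in Lemma~\ref{Lemma::Proof::HeatFuncs::SupBoundWithoutExponential}.
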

\begin{proof}
    \ref{Item::Proof::HeatFuncs::DerivOfHeatIsHeat::ApplyA}:  Using Definition \ref{Defn::Assump::Hypo::HeatFunction} \ref{Item::Assump::Hypo::HeatFunction::Derivatives},
    we have
    \begin{equation*}
        \partial_t^j \left( \delta^{2\kappa} A u(t)\big|_{\MetricBall{x}{\delta}}  \right) = \partial_t^{j+1}\left(  u(t)\big|_{\MetricBall{x}{\delta}} \right)
        =\left( \left( \delta^{2\kappa} A \right)^{j+1} u(t)\big|_{\MetricBall{x}{\delta}}\right)=\left( \left( \delta^{2\kappa} A \right)^{j} \delta^{2\kappa} Au(t)\big|_{\MetricBall{x}{\delta}}\right).
    \end{equation*}
    Clearly, \(\delta^{2\kappa}A u\) satisfies Definition \ref{Defn::Assump::Hypo::HeatFunction} \ref{Item::Assump::Hypo::HeatFunction::SmoothExceptOnePoint}
    with the same \(t_0\) as \(u\).
    From here, \ref{Item::Proof::HeatFuncs::DerivOfHeatIsHeat::ApplyA} follows.

    \ref{Item::Proof::HeatFuncs::DerivOfHeatIsHeat::Deriv}: Take \(v_j(t)=\left( \delta^{2\kappa} A\right)^j u(t)\).  \(v_j\) is an \((A,x,\delta)\)-heat
    function by \ref{Item::Proof::HeatFuncs::DerivOfHeatIsHeat::ApplyA}, and \(\partial_t^j\left( u(t)\big|_{\MetricBall{x}{\delta}} \right)=v_j(t)\big|_{\MetricBall{x}{\delta}}\)
    by Definition \ref{Defn::Assump::Hypo::HeatFunction} \ref{Item::Assump::Hypo::HeatFunction::Derivatives}.
\end{proof}

\begin{lemma}\label{Lemma::Proof::HeatFuncs::Inducel::GetFactorial}
    There exists an admissible constant \(R_1\geq 1\) such that the following holds.
    Let \(\delta_1\in (0,\delta_0)\) and \(u:(-1,1)\rightarrow \DAinfty\) be an \((A,x_0,\delta_1)\)-heat function.
    Then, \(\forall l\in \N\),
    \begin{equation*}
        \LpNorm{\partial_t^l u}{p_0}[(-1/2,1/2)\times \MetricBall{x_0}{\delta_1/2}]
        \lesssim R_1^l \left( l! \right)^{2\kappa} \LpNorm{u}{p_0}[(-1,1)\times \MetricBall{x_0}{\delta_1}].
    \end{equation*}
\end{lemma}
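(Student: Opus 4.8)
The plan is to iterate Lemma \ref{Lemma::Proof::HeatFuncs::Inducel::Firstl} exactly $l$ times, peeling off one $t$-derivative at each step, with Lemma \ref{Lemma::Proof::HeatFuncs::DerivOfHeatIsHeat} guaranteeing that after each differentiation what remains is again the restriction to $\MetricBall{x_0}{\delta_1}$ of an $(A,x_0,\delta_1)$-heat function, so that the lemma may be reapplied. The case $l=0$ is trivial, since $(-1/2,1/2)\times\MetricBall{x_0}{\delta_1/2}\subseteq(-1,1)\times\MetricBall{x_0}{\delta_1}$; so fix $l\geq 1$.

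For $k=0,1,\ldots,l+1$ set $\rho_k:=1-k/2(l+1)$ and $R_k:=(-\rho_k,\rho_k)\times\MetricBall{x_0}{\delta_1\rho_k}$, so that $R_0\subseteq(-1,1)\times\MetricBall{x_0}{\delta_1}$ and $R_{l+1}=(-1/2,1/2)\times\MetricBall{x_0}{\delta_1/2}$. By Lemma \ref{Lemma::Proof::HeatFuncs::DerivOfHeatIsHeat}\,\ref{Item::Proof::HeatFuncs::DerivOfHeatIsHeat::Deriv}, each $v_m:=(\delta_1^{2\kappa}A)^m u$, $0\leq m\leq l$, is an $(A,x_0,\delta_1)$-heat function on $(-1,1)$ with $v_m\big|_{\MetricBall{x_0}{\delta_1}}=\partial_t^m\big(u\big|_{\MetricBall{x_0}{\delta_1}}\big)$, and consequently $\partial_t v_m=v_{m+1}$ on $\MetricBall{x_0}{\delta_1}$. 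I will show by induction on $m\in\{0,1,\ldots,l\}$ that
\begin{equation*}
    \LpNorm{\partial_t^m u}{p_0}[R_{m+1}]\lesssim (Cl^{2\kappa})^m\,\LpNorm{u}{p_0}[(-1,1)\times\MetricBall{x_0}{\delta_1}],
\end{equation*}
where $C\geq 1$ is the admissible implicit constant of Lemma \ref{Lemma::Proof::HeatFuncs::Inducel::Firstl}. The base case $m=0$ is just the containment $R_1\subseteq(-1,1)\times\MetricBall{x_0}{\delta_1}$. For the inductive step, given $1\leq m\leq l$, apply Lemma \ref{Lemma::Proof::HeatFuncs::Inducel::Firstl} to the heat function $v_{m-1}$ with $k=m+1\in\{2,\ldots,l+1\}$; the left- and right-hand regions there are precisely $R_{m+1}$ and $R_m$, and since their spatial balls lie inside $\MetricBall{x_0}{\delta_1}$ we have $\partial_t v_{m-1}=\partial_t^m u$ on $R_{m+1}$ and $v_{m-1}=\partial_t^{m-1}u$ on $R_m$, so the lemma yields $\LpNorm{\partial_t^m u}{p_0}[R_{m+1}]\leq Cl^{2\kappa}\LpNorm{\partial_t^{m-1}u}{p_0}[R_m]$, and the inductive hypothesis completes the step. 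Taking $m=l$ gives $\LpNorm{\partial_t^l u}{p_0}[(-1/2,1/2)\times\MetricBall{x_0}{\delta_1/2}]\lesssim (Cl^{2\kappa})^l\LpNorm{u}{p_0}[(-1,1)\times\MetricBall{x_0}{\delta_1}]$.

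It remains only to put $(Cl^{2\kappa})^l$ in the required form. Since $l^l/l!$ is a single term of the power series for $e^l$, we have $l^l\leq e^l\,l!$, hence $(Cl^{2\kappa})^l=C^l(l^l)^{2\kappa}\leq (Ce^{2\kappa})^l(l!)^{2\kappa}$, and $R_1:=Ce^{2\kappa}$ is an admissible constant (depending only on $\kappa$ and on admissible data). This argument is essentially bookkeeping once the earlier lemmas are available; the one point that must be handled with care is that a $t$-derivative of a heat function is again a heat function on $\MetricBall{x_0}{\delta_1}$ — which is exactly what Lemma \ref{Lemma::Proof::HeatFuncs::DerivOfHeatIsHeat} provides — together with the observation that the $l+1$ nested shrinkings $R_0\supseteq R_1\supseteq\cdots\supseteq R_{l+1}$ are just enough to absorb $l$ differentiations while ending on $(-1/2,1/2)\times\MetricBall{x_0}{\delta_1/2}$.
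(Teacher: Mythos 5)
Your argument is correct and is essentially identical to the paper's own proof: iterate Lemma \ref{Lemma::Proof::HeatFuncs::Inducel::Firstl} exactly $l$ times (with shift $k\mapsto k+1$), using Lemma \ref{Lemma::Proof::HeatFuncs::DerivOfHeatIsHeat}\ref{Item::Proof::HeatFuncs::DerivOfHeatIsHeat::Deriv} to see that each $t$-derivative is again the restriction of an $(A,x_0,\delta_1)$-heat function, then convert $l^{2\kappa l}$ to $(l!)^{2\kappa}$ via $l^l\leq e^l\,l!$. The explicit $R_k$/$v_m$ notation you introduce is just a cleaner bookkeeping of the same induction.
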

\begin{proof}
    The result is trivial for \(l=0\), so we assume \(l\geq 1\).

    Lemma \ref{Lemma::Proof::HeatFuncs::DerivOfHeatIsHeat} \ref{Item::Proof::HeatFuncs::DerivOfHeatIsHeat::Deriv}
    shows that for every \(k=1,2,\ldots,l\), \(\partial_t^{k-1}\left( u(t)\big|_{\MetricBall{x_0}{\delta_1}} \right)\)
    agrees with an \((A,x,\delta_1)\) heat function. We may therefore apply Lemma \ref{Lemma::Proof::HeatFuncs::Inducel::Firstl}
    to see that there exists an admissible constant \(B_0\geq 1\) with
    \begin{equation}\label{Eqn::Proof::HeatFuncs::Inducel::GetFactorial::Tmp1}
        \begin{split}
            &\LpNorm{\partial_t^k u}{p_0}[(-1+(k+1)/2(l+1), 1-(k+1)/2(l+1))\times \MetricBall{x_0}{\delta_1(1-(k+1)/2(l+1)}]
            \\&\leq l^{2\kappa} B_0\LpNorm{\partial_t^{k-1} u}{p_0}[(-1+k)/2(l+1), 1-k/2(l+1))\times \MetricBall{x_0}{\delta_1(1-k/2(l+1)}].
        \end{split}
    \end{equation}
    Applying \eqref{Eqn::Proof::HeatFuncs::Inducel::GetFactorial::Tmp1} \(l\) times and using Stirling's formula
    gives
    \begin{equation*}
        \begin{split}
        &\LpNorm{\partial_t^l u}{p_0}[(-1+1/2, 1-1/2)\times \MetricBall{x_0}{\delta_1/2}]
        \leq B_0^l l^{2\kappa l}
        \LpNorm{u}{p_0}[(-1,1)\times \MetricBall{x_0}{\delta_1}]
         \lesssim (B_0 e^{2\kappa})^l (l!)^{2\kappa}\LpNorm{u}{p_0}[(-1,1)\times \MetricBall{x_0}{\delta_1}].
        \end{split}
    \end{equation*}
\end{proof}

\begin{lemma}\label{Lemma::Proof::HeatFuncs::SupBoundWithoutExponential}
    There exists an admissible constant \(R_1\geq 1\) such that the following holds.
    Let \(\delta_1\in (0,\delta_0)\) and \(u:(-1,1)\rightarrow \DAinfty\) be an \((A,x_0,\delta_1)\)-heat function.
    Then,
    \begin{equation}\label{Eqn::Proof::HeatFuncs::SupBoundWithoutExponential::Qualitative}
        Xu(t)\big|_{(-a_1,a_1)\times \MetricBall{x_0}{a_1\delta_1}} \in \CinftySpace[(-a_1,a_1)][\CSpace{\MetricBall{x_0}{a_1\delta_1}}]
    \end{equation}
    and \(\forall l\in \N\),
    \begin{equation}\label{Eqn::Proof::HeatFuncs::SupBoundWithoutExponential::Quantitative}
    \begin{split}
         \sup_{\substack{t\in (-a_1,a_1)\\ x\in \MetricBall{x_0}{a_1\delta_1}}} \left| \partial_t^l Xu(t,x) \right|
         \lesssim S_X(\delta_1)^{-1} \measure\left( \MetricBall{x_0}{\delta_1} \right)^{-1/p_0} R_1^l (l!)^{2\kappa}
         \LpNorm{u}{p_0}[(-1,1)\times \MetricBall{x_0}{\delta_1}].
    \end{split}
    \end{equation}
\end{lemma}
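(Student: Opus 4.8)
The plan is to deduce both assertions from Assumption~\ref{Assumption::HypoellipticityI}, applied not to $u$ itself but to the heat functions $(\delta_1^{2\kappa}A)^l u$, and then to convert the resulting $L^{p_0}$ bounds into the factorial growth $R_1^l(l!)^{2\kappa}$ via Lemma~\ref{Lemma::Proof::HeatFuncs::Inducel::GetFactorial}. First, the qualitative claim \eqref{Eqn::Proof::HeatFuncs::SupBoundWithoutExponential::Qualitative}: restricting the time interval, $u|_{(-1/2,1/2)}$ is again an $(A,x_0,\delta_1)$-heat function, so \eqref{Eqn::Proof::HeatFuncs::SupBoundWithoutExponential::Qualitative} is exactly the content of Assumption~\ref{Assumption::HypoellipticityI}\ref{Item::Assumption::HypoellipticityI::Qualitative}.

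The algebraic heart of the argument is the identity, valid on $(-a_1,a_1)\times\MetricBall{x_0}{a_1\delta_1}$,
\[
    \partial_t^l (Xu) = X(\delta_1^{2\kappa}A)^l u, \qquad l\in\N,
\]
which I would prove by induction on $l$. The case $l=0$ is trivial. Writing $v_l:=(\delta_1^{2\kappa}A)^l u$, Lemma~\ref{Lemma::Proof::HeatFuncs::DerivOfHeatIsHeat}\ref{Item::Proof::HeatFuncs::DerivOfHeatIsHeat::ApplyA} shows that each $v_l$ is again an $(A,x_0,\delta_1)$-heat function, so Assumption~\ref{Assumption::HypoellipticityI}\ref{Item::Assumption::HypoellipticityI::CommuteDerivs} applies to $v_l$ and yields $\partial_t(Xv_l)=X\delta_1^{2\kappa}A v_l = Xv_{l+1}$ on that set; combined with the inductive hypothesis $\partial_t^l(Xu)=Xv_l$ this gives $\partial_t^{l+1}(Xu)=\partial_t(Xv_l)=Xv_{l+1}$.

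Next I would apply the quantitative bound Assumption~\ref{Assumption::HypoellipticityI}\ref{Item::Assumption::HypoellipticityI::Quantitative} to the heat function $v_l|_{(-1/2,1/2)}$, obtaining
\[
    \sup_{\substack{t\in(-a_1,a_1)\\ x\in\MetricBall{x_0}{a_1\delta_1}}}\bigl|S_X(\delta_1)Xv_l(t)\bigr|\le\measure(\MetricBall{x_0}{\delta_1})^{-1/p_0}\LpNorm{v_l}{p_0}[(-1/2,1/2)\times\MetricBall{x_0}{\delta_1/2}].
\]
By the previous step the left-hand side equals $\sup|S_X(\delta_1)\partial_t^l Xu(t,x)|$. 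By Definition~\ref{Defn::Assump::Hypo::HeatFunction}\ref{Item::Assump::Hypo::HeatFunction::Derivatives} (equivalently, Lemma~\ref{Lemma::Proof::HeatFuncs::DerivOfHeatIsHeat}\ref{Item::Proof::HeatFuncs::DerivOfHeatIsHeat::Deriv}), $v_l=(\delta_1^{2\kappa}A)^l u$ agrees with $\partial_t^l u$ on $\MetricBall{x_0}{\delta_1}\supseteq\MetricBall{x_0}{\delta_1/2}$, so $\LpNorm{v_l}{p_0}[(-1/2,1/2)\times\MetricBall{x_0}{\delta_1/2}]=\LpNorm{\partial_t^l u}{p_0}[(-1/2,1/2)\times\MetricBall{x_0}{\delta_1/2}]$, which Lemma~\ref{Lemma::Proof::HeatFuncs::Inducel::GetFactorial} bounds by $\lesssim R_1^l(l!)^{2\kappa}\LpNorm{u}{p_0}[(-1,1)\times\MetricBall{x_0}{\delta_1}]$. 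Chaining these gives \eqref{Eqn::Proof::HeatFuncs::SupBoundWithoutExponential::Quantitative}, with $R_1$ taken to be the admissible constant of Lemma~\ref{Lemma::Proof::HeatFuncs::Inducel::GetFactorial} and the implicit $\lesssim$-constant absorbing the other admissible factors.

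The only genuinely delicate point is the iteration in the second paragraph: Assumption~\ref{Assumption::HypoellipticityI}\ref{Item::Assumption::HypoellipticityI::CommuteDerivs} by itself only lets $\partial_t$ cross $X$ once, and it is \emph{exactly} Lemma~\ref{Lemma::Proof::HeatFuncs::DerivOfHeatIsHeat}\ref{Item::Proof::HeatFuncs::DerivOfHeatIsHeat::ApplyA} (that $\delta_1^{2\kappa}A$ sends heat functions to heat functions) that makes repeated application legitimate. Everything else is bookkeeping: restricting heat functions from $(-1,1)$ to $(-1/2,1/2)$ and keeping track of which ball each estimate lives on.
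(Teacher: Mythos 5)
Your proof is correct and follows essentially the same route as the paper's: convert $\partial_t^l Xu$ into $X(\delta_1^{2\kappa}A)^l u$ via Assumption~\ref{Assumption::HypoellipticityI}\ref{Item::Assumption::HypoellipticityI::CommuteDerivs} together with Lemma~\ref{Lemma::Proof::HeatFuncs::DerivOfHeatIsHeat}\ref{Item::Proof::HeatFuncs::DerivOfHeatIsHeat::ApplyA}, apply the quantitative hypoelliptic estimate, identify the $L^{p_0}$ norm of $(\delta_1^{2\kappa}A)^l u$ with that of $\partial_t^l u$ on the ball, and finish with Lemma~\ref{Lemma::Proof::HeatFuncs::Inducel::GetFactorial}. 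The paper compresses the induction into a parenthetical ``repeated applications'' while you spell it out, which is a welcome clarification rather than a deviation.
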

\begin{proof}
    \eqref{Eqn::Proof::HeatFuncs::SupBoundWithoutExponential::Qualitative} follows from Assumption \ref{Assumption::HypoellipticityI} \ref{Item::Assumption::HypoellipticityI::Qualitative}.

    Repeated applications of Lemma \ref{Lemma::Proof::HeatFuncs::DerivOfHeatIsHeat} \ref{Item::Proof::HeatFuncs::DerivOfHeatIsHeat::ApplyA}
    show \(\left( \delta_1^{2\kappa }A \right)^j u:I\rightarrow \DAinfty\) is an \((A,x_0,\delta_1)\)-heat function,
    \(\forall j\in \N\).
    We claim, for \(0\leq j\leq l\),
    \begin{equation}\label{Eqn::Proof::HeatFuncs::SupBoundWithoutExponential::Tmp1}
         \partial_t^{l} X u(t,x)\big|_{(-a_1,a_1)\times \MetricBall{x_0}{a_1\delta_1}} =\partial_t^{l-j} X \left( \delta_1^{2\kappa} A \right)^j u(t,x)\big|_{(-a_1,a_1)\times \MetricBall{x_0}{a_1\delta_1}}.
    \end{equation}
    We establish \eqref{Eqn::Proof::HeatFuncs::SupBoundWithoutExponential::Tmp1} by induction on \(j\).
    The base case, \(j=0\), is trivial.  We assume the result for \(0\leq j<l-1\).
    Using the inductive hypothesis, that \(\left( \delta_1^{2\kappa }A \right)^j u:I\rightarrow \DAinfty\) is an \((A,x,\delta_1)\)-heat function,
    and Assumption \ref{Assumption::HypoellipticityI} \ref{Item::Assumption::HypoellipticityI::CommuteDerivs},
    we see
    \begin{equation*}
    \begin{split}
         \partial_t^{l} X u(t,x)\big|_{(-a_1,a_1)\times \MetricBall{x_0}{a_1\delta_1}} 
         &=\partial_t^{l-j} X \left( \delta_1^{2\kappa} A \right)^j u(t,x)\big|_{(-a_1,a_1)\times \MetricBall{x_0}{a_1\delta_1}}
         \\&=\partial_t^{l-j-1} X \left( \delta_1^{2\kappa} A \right)^{j+1} u(t,x)\big|_{(-a_1,a_1)\times \MetricBall{x_0}{a_1\delta_1}}.
    \end{split}
    \end{equation*}
    This completes the inductive step and proof of \eqref{Eqn::Proof::HeatFuncs::SupBoundWithoutExponential::Tmp1}.

    By taking \(j=l\) in \eqref{Eqn::Proof::HeatFuncs::SupBoundWithoutExponential::Tmp1}, we have
    \begin{equation}\label{Eqn::Proof::HeatFuncs::SupBoundWithoutExponential::Tmp2}
        \sup_{\substack{t\in (-a_1,a_1)\\ x\in \MetricBall{x_0}{a_1\delta_1}}} \left| \partial_t^l Xu(t,x) \right|
        =\sup_{\substack{t\in (-a_1,a_1)\\ x\in \MetricBall{x_0}{a_1\delta_1}}} \left| X \left( \delta_1^{2\kappa} A \right)^l  u(t,x) \right|
    \end{equation}
    Since \(\left( \delta_1^{2\kappa }A \right)^l u:I\rightarrow \DAinfty\) is an \((A,x_0,\delta_1)\)-heat function,
    Assumption \ref{Assumption::HypoellipticityI} \ref{Item::Assumption::HypoellipticityI::Quantitative} shows
    \begin{equation}\label{Eqn::Proof::HeatFuncs::SupBoundWithoutExponential::Tmp3}
        \sup_{\substack{t\in (-a_1,a_1)\\ x\in \MetricBall{x_0}{a_1\delta_1}}} \left| X \left( \delta_1^{2\kappa} A \right)^l  u(t,x) \right|
        \leq 
        S_X(\delta_1)^{-1} \measure(\MetricBall{x_0}{\delta_1})^{-1/p_0} \LpNorm{(\delta_1^{2\kappa}A)^l u}{p_0}[(-1/2,1/2)\times \MetricBall{x_0}{\delta_1/2}].
    \end{equation}
    Definition \ref{Defn::Assump::Hypo::HeatFunction} \ref{Item::Assump::Hypo::HeatFunction::Derivatives} shows
    \begin{equation}\label{Eqn::Proof::HeatFuncs::SupBoundWithoutExponential::Tmp4}
    \begin{split}
            &S_X(\delta_1)^{-1} \measure(\MetricBall{x_0}{\delta_1})^{-1/p_0} \LpNorm{(\delta_1^{2\kappa}A)^l u}{p_0}[(-1/2,1/2)\times \MetricBall{x_0}{\delta_1/2}]
            \\&=S_X(\delta_1)^{-1} \measure(\MetricBall{x_0}{\delta_1})^{-1/p_0} \LpNorm{\partial_t^l u}{p_0}[(-1/2,1/2)\times \MetricBall{x_0}{\delta_1/2}].
    \end{split}    \end{equation}
    Finally, Lemma \ref{Lemma::Proof::HeatFuncs::Inducel::GetFactorial} shows
    \begin{equation}\label{Eqn::Proof::HeatFuncs::SupBoundWithoutExponential::Tmp5}
    \begin{split}
            &S_X(\delta_1)^{-1} \measure(\MetricBall{x_0}{\delta_1})^{-1/p_0} \LpNorm{\partial_t^l u}{p_0}[(-1/2,1/2)\times \MetricBall{x_0}{\delta_1/2}]
            \\&\lesssim S_X(\delta_1)^{-1} \measure(\MetricBall{x_0}{\delta_1})^{-1/p_0}R_1^{l} (l!)^{2\kappa}\LpNorm{u}{p_0}[(-1,1)\times \MetricBall{x_0}{\delta_1}].
    \end{split}    \end{equation}
    
    Combining \eqref{Eqn::Proof::HeatFuncs::SupBoundWithoutExponential::Tmp2},
    \eqref{Eqn::Proof::HeatFuncs::SupBoundWithoutExponential::Tmp3},
    \eqref{Eqn::Proof::HeatFuncs::SupBoundWithoutExponential::Tmp4},
    and \eqref{Eqn::Proof::HeatFuncs::SupBoundWithoutExponential::Tmp5} completes the proof.
\end{proof}

\begin{proof}[Proof of Proposition \ref{Prop::Proof::HeatFuncs::MainExpBound}]
  In light of Lemma \ref{Lemma::Proof::HeatFuncs::SupBoundWithoutExponential}, 
  there is an admissible constant \(A_0\geq 1\) such that
  the conditions of
  Proposition \ref{Prop::Proof::Gevrey::MainProp} hold
  for \(Xu(t,x)\), \(t\in (-a_1,a_1)\), \(x\in \MetricBall{x_0}{\delta_1}\),
  with \(C\), \(R\), and \(\gamma\) replaced by \(A_0 S_X(\delta_1)^{-1} \measure\left( \MetricBall{x_0}{\delta_1} \right)^{-1/p_0}\LpNorm{u}{p_0}[(-1,1)\times \MetricBall{x_0}{\delta_1}]\), \(R_1\), and \(2\kappa\), respectively.
  The result follows.
\end{proof}

    \subsection{Operator bounds}\label{Section::Proof::OperatorBounds}
    In this section, we study the operator \(\OpQx(t)\) from Theorem \ref{Thm::Results::DefineOperator}.
In particular, we provide estimates on its integral kernel and prove Theorem \ref{Thm::Results::DefineOperator}.

\begin{notation}
    When given a function \(L(t,x,y)\) of three variables \(t\in I\subseteq \R\), \(x\in U\subseteq \MetricSpace\), and \(y\in V\subseteq \MetricSpace\),
    we write \(L\in \CinftySpacet[I][\CSpacex{U}[\LpSpacewy{p}[V]]]\) to mean that \(L\) is a \(\CinftySpace\) function in the \(t\)-variable,
    taking values in the space of functions which are continuous functions in \(U\) (in the \(x\)-variable) taking values in \(\LpSpace{p}[V]\), where \(\LpSpace{p}[V]\)
    is given the weak topology. We similarly define \(\CinftySpacet[I][\CSpacey{V}[\LpSpacewx{p}[U]]]\) with the roles of \(x\) and \(y\) reversed.
\end{notation}

\begin{definition}
    For \(t\geq 0\), \(x_1\in \MetricSpace\), \(\delta>0\), we define \(\OpQx[x_1][\delta](t):\DAinfty\rightarrow \LpSpace{p_0}[\MetricBall{x_1}{\delta}]\) by
    \begin{equation*}
        \OpQx[x_1][\delta](t) \phi:= XT(t)\phi\big|_{\MetricBall{x_1}{\delta}}.
    \end{equation*}
    We similarly define \(\OpQy[y_1][\delta](t):\DAsinfty\rightarrow \LpSpace{p_0'}[\MetricBall{y_1}{\delta}]\)
    by
    \begin{equation*}
        \OpQy[y_1][\delta](t) \phi:= YT(t)^{*}\phi\big|_{\MetricBall{y_1}{\delta}}.
    \end{equation*}
\end{definition}

\begin{lemma}\label{Lemma::Proof::OperatorBounds::ReiszRepn}
    Let \(I\subseteq \R\) be an open interval, \(N\) a metric space, \(\BanachSpaceY\) a Banach space,
    \(\BanachSpace\subseteq \BanachSpaceY\) a closed subspace,
    and \(D\subseteq \BanachSpaceY\) a subspace such that \(\overline{D}\supseteq \BanachSpace\).
    For \(t\in I\), let \(Q(t):D \rightarrow \CSpace{N}\) be a linear map
    such that
    \begin{enumerate}[(i)]
        \item\label{Item::Proof::OperatorBounds::ReiszRepn::QtvSmooth} \(Q(t)v(x)\in \CinftySpace[I][\CSpace{N}]\), \(\forall v\in D\).
        \item\label{Item::Proof::OperatorBounds::ReiszRepn::AprioriBounds} \(\forall l\in \N\), \(\exists C_l\geq 0\), \(\forall v\in D\),
            \begin{equation}\label{Eqn::Proof::OperatorBounds::ReiszRepn::APrioriBound}
                \sup_{\substack{x\in N \\ t\in I}} \left| Q(t)v(x) \right| \leq C_l \BanachSpaceNorm{v}.
            \end{equation}
    \end{enumerate}
    Then, for each \(t\in I\), \(Q(t)\) extends to a unique continuous map \(\BanachSpace\rightarrow \CSpace{N}\),
    which we also denote by \(Q(t)\).
    There exists a unique \(L(t,x)\in \CinftySpacet[I][\CSpacex{N}[\BanachSpaceDualwstar]]\),
    where \(\BanachSpaceDualwstar\) denotes the dual of \(\BanachSpace\) given the weak-\(*\) topology,\footnote{In what
    follows, \(\BanachSpace\) will be taken to be either \(\LpSpace{p_0}\) or \(\LpSpace{p_0'}\) and is therefore
    reflexive. Thus, the weak-\(*\) topology agrees with the weak topology.}
    satisfying
    \(Q(t) v(x) = \BanachSpacePairing{L(t,x)}{v}\), \(\forall v\in \BanachSpace\), and
    \begin{equation}\label{Eqn::Proof::OperatorBounds::ReiszRepn::APosterioriBound}
        \sup_{\substack{x\in N \\ t\in I}}  \BanachSpaceDualNorm{\partial_t^l L(t,x)} \leq C_l,\quad \forall l\in \N.
    \end{equation}
\end{lemma}
\begin{proof}
    That \(Q(t)\) extends to a unique continuous map \(\overline{D}\rightarrow \CSpace{N}\), \(\forall t\in I\),
    follows immediately from \eqref{Eqn::Proof::OperatorBounds::ReiszRepn::APrioriBound}.
    Moreover \ref{Item::Proof::OperatorBounds::ReiszRepn::QtvSmooth} and \ref{Item::Proof::OperatorBounds::ReiszRepn::AprioriBounds}
    hold
    with \(D\) replaced by \(\overline{D}\) (with the same choice of \(C_l\) in \ref{Item::Proof::OperatorBounds::ReiszRepn::AprioriBounds}).
    By restricting to \(\BanachSpace\), we see \(Q(t)\) extends to a unique continuous map \(\BanachSpace\rightarrow \CSpace{N}\), \(\forall t\in I\),
    and \ref{Item::Proof::OperatorBounds::ReiszRepn::QtvSmooth} and \ref{Item::Proof::OperatorBounds::ReiszRepn::AprioriBounds}
    hold
    with \(D\) replaced by \(\BanachSpace\) (with the same choice of \(C_l\) in \ref{Item::Proof::OperatorBounds::ReiszRepn::AprioriBounds}).

    
    For each \((t,x)\in I\times N\), \(v\mapsto Q(t)v(x)\) is a continuous linear functional
    on \(\BanachSpace\). Thus, there exists a unique \(L(t,x)\in \BanachSpaceDual\)
    with \(Q(t) v(x) = \BanachSpacePairing{L(t,x)}{v}\), \(\forall v\in \BanachSpace\)
    (where \(\BanachSpacePairing{\cdot}{\cdot}\) denotes the pairing of \(\BanachSpace\)
    and \(\BanachSpaceDual\)).
    Since \(Q(t)v(x)\in \CinftySpace[I][\CSpacex{N}]\), \(\forall v\in \BanachSpace\),
    we conclude \(L(t,x)\in \CinftySpacet[I][\CSpacex{N}[\BanachSpaceDualwstar]]\).

    Finally, \eqref{Eqn::Proof::OperatorBounds::ReiszRepn::APosterioriBound}
    follows from \eqref{Eqn::Proof::OperatorBounds::ReiszRepn::APrioriBound} by taking the supremum
    of \eqref{Eqn::Proof::OperatorBounds::ReiszRepn::APrioriBound}
    over
    \(\BanachSpaceNorm{v}=1\).
\end{proof}

\begin{lemma}\label{Lemma::Proof::OperatorBounds::OnDiagonalOperators}
    Let \(\delta_2\in (0,\delta_0)\) and \(t_0\geq 2\delta_2^{2\kappa}\). Then, for \(t\in (t_0/2-a_1\delta_2^{2\kappa}, t_0/2+a_1\delta_2^{2\kappa})\),
    \(\OpQx[x_0][a_1\delta_2](t)\), initially defined as an operator \(\DAinfty\rightarrow \LpSpace{p_0}[\MetricBall{x_0}{a_1\delta_2}][\measure]\),
    extends to a continuous operator \(\LpSpace{p_0}[\MetricSpace][\measure]\rightarrow \CSpace{\MetricBall{x_0}{a_1\delta_2}}\).
    There exists a unique function
    \begin{equation*}
        XK_t(x,y)\in \CinftySpacet*[(t_0/2-a_1\delta_2^{2\kappa},t_0/2+a_1\delta_2^{2\kappa})][\CSpacex*{\MetricBall{x_0}{a_1\delta_2}}[\LpSpacewy{p_0'}[\MetricSpace][\measure]]]
    \end{equation*}
    such that
    \begin{equation*}
        \OpQx[x_0][a_1\delta_2](t)\phi(x) = \int XK_t(x,y)\phi(y)\: dy,\quad \forall \phi\in \LpSpace{p_0}[\MetricSpace][\measure],  x\in \MetricBall{x_0}{a_1\delta_2}.
    \end{equation*}
    This function satisfies, \(\forall l\in \N\),
    \begin{equation*}
        \sup_{\substack{x\in \MetricBall{x_0}{a_1\delta_2}\\ t\in (t_0/2-a_1\delta_2,t_0/2+a_1\delta_2)}} \LpNorm{ (\delta_2^{2\kappa} \partial_t)^l XK_t(x,\cdot)}{p_0'}[\MetricSpace]
        \lesssim_l S_X(\delta_2)^{-1} \measure(\MetricBall{x_0}{\delta_2})^{-1/p_0}.
    \end{equation*}
\end{lemma}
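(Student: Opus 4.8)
The plan is to reduce the statement to the heat-function estimate of Lemma~\ref{Lemma::Proof::HeatFuncs::SupBoundWithoutExponential} by a time translation. Fix $\phi\in\DAinfty$ and put $\sigma_0:=t_0/(2\delta_2^{2\kappa})$; the hypothesis $t_0\geq 2\delta_2^{2\kappa}$ is used exactly to guarantee $\sigma_0\geq 1$. By Lemma~\ref{Lemma::HeatFuncs::Characterize}~\ref{Item::HeatFuncs::Characterize::PositiveReals}, $u(\sigma):=T(\delta_2^{2\kappa}\sigma)\phi$ is an $(A,x_0,\delta_2)$-heat function on $(0,\infty)$, and hence its time translate $\tilde u(s):=u(s+\sigma_0)=T(\delta_2^{2\kappa}s+t_0/2)\phi$ is an $(A,x_0,\delta_2)$-heat function on $(-\sigma_0,\infty)\supseteq(-1,1)$, since conditions (i)--(ii) of Definition~\ref{Defn::Assump::Hypo::HeatFunction} are covariant under translation of $t$, and $\delta_2^{2\kappa}s+t_0/2>0$ for $s>-\sigma_0$ (so $\tilde u$ is indeed $\DAinfty$-valued there). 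For $t$ with $|t-t_0/2|<a_1\delta_2^{2\kappa}$ we write $t=\delta_2^{2\kappa}s+t_0/2$ with $s\in(-a_1,a_1)$, so that $\OpQx[x_0][a_1\delta_2](t)\phi=XT(t)\phi\big|_{\MetricBall{x_0}{a_1\delta_2}}=X\tilde u(s)\big|_{\MetricBall{x_0}{a_1\delta_2}}$, with $(\delta_2^{2\kappa}\partial_t)^l$ corresponding to $\partial_s^l$.

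Applying Lemma~\ref{Lemma::Proof::HeatFuncs::SupBoundWithoutExponential} to $\tilde u$ with $\delta_1=\delta_2$: its qualitative conclusion \eqref{Eqn::Proof::HeatFuncs::SupBoundWithoutExponential::Qualitative}, together with Assumption~\ref{Assumption::LocalPositivity} (uniqueness of continuous representatives, Remark~\ref{Rmk::Assump::Measure::ContinuousUnique}), shows $\OpQx[x_0][a_1\delta_2](t)\phi$ is a bounded continuous function on $\MetricBall{x_0}{a_1\delta_2}$; and in its quantitative conclusion \eqref{Eqn::Proof::HeatFuncs::SupBoundWithoutExponential::Quantitative} we bound $\LpNorm{\tilde u}{p_0}[(-1,1)\times\MetricBall{x_0}{\delta_2}][dt\times d\measure]\leq 2^{1/p_0}M_0\LpNorm{\phi}{p_0}[\MetricSpace][\measure]$ using $\OpNorm{T(\tau)}[\LpSpace{p_0}[\MetricSpace]]\leq M_0$ (recall $\omega_0=0$), and absorb $R_1^l(l!)^{2\kappa}\lesssim_l 1$, obtaining for every $l\in\N$
\[
    \sup_{\substack{x\in\MetricBall{x_0}{a_1\delta_2}\\ |t-t_0/2|<a_1\delta_2^{2\kappa}}}\bigl|(\delta_2^{2\kappa}\partial_t)^l\OpQx[x_0][a_1\delta_2](t)\phi(x)\bigr|\lesssim_l S_X(\delta_2)^{-1}\measure(\MetricBall{x_0}{\delta_2})^{-1/p_0}\LpNorm{\phi}{p_0}[\MetricSpace],\qquad\phi\in\DAinfty.
\]
Since $\DAinfty$ is dense in $\LpSpace{p_0}[\MetricSpace][\measure]$ (a standard fact, see \cite{EngelNagelAShortCourseOnOperatorSemigroups}), the case $l=0$ extends $\OpQx[x_0][a_1\delta_2](t)$ to a continuous operator $\LpSpace{p_0}[\MetricSpace][\measure]\to\CSpace{\MetricBall{x_0}{a_1\delta_2}}$ of norm $\lesssim S_X(\delta_2)^{-1}\measure(\MetricBall{x_0}{\delta_2})^{-1/p_0}$, uniformly in $t$ over the window; and for fixed such $t$ and $x\in\MetricBall{x_0}{a_1\delta_2}$, $\phi\mapsto\OpQx[x_0][a_1\delta_2](t)\phi(x)$ is a bounded linear functional on the reflexive space $\LpSpace{p_0}[\MetricSpace][\measure]$, so there is a unique $XK_t(x,\cdot)\in\LpSpace{p_0'}[\MetricSpace][\measure]$ with $\OpQx[x_0][a_1\delta_2](t)\phi(x)=\int XK_t(x,y)\phi(y)\,dy$ for all $\phi$ and $\LpNorm{XK_t(x,\cdot)}{p_0'}[\MetricSpace]\lesssim S_X(\delta_2)^{-1}\measure(\MetricBall{x_0}{\delta_2})^{-1/p_0}$. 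This is the desired kernel and its $l=0$ estimate.

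Finally I would upgrade this to the full regularity $XK_t(x,y)\in\CinftySpacet[(t_0/2-a_1\delta_2^{2\kappa},t_0/2+a_1\delta_2^{2\kappa})][\CSpacex{\MetricBall{x_0}{a_1\delta_2}}[\LpSpacewy{p_0'}[\MetricSpace][\measure]]]$ and the $l\geq 1$ bounds by transposition. The estimates above say that $\phi\mapsto\bigl((s,x)\mapsto\OpQx[x_0][a_1\delta_2](\delta_2^{2\kappa}s+t_0/2)\phi(x)\bigr)$, which on $\DAinfty$ is $\phi\mapsto\bigl((s,x)\mapsto X\tilde u_\phi(s)(x)\bigr)$, is a continuous linear map from $\LpSpace{p_0}[\MetricSpace][\measure]$ into the Fr\'echet space $\CinftySpace[(-a_1,a_1)][\CSpace{\MetricBall{x_0}{a_1\delta_2}}]$ (seminorms $g\mapsto\sup_{|s|<a_1}\Norm{\partial_s^l g(s)}[\CSpace{\MetricBall{x_0}{a_1\delta_2}}]$). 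Since the topology of $\CSpacex{\MetricBall{x_0}{a_1\delta_2}}[\LpSpacewy{p_0'}[\MetricSpace][\measure]]$ is that of uniform-in-$x$ convergence tested against single $\phi\in\LpSpace{p_0}[\MetricSpace][\measure]$, transposing this map (i.e.\ reading off $\int(\delta_2^{2\kappa}\partial_t)^l XK_t(x,y)\phi(y)\,dy=\partial_s^l X\tilde u_\phi(s)(x)$ at $t=\delta_2^{2\kappa}s+t_0/2$) yields continuity of $x\mapsto XK_t(x,\cdot)$, smoothness of $t\mapsto XK_t(\cdot,\cdot)$ in the claimed space, and, on taking the supremum over $\LpNorm{\phi}{p_0}[\MetricSpace]\leq 1$, the stated bound for all $l$; uniqueness of $XK_t(x,y)$ is immediate from this duality together with Assumption~\ref{Assumption::LocalPositivity}. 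The one mildly delicate step is this final transposition, i.e.\ keeping the weak-$\LpSpace{p_0'}$ topology bookkeeping straight; once the heat function $\tilde u$ is produced, everything else is a direct repackaging of Lemma~\ref{Lemma::Proof::HeatFuncs::SupBoundWithoutExponential}.
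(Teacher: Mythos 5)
Your proof is correct and takes essentially the same route as the paper: define the translated heat function $\tilde u(s)=T(t_0/2+\delta_2^{2\kappa}s)\phi$ (which makes sense on $(-1,1)$ precisely because $t_0\geq 2\delta_2^{2\kappa}$), apply Lemma~\ref{Lemma::Proof::HeatFuncs::SupBoundWithoutExponential} and the semigroup bound $\OpNorm{T(\tau)}[\LpSpace{p_0}[\MetricSpace]]\leq M_0$, then extend by density of $\DAinfty$ and read off the kernel via Riesz representation. The paper's own proof is identical in substance; you merely spell out in more detail the translation-covariance of the heat-function property and the final duality bookkeeping, both of which the paper leaves implicit.
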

\begin{proof}
    Fix \(\phi\in \DAinfty\) and consider the function \(u:(-1,1)\rightarrow \DAinfty\) defined by
    \(u(t)=T(t_0/2+t\delta_2^{2\kappa})\phi\); Lemma \ref{Lemma::HeatFuncs::Characterize} \ref{Item::HeatFuncs::Characterize::PositiveReals}
    shows \(u\) is an \((A,x_0,\delta_2)\)-heat function.
    Lemma \ref{Lemma::Proof::HeatFuncs::SupBoundWithoutExponential}
    shows \eqref{Eqn::Proof::HeatFuncs::SupBoundWithoutExponential::Qualitative} and \eqref{Eqn::Proof::HeatFuncs::SupBoundWithoutExponential::Quantitative} hold
    (with \(\delta_1\) replaced with \(\delta_2\)).
    Moreover, using \eqref{Eqn::Proof::HeatFuncs::SupBoundWithoutExponential::Quantitative}, 
    we have
    \begin{equation}\label{Eqn::Proof::OperatorBounds::OnDiagonalOperators::Tmp1}
    \begin{split}
         &\sup_{\substack{x\in \MetricBall{x_0}{a_1\delta_2}\\ t\in (t_0/2-a_1\delta_2,t_0/2+a_1\delta_2)}} \left| (\delta_2^{2\kappa} \partial_t)^l\OpQx[x_0][a_1\delta_2](t)\phi(t,x) \right|
         =\sup_{\substack{x\in \MetricBall{x_0}{a_1\delta_2}\\ t\in (-a_1,a_1)}} \left|\partial_t^l Xu(t,x) \right|
         \\&\lesssim_l S_X(\delta_2)^{-1} \measure\left( \MetricBall{x_0}{\delta_2} \right)^{-1/p_0} 
         \LpNorm{u}{p_0}[(-1,1)\times \MetricBall{x_0}{\delta_2}]
         \\&= S_X(\delta_2)^{-1} \measure\left( \MetricBall{x_0}{\delta_2} \right)^{-1/p_0} 
         \LpNorm{T(t_0/2+t\delta_2^{2\kappa})\phi}{p_0}[(-1,1)\times \MetricBall{x_0}{\delta_2}]
         \\&\lesssim S_X(\delta_2)^{-1} \measure\left( \MetricBall{x_0}{\delta_2} \right)^{-1/p_0} 
         \LpNorm{\phi}{p_0}[\MetricSpace].
    \end{split}
    \end{equation}
    Since \(\DAinfty\) is dense in \(\LpSpace{p_0}[\MetricSpace][\mu]\) (combine  \cite[Theorem 1.4 and Proposition 1.8 in Chapter 2]{EngelNagelAShortCourseOnOperatorSemigroups}),
    the conditions from Lemma \ref{Lemma::Proof::OperatorBounds::ReiszRepn}
    are established
    for \(Q(t)=\OpQx[x_0][a_1\delta_2](t)\), \(D=\Domain[A^\infty]\), \(\BanachSpace=\BanachSpaceY=\LpSpace{p_0}[\MetricSpace][\mu]\),
    \(I=(t_0/2-a_1\delta_2^{2\kappa},t_0/2+a_1\delta_2^{2\kappa})\),
    and \(N=\MetricBall{x_0}{a_1\delta_2}\),
    and the result follows from  Lemma \ref{Lemma::Proof::OperatorBounds::ReiszRepn} (using \eqref{Eqn::Proof::OperatorBounds::OnDiagonalOperators::Tmp1} to determine \(C_l\) in that lemma).
\end{proof}

\begin{proof}[Proof of Theorem \ref{Thm::Results::DefineOperator}]
    The result for \(\OpQx(t)\) is an immediate consequence of Lemma \ref{Lemma::Proof::OperatorBounds::OnDiagonalOperators}.
    The result for \(\OpQy(t)\) follows  by symmetry
    (see Remark \ref{Rmk::Assumption::Symmetric}).
\end{proof}

\begin{lemma}\label{Lemma::Proof::OperatorBounds::OffDiagonalOperators}
    Let \(\delta_1\in (0,\delta_0)\). Then, for \(t\in (0,a_1\delta_1^{2\kappa})\),
    \(\OpQx[x_0][a_1\delta_1](t)\) extends to a continuous operator
    \(\LpSpace{p_0}[\MetricSpace\setminus \MetricBallClosure{x_0}{\delta_1}]\rightarrow \CSpace{\MetricBall{x_0}{a_1\delta_1}}\).
    There exists a unique function
    \begin{equation*}
        XK_t(x,y)\in \CinftySpacet*[(0,a_1\delta_1^{2\kappa})][\CSpacex*{\MetricBall{x_1}{a_1\delta_1}}[\LpSpacewy{p_0'}[\MetricSpace\setminus \MetricBallClosure{x_0}{\delta_1}][\measure]]]
    \end{equation*}
    such that for \(t\in (0,a_1\delta_1^{2\kappa})\) and \(x_1\in \MetricBall{x_1}{a_1\delta_1}\),
    \begin{equation*}
        \OpQx[x_0][a_1\delta_1](t)f(x)=\int XK_t(x,y) f(y)\: d\measure(y), \quad \forall f\in \LpSpace{p_0}[\MetricSpace\setminus\MetricBallClosure{x_0}{\delta_1}].
    \end{equation*}
    There exists an admissible constant \(c_1>0\) such that
    this function satisfies, \(\forall j\in \N\),
    \begin{equation*}
        \sup_{x\in \MetricBall{x_0}{a_1\delta_1} \\ t\in (0,a_1 \delta^{2\kappa})} \LpNorm{\partial_t^j XK_t(x,\cdot)}{p_0'}[\MetricSpace\setminus \MetricBallClosure{x_0}{\delta_1}]
        \lesssim_j \delta_1^{-2\kappa j} S_X(\delta_1)^{-1} \measure(\MetricBall{x_0}{\delta_1})^{-1/p_0} \exp\left( -c_1  \delta_1^{2\kappa/(2\kappa-1)} t^{-1/(2\kappa-1)} \right).
    \end{equation*}
\end{lemma}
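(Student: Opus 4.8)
\emph{Approach.} The plan is to run the argument of Lemma~\ref{Lemma::Proof::OperatorBounds::OnDiagonalOperators}, but to arrange the heat function so that Proposition~\ref{Prop::Proof::HeatFuncs::MainExpBound}---which carries the exponential factor---can be used in place of Lemma~\ref{Lemma::Proof::HeatFuncs::SupBoundWithoutExponential}. The point is that Proposition~\ref{Prop::Proof::HeatFuncs::MainExpBound} applies to heat functions that vanish for negative times, and Assumption~\ref{Assumption::Locality} produces exactly such heat functions out of data supported off \(\MetricBall{x_0}{\delta_1}\): given \(f\in \LpSpace{p_0}[\MetricSpace\setminus \MetricBall{x_0}{\delta_1}][\measure]\), approximate it in \(\LpSpace{p_0}[\MetricSpace\setminus \MetricBall{x_0}{\delta_1}][\measure]\) by \(\phi\in \DAinfty\) with \(A^j\phi\big|_{\MetricBall{x_0}{\delta_1}}=0\) for all \(j\geq 0\); in particular \(\phi\big|_{\MetricBall{x_0}{\delta_1}}=0\), so \(\LpNorm{\phi}{p_0}[\MetricSpace]=\LpNorm{\phi}{p_0}[\MetricSpace\setminus \MetricBall{x_0}{\delta_1}]\).

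\emph{Key steps.} For each such \(\phi\) set \(u(t):=T(\delta_1^{2\kappa}t)\phi\) for \(t\geq 0\) and \(u(t):=0\) for \(t<0\). By Lemma~\ref{Lemma::HeatFuncs::Characterize}~\ref{Item::HeatFuncs::Characterize::AllReals}, the restriction of \(u\) to \((-1,1)\) is an \((A,x_0,\delta_1)\)-heat function vanishing on \((-1,0)\times \MetricBall{x_0}{\delta_1}\), so Proposition~\ref{Prop::Proof::HeatFuncs::MainExpBound} gives \(Xu\big|_{(-a_1,a_1)\times \MetricBall{x_0}{a_1\delta_1}}\in \CinftySpace[(-a_1,a_1)][\CSpace{\MetricBall{x_0}{a_1\delta_1}}]\) and, for all \(l\in \N\) and \(\tau\in(-a_1,a_1)\),
\[
\sup_{x\in \MetricBall{x_0}{a_1\delta_1}}\left|\partial_\tau^l Xu(\tau,x)\right|
\lesssim_l S_X(\delta_1)^{-1}\measure(\MetricBall{x_0}{\delta_1})^{-1/p_0}\exp\left(-c_1\tau^{-1/(2\kappa-1)}\right)\LpNorm{u}{p_0}[(-1,1)\times \MetricBall{x_0}{\delta_1}],
\]
where \(\lesssim_l\) absorbs the \(l\)-admissible factor \(B_1^l l^{l\gamma}\) of Proposition~\ref{Prop::Proof::HeatFuncs::MainExpBound}. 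Since \(u\) vanishes for \(t<0\) and \(\OpNorm{T(\delta_1^{2\kappa}t)}[\LpSpace{p_0}[\MetricSpace]]\leq M_0\) (recall \(\omega_0=0\)), the last factor is \(\lesssim \LpNorm{\phi}{p_0}[\MetricSpace\setminus \MetricBall{x_0}{\delta_1}]\). Writing \(t=\delta_1^{2\kappa}\tau\), the chain rule applied to \(XT(t)\phi(x)=Xu(t\delta_1^{-2\kappa},x)\) gives \(\partial_t^l(XT(t)\phi)(x)=\delta_1^{-2\kappa l}(\partial_\tau^l Xu)(t\delta_1^{-2\kappa},x)\), and \(\tau^{-1/(2\kappa-1)}=\delta_1^{2\kappa/(2\kappa-1)}t^{-1/(2\kappa-1)}\), so for \(t\in(0,a_1\delta_1^{2\kappa})\),
\[
\sup_{x\in \MetricBall{x_0}{a_1\delta_1}}\left|\partial_t^l (XT(t)\phi)(x)\right|
\lesssim_l \delta_1^{-2\kappa l}S_X(\delta_1)^{-1}\measure(\MetricBall{x_0}{\delta_1})^{-1/p_0}\exp\left(-c_1\delta_1^{2\kappa/(2\kappa-1)}t^{-1/(2\kappa-1)}\right)\LpNorm{\phi}{p_0}[\MetricSpace\setminus \MetricBall{x_0}{\delta_1}].
\]

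\emph{Conclusion.} From here one argues exactly as at the end of Lemma~\ref{Lemma::Proof::OperatorBounds::OnDiagonalOperators}: the \(\phi\) above are dense in \(\LpSpace{p_0}[\MetricSpace\setminus \MetricBall{x_0}{\delta_1}][\measure]\) and \(\OpQx[x_0][a_1\delta_1](t)\phi=XT(t)\phi\big|_{\MetricBall{x_0}{a_1\delta_1}}\), so the \(l=0\) bound shows \(\OpQx[x_0][a_1\delta_1](t)\) extends continuously to \(\LpSpace{p_0}[\MetricSpace\setminus \MetricBall{x_0}{\delta_1}]\rightarrow \CSpace{\MetricBall{x_0}{a_1\delta_1}}\); the bounds for all \(l\), uniform over \(t\) in compact subintervals of \((0,a_1\delta_1^{2\kappa})\), show this extension is \(\CinftySpace\) in \(t\) with \(\partial_t^l(XT(t)f)\) the corresponding limit, hence with the asserted derivative bounds; and the Riesz representation theorem (applied for each fixed \(t\) and \(x\)) produces the kernel \(XK_t(x,\cdot)\in \LpSpace{p_0'}[\MetricSpace\setminus \MetricBall{x_0}{\delta_1}][\measure]\) together with the weak-topology \(t\)-smoothness, \(x\)-continuity, and norm bounds, just as there.

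\emph{Main obstacle.} There is no single hard estimate; the content lies in arranging matters so Proposition~\ref{Prop::Proof::HeatFuncs::MainExpBound} applies. The two points needing care are: (i) checking that the heat function supplied by Assumption~\ref{Assumption::Locality} vanishes identically on \(\MetricBall{x_0}{\delta_1}\), which is what replaces \(\LpNorm{\phi}{p_0}[\MetricSpace]\) by \(\LpNorm{\phi}{p_0}[\MetricSpace\setminus \MetricBall{x_0}{\delta_1}]\) on the right and makes the bound genuinely off-diagonal; and (ii) propagating the rescaling \(t=\delta_1^{2\kappa}\tau\) correctly through the exponent, turning \(\exp(-c_1\tau^{-1/(2\kappa-1)})\) into \(\exp(-c_1\delta_1^{2\kappa/(2\kappa-1)}t^{-1/(2\kappa-1)})\) and producing the derivative loss \(\delta_1^{-2\kappa l}\). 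Everything else (density, Riesz representation, passage to the \(\CinftySpace\)-in-\(t\) limit) is routine and mirrors Lemma~\ref{Lemma::Proof::OperatorBounds::OnDiagonalOperators}.
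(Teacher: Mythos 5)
Your proposal is correct and follows the same route as the paper: take \(\phi\in\DAinfty\) with \(A^j\phi\) vanishing on \(\MetricBall{x_0}{\delta_1}\) (dense by Assumption~\ref{Assumption::Locality}), form the one-sided heat function \(u\) via Lemma~\ref{Lemma::HeatFuncs::Characterize}~\ref{Item::HeatFuncs::Characterize::AllReals}, invoke Proposition~\ref{Prop::Proof::HeatFuncs::MainExpBound} for the exponential gain, and finish with density plus Riesz representation. Your write-up is in fact slightly more explicit than the paper's about two details it leaves implicit: that \(\phi\big|_{\MetricBall{x_0}{\delta_1}}=0\) makes \(\LpNorm{\phi}{p_0}[\MetricSpace]=\LpNorm{\phi}{p_0}[\MetricSpace\setminus\MetricBall{x_0}{\delta_1}]\) (so the density argument genuinely closes), and the rescaling \(t=\delta_1^{2\kappa}\tau\) that produces the factors \(\delta_1^{-2\kappa l}\) and \(\delta_1^{2\kappa/(2\kappa-1)}\) in the final statement.
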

\begin{proof}
    Let \(\phi\in \DAinfty\) be such that \(A^j\phi\big|_{\MetricBall{x_0}{\delta_1}}=0\), \(\forall j\in \N\); by Assumption
    \ref{Assumption::Locality}, such \(\phi\) can approximate any element of \(\LpSpace{p_0}[\MetricSpace\setminus \MetricBallClosure{x_0}{\delta_1}]\)
    in \(\LpSpace{p_0}[\MetricSpace]\).
    Define \(u:(-1,1)\rightarrow \DAinfty\) by
    \begin{equation*}
        u(t):=
        \begin{cases}
            T(\delta_1^{2\kappa}t) \phi & t\geq 0,\\
            0 & t<0.
        \end{cases}
    \end{equation*}
    Lemma \ref{Lemma::HeatFuncs::Characterize} \ref{Item::HeatFuncs::Characterize::AllReals} shows \(u\) is an \((A,x_0,\delta_1)\)-heat function.
    Proposition \ref{Prop::Proof::HeatFuncs::MainExpBound} gives that \(Xu\big|_{(-a_1,a_1)\times \MetricBall{x_0}{a_1\delta_1}} \in\CinftySpace[(-a_1,a_1)][\CSpace{\MetricBall{x_0}{a_1\delta_1}}]\)
    and using \eqref{Eqn::Proof::HeatFunc::MainExpBound} we have for \(t\in (0,a_1)\) and \(l\in \N\),
    \begin{equation}\label{Eqn::Proof::OperatorBounds::OffDiagonalOperators::Tmp1}
    \begin{split}
         & \sup_{x\in \MetricBall{x_0}{\delta_1}} \left| \partial_t^l \OpQx[x_0][a_1\delta_1](\delta_1^{2\kappa}t)\phi(x)  \right|
         = \sup_{x\in \MetricBall{x_0}{\delta_1}} \left| \partial_t^l X u(t,x) \right|
         \\&\lesssim_l S_X(\delta_1)^{-1} \measure(\MetricBall{x_0}{\delta_1})^{-1/p_0} \exp\left( -c_1 t^{-1/(2\kappa-1)} \right)\LpNorm{u}{p_0}[(-1,1)\times \MetricBall{x_0}{\delta_1}]
         \\&=S_X(\delta_1)^{-1} \measure(\MetricBall{x_0}{\delta_1})^{-1/p_0} \exp\left( -c_1 t^{-1/(2\kappa-1)} \right)\LpNorm{T(\delta_1^{2\kappa}t)\phi}{p_0}[(-1,1)\times \MetricBall{x_0}{\delta_1}]
         \\&\lesssim S_X(\delta_1)^{-1} \measure(\MetricBall{x_0}{\delta_1})^{-1/p_0} \exp\left( -c_1 t^{-1/(2\kappa-1)} \right)\LpNorm{\phi}{p_0}[\MetricSpace].
    \end{split}
    \end{equation}
    We have established the conditions of 
    Lemma \ref{Lemma::Proof::OperatorBounds::ReiszRepn}
    with \(Q(t)=\OpQx[x_0][a_1\delta_1](\delta_1^{2\kappa}t)\),
    \(D=
    \left\{\phi\in \DAinfty: A^j\phi\big|_{\MetricBall{x_0}{\delta_1}}=0,\forall j\in \N \right\}\),
    \(\BanachSpace=\LpSpace{p_0}[\MetricSpace\setminus \MetricBallClosure{x_0}{\delta_1}]\),
    \(\BanachSpaceY=\LpSpace{p_0}[\MetricSpace]\),
    \(I=(0,a_1)\), and \(N=\MetricBall{x_1}{a_1\delta_1}\).
    The result now follows from
    Lemma \ref{Lemma::Proof::OperatorBounds::ReiszRepn} 
    with \(t\) replaced by \(\delta^{-2\kappa}t\)
    (using \eqref{Eqn::Proof::OperatorBounds::OffDiagonalOperators::Tmp1} to determine \(C_l\) in that lemma).
\end{proof}

\begin{remark}\label{Lemma::Proof::OperatorBounds::XKtWellDefined}
    We have used the same name for the function \(XK_t(x,y)\) in Lemmas \ref{Lemma::Proof::OperatorBounds::OnDiagonalOperators}
    and \ref{Lemma::Proof::OperatorBounds::OffDiagonalOperators}, however there is no ambiguity since these functions agree on their common domain
    due to their uniqueness.
\end{remark}

    \subsection{Pointwise Bounds}
    In this section, we complete the proof of Theorem \ref{Thm::Results::MainThm}, by proving pointwise bounds on the kernel
\(X\Yb K_t(x,y)\). 
In Section \ref{Section::Proof::OperatorBounds}, we established properties of the function \(XK_t(x,y)\) which is, by definition,
the integral kernel of \(\OpQx(t)\) (which is the restriction of \(XT(t)\) near \(x_0\)).
As described in Remark \ref{Rmk::Assumption::Symmetric}, our assumptions are symmetric in \(x\) and \(y\), and so we obtain
analogous estimates for \(\OpQy(t)\) (which is the restriction of \(YT(t)^{*}\) near \(y_0\)).
We denote by \(\Yb K_t(x,y)\) the complex conjugate of integral kernel of \(\OpQy(t)\) acting in the \(x\)-variable, so that if \(f\in \LpSpace{p_0'}[\MetricSpace]\),
then
\begin{equation*}
    \OpQy(t) f (y)=\int \overline{\Yb K_t(x,y)}  f(x)\: d\mu(x).
\end{equation*}
By definition, \(XT(t)Y^{*}=\OpQx(t/2) \OpQy(t/2)^{*}\) (see the discussion in Section \ref{Section::Results}).
Thus, we have, for \(x\) near \(x_0\) and \(y\) near \(y_0\),
\begin{equation}\label{Eqn::Proof::PtwiseBounds::XYbKtFormula}
    X\Yb K_t(x,y)=\int XK_{t/2}(x,z) \Yb K_{t/2}(z,y)\: d\measure(z).
\end{equation}
We will use \eqref{Eqn::Proof::PtwiseBounds::XYbKtFormula} to prove the properties described in Theorem \ref{Thm::Results::MainThm}.

\begin{proposition}\label{Prop::Proof::PtwiseBounds::PreliminaryEstimate}
    Fix \(t_0>0\) and let \(\delta_1:=(t_0/2)^{1/2\kappa} \wedge \delta_0\).
    Then,
    \begin{enumerate}[(i)]
        \item\label{Item::Proof::PtwiseBound::SmoothInt} For each fixed \((x,y)\in \MetricBall{x_0}{a_1\delta_1}\times \MetricBall{y_0}{a_1\delta_1}\), the map
            \(t\mapsto X\Yb K_t(x,y)\) is smooth for \(t\in (t_0-2a_1\delta_1^{2\kappa}, t_0+2a_1\delta_1^{2\kappa})\).
        \item\label{Item::Proof::PtwiseBound::ContinuousForFixedy} For each \(j\in \N\), and each fixed \(y\in \MetricBall{y_0}{a_1\delta_1}\), the map
            \((t,x)\mapsto \partial_t^j X\Yb K_t(x,y)\) is continuous for \((t,x)\in (t_0-2a_1\delta_1^{2\kappa}, t_0+2a_1\delta_1^{2\kappa})\times \MetricBall{x_0}{a_1\delta_1}\).
        \item\label{Item::Proof::PtwiseBound::ContinuousForFixedx} \ref{Item::Proof::PtwiseBound::ContinuousForFixedy} remains true with the roles of \(x\) and \(y\) reversed.
    \end{enumerate}
    Also, \(\forall j\in \N\),
    \begin{equation}\label{Eqn::Proof::PtwiseBound::BoundKt}
        \sup_{\substack{t\in (t_0-a_1\delta_1^{2\kappa}, t_0+a_1\delta_1^{2\kappa})\\ x\in \MetricBall{x_0}{a_1\delta_1} \\ y\in \MetricBall{y_0}{a_1\delta_1}}}
        \left| \partial_t^j X\Yb K_t(x,y) \right|
        \lesssim_j S_X(\delta_1)^{-1} S_Y(\delta_1)^{-1} \delta_1^{-2\kappa j}\measure(\MetricBall{x_0}{\delta_1})^{-1/p_0} \measure(\MetricBall{y_0}{\delta_1})^{-1/p_0'}.
    \end{equation}
\end{proposition}

Before we prove Proposition \ref{Prop::Proof::PtwiseBounds::PreliminaryEstimate}, we address one subtlety.
Let \(\BanachSpace\) be a reflexive Banach space and \(\BanachSpaceDual\) its dual, and let
\(\BanachSpacew\) and \(\BanachSpaceDualw\) denote these spaces with the weak topology.
Let \(\BanachSpacePairing{\cdot}{\cdot}\) denote the pairing of \(\BanachSpace\) with \(\BanachSpaceDual\).
Then, the map \((f,g)\mapsto \BanachSpacePairing{f}{g}\), mapping \(\BanachSpacew\times \BanachSpaceDualw\rightarrow \C\)
is separately continuous but not jointly continuous.  This is why Proposition \ref{Prop::Proof::PtwiseBounds::PreliminaryEstimate}
only establishes \(\partial_t^j X\Yb K_t(x,y)\) is separately continuous in \(x,y\) (the pairing of a Banach space with its
dual is in \eqref{Eqn::Proof::PtwiseBounds::XYbKtFormula}).
However, Proposition \ref{Prop::Proof::PtwiseBounds::PreliminaryEstimate} claims
\(\partial_t^j X\Yb K_t(x,y)\) is jointly continuous in \((t,x)\) and jointly continuous in \((t,y)\);
despite the fact that the variable \(t\) is in both terms in right-hand side of \eqref{Eqn::Proof::PtwiseBounds::XYbKtFormula}.
In the next two lemmas, we present one way to see this. Roughly speaking, the key is that everything is
smooth in \(t\), not merely continuous, and one can use this smoothness to establish the joint continuity.

\begin{lemma}\label{Lemma::Proof::PtwiseBounds::SmoothnessToImprovingParing}
    Let \(I\subseteq \R\) be an interval and \(N\) a metric space.
    Suppose \(K(t,x)\in \CinftySpacet[I][\CSpacex{N}[\BanachSpacew]]\)
    and \(L(t)\in \CinftySpace[I][\BanachSpaceDualw]\).  Suppose, \(\forall l\in \N\),
    \begin{equation}\label{Eqn::Proof::PtwiseBounds::SmoothnessToImprovingParing::AprioriBound}
        \sup_{\substack{t\in I \\ x\in N}} \BanachSpaceNorm*{\partial_t^l K(t,x)} + \sup_{t\in I} \BanachSpaceDualNorm*{\partial_t^l L_t}<\infty.
    \end{equation}
    Set \(M(t,x):=\BanachSpacePairing*{K(t,x)}{L(t)}\). Then,
    \begin{enumerate}[(i)]
        \item For each \(x\), \(t\mapsto M(t,x)\) is  smooth on its domain.
        \item For each \(l\in \N\), \((t,x)\mapsto \partial_t^l M(t,x)\) is continuous on its domain.
        \item\label{Item::Proof::PtwiseBounds::SmoothnessToImprovingParing::DerivFormula} \(\partial_{t} M(t,x)=\BanachSpacePairing{\partial_t K(t,x)}{L(t)}+\BanachSpacePairing{K(t,x)}{\partial_t L(t)}\).
    \end{enumerate}
\end{lemma}
\begin{proof}
    We will show \(M(t,x)\) is jointly continuous in \((t,x)\) differentiable in \(t\) and
    \ref{Item::Proof::PtwiseBounds::SmoothnessToImprovingParing::DerivFormula} holds.
    This will complete the proof since \ref{Item::Proof::PtwiseBounds::SmoothnessToImprovingParing::DerivFormula}
    shows
    \(\partial_t M(t,x)\) is a sum of two terms of the same form as \(M(t,x)\); and therefore
    \(\partial_t M(t,x)\) is jointly continuous in \((t,x)\) and differentiable in \(t\), and one obtains
    a formula for \(\partial_t^2M(t,x)\) which sees it as a sum of terms of the same form as \(M(t,x)\), etc. A simple induction then
    iterates this to establish the full result.

    Without loss of generality, we may assume \(I=(-a,a)\) for some \(a>0\).
    For \((t,x)\in I\times N\) fixed, we may write
    \begin{equation*}
        K(t,x)=K(0,x)+\int_0^t \partial_{s_1} K(s_1,x)\: ds_1,
    \end{equation*}
    where the Riemann sums of the integral converge in \(\BanachSpacew\). Thus,
    \begin{equation}\label{Eqn::Proof::PtwiseBounds::SmoothnessToImprovingParing::Tmp1}
        M(t,x)=\BanachSpacePairing{K(0,x)}{L(t)}+\int_0^t \BanachSpacePairing{\partial_{s_1} K(s_1,x)}{L(t)}\: ds_1.
    \end{equation}
    Similarly, \(L(t)=L(0)+\int_0^t \partial_{s_2} L(s_2)\: ds_2\). Plugging this into
    \eqref{Eqn::Proof::PtwiseBounds::SmoothnessToImprovingParing::Tmp1} shows
    \begin{equation}\label{Eqn::Proof::PtwiseBounds::SmoothnessToImprovingParing::Tmp2}
        \begin{split}
            M(t,x)=&\BanachSpacePairing{L(0,x)}{K(0)} + \int_0^t \BanachSpacePairing{K(0,x)}{\partial_{s_2}L(s_2)}\: ds_2
            \\&+\int_0^t \BanachSpacePairing{\partial_{s_1}K(s_1,x)}{L(0)}\: ds_1
            +\int_0^t \int_0^t \BanachSpacePairing{\partial_{s_1}K(s_1,x)}{\partial_{s_2}L(s_2)}\: ds_1\: ds_2.
        \end{split}
    \end{equation}
    We claim that each of the four terms on the right-hand side of \eqref{Eqn::Proof::PtwiseBounds::SmoothnessToImprovingParing::Tmp2}
    are jointly continuous in \((t,x)\).  The first and third terms are clear. We address the fourth, as it is the hardest;
    a similar proof establishes the joint continuity of the second term.

    Fix \((t_0,x_0)\in I\times N\). We wish to show
    \begin{equation}
        \begin{split}
            &\lim_{(t_0,x_0)\rightarrow (t,x)} \int_0^t \int_0^t \BanachSpacePairing{\partial_{s_1}K(s_1,x)}{\partial_{s_2}L(s_2)}\: ds_1\: ds_2
            \\&=\int_0^{t_0} \int_0^{t_0} \BanachSpacePairing{\partial_{s_1}K(s_1,x_0)}{\partial_{s_2}L(s_2)}\: ds_1\: ds_2.
        \end{split}
    \end{equation}
    Set \(C:=\sup_{\substack{t\in I \\ x\in N}} \BanachSpaceNorm*{\partial_t^l K(t,x)} + \sup_{t\in I} \BanachSpaceDualNorm*{\partial_t^l L_t}\);
    by assumption \(C<\infty\).
    We have,
    \begin{equation}\label{Eqn::Proof::PtwiseBounds::SmoothnessToImprovingParing::Tmp3}
    \begin{split}
         &\left| \int_0^t \int_0^t \BanachSpacePairing{\partial_{s_1}K(s_1,x)}{\partial_{s_2}L(s_2)}\: ds_1\: ds_2 - \int_0^{t_0} \int_0^{t_0} \BanachSpacePairing{\partial_{s_1}K(s_1,x_0)}{\partial_{s_2}L(s_2)}\: ds_1\: ds_2 \right|
         \\&\leq 2 a C^2 |t-t_0| + \int_0^{t_0}\int_0^{t_0}\left| \BanachSpacePairing{\partial_{s_1}K(s_1,x)-\partial_{s_1}K(s_1,x_0)}{\partial_{s_2}L(s_2)}  \right|\: ds_1\: ds_2.
    \end{split}
    \end{equation}
    The first term on the right-hand side of \eqref{Eqn::Proof::PtwiseBounds::SmoothnessToImprovingParing::Tmp3} clearly tends
    to \(0\) as \((t,x)\rightarrow (t_0,x_0)\), while the second term tends to \(0\) by \eqref{Eqn::Proof::PtwiseBounds::SmoothnessToImprovingParing::AprioriBound}
    and the dominated convergence theorem.

    This completes the proof that \(M(t,x)\) is jointly continuous in \((t,x)\).
    The formula  \ref{Item::Proof::PtwiseBounds::SmoothnessToImprovingParing::DerivFormula} follows by taking
    \(\partial_t\) of \eqref{Eqn::Proof::PtwiseBounds::SmoothnessToImprovingParing::Tmp2}.
\end{proof}

\begin{lemma}\label{Lemma::Proof::PtwiseBounds::SmoothnessToImprovingParingWithVars}
    Let \(I\subseteq \R\) be an open interval and \(N_1\) and \(N_2\) metric spaces.
    Suppose \(K(t,x)\in \CinftySpacet[I][\CSpacex{N_1}[\BanachSpacew]]\)
    and \(L(t,y)\in \CinftySpacet[I][\CSpacex{N_2}[\BanachSpaceDualw]]\).
    Suppose \(\forall l\in \N\),
    \begin{equation*}
        \sup_{\substack{t\in I \\ x\in N_1}} \BanachSpaceNorm*{\partial_t^l K(t,x)} + \sup_{\substack{t\in I \\ y\in N_1}} \BanachSpaceDualNorm*{\partial_t^l L(t,y)}<\infty.
    \end{equation*}
    Set \(M(t,x,y):=\BanachSpacePairing{K(t,x)}{L(t,y)}\).
    Then,
    \begin{enumerate}[(i)]
        \item\label{Item::Proof::PtwiseBounds::SmoothnessToImprovingParingWithVars::SmoothInt} For each \((x,y)\), \(t\mapsto M(t,x,y)\) is  smooth on its domain.
        \item\label{Item::Proof::PtwiseBounds::SmoothnessToImprovingParingWithVars::ContinuousIntx} For each \(l\in \N\) and \(y\in N_2\), \((t,x)\mapsto \partial_t^l M(t,x,y)\) is continuous on its domain.
        \item\label{Item::Proof::PtwiseBounds::SmoothnessToImprovingParingWithVars::ContinuousInty} For each \(l\in \N\) and \(x\in N_1\), \((t,y)\mapsto \partial_t^l M(t,x,y)\) is continuous on its domain.
        \item\label{Item::Proof::PtwiseBounds::SmoothnessToImprovingParingWithVars::DerivFormula} \(\partial_{t} M(t,x,y)=\BanachSpacePairing{\partial_t K(t,x)}{L(t,y)}+\BanachSpacePairing{K(t,x)}{\partial_t L(t,y)}\).
    \end{enumerate}
\end{lemma}
\begin{proof}
    By fixing \(y\), \ref{Item::Proof::PtwiseBounds::SmoothnessToImprovingParingWithVars::SmoothInt},
    \ref{Item::Proof::PtwiseBounds::SmoothnessToImprovingParingWithVars::ContinuousIntx},
    and \ref{Item::Proof::PtwiseBounds::SmoothnessToImprovingParingWithVars::DerivFormula}
    follow from Lemma \ref{Lemma::Proof::PtwiseBounds::SmoothnessToImprovingParing} applied with \(L(t)=L(t,y)\).
    \ref{Item::Proof::PtwiseBounds::SmoothnessToImprovingParingWithVars::ContinuousInty} follows from
    \ref{Item::Proof::PtwiseBounds::SmoothnessToImprovingParingWithVars::ContinuousIntx} because the assumptions
    are symmetric in \(x\) and \(y\).
\end{proof}

\begin{proof}[Proof of Proposition \ref{Prop::Proof::PtwiseBounds::PreliminaryEstimate}]
    Let \(\delta_2\in (0,\delta_1)\). 
    We first establish the result with \(\delta_2\) in place of \(\delta_1\).
    We use Lemma \ref{Lemma::Proof::OperatorBounds::OnDiagonalOperators} both for
    \(XK_t(x,y)\) and for \(\Yb K_t(x,y)\) (using that our assumptions are symmetric in this replacement; see Remark \ref{Rmk::Assumption::Symmetric}).
    Lemma \ref{Lemma::Proof::OperatorBounds::OnDiagonalOperators} and \eqref{Eqn::Proof::PtwiseBounds::XYbKtFormula}
    show that Lemma \ref{Lemma::Proof::PtwiseBounds::SmoothnessToImprovingParingWithVars} applies
    to establish \ref{Item::Proof::PtwiseBound::SmoothInt}, \ref{Item::Proof::PtwiseBound::ContinuousForFixedy},
    and \ref{Item::Proof::PtwiseBound::ContinuousForFixedx}.
    \eqref{Eqn::Proof::PtwiseBound::BoundKt} follows from Lemma \ref{Lemma::Proof::OperatorBounds::OnDiagonalOperators}
    and H\"older's inequality applied to \eqref{Eqn::Proof::PtwiseBounds::XYbKtFormula}.  Here we have used Lemma \ref{Lemma::Proof::PtwiseBounds::SmoothnessToImprovingParingWithVars} \ref{Item::Proof::PtwiseBounds::SmoothnessToImprovingParingWithVars::DerivFormula}
    when taking \(\partial_t\) derivatives.

    The claim for \(\delta_1\) now follows by taking
    \(\delta_2\uparrow \delta_1\) and using Assumption \ref{Assumption::DoublingOfSXandSY}.
\end{proof}

\begin{proof}[Proof of Theorem \ref{Thm::Results::MainThm}]
    In light of Proposition \ref{Prop::Proof::PtwiseBounds::PreliminaryEstimate}, all that remains to be shown is
    \eqref{Eqn::Results::MainGaussianBounds} (recall, we have reduced to the case \(\omega_0=0\), \(\NSubsetx=\{x_0\}\), and \(\NSubsety=\{y_0\}\)).

    We begin with the \textit{on-diagonal bounds}; namely, we establish \eqref{Eqn::Results::MainGaussianBounds} in the case
    \(t\geq \min \left\{ 2a_1 \delta_0^{2\kappa}, 2^{1-4\kappa} a_1 \metric[x_0][y_0]^{2\kappa} \right\}\).
    Set \(t_0=t\). By Proposition \ref{Prop::Proof::PtwiseBounds::PreliminaryEstimate} with this choice of \(t_0\)
    we have \(\forall j\in \N\), 
    \begin{equation}\label{Eqn::Proof::PtwiseBounds::OnDiag::Tmp1}
    \begin{split}
         \left| \partial_t^j X\Yb K_{t_0}(x_0,y_0) \right|
         \lesssim_j &\left( (t_0/2)^{1/2\kappa}\wedge \delta_0  \right)^{-2\kappa j}
         S_X\left( (t_0/2)^{1/2\kappa} \wedge \delta_0 \right)^{-1}
         S_Y\left( (t_0/2)^{1/2\kappa} \wedge \delta_0 \right)^{-1}
         \\&\times \measure( \MetricBall{x_0}{ (t_0/2)^{1/2\kappa}\wedge \delta_0 })^{-1/p_0}
         \measure( \MetricBall{y_0}{ (t_0/2)^{1/2\kappa}\wedge \delta_0 })^{-1/p_0'}.
    \end{split}
    \end{equation}
    Using that in the case we are considering, we have \((t_0/2)^{1/2\kappa} \wedge \delta_0 \approx \left( t_0^{1/2\kappa}+\metric[x_0][y_0] \right)\wedge \delta_0\), 
    Assumption \ref{Assumption::DoublingOfSXandSY} and Lemma \ref{Lemma::ProofDoubling::AllDoubling} (which also holds with \(x_0\) replaced with \(y_0\)--see Remark \ref{Rmk::Assumption::Symmetric}),
    show that \eqref{Eqn::Proof::PtwiseBounds::OnDiag::Tmp1} implies
    \begin{equation}\label{Eqn::Proof::PtwiseBounds::OnDiag::Tmp2}
        \begin{split}
             \left| \partial_{t}^j X\Yb K_{t_0}(x_0,y_0) \right|
             \lesssim_j &\left( \left(t_0^{1/2\kappa} +\metric[x_0][y_0]\right)\wedge \delta_0  \right)^{-2\kappa j}
             \\&\times S_X\left( \left(t_0^{1/2\kappa} +\metric[x_0][y_0]\right)\wedge \delta_0 \right)^{-1}
             S_Y\left( \left(t_0^{1/2\kappa} +\metric[x_0][y_0]\right) \wedge \delta_0 \right)^{-1}
             \\&\times \measure( \MetricBall{x_0}{ (t_0^{1/2\kappa} +\metric[x_0][y_0])\wedge \delta_0 })^{-1/p_0}
             \measure( \MetricBall{y_0}{ (t_0^{1/2\kappa} +\metric[x_0][y_0])^{1/2\kappa}\wedge \delta_0 })^{-1/p_0'}.
        \end{split}
        \end{equation}
        \eqref{Eqn::Proof::PtwiseBounds::OnDiag::Tmp2} is the same as \eqref{Eqn::Results::MainGaussianBounds},
        except that it lacks the factor
        \begin{equation}\label{Eqn::Proof::PtwiseBounds::OnDiag::Tmp3}
            \exp\left( -c \left( \frac{\left( \metric[x_0][y_0]\wedge \delta_0 \right)^{2\kappa}}{t_0} \right)^{1/(2\kappa-1)} \right).
        \end{equation}
        However, once we choose the admissible constant \(c>0\) in the off-diagonal estimates, below, 
        and using that in the case we're considering \(\left( \metric[x_0][y_0]\wedge \delta_0 \right)^{2\kappa}\lesssim t_0\),
        we have \eqref{Eqn::Proof::PtwiseBounds::OnDiag::Tmp3} \(\approx 1\), and so \eqref{Eqn::Proof::PtwiseBounds::OnDiag::Tmp2} completes the proof of \eqref{Eqn::Results::MainGaussianBounds} in this case.

        Finally, we turn to the \textit{off-diagonal estimates}; namely the case when \(t< \min \left\{ 2^{1-4\kappa} a_1\metric[x_0][y_0]^{2\kappa}, 2a_1\delta_0^{2\kappa} \right\}\);
        set \(t_0=t\).
        Let \(\delta_1:=(\metric[x_0][y_0]/2)\wedge \delta_0\),
        \(\delta_3:=\delta_1/2\) so that
        \(t_0/2<a_1\delta_3^{2\kappa}\),
        and \(\delta_2:=(t_0/2)^{1/2\kappa}<a_1^{1/2\kappa}\delta_0<\delta_0\).
        Using Lemma \ref{Lemma::Proof::OperatorBounds::OffDiagonalOperators} with \(\delta_1\) replaced by \(\delta_3\)
        we have
        \begin{equation}\label{Eqn::Proof::PtwiseBounds::OffDiag::xoff::Tmp1}
            \begin{split}
            & \BLpNorm{\partial_t^j XK_{t_0/2}(x_0,\cdot)}{p_0'}[\MetricSpace\setminus \MetricBallClosure{x_0}{\delta_3}]
            \\&\lesssim_j \delta_3^{-2\kappa j} S_X(\delta_3)^{-1} \measure(\MetricBall{x_0}{\delta_3})^{-1/p_0} \exp\left( -c_2   \delta_3^{2\kappa/(2\kappa-1)} t^{-1/(2\kappa-1)} \right),
            \end{split}
        \end{equation}
         and Lemma \ref{Lemma::Proof::OperatorBounds::OnDiagonalOperators} with this choice of \(\delta_2\), we have
        \begin{equation}\label{Eqn::Proof::PtwiseBounds::OffDiag::xon}
             \BLpNorm{ (\delta_2^{2\kappa} \partial_t)^j XK_{t_0/2}(x_0,\cdot)}{p_0'}[\MetricSpace]
            \lesssim_j S_X(\delta_2)^{-1} \measure(\MetricBall{x_0}{\delta_2})^{-1/p_0}.
        \end{equation}
        Here, \(c_2>0\) is the admissible constant given by \(c_2=c_12^{1/(2\kappa-1)}\) where \(c_1>0\) is the admissible constant from Lemma \ref{Lemma::Proof::OperatorBounds::OffDiagonalOperators}.
        Using \(\delta_3=\delta_1/2<\delta_1\) in \eqref{Eqn::Proof::PtwiseBounds::OffDiag::xoff::Tmp1} (and using Assumption \ref{Assumption::DoublingOfSXandSY} and Lemma \ref{Lemma::ProofDoubling::AllDoubling}), 
        we obtain
        \begin{equation}\label{Eqn::Proof::PtwiseBounds::OffDiag::xoff}
            \begin{split}
            & \BLpNorm{\partial_t^j XK_{t_0/2}(x_0,\cdot)}{p_0'}[\MetricSpace\setminus \MetricBall{x_0}{\delta_1}]
            \leq \BLpNorm{\partial_t^j XK_{t_0/2}(x_0,\cdot)}{p_0'}[\MetricSpace\setminus \MetricBallClosure{x_0}{\delta_3}]
            \\&\lesssim_j \delta_1^{-2\kappa j} S_X(\delta_1)^{-1} \measure(\MetricBall{x_0}{\delta_1})^{-1/p_0} \exp\left( -c_3   \delta_1^{2\kappa/(2\kappa-1)} t_0^{-1/(2\kappa-1)} \right),
            \end{split}
        \end{equation}
        with \(c_3=2^{-2\kappa/(2\kappa-1)}c_2\).
        By using the symmetry of our assumptions in \(x\) and \(y\) (see Remark \ref{Rmk::Assumption::Symmetric}), we also have
        \begin{equation}\label{Eqn::Proof::PtwiseBounds::OffDiag::yoff}
            \begin{split}
            & \BLpNorm{\partial_t^j \Yb K_{t_0/2}(\cdot,y_0)}{p_0}[\MetricSpace\setminus \MetricBall{y_0}{\delta_1}]
            \\&\lesssim_j \delta_1^{-2\kappa j} S_Y(\delta_1)^{-1} \measure(\MetricBall{y_0}{\delta_1})^{-1/p_0'} \exp\left( -c_3   \delta_1^{2\kappa/(2\kappa-1)} t^{-1/(2\kappa-1)} \right),
            \end{split}
        \end{equation}
        \begin{equation}\label{Eqn::Proof::PtwiseBounds::OffDiag::yon}
             \BLpNorm{ (\delta_2^{2\kappa} \partial_t)^j \Yb K_{t_0/2}(\cdot,y_0)}{p_0}[\MetricSpace]
            \lesssim_j S_Y(\delta_2)^{-1} \measure(\MetricBall{x_0}{\delta_2})^{-1/p_0'}.
        \end{equation}
        Since \(\delta_1\leq \metric[x_0][y_0]/2\), we have \(\left( \MetricSpace\setminus \MetricBall{x_0}{\delta_1} \right)\bigcup \left( \MetricSpace\setminus \MetricBall{y_0}{\delta_1} \right)=\MetricSpace\).
        Using this, \eqref{Eqn::Proof::PtwiseBounds::XYbKtFormula}, and repeated applications of
        Lemma \ref{Lemma::Proof::PtwiseBounds::SmoothnessToImprovingParing} \ref{Item::Proof::PtwiseBounds::SmoothnessToImprovingParing::DerivFormula}, we have for \(l\in \N\),
        \begin{equation}\label{Eqn::Proof::PtwiseBounds::OffDiag::CutUpFinalEstimate}
        \begin{split}
             &\left| \partial_t^l X\Yb K_{t_0}(x_0,y_0)  \right|
             = \left| \partial_t^l \int XK_{t/2}(x_0,z) \Yb K_{t/2}(z,y_0)\: d\measure(z)\big|_{t=t_0} \right|
             \\&\lesssim_l \sum_{j=0}^l \int \left| \partial_t^{l-j}  X K_{t_0/2}(x,z) \partial_t^j \Yb K_{t_0/2}(z,y_0)\right|\:d\measure(z)  
             \\&\lesssim_l \sum_{j=0}^l \int_{\MetricSpace\setminus \MetricBall{x_0}{\delta_1}}\left| \partial_t^{l-j}  X K_{t_0/2}(x,z) \partial_t^j \Yb K_{t_0/2}(z,y_0)\right|\:d\measure(z)
             \\&\quad\quad+\sum_{j=0}^l \int_{\MetricSpace\setminus \MetricBall{y_0}{\delta_1}}\left| \partial_t^{j}  X K_{t_0/2}(x,z) \partial_t^{l-j} \Yb K_{t_0/2}(z,y_0)\right|\:d\measure(z)
            \\&\leq \sum_{j=0}^l \BLpNorm{\partial_t^{l-j} XK_{t_0/2}(x_0,\cdot)}{p_0'}[\MetricSpace\setminus \MetricBall{x_0}{\delta_1}] \BLpNorm{\partial_t^j \Yb K_{t_0/2}(\cdot, y_0)}{p_0}[\MetricSpace]
            \\&\quad\quad +\sum_{j=0}^l \BLpNorm{\partial_t^{j} XK_{t_0/2}(x_0,\cdot)}{p_0'}[\MetricSpace] \BLpNorm{\partial_t^{l-j} \Yb K_{t_0/2}(\cdot, y_0)}{p_0}[\MetricSpace\setminus \MetricBall{y_0}{\delta_1}]
        \end{split}
        \end{equation}
        We estimate the two terms on the right-hand side of \eqref{Eqn::Proof::PtwiseBounds::OffDiag::CutUpFinalEstimate} 
        using \eqref{Eqn::Proof::PtwiseBounds::OffDiag::xon}, \eqref{Eqn::Proof::PtwiseBounds::OffDiag::xoff}, \eqref{Eqn::Proof::PtwiseBounds::OffDiag::yoff}, and \eqref{Eqn::Proof::PtwiseBounds::OffDiag::yon}.
        The estimate for the two terms differs only by reversing the roles of \(x_0\) and \(y_0\), so we estimate only the first.
        We have, for \(0\leq j\leq l\),
        \begin{equation*}
        \begin{split}
             &\BLpNorm{\partial_t^{l-j} XK_{t_0/2}(x_0,\cdot)}{p_0'}[\MetricSpace\setminus \MetricBall{x_0}{\delta_1}] \BLpNorm{\partial_t^j \Yb K_{t_0/2}(\cdot, y_0)}{p_0}[\MetricSpace]
            \\&\lesssim_l \delta_1^{-2\kappa (l-j)}\delta_2^{-2\kappa j} S_X(\delta_1)^{-1}S_Y(\delta_2)^{-1} \measure(\MetricBall{x_0}{\delta_1})^{-1/p_0}\measure(\MetricBall{x_0}{\delta_2})^{-1/p_0'} 
            \exp\left( -c_3   \delta_1^{2\kappa/(2\kappa-1)} t^{-1/(2\kappa-1)} \right)
        \end{split}
        \end{equation*}
        Using \(\delta_1=(\metric[x_0][y_0]/2)\wedge \delta_0\approx (t^{1/2\kappa}+\metric[x_0][y_0])\wedge \delta_0\),
        Lemma \ref{Lemma::ProofDoubling::AllDoubling},  Assumption \ref{Assumption::DoublingOfSXandSY}, and \(\delta_2=(t_0/2)^{1/2\kappa}\wedge \delta_0\),
        we have with \(c_4=2^{-2\kappa/(2\kappa-1)}c_3>0\)
        \begin{equation*}
        \begin{split}
             &\BLpNorm{\partial_t^{l-j} XK_{t_0/2}(x_0,\cdot)}{p_0'}[\MetricSpace\setminus \MetricBall{x_0}{\delta_1}] \BLpNorm{\partial_t^j \Yb K_{t_0/2}(\cdot, y_0)}{p_0}[\MetricSpace]
             \\&\lesssim_l \left( \left( t_0^{1/2\kappa}+\metric[x_0][y_0] \right)\wedge \delta_0 \right)^{-2\kappa(l-j)} \left( t_0^{1/2\kappa}\wedge \delta_0 \right)^{-2\kappa j}
             \\&\quad\times S_X\left( \left( t_0^{1/2\kappa}+\metric[x_0][y_0] \right)\wedge \delta_0 \right)^{-1} S_Y\left(  t_0^{1/2\kappa}\wedge \delta_0  \right)^{-1}
             \\&\quad\times \measure\left( \BMetricBall{x_0}{ \left( t_0^{1/2\kappa}+\metric[x_0][y_0] \right)\wedge \delta_0} \right)^{-1/p_0}
             \measure\left( \BMetricBall{y_0}{t_0^{1/2\kappa}\wedge \delta_0 } \right)^{-1/p_0'}
             \\&\quad\times\exp\left( -c_4 \left( \frac{\left( \metric[x_0][y_0]\wedge \delta_0 \right)^{2\kappa}}{t} \right)^{1/(2\kappa-1)}    \right)
            \\&=  \left( \left( t_0^{1/2\kappa}+\metric[x_0][y_0] \right)\wedge \delta_0 \right)^{-2\kappa l}
            \\&\quad\times S_X\left( \left( t_0^{1/2\kappa}+\metric[x_0][y_0] \right)\wedge \delta_0 \right)^{-1} S_Y\left(  \left( t_0^{1/2\kappa}+\metric[x_0][y_0] \right)\wedge \delta_0  \right)^{-1}
            \\&\quad \times \measure\left( \BMetricBall{x_0}{ \left( t_0^{1/2\kappa}+\metric[x_0][y_0] \right)\wedge \delta_0} \right)^{-1/p_0} \measure\left( \BMetricBall{y_0}{ \left( t_0^{1/2\kappa}+\metric[x_0][y_0] \right)\wedge \delta_0} \right)^{-1/p_0'}
            \\&\quad\times\exp\left( -(c_4/2) \left( \frac{\left( \metric[x_0][y_0]\wedge \delta_0 \right)^{2\kappa}}{t} \right)^{1/(2\kappa-1)}    \right)
            \\&\quad\times \Error,
        \end{split}
        \end{equation*}
        where
        \begin{equation*}
        \begin{split}
             \Error
             =&\frac{\left(  \left( t_0^{1/2\kappa}+\metric[x_0][y_0] \right)\wedge \delta_0  \right)^j S_Y\left(  \left( t_0^{1/2\kappa}+\metric[x_0][y_0] \right)\wedge \delta_0  \right)}{\left( t_0^{1/2\kappa}\wedge \delta_0 \right)^j S_Y\left(  t_0^{1/2\kappa}\wedge \delta_0 \right)}
             \frac{\measure\left( \BMetricBall{y_0}{ \left( t_0^{1/2\kappa}+\metric[x_0][y_0] \right)\wedge \delta_0} \right)^{1/p_0'}}{\measure\left( \BMetricBall{y_0}{  t_0^{1/2\kappa}\wedge \delta_0   } \right)^{1/p_0'}}
             \\&\times \exp\left( -(c_4/2) \left( \frac{\left( \metric[x_0][y_0]\wedge \delta_0 \right)^{2\kappa}}{t} \right)^{1/(2\kappa-1)}    \right).
        \end{split}
        \end{equation*}
        The result will follow with \(c=c_4/2\) once we show \(\Error\lesssim_l 1\).
        That \(\Error\lesssim_j 1\) follows from Lemma \ref{Lemma::ProofDoubling::ExpBeatsDoubling} with
        \(F(\delta) = (\delta\wedge \delta_0)^j S_Y(\delta\wedge \delta_0) \measure\left( \BMetricBall{y_0}{ \delta \wedge \delta_0} \right)^{1/p_0'}\) and with
        \(\delta_1\) and \(\delta_2\) in that lemma replaced by \(t_0^{1/2\kappa}\) and \(\metric[x_0][y_0]\wedge \delta_0\), respectively;
        here we have used Lemma \ref{Lemma::ProofDoubling::AllDoubling} and Assumption \ref{Assumption::DoublingOfSXandSY}.
        Since \(j\leq l\), this implies \(\Error\lesssim_l 1\), completing the proof.
\end{proof}

\bibliographystyle{amsalpha}

\bibliography{heat}

\center{\it{University of Wisconsin-Madison, Department of Mathematics, 480 Lincoln Dr., Madison, WI, 53706}}

\center{\it{street@math.wisc.edu}}

\center{MSC 2020:  35K08 (Primary), 47D06, 35H10, and 35H20 (Secondary)}

\end{document}